\DeclareMathOperator*{\argmax}{arg\,max}
\DeclareMathOperator*{\argmin}{arg\,min}
\newcommand{\bR}{\mathbb{R}}
\newcommand{\bN}{\mathbb{N}}
\newcommand{\bM}{\mathbb{M}}
\newcommand{\bI}{\mathbb{I}}
\newcommand{\cT}{\mathcal{T}}
\newcommand{\cB}{\mathcal{B}}
\newcommand{\cC}{\mathcal{C}}
\newcommand{\cD}{\mathcal{D}}
\newcommand{\cG}{\mathcal{G}}
\newcommand{\cS}{\mathcal{S}}
\newcommand{\cJ}{\mathcal{J}}
\newcommand{\cR}{\mathcal{R}}
\newcommand{\cE}{\mathcal{E}}
\newcommand{\cP}{\mathcal{P}}
\newcommand{\cU}{\mathcal{U}}
\newcommand{\cI}{\mathcal{I}}
\newcommand{\cV}{\mathcal{V}}
\newcommand{\cL}{\mathcal{L}}
\newcommand{\sfE}{{\sf E}}
\newcommand{\Var}{{\sf Var}}
\newcommand{\Cov}{{\sf Cov}}
\newcommand{\sfQ}{{\sf Q}}
\newcommand{\sfP}{{\sf P}}
\newcommand{\sfBF}{{\sf BF}}
\newcommand{\sfp}{{\sf p}}
\newcommand{\pa}{{\rm pa}}
\newcommand{\an}{{\rm an}}
\newcommand{\de}{{\rm de}}
\newcommand{\supp}{{\rm supp}}
\newcommand{\rt}{\right}
\newcommand{\lt}{\left}
\newcommand*{\indep}{%
  \mathbin{%
    \mathpalette{\@indep}{}%
  }%
}
\newcommand*{\nindep}{%
  \mathbin{%                   % The final symbol is a binary math operator
    \mathpalette{\@indep}{\not}% \mathpalette helps for the adaptation
                               % of the symbol to the different math styles.
  }%
}
\newcommand*{\@indep}[2]{%
  % #1: math style
  % #2: empty or \not
  \sbox0{$#1\perp\m@th$}%        box 0 contains \perp symbol
  \sbox2{$#1=$}%                 box 2 for the height of =
  \sbox4{$#1\vcenter{}$}%        box 4 for the height of the math axis
  \rlap{\copy0}%                 first \perp
  \dimen@=\dimexpr\ht2-\ht4-.2pt\relax
      % The equals symbol is centered around the math axis.
      % The following equations are used to calculate the
      % right shift of the second \perp:
      % [1] ht(equals) - ht(math_axis) = line_width + 0.5 gap
      % [2] right_shift(second_perp) = line_width + gap
      % The line width is approximated by the default line width of 0.4pt
  \kern\dimen@
  {#2}%
      % {\not} in case of \nindep;
      % the braces convert the relational symbol \not to an ordinary
      % math object without additional horizontal spacing.
  \kern\dimen@
  \copy0 %                       second \perp
} 
\newcommand{\ba}{\begin{align*}}
\newcommand{\ea}{\end{align*}}
\newcommand{\be}{\begin{equs}}
\newcommand{\ee}{\end{equs}}
\newtheorem{theorem}{Theorem}[section]
\newtheorem{lemma}[theorem]{Lemma}
\newtheorem{corollary}{Corollary}[section]
\newtheorem{proposition}{Proposition}[section]
\theoremstyle{remark}
\newtheorem{remark}{Remark}[section]
\newtheorem{definition}[theorem]{Definition}
\newtheorem{example}{Example}[section]
\begin{document}

\begin{frontmatter}
%%%%%%%%%%%%%%%%%%%%%%%%%%%%%%%%%%%%%%%%%%%%%%
%%                                          %%
%% Enter the title of your article here     %%
%%                                          %%
%%%%%%%%%%%%%%%%%%%%%%%%%%%%%%%%%%%%%%%%%%%%%%
%\title{Bayesian Causal Discovery under non-Gaussianity}
%\title{Posterior consistency in Bayesian DAG selection for Causal Discovery under non-Gaussianity}
%\title{Consistent DAG selection for Bayesian Causal Discovery under non-Gaussianity}
\title{Consistent DAG selection for Bayesian Causal Discovery under general error distributions}
%\title{A sample article title with some additional note\thanksref{T1}}
\runtitle{Bayesian Causal Discovery}
%\thankstext{T1}{A sample of additional note to the title.}

\begin{aug}
%%%%%%%%%%%%%%%%%%%%%%%%%%%%%%%%%%%%%%%%%%%%%%%
%% Only one address is permitted per author. %%
%% Only division, organization and e-mail is %%
%% included in the address.                  %%
%% Additional information can be included in %%
%% the Acknowledgments section if necessary. %%
%% ORCID can be inserted by command:         %%
%% \orcid{0000-0000-0000-0000}               %%
%%%%%%%%%%%%%%%%%%%%%%%%%%%%%%%%%%%%%%%%%%%%%%%
\author{\fnms{Anamitra}~\snm{Chaudhuri}\ead[label=e1]{ac27@tamu.edu}},
\author{\fnms{Anirban}~\snm{Bhattacharya}\ead[label=e2]{anirbanb@stat.tamu.edu}}
\and
\author{\fnms{Yang}~\snm{Ni}\ead[label=e3]{yni@stat.tamu.edu}}
%%%%%%%%%%%%%%%%%%%%%%%%%%%%%%%%%%%%%%%%%%%%%%
%% Addresses                                %%
%%%%%%%%%%%%%%%%%%%%%%%%%%%%%%%%%%%%%%%%%%%%%%
\address{Department of Statistics, Texas A\&M University, \printead{e1,e2,e3}}
%\address[B]{???\printead[presep={,\ }]{e2,e3}}
\end{aug}

\begin{abstract}
We consider the problem of learning the underlying causal structure among a set of variables, which are assumed to follow a Bayesian network or, more specifically, a linear recursive structural equation model (SEM)  with the associated errors being independent and allowed to be non-Gaussian. A Bayesian hierarchical model is proposed to identify the true data-generating directed acyclic graph (DAG) structure where the nodes and edges represent the variables and the direct causal effects, respectively. Moreover, incorporating the information of non-Gaussian errors, we characterize the distribution equivalence class of the true DAG, which specifies the best possible extent to which the DAG can be identified %by any method 
based on purely observational data.
%\textcolor{red}{
Furthermore, under the consideration that the errors are distributed as some scale mixture of Gaussian, where the mixing distribution is unspecified, and mild distributional assumptions, we establish that by employing a non-standard DAG prior, the posterior probability of the distribution equivalence class of the true DAG converges to unity as the sample size grows. This shows that the proposed method achieves the posterior DAG selection consistency, which is further illustrated with examples and simulation studies. %are presented to illustrate the results, %where we also demonstrate different rates of divergence of the associated posterior odds varying over the competing DAGs.
\end{abstract}

\begin{keyword}[class=MSC]
\kwd[Primary ]{	62H22 }
\kwd{62F15}
%\kwd{62D20 }
\kwd[; secondary ]{62C10}
\kwd{62E10 }
\end{keyword}

\begin{keyword}
\kwd{causal discovery}
%\kwd{directed acyclic graph (DAG)}
\kwd{Bayesian network}
%\kwd{non-Gaussian DAG models}
\kwd{structural equation model}
\kwd{Bayesian model selection}
%\kwd{posterior consistency}
\kwd{non-Gaussianity}
\end{keyword}

\end{frontmatter}

\section{Introduction}

Learning causal structure in complex systems is a fundamental challenge across a broad range of disciplines, from traditional scientific fields to modern engineering and technology. 
Unlike conventional statistical methods that focus merely on correlation, the field of causal discovery primarily considers the problem of discovering the directionality and strength of causal relationships between variables, often from observational data. Thus, it has become a critical tool for researchers aiming to predict the effects of interventions on the systems, especially where controlled experimentation may be expensive, unethical, or even infeasible. Such necessities arise not only in various areas of natural science, such as epidemiology \cite{Petersen2024epi}, public health \cite{shen2020challenges},  genomics \cite{choi2023model}, neuroscience \cite{zhou2023functional}, and climate and environmental science \cite{runge2019inferring}, but also in numerous domains in social science, such as psychology \citep{ni2025causal}, philosophy \cite{glymour2019review}, and economics \cite{imbens2004nonparametric}. Moreover, with recent advances in science and technology and the increase in size and complexity of data generation processes, causal discovery has acquired significant relevance in the fields of machine learning \cite{berhanrd2012caus_antcaus} and artificial intelligence 
\cite{xia2021causal, zevcevic2021relating} 
through various emerging areas such as causal representation learning \cite{scholkopf2021toward, zhang2024causal}, causal transfer learning \cite{zhang2017transfer}, causal algorithmic fairness \cite{zhang2018fairness}, and causal reinforcement learning \cite{bareinboim2015bandits}.

This work focuses on learning causal structures from purely observational data within the framework of causal Bayesian networks, which are widely used to represent causal relationships among variables through directed acyclic graphs (DAGs).
This is, in general, a nontrivial and difficult task due to the vast number of potential DAG structures and multiple DAGs representing the same set of conditional independence relationships. In fact, DAGs are generally identifiable only up to their corresponding Markov equivalence class, in which all DAGs encode the same conditional independencies \cite{heckerman1995learning}.  

Numerous methods have been proposed in the past (see reviews such as \cite{drton2017structure}) to 
%at least 
estimate the Markov equivalence class, which can be broadly classified as constraint-based, score-based, and hybrid methods. Constraint-based approaches such as the widely used PC algorithm \cite{spirtes2001causation} and its high-dimensional variants \cite{kalisch2007estimating, maathuis2009estimating, harris2013pc}, the FCI algorithm \cite{spirtes2001anytime}, and the RFCI algorithm \cite{colombo2012learning} aim to infer the underlying conditional independencies based on hypothesis testing. Score-based methods aim to maximize certain scoring criteria over the space of models, viz., the DAGs, their equivalence classes, or their causal orderings, generally through some search procedure. One of the most notable is the GES algorithm \cite{chickering2002optimal} that performs a two-stage greedy search over the space of equivalence classes to obtain the best-scored one. An alternative popular approach along this direction is Bayesian structure learning, which utilizes Markov chain Monte Carlo algorithm to search over the model space, enabling posterior inference on relevant quantities through model averaging; see, for example, the series of works \cite{madigan1996bayesian, geiger2002parameter, friedman2003being, castelletti2018learning}. Hybrid methods such as \cite{tsamardinos2006max, schmidt2007learning, alonso2013scaling} combine these two approaches by deploying a score-based search algorithm over a restricted space estimated via conditional independence tests. One common thread of the aforementioned methods is that they all aim to infer Markov equivalence classes, which may contain DAGs with significantly different causal interpretations \cite{wang2020high} and can be quite large \cite{andersson1997characterization}.

Recent studies have discovered that under additional distributional assumptions, the exact DAG structure, rather than %just 
the associated Markov equivalence class, can be recovered solely from observational data. To be specific, in the case of continuous variables, it is a popular choice to represent the causal structure using a structural equation model (SEM). In their seminal work, Shimizu et al. \cite{shimizu2006linear} proposed the linear non-Gaussian acyclic model, abbreviated as LiNGAM, where the functional form of the SEM is linear and the errors are non-Gaussian. They show that the underlying DAG can be uniquely identified under their model by establishing the equivalence between LiNGAM and independent component analysis (ICA)\cite{comon1994independent}. %Subsequent research incorporating non-Gaussian errors have proposed notable methods such as DirectLiNGAM \cite{shimizu2011directlingam, wang2020high}, pairwise LiNGAM \cite{hyvarinen2013pairwise}, vector autoregressive LiNGAM \cite{hyvarinen2010estimation}.
In a similar vein, the unique identification of the underlying DAG is possible if the functional form of the SEM is non-linear with some mild regularity assumptions on the function and noise \cite{hoyer2008nonlinear, peters2011identifiability, peters2014causal} or if the functional form is linear and the errors have equal variances \cite{peters2014identifiability, chen2019causal, loh2014high, rothenhausler2018causal}.

Historically, Bayesian DAG structure learning methods have been primarily focused on developing efficient computational algorithms for Gaussian DAG models 
\cite{giudici2003improving, grzegorczyk2008improving, niinimaki2011partial, su2016improving, goudie2016gibbs, kuipers2017partition}. Only recently, a few studies \cite{cao2019posterior, lee2019minimax, zhou2023complexity} established the consistency of such Bayesian approaches. However, for non-Gaussian DAG models, there are significantly %much 
fewer works \cite{hoyer2009bayesian, shimizu2014bayesian}. Although these works already showed via extensive simulations that, in general, their performance is significantly better than the existing non-Bayesian methods under a vast range of non-Gaussian distributions, a rigorous Bayesian DAG selection method with some desired statistical property, e.g., consistency, is lacking in this context, as it
comes with several inherent challenges such as modeling the errors with some appropriate non-Gaussian distribution, analytical intractability of the marginal likelihood, and asymptotic analysis of Bayes factors under model misspecification. 

In this work, we address this research gap by considering a linear acyclic SEM, where the associated errors are independent and not necessarily Gaussian, with the objective of proposing a Bayesian method that consistently recovers the true underlying DAG or its equivalence class under mild assumptions. 
More specifically, we develop a method that not only takes advantage of non-Gaussianity for finer identifiability but also is more general than LiNGAM in that we allow for the possible presence of Gaussian errors. In order to precisely characterize this as well as relax the restriction of all true errors being non-Gaussian, we assume that the errors in the data-generating process are distributed as a scale mixture of Gaussian with some unknown mixing distribution.
This is generally considered to be a popular and appropriate choice \cite{box2011bayesian, west1984outlier, andrews1974scale} for the error distributions not only because of symmetry around the origin, but also due to its comprehensive representation that encompasses a large family of distributions including well-known distributions such as Laplace, Student's t, Cauchy, the family of stable and exponential power distributions \cite{west1987scale}, and the scale mixture thereof. 
These prominently include polynomial-tailed distributions, 
% Furthermore, such a choice also includes the family of stable and exponential power distributions \cite{west1987scale}, which allows the errors to be heavy-tailed, 
thereby relaxing the restriction of log-concavity in the existing literature \cite{wang2020high}.
% Edited this line slightly to condense
%\textcolor{red}{Under this consideration, we propose a Bayesian hierarchical model, where the distribution of errors is modeled by Laplace distribution, and derive a criterion function based on the postulated model. Based on that, we determine the equivalence class of the true underlying DAG, which depends not only on the structure of the DAG but also the number and location of the nodes corresponding to the non-Gaussian errors in the structural equations. Next, under some mild moment assumptions on the error distributions, we establish that the proposed method can identify the true DAG consistently up to the characterized equivalence class, or in other words, the posterior probability of the equivalence class corresponding to the true DAG tends to unity as the sample size increases. Because our theories do not enforce Gaussianity or non-Gaussianity uniformly to all nodes, our results encompass both scenarios and the mixture of the two. The theoretical results are subsequently illustrated with examples and simulation studies.}
%\textcolor{red}{
Under this consideration, we propose a Bayesian hierarchical model where the variables are generated by an SEM with the errors being modeled via {\it Laplace distributions} with unknown scale parameters, thereby rendering it a misspecified (working) model. Nevertheless, this misspecification is intentional as it offers significant advantages for identifiability and asymptotic analysis, exploiting the advantages of the Laplace distribution despite the inevitable analytical challenges arising from the associated likelihood function. Specifically, we address the intractability of the marginal likelihoods by establishing Laplace approximations \cite{tierney1986accurate}, which is non-trivial in this context, particularly due to the non-smoothness of the log-likelihood functions. Importantly, the deterministic quantities appearing in the approximation result possess convenient expressions due to favorable properties of the Laplace distribution, facilitating the establishment of our identifiability theory by seamlessly transitioning between the probabilistic and graph-theoretic aspects of the working model, which is less evident with other parametric or semiparametric non-Gaussian error models. %, e.g., Cauchy, t, or scale mixture of Gaussian. 
Regarding identifiability, we characterize the {\it distribution equivalence class}, new notions of {\it risk equivalence class} and {\it minimal risk equivalence class}, and their relationships, specifying the best possible extent to which the true underlying DAG can be identified based on purely observational data. These characterizations are presented in a suite of new identifiability results that capture subtle interactions between our working model and the postulated ground truth. 
%They are one of the major contributions of this article. 
Furthermore, we propose a non-standard prior over the families of DAGs, which imposes a penalty on the number of edges and ensures that, only under the finite second moment assumption of the errors, the posterior probability of the distribution equivalence class tends to unity as the sample size grows. In this way, we establish that the proposed method achieves the desired posterior DAG selection consistency over a broad semiparametric class of ground truths, and 
finally illustrate the theoretical results with concrete examples and simulation studies. Our DAG identifiability and selection consistency results encompass linear Gaussian DAGs,  linear non-Gaussian DAGs, and linear DAGs with both Gaussian and non-Gaussian errors. The DAG selection consistency (Bayesian or frequentist) in the latter two cases is novel in the literature to the best of our knowledge. From a broader perspective, we add to the growing literature of Bayesian model selection consistency in graphical and non-Gaussian models, and under model misspecification \cite{rossell2018tractable,geng2019probabilistic,niu2020bayesian,rossell2021approximate}. 
%derive a criterion function based on the postulated model. Based on that, we determine the equivalence class of the true underlying DAG, which depends not only on the structure of the DAG but also the number and location of the nodes corresponding to the non-Gaussian errors in the structural equations. 
%Next, under some mild moment assumptions on the error distributions, we establish that the proposed method can identify the true DAG consistently up to the characterized equivalence class, or in other words, the posterior probability of the equivalence class corresponding to the true DAG tends to unity as the sample size increases. Because our theories do not enforce Gaussianity or non-Gaussianity uniformly to all nodes, our results encompass both scenarios and the mixture of the two. The theoretical results are subsequently illustrated with examples and simulation studies.
%}

%Yang: Talk about non-identifiability of Gaussian DAG and LiNGAM, which will make your elaboration on error distribution more natural. Maybe also briefly mention that we will propose a Bayesian method, which requires the specification of full probability model (because LiNGAM, depending on the ICA algorithm, doesn't necessarily need the specification of the error distribution) and that DAG could be partially identifiable when some errors are Gaussian and some are non-Gaussian, which will be characterized in detail in this paper.

The remainder of this paper is organized as follows. In Section \ref{sec:prob_form} we formally describe the acyclic structural equation model that we consider as our causal model in this paper, and state our main objective. Furthermore, we introduce the proposed Bayesian hierarchical model in Section \ref{sec:prop_meth}, and state our identifiability results in Section \ref{sec:identif}. Next, we establish the asymptotic properties of our method, namely the Bayes factor consistency and posterior consistency, in Section \ref{sec:asym_prop}. The results of simulation studies are presented in Section \ref{sec:sim_stud}, and finally, we conclude and discuss potential extensions of this work in Section \ref{sec:cnclsn}. All proofs are presented in the Appendix, %(Supplementary material \cite{chbhni_aos_supp})
where we also include auxiliary results of independent theoretical interest.

\section{Problem formulation}\label{sec:prob_form}

\paragraph*{Preliminaries}
We denote the set of real numbers by $\bR$ and the set of natural numbers by $\bN := \{1, 2, \dots\}$, and for any $n \in \bN$, we denote $[n]:= \{1, 2, \dots, n\}$. A DAG is denoted by a tuple $\gamma=(V, E)$ where $V=[p]$ is the set of $p$ nodes and $E\subset V\times V$ is the set of directed edges, i.e., $(k, j) \in E$ if there is a directed edge from node $k$ to node $j$ in $\gamma$, which will be denoted by $(k\to j)\in \gamma$ throughout the rest of the paper for simplicity.
%Fix any $p \in \bN$, and throughout this work, we consider DAGs with $p$ nodes, which are indexed by $[p]$. Accordingly, any DAG is specified by the set of its directed edges, denoted by $\gamma$, and 
% Since DAG $\cD$ is determined by its edge set $\gamma$ as the node set $[p]$ is fixed throughout this paper, we will use $\gamma$ instead of $\cD$ to denote a DAG for simplicity.     
The family of all DAGs with $p$ nodes is denoted by $\Gamma^p$.
%the set of nodes of any DAG with $p$ nodes is indexed by $[p]$, any such DAG $\gamma$ is defined as the set of its directed edges, and the family of all such DAGs is denoted by $\Gamma^p$. For any $p \in \bN$, the set of nodes of any DAG with $p$ nodes is indexed by $[p]$, any such DAG $\gamma$ is defined as the set of its directed edges, and the family of all such DAGs is denoted by $\Gamma^p$.
% Fix any DAG $\gamma \in \Gamma^p$, %we index its nodes by $[p]$, 
% For $j , k \in [p]$, if there is a directed edge from node $k$ to node $j$ in DAG $\gamma$, then we denote it as $(k \to j) \in \gamma$, where 
We call node $k$ a \textit{parent} of node $j$ in $\gamma$ if $(k \to j) \in \gamma$, and the set of parents of node $j$ is denoted by $\pa^\gamma(j)$. The total number of edges in $\gamma$ is denoted by $|\gamma|$. We call any DAG $\gamma' \in \Gamma^p$ with edge set $E'$ a \emph{supergraph} of $\gamma$, denoted by $\gamma'\supseteq \gamma$ with a slight abuse of notation, if $E' \supseteq E$, i.e., every directed edge in $\gamma$ is present in $\gamma'$, and collect them within the class
$\cS^\gamma := \{\gamma' \in \Gamma^p : \gamma'\supseteq \gamma\}$.
The set of conditional independence relationships encoded by $\gamma$ (via the notion of d-separation) is denoted by $\bI(\gamma)$, and any $\gamma' \in \Gamma^p$ is said to be \textit{Markov equivalent} to $\gamma$ if $\bI(\gamma) = \bI(\gamma')$.
Finally, the family of all permutations of $[p]$ is denoted by $\cT_p$. We use $\mbox{Laplace}(0, 1)$ to denote the standard Laplace (or Double-Exponential) distribution with density function $(2x)^{-1} e^{-|x|}$ for $x \in \bR$. 

\subsection{Structural causal model}

Consider $p$ random variables $X_j, j \in [p]$.
We assume that they are generated by a linear recursive SEM governed by a data-generating true DAG $\gamma^* \in \Gamma^p$ with nodes $[p]$ representing the set of random variables and edges $E^*$ representing their direct causal relationships -- for every $j, k \in [p]$, $(k \to j) \in \gamma^*$ if $X_k$ has a \textit{direct linear (causal) effect} on $X_j$.
Consequently, there exists a permutation of the variables $\sigma^* \in \cT_p$, which we refer to as the \textit{causal order} of the variables, such that %for every $j \in [p]$, 
%if the causal order of the $X_j$ is denoted by $\sigma^*(j)$, then 
$(k \to j) \in \gamma^*$ only if $\sigma^*(k) < \sigma^*(j)$. Therefore, the parents of a node always have lower causal orders than the node itself. 

Letting $\pa^*(j) \equiv \pa^{\gamma^*}(j)$ denote the parent set of node $j$ in $\gamma^*$ for every $j \in [p]$, the SEM assumes that 
$X_{j}$ is some (unknown) linear function of $X_{k}, k \in \pa^*(j)$, plus an (unobserved) independent random error variable $\epsilon_{j}$,
\begin{align}\label{eq:model}
\qquad X_{j} = \sum_{k \in {\rm pa}^*(j)} \beta^*_{jk} X_{k} + \epsilon_{j}\quad \text{with} \quad \epsilon_j \;\; {\buildrel\rm ind\over\sim} \;\; \sfP^*_j,
\end{align}
where the (unknown) {\it non-zero} SEM coefficient $\beta^*_{jk} \in \bR$ quantifies the direct causal effect of $X_k$ on $X_j$.
We consider $n$ independent and identically distributed (iid) observations of the random vector $X = (X_1, X_2, \dots X_p)$, denoted by $X^{(i)} = (X_{1}^{(i)}, X_{2}^{(i)}, \dots, X_{p}^{(i)}),
%(X_{1i}, X_{2i}, \dots, X_{pi}), 
\ i \in [n]$, and let $D_n := \{X^{(i)} : i \in [n]\}$ denote the complete dataset.

\paragraph*{Error distribution}
It is well known that observational data alone may not distinguish DAGs from each other as they are generally identifiable only up to the Markov equivalence class \cite{heckerman1995learning}. 
For instance, if the implied joint distribution of the SEM is Gaussian, then Markov equivalence implies distribution equivalence \cite{geiger2002parameter}, and thus, neither conditional independence tests nor likelihood-based scores can differentiate between Markov equivalent DAGs. On the other hand, 
%However, 
as shown in \cite{shimizu2006linear}, LiNGAM, i.e., model \eqref{eq:model} with $\sfP_j^*$ being non-Gaussian for every $j \in [p]$, allows for unique identification of $\gamma^*$.
However, to the best of our knowledge, there is no existing theory or method rigorously studying the case where an arbitrary subset of the errors is Gaussian and the rest are non-Gaussian. In this article, we do not impose any restriction on the number of non-Gaussian errors, unlike LiNGAM. In such situations, one may expect the extent of identifiability to lie in between unique DAG identifiability and identifiability up to Markov equivalence classes, and we rigorously characterize this phenomenon. 
% Therefore, if some errors are Gaussian and others are non-Gaussian, then we expect the identifiability lies between unique DAGs and Markov equivalence classes. However, there is no existing theory or method rigorously studying such cases. 
% Although our approach accommodates non-Gaussian errors, we do not impose any restrictions on the number of them, unlike in LiNGAM, with the objective to generalize the existing results on identifiability and equivalence. 
% Thus, we allow for the distribution of any error to be either Gaussian or non-Gaussian, and a very popular as well as comprehensible choice to encompass both these possibilities is to consider that 

For each error distribution $\sfP^*_j$, we assume it follows some scale mixture of Gaussian, where the mixing distribution is unknown. Such scale mixtures are a popular and flexible class for representing error distributions; see Remark \ref{rem:sc_mix} below. 
Formally, the distributions of errors \ $\sfP^*_j, \ j \in [p]$ can be expressed as
\begin{align}\label{eq:error_dist}
\epsilon_j \ | \ \lambda_j \;\; \sim \;\; \text{N}(0, \lambda_j^2) \quad \text{with} \quad \lambda_j \;\; {\buildrel\rm ind\over\sim} \;\; \sfQ_j^*,%\\
\end{align}
where each $\sfQ_j^*$ is a probability distribution on $(0, \infty)$. 
Under the above representation, $\epsilon_j$ is Gaussian if and only if $\lambda_j$ is a degenerate random variable, i.e., $\sfQ_j^*$ is a point mass. 
We denote by $n\cG^*$ the set of nodes in $\gamma^*$ corresponding to the non-Gaussian errors, that is,
\begin{align}\label{eq:nG*}
n\cG^* := \{j \in [p] : \;\; \epsilon_j \text{ in \eqref{eq:error_dist} is non-Gaussian, i.e.,} \;\;\; \lambda_j \;\; \text{is non-degenerate}\}.
\end{align}
\begin{remark}[Generality of the scale mixture of Gaussians]\label{rem:sc_mix}
The scale mixture of Gaussians is a widely recognized choice \cite{west1987scale} for error distributions due to several reasons.
First, the distributions represented by \eqref{eq:error_dist} are continuous, unimodal, and symmetric with respect to $0$, which is a generally desirable property for error distributions. Second, the scale mixture representation in \eqref{eq:error_dist} is highly flexible and encompasses a wide class of well-known distributions such as contaminated Gaussian, Laplace, Student's t, Cauchy, and Logistic, \cite{andrews1974scale, west1984outlier} and the scale mixtures of those well-known distributions. More generally, it has been shown \cite{west1987scale} that the class of Gaussian scale mixtures includes the distributions from the symmetric stable family as well as the exponential power family \cite{box2011bayesian}. In particular, the scale mixture family includes polynomial-tailed distributions, thereby relaxing the log-concavity assumption in the related works, e.g., \cite{wang2020high}.
\end{remark}
From \eqref{eq:error_dist}, $\sfP^*_j$ admits a probability density ${\sfp^*_j}$ with respect to the Lebesgue measure, %given by
\begin{align*}
\sfp^*_j(x) = \int_0^\infty \frac{1}{\lambda} \phi\lt(\frac{x}{\lambda}\rt) d\sfQ^*_j(\lambda), \quad x \in \bR, \ j \in [p],
\end{align*}
where $\phi(\cdot)$ denotes the density of a standard Gaussian distribution. 
Furthermore, due to the independence of the errors, $\sfP^*$, the joint probability distribution of the errors, is given by $\sfP^* = \otimes_{j \in [p]} \sfP^*_j$.
Then the joint probability distribution $\sfP_X^*$ of $X$ induced by $\sfP^*$ admits a joint density $\sfp_X^*$ given by
\begin{align}\label{eq:true_dens}
\sfp_X^*(x) = \prod_{j \in [p]} \sfp^*_j\lt(x_j - \sum_{k \in {\rm pa}^*(j)} \beta^*_{jk} x_{k}\rt), \qquad x = (x_1, x_2, \dots, x_p) \in \bR^p, %j \in [p].
\end{align}
which is known as the Bayesian network factorization.
We illustrate the above in a concrete example. 

\begin{example}\label{ex:4node}
Consider $p = 4$ with $\gamma^*$ being the DAG as shown in Figure \ref{fig:4node}, 
 \begin{figure}[H]
\centering
\begin{tikzpicture}[->, >=stealth, thick]
    % Define nodes in an equilateral triangle
\node (3) [circle, draw, fill=black, text=white, minimum size=1cm] at (1,1.732) {3}; 
\node (2) [circle, draw, fill=black, text=white, minimum size=1cm] at (0,0) {2}; 
\node (1) [circle, draw, minimum size=1cm] at (2,0) {1}; 
\node (4) [circle, draw, fill=black, text=white, minimum size=1cm] at (4,0) {4};
    % Draw directed edges
\draw (3) -- (2);
\draw (3) -- (1);
\end{tikzpicture}        
\caption{DAG $\gamma^*$ with the nodes in $n\cG^*$ marked in black.}
\label{fig:4node}
\end{figure}
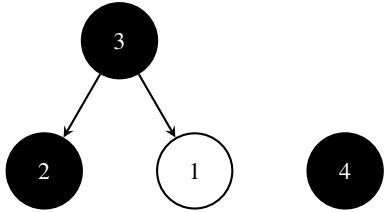
\noindent and let the associated data-generating SEM in \eqref{eq:model} take the following specific form: 
\begin{align*}
X_3 &= \epsilon_3,\\
X_2 &= 1.5 X_3 + \epsilon_2,\\
X_1 &= -3.2 X_3 + \epsilon_1,\\
X_4 &= \epsilon_4,
\end{align*} 
where the error distributions are: %$\epsilon_1, \epsilon_2, \epsilon_3$ are independent and
\begin{align*}
\epsilon_1 \sim \text{N}(0, 2.8), \quad \epsilon_2 \sim \text{Laplace}(0, 1), \quad \epsilon_3 \sim \text{t}_2, \quad \text{and} \quad \epsilon_4 \sim \frac{3}{4} \ \text{N}(0, 1) + \frac{1}{4} \ \text{N}(0, 4),
\end{align*}
with $\text{t}_2$ being the Student's t-distribution with degrees of freedom 2.
That is, in view of \eqref{eq:error_dist}, the distributions $\sfQ_1^*, \sfQ_2^*$, $\sfQ_3^*$ and $\sfQ_4^*$ are such that
\begin{align*}
\lambda_1^2 = 2.8 \quad \text{w.p.} \;\; 1,
\qquad 
\lambda_2^2 \sim \text{Exp}(2), \qquad
\lambda_3^2 \sim \text{Inv.G}(1, 1), \quad \text{and} \quad
\lambda_4^2 =
\begin{cases}
1 \quad \text{w.p.} \;\; \frac{3}{4}\\
4 \quad \text{w.p.} \;\; \frac{1}{4}
\end{cases},
\end{align*}
where $\text{Inv.G}$ is the inverse-gamma distribution,
and according to \eqref{eq:nG*}, we have $n\cG^* = \{2, 3, 4\}$.
\end{example}

In the rest of the paper, we follow the convention in Figure \ref{fig:4node}, marking in black the nodes corresponding to the non-Gaussian errors.

% \begin{remark}[Generality of scale mixture of Gaussian]\label{rem:sc_mix}
% The scale mixture of Gaussian is a widely recognized choice \cite{west1987scale} for error distributions due to several reasons.
% First, the distributions represented by \eqref{eq:error_dist} are continuous, unimodal, and symmetric with respect to $0$, which is a generally desirable property for any error distribution. Second, the scale mixture representation in \eqref{eq:error_dist} is highly flexible and encompasses a wide class of well-known distributions such as contaminated Gaussian, Laplace, Student's t, Cauchy, and Logistic, \cite{andrews1974scale, west1984outlier} and the scale mixtures of those well-known distributions. More generally, it has been shown \cite{west1987scale} that the class of Gaussian scale mixtures include the distributions from the symmetric stable family as well as the exponential power family \cite{box2011bayesian}. By this, in this work we allow the errors to be heavy-tailed, thereby relaxing the log-concavity assumption in the related works, e.g., \cite{wang2020high}.
% \end{remark}

\subsection{Consistent DAG selection}

The goal of causal discovery is to identify the true underlying DAG $\gamma^*$ based on purely observational data $D_n$. However, the data-generating distribution $\sfP_X^*$ may be equivalently represented by  DAGs apart from $\gamma^*$. As a consequence, the exact recovery of $\gamma^*$ is infeasible without further assumptions, and it is generally possible to identify $\gamma^*$ only up to a certain class of DAGs, namely the \textit{distribution equivalence class}. In order to elucidate this, we first present the notion of distribution equivalence as follows.

\paragraph*{Distribution equivalence}

Regarding $\sfP_X^*$, the information about its underlying causal structure and non-Gaussianity is specified by $\gamma^*$ and $n\cG^*$, respectively, through \eqref{eq:model} and \eqref{eq:nG*}. Thus, we formally encode such a specification by the tuple $(\gamma^*, n\cG^*)$, and $\sfP_X^*$ is said to be \textit{represented} by $(\gamma^*, n\cG^*)$. 
More generally, for any DAG $\gamma \in \Gamma^p$ and $n\cG \subseteq [p]$, we denote by $\cP(\gamma, n\cG)$ the family of distributions of $X$ that are represented by $(\gamma, n\cG)$, that is,
\begin{align*}
\cP(\gamma, n\cG) := \{\sfP_X \, : \; &\sfP_X \; \text{is the distribution of $X$ under which $X_j, j \in [p]$ are generated}\\
&\text{by some linear recursive SEM represented by $\gamma$ such that}\\
&\text{the error corresponding to node $j$ is non-Gaussian if and only if $j\in n\cG$}\}.
\end{align*}
For instance, following \eqref{eq:model} and \eqref{eq:nG*}, clearly $\sfP^*_X \in \cP(\gamma^*, n\cG^*)$. We now define below the concept of distribution equivalence.

\begin{definition}[Distribution equivalence]
For any $\gamma, \gamma' \in \Gamma^p$ and $n\cG, n\cG' \subseteq [p]$, the tuples $(\gamma, n\cG)$ and $(\gamma', n\cG')$ are called \textit{distribution equivalent} if $\cP(\gamma, n\cG) = \cP(\gamma', n\cG')$, that is,
every $\sfP_X \in \cP(\gamma, n\cG)$ can be alternatively represented by $(\gamma', n\cG')$, and vice versa.
\end{definition}

Subsequently, we define the \textit{distribution equivalence class}  $\cE(\gamma^*, n\cG^*)$ of $(\gamma^*, n\cG^*)$,
\begin{align}\label{eq:dist_equiv}
\begin{split}
\cE(\gamma^*, n\cG^*) := \{\gamma \in \Gamma^p : \cP(\gamma, n\cG) = \cP(\gamma^*, n\cG^*)\;\text{for some}\;n\cG \subseteq [p]\}. 
%\\
% \; &(\gamma, n\cG) \;\; \text{is distribution equivalent to} \;\; (\gamma^*, n\cG^*)\}.
 \end{split}
\end{align}
Therefore, following the above definition, the underlying distribution $\sfP_X^*$ can be represented by any DAG $\gamma \in \cE(\gamma^*, n\cG^*)$ in addition to $\gamma^*$, and hence, it is impossible to distinguish between $\gamma^*$ and $\gamma$ by their distributions. This indicates that $\cE(\gamma^*, n\cG^*)$ is the best possible extent of identification to achieve, and thereby, we call a DAG selection method to be \textit{consistent} if the estimated DAG tends to be only inside of $\cE(\gamma^*, n\cG^*)$ as the sample size grows.

\paragraph*{Objectives of this paper}

Our goal is to develop a Bayesian hierarchical model that achieves \textit{posterior DAG selection consistency}, that is,
\begin{align}\label{eq:goal_pmsc}
\text{posterior probability of $\cE(\gamma^*, n\cG^*)$} \;\; \to \;\; 1 \quad \text{in $\sfP^*$-probability \; as $n \to \infty$.}
\end{align}
In the above, we assume the number of nodes $p$ to be fixed, and focus on establishing selection consistency over the nonparametric class of Gaussian scale-mixture errors $\sfP^*$. 
One specific instance of this consistency result is when $n\cG^*=[p]$, i.e., all errors are non-Gaussian.
To the best of our knowledge, such consistency result is novel even in the frequentist non-Gaussian DAG literature.

In addition, en route to the establishment of the consistency result, we provide a new characterization of the distribution equivalence class $\cE(\gamma^*, n\cG^*)$ through a novel mixed graph representation, which generalizes the existing identifiability results of Gaussian DAGs (all errors are Gaussian) and LiNGAM (all errors are non-Gaussian) to the case of arbitrary presence of Gaussian and non-Gaussian errors.

\section{Proposed method}\label{sec:prop_meth}

In this section, we propose a family of Bayesian hierarchical models for selecting DAGs in $\Gamma^p$, and provide a sketch of proof of posterior DAG selection consistency, which is rigorously established in Section \ref{sec:asym_prop}. 

\subsection{Bayesian hierarchical model}

For a given DAG $\gamma \in \Gamma^p$, we consider that the observations $X^{(i)}, \ i \in [n]$ are iid and follow the \textit{Laplace-error} SEM $\bM_\gamma$ with real SEM coefficients $b_{jk}^\gamma, k \in \pa^\gamma(j), \, j \in [p]$, and positive scale parameters $\theta_j^\gamma, j \in [p]$ along with their corresponding prior distributions,
\begin{align}\label{eq:lap_model_alt}
\begin{split}
\bM_\gamma: \quad \qquad \qquad \quad \quad  \text{SEM:} \qquad 
&X_j = \sum_{k \in {\rm pa}^\gamma(j)} b_{jk}^\gamma X_{k} + e_j^\gamma, \quad j \in [p], \\ 
& e_j^\gamma/\theta_j^\gamma \;\; \overset{\rm iid}{\sim} \;\; \text{Laplace} \, (0, 1),\\
\text{coefficient prior:} \qquad &b_{j}^\gamma \;\; \overset{\rm ind}{\sim}  \;\; \pi_{b, j}^\gamma(\cdot),\\
\text{scale prior:} \qquad &\theta_j^\gamma \;\;\overset{\rm iid}{\sim} \;\; \pi_\theta^\gamma(\cdot).\\
% &\text{and are independent of the SEM coefficients}.
\end{split}
\end{align}
where $b_j^\gamma := (b_{jk}^\gamma : k \in \pa^\gamma(j)), \, j \in [p]$.
We treat $\bM_\gamma$ as our \textit{working model} and emphasize here that the true data-generating errors need not be distributed as Laplace; see Remark \ref{rem:mis_specif} below for further discussions on this point.

We collect the coefficients as $b^\gamma := (b_{jk}^\gamma : j \in [p], k \in \pa^\gamma(j))$ and the scale parameters as $\theta^\gamma := (\theta_j^\gamma : j \in [p])$, and in particular, when $\gamma = \gamma^*$, we denote them as $b^* := (b^*_{jk} : j \in [p], \, k \in \pa^*(j))$ and $\theta^* := (\theta^*_j : j \in [p])$, respectively. 
Thus, the joint density of $X$ under the working model $\bM_\gamma$ in \eqref{eq:lap_model_alt} is given by 
\begin{align}\label{eq:pX}
f^\gamma(x | b^\gamma, \theta^\gamma, \gamma) &= \prod_{j \in [p]} \frac{1}{2\theta_j^\gamma} \exp\Bigg(-\frac{1}{\theta_j^\gamma} \bigg|x_j - \sum_{k \in {\rm pa}^\gamma(j)} b_{jk}^\gamma x_{k}\bigg|\Bigg),
\end{align}
where $x = (x_1, x_2, \dots, x_p) \in \bR^p$, and
in particular, when $\gamma = \gamma^*$, we denote the above as $f^*(x |b^*, \theta^*, \gamma^*)$.
Subsequently, by \eqref{eq:pX}, the likelihood function of data %$X^{(i)}, i \in [n]$ 
$D_n$ is given by 
\begin{align}\label{eq:likeli_L}
\cL\lt(D_n\rt | b^\gamma, \theta^\gamma, \gamma) = \Big(2^p\prod_{j \in [p]} \theta_j^\gamma\Big)^{-n} \exp\Bigg(-\sum_{j \in [p]}\frac{1}{\theta_j^\gamma} \sum_{i \in [n]}\bigg|X_j^{(i)} - \sum_{k \in {\rm pa}^\gamma(j)} b_{jk}^\gamma X_{k}^{(i)}\bigg|\Bigg).
\end{align}
Marginalizing over the parameters, we obtain the marginal likelihood for DAG $\gamma$,
\begin{align}\label{eq:marg_m}
m\lt(D_n | \gamma\rt)
= \int \cL\lt(D_n\rt | b^\gamma, \theta^\gamma, \gamma) \prod_{j \in [p]} \pi_\theta^\gamma(\theta_j^\gamma) \, \pi_{b, j}^\gamma(b_j^\gamma) \, d\theta_j^\gamma \, db_{j}^\gamma \, .
\end{align}
The marginal likelihood or \emph{evidence} is a crucial quantity for Bayesian model selection. Specifically, given a generic DAG prior $\gamma \; \sim \; \pi_g(\cdot)$, the posterior probability of $\gamma$ given data $D_n$ is proportional to the product of the marginal likelihood and the DAG prior,
\begin{align}\label{eq:post_prob}
\pi(\gamma | D_n) \;\; \propto \;\; m\lt(D_n | \gamma\rt) \times \pi_g(\gamma). %\;\; \propto \;\; m\lt(D_n | \gamma\rt)\,\exp(-n^{\alpha}|\gamma|).
\end{align} 
The Bayes factor and the posterior odds in favor of $\gamma$ over any $\gamma' \in \Gamma^p$ are denoted by $\sfBF_n(\gamma, \gamma')$ and $\Pi_n(\gamma, \gamma')$, respectively, i.e., %is defined as,
\begin{align}\label{eq:def_BF}
\sfBF_n(\gamma, \gamma') := \frac{m\lt(D_n | \gamma \; \rt)}{m\lt(D_n | \gamma'\rt)}, \qquad \Pi_n(\gamma, \gamma') := 
\frac{\pi(\gamma \; |  D_n)}{\pi(\gamma' | D_n)} = \sfBF_n(\gamma, \gamma') \times \frac{\pi_g(\gamma \; )}{\pi_g(\gamma')}.
\end{align}
A natural choice of $\pi_g(\cdot)$ is the uniform prior, i.e., $\pi_g(\cdot) \; \propto \; 1$; however, somewhat surprisingly,  a non-trivial DAG prior is required to ensure the desired model selection consistency \eqref{eq:goal_pmsc} under certain scenarios, as we show later in Section \ref{sec:asym_prop}.
%and asymptotic theory; more details will be provided later in this section.
\begin{remark}[Model misspecification]\label{rem:mis_specif}
%{\cred (This remark is the first time the phrase `intentional misspecification' is being used, that too in a passive voice. I think we should delete `intentional' and rephrase to say (after the last line ending `...indeed misspecified') that (a) this mis-specification is intentional/by choice, that is, we only treat the Laplace-error model as a {\it working model}, and do not assume/require the true errors to be Laplace. (b) The mis-specification necessitates careful considerations in our theoretical study, however also brings important advantages, exploiting specific properties of the Laplace distribution. (c) an alternative would be to consider more expressive semi-parametric models, which, however, carry their own challenges, as elucidated in Remark 3.4. Also, perhaps start using this `working model' terminology when you first introduce the Laplace-error model. -- AB 02/20)}
It is important to note that, in general, the working model $\bM_\gamma$ in \eqref{eq:lap_model_alt} is misspecified, even when $\gamma = \gamma^*$.
To see this, recall the setup in Example \ref{ex:4node}, and consider the SEM of $\bM_{\gamma^*}$,
\begin{align*}
X_3 &= e_3^*,\\
X_2 &= b_{23}^* X_3 + e_2^*,\\
X_1 &= b_{13}^* X_3 + e_1^*,\\
X_4 &= e_4^*,
\end{align*} 
where $(e_j^*/\theta_j^*) \overset{\rm iid}{\sim} \;\; \text{Laplace} \, (0, 1)$ for $j \in [4]$.
Due to the misspecification in error distributions, we have, for almost every $x \in \bR^4$, 
\begin{align*}
\sfp_X^*(x) \neq f^*(x |b^*, \theta^*, \gamma^*) \qquad \text{for every} \;\; b^*, \theta^*.
\end{align*}
The same holds for any $\gamma$.  
%which immediately shows that the working model $\bM_{\gamma^*}$ is indeed misspecified. 
We emphasize that this misspecification is intentional, that is, we only treat the Laplace-error model as a {\it working} model and {\it do not} assume or require the {\it true errors} to be Laplace -- all we assume is that they lie in the family of the scale mixture of Gaussian  \eqref{eq:error_dist}. The misspecification necessitates careful considerations in our theoretical study, but also brings important advantages in terms of identifiability and asymptotics, exploiting specific properties of the Laplace distribution. We remark here that the Gaussian family, which is perhaps the most common choice for an error distribution, leads to identifiability issues in the present setting. On the other hand, more expressive semi-parametric models carry their own challenges. These points are further elucidated in Remarks  \ref{rem:why_not_Gauss} and \ref{rem:why_lap} to provide insights behind our choice of Laplace errors.%modeling $\bM_\gamma$ via Laplace errors.
\end{remark}

\begin{remark}[Prior choice \& intractability of the marginal likelihood]\label{rem:prior}
For the SEM coefficients $b_j^\gamma, j \in [p]$,
%Regarding the prior distributions $\pi_{b, j}^\gamma(\cdot), \, j \in [p]$, 
we consider typical choices %for priors 
such as the independent Gaussian (ridge) priors and Zellner's g-prior, that is,
\begin{align*}
\pi_{b, j}^\gamma(\cdot)  \;\; \equiv \;\; \text{N}(\boldsymbol{0}, \Sigma_j), \quad \text{with} \quad \Sigma_j = \tau_j^2 I_{|\pa^\gamma(j)|} \quad \text{or} \quad \Sigma_j=g \, (D_{n, j}^{\gamma \, T}D_{n, j}^\gamma)^{-1}, \;\; g > 0,
\end{align*} 
where $D_{n, j}^\gamma \in \bR^{n \times |\pa^\gamma(j)|}$ denotes the data matrix consisting of the observations of random variables $X_k, k \in \pa^\gamma(j)$. Alternatively, one may use non-local priors \citep{johnson2012bayesian,altomare2013objective}. For the scale parameters, we similarly consider standard choices for $\pi_\theta^\gamma(\cdot)$, for instance, the inverse-Gamma priors,
%and Jeffreys prior, 
i.e., for some $\alpha, \beta > 0$,
\begin{align*} 
\pi_\theta^\gamma(\theta_j^\gamma) \propto (\theta_j^\gamma)^{-\alpha - 1}\exp(-{\beta}/{\theta_j^\gamma}), \quad \theta_j^\gamma \in (0, \infty).  %\qquad \text{or} \qquad \pi_\theta^\gamma(\theta_j^\gamma) \propto 1/\theta_j^\gamma.
\end{align*}
However, the marginal likelihood in \eqref{eq:marg_m} is not analytically tractable for any of these prior choices due to the Laplace likelihood, which constitutes a major challenge in establishing posterior DAG selection consistency. We circumvent this issue by developing a Laplace approximation to the marginal likelihood in Theorem \ref{thm:lap_app}.
%{\cred mention about the non-local priors \cite{altomare2013objective}}
\end{remark}

\subsection{A brief sketch on DAG selection consistency under model misspecification}\label{sec:sketch}
%{\cred (Before we get to equation (8), I think we need to provide a little bit of perspective. We can begin by clarifying the interpretation of the parameters $b^\gamma, \theta^\gamma$ for a given $\gamma$. Since the model is mis-specified (even for $\gamma = \gamma^*$, potentially), the posterior distribution of these parameters target the pseudo-true parameters (CITE Kleijn \& van der Vaart (2006, 2012). Then, describe what these are (as in equation (9) onwards). After all this has been established, mention that we develop a version of Laplace approximation (use first person), under mis-specification, and give equation (8). Give the precise reference to the Lemmas where you derive this. Also, highlight that (a) the identity of the pseudo-true parameters are largely tractable under the Laplace-error model and further discussed in Remark 3.4, and this is a key advantage, and (b) the derivation of the Laplace approximation itself is non-trivial, as the log-likelihood is not differentiable (and thus we cannot directly employ classical results; I will add citations). Instead, we use careful asymptotic arguments and exploit log-concavity in a transformed parameterization. -- AB)}

Since our working model is generally misspecified even when $\gamma = \gamma^*$, the posterior distribution of $(b^\gamma, \theta^\gamma)$ asymptotically targets the \textit{pseudo-true} parameters \cite{white1982maximum, kleijn2012bernstein}, given by 
\begin{align}\label{eq:target}
(\tilde{b}^\gamma, \tilde{\theta}^\gamma) := \argmin_{(b^\gamma, \theta^\gamma)} \; H^\gamma(b^\gamma, \theta^\gamma),
\end{align}
where $H^\gamma(b^\gamma, \theta^\gamma)$ is the negative expected log density under the working model \eqref{eq:lap_model_alt}, i.e.,
\begin{align}\label{def:H_fun}
H^\gamma(b^\gamma, \theta^\gamma) := - \; \sfE_* \lt[\log \; f^\gamma(X | b^\gamma, \theta^\gamma, \gamma)\rt],
\end{align} 
with the expectation $\sfE_*[\cdot]$ taken over $X$ under the data-generating true distribution $\sfP_X^*$. 
In particular, when $\gamma = \gamma^*$, we denote the above function by $H^*(b^*, \theta^*)$ and its minimizer by $(\tilde{b}^*, \tilde{\theta}^*)$. We state some important properties of $H^\gamma(\cdot)$ in the following Lemma. 
\begin{lemma}\label{lem:H_gamma_basic}
Assume $\sfE_*[|\lambda_j|] < \infty$ for every $j \in [p]$. Then, $H^\gamma(b^\gamma, \theta^\gamma)$ is finite for each $\gamma \in \Gamma_p$, and $(b^\gamma, \theta^\gamma) \in \mathbb{R}^{|\gamma|} \times (0, \infty)^p$. Moreover, the minimization problem in \eqref{eq:target} possesses a unique solution given by 
\begin{align*}
\tilde{b}_j^\gamma = \argmin_{b_j^\gamma} \; \sfE_*\bigg[\bigg|X_j - \sum_{k \in {\rm pa}(j)} b_{jk}^\gamma X_{k}\bigg|\bigg], \quad 
\tilde{\theta}_j^\gamma = \bigg(\sfE_*\bigg[\bigg|X_j - \sum_{k \in {\rm pa}(j)} \tilde{b}_{jk}^\gamma X_{k}\bigg|\bigg]\bigg)^{-1}, \;\;\; j \in [p]. 
\end{align*}
In particular, when $\gamma = \gamma^*$, 
\begin{align*}
\tilde{b}^*_{jk} = \beta^*_{jk} \qquad \text{for every} \;\; k \in \pa^*(j).
\end{align*} 
Let $h_\gamma$ denote the minimized value of $H^\gamma(\cdot)$. Then, 
\begin{align}\label{eq:h_gamma_id}
h_\gamma :\, = \min_{(b^\gamma, \theta^\gamma)} H^\gamma(b^\gamma, \theta^\gamma) \, = \, p (1 + \log 2) %+ \log \left(\prod_{j \in p} \frac{1}{\tilde{\theta}_j^\gamma}\right). 
- \sum_{j \in [p]} \log \tilde{\theta}_j^\gamma.
\end{align}
\end{lemma}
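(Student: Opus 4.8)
The plan is to work node by node, exploiting the fact that the density $f^\gamma(x \mid b^\gamma, \theta^\gamma, \gamma)$ in \eqref{eq:pX} factorizes into a product over $j \in [p]$, so that the negative expected log density $H^\gamma(b^\gamma, \theta^\gamma)$ decomposes as a sum of $p$ terms, each depending only on $(b_j^\gamma, \theta_j^\gamma)$. Concretely, writing $R_j^\gamma := X_j - \sum_{k \in \pa^\gamma(j)} b_{jk}^\gamma X_k$ for the $j$-th residual, one gets
\begin{align*}
H^\gamma(b^\gamma, \theta^\gamma) = \sum_{j \in [p]} \Big( \log 2 + \log \theta_j^\gamma + \frac{1}{\theta_j^\gamma} \, \sfE_*\big[\, |R_j^\gamma| \,\big] \Big).
\end{align*}
First I would check finiteness: since each $X_j$ is a linear combination of errors $\epsilon_1, \dots, \epsilon_p$ (solving the recursive SEM \eqref{eq:model}), and $\sfE_*[|\epsilon_j|] = \sfE_*\big[\sfE_*[|\epsilon_j| \mid \lambda_j]\big] = \sqrt{2/\pi}\,\sfE_*[\lambda_j] < \infty$ by the moment assumption, each $\sfE_*[|R_j^\gamma|]$ is finite, so $H^\gamma$ is finite on $\mathbb{R}^{|\gamma|} \times (0,\infty)^p$.

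Next I would minimize each summand separately. For fixed $b_j^\gamma$, the map $\theta_j^\gamma \mapsto \log \theta_j^\gamma + \sfE_*[|R_j^\gamma|]/\theta_j^\gamma$ is strictly convex on $(0,\infty)$ with a unique minimizer at $\theta_j^\gamma = \sfE_*[|R_j^\gamma|]$ (provided this quantity is strictly positive, which I would note holds because $X_j$ has a density, so $R_j^\gamma$ is non-degenerate and $\sfE_*[|R_j^\gamma|]>0$); plugging back in, the minimized inner value is $\log 2 + 1 + \log \sfE_*[|R_j^\gamma|]$. It then remains to minimize $b_j^\gamma \mapsto \sfE_*[|R_j^\gamma|] = \sfE_*\big[\,|X_j - \sum_{k} b_{jk}^\gamma X_k|\,\big]$, which is a convex function of $b_j^\gamma$ — this is the least-absolute-deviation (median-regression) population objective. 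I would invoke a standard uniqueness argument for the LAD minimizer: the objective is finite and convex, is coercive once one checks that the relevant design vectors $(X_k : k \in \pa^\gamma(j))$ are not almost-surely linearly dependent (which follows since the $X$'s have a joint density), and strict convexity along every direction holds because the conditional distribution of $\sum_k c_k X_k$ given the rest has no atoms. This yields the unique $\tilde b_j^\gamma$, hence unique $\tilde\theta_j^\gamma = (\sfE_*[|X_j - \sum_k \tilde b_{jk}^\gamma X_k|])^{-1}$ — matching the stated formulas — and the identity \eqref{eq:h_gamma_id} follows by summing $\log 2 + 1 - \log\tilde\theta_j^\gamma$ over $j$.

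For the special case $\gamma = \gamma^*$: here $R_j^{\gamma^*} = \epsilon_j + \sum_{k \in \pa^*(j)} (\beta_{jk}^* - b_{jk}^*) X_k$, and since $\epsilon_j$ is independent of $\{X_k : k \in \pa^*(j)\}$ (parents have strictly lower causal order) and is symmetric about $0$ (being a Gaussian scale mixture), the random variable $R_j^{\gamma^*}$ is, conditionally on the parents, symmetric about $\sum_k (\beta_{jk}^* - b_{jk}^*) X_k$; a symmetrization/conditioning argument (or Jensen applied to the convex map $t \mapsto \sfE_*[|\epsilon_j + t|]$, which is minimized at $t=0$ by symmetry) shows $\sfE_*[|R_j^{\gamma^*}|] \ge \sfE_*[|\epsilon_j|]$ with equality iff $b_{jk}^* = \beta_{jk}^*$ for all $k$, giving $\tilde b_{jk}^* = \beta_{jk}^*$.

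I expect the main obstacle to be the rigorous uniqueness of the LAD minimizer $\tilde b_j^\gamma$: convexity only gives a convex set of minimizers, so strict convexity (equivalently, non-degeneracy of the conditional distributions of linear combinations of the regressors, plus a rank/coercivity condition) must be carefully extracted from the assumption that $X$ has a Lebesgue density. Everything else — finiteness, the one-dimensional optimization over $\theta_j^\gamma$, and the symmetry argument at $\gamma^*$ — is routine convex analysis.
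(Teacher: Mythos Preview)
Your proposal is correct and follows the same overall architecture as the paper: decompose $H^\gamma$ node by node, minimize first in $\theta_j^\gamma$ (the paper's Lemma~\ref{lem:min_f}) and then in $b_j^\gamma$, and finally handle $\gamma=\gamma^*$ separately. Two points of departure are worth noting. First, for uniqueness of the minimizer the paper does not argue via strict convexity of the LAD objective directly; instead it reparametrizes to $(\eta_j,w_j)=(1/\theta_j,\, b_j/\theta_j)$ and observes that $\log g$ is concave in $(\eta,w)$ (their Lemma~\ref{lem:log-concave}), which yields convexity of $H$ in one stroke---this sidesteps your density/coercivity discussion, though your argument is also fine once the non-degeneracy of linear combinations of the $X_k$ is established. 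Second, for the $\gamma=\gamma^*$ case your conditioning argument (condition on $X_{\pa^*(j)}$, use independence and symmetry of $\epsilon_j$) is actually more direct than the paper's route: the paper expands $X_k$ in terms of the ancestral errors $\epsilon_\ell$ via Lemma~\ref{lem:apply_rep_anc}, applies Lemma~\ref{lem:min_abs_exp} to force every coefficient of $\epsilon_\ell$ to vanish, and then runs an induction over causal orders to recover $\tilde b_{jk}^*=\beta_{jk}^*$. Your shortcut avoids that induction entirely; the paper's version, on the other hand, makes the dependence on the error structure fully explicit, which feeds into later identifiability arguments. One small inconsistency: you correctly derive that the optimal $\theta_j^\gamma$ equals $\sfE_*[|R_j^\gamma|]$ (not its reciprocal), so be careful when matching this to the displayed formula for $\tilde\theta_j^\gamma$ and the sign in~\eqref{eq:h_gamma_id}.
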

\begin{proof}
The proof can be found in Appendix \ref{app:identif}, see Lemma \ref{lem:minH} and Lemma \ref{lem:minH*}.
%\textcolor{red}{(Anamitra: check! also, pls point to the appropriate Lemmas. BTW, something I mentioned before, Lemma B.2 needs an argument why the global minima is the iterated minima; this follows from convexity under the reparameterization.)}
\end{proof}
In the rest of the paper, we assume the condition $\sfE_*[|\lambda_j|] < \infty$ for every $j \in [p]$, which guarantees finiteness of the function $H^\gamma(\cdot)$, for every $\gamma \in \Gamma^p$.
The uniqueness of the minimizer exploits log-concavity of $f^\gamma$ under a reparameterization, see Lemma \ref{lem:log-concave}. An important upshot of Lemma \ref{lem:H_gamma_basic} is that when $\gamma = \gamma^*$, the pseudo-true parameters $\tilde{b}_{jk}^*$ target the corresponding true SEM coefficients $\beta_{jk}^*$, even though the error model is misspecified.

We call $h_\gamma$ the population risk (or simply {\it risk}) associated with $\gamma$, and in particular, when $\gamma = \gamma^*$, we denote it by $h_* \equiv h_{\gamma^*}$. To connect the marginal likelihood $m\lt(D_n | \gamma\rt)$ with the risk $h_\gamma$, we develop a version of the Laplace approximation \cite{tierney1986accurate, tierney1989fully} under model misspecification in Theorem \ref{thm:lap_app} to obtain
\begin{align}\label{eq:lap_missp}
\log m\lt(D_n | \gamma\rt) = -n h_\gamma (1 + O_p(n^{-1/2})) - \frac{p + |\gamma|}{2}\log n + O_p(1).
\end{align}
% Subsequently, we develop a version of Laplace approximation under model misspecification in Theorem \ref{thm:lap_app} that
% \begin{align}\label{eq:lap_missp}
% \frac{1}{n}\log m\lt(D_n | \gamma\rt) \approx - H^\gamma(\tilde{b}^\gamma, \tilde{\theta}^\gamma) - \frac{p + |\gamma|}{2n}\log n.
% \end{align}
The derivation of the above approximation is non-trivial due to non-differentiability of the likelihood function \eqref{eq:likeli_L}; refer to the discussion around Theorem \ref{thm:lap_app}. 
%since the likelihood function \eqref{eq:likeli_L} itself is not differentiable and thereby we cannot directly employ the classical results \cite{tierney1986accurate, tierney1989fully}. Instead, we use careful asymptotic arguments and exploit log-concavity in a transformed parameterization, as shown rigorously in Appendix \ref{app:lap}. 
Following \eqref{eq:def_BF} and using \eqref{eq:lap_missp}, we then have
\begin{align} \label{eq:approx_BF}
\log \sfBF_n(\gamma^*, \gamma) %&= \frac{1}{n} \log m\lt(D_n | \gamma^*\rt) - \frac{1}{n} \log m\lt(D_n | \gamma\rt)\\ \label{eq:approx_BF}
& = n \lt(h_\gamma - h_{*}\rt) - \frac{|\gamma^*| - |\gamma|}{2}\log n  + R_n, 
\end{align}
where $R_n$ is a remainder term which is at most $O_p(\sqrt{n})$. The leading contribution to the log-Bayes factor between $\gamma^*$ and $\gamma$ therefore comes from the risk difference $(h_\gamma - h_{*})$, and thus, we undertake a careful study of the properties of $h_\gamma$ in the next section. 
%\textcolor{red}{(Anamitra: I think this is enough, and if you agree, delete the next part until Remark 3.3. Also, I have introduced $h_\gamma$ already, so you may reuse it in Section 4.)}
Next, in order to establish the posterior DAG selection consistency, first we %sufficient to 
show that for every $\gamma \in \Gamma^p$,
\begin{align}\label{ineq:H>H*}
%H^\gamma(\tilde{b}^\gamma, \tilde{\theta}^\gamma) \geq H^*(\tilde{b}^*, \tilde{\theta}^*),
%h_\gamma \geq h_*,
(h_\gamma - h_*) \geq 0,
\end{align}
and furthermore, there exists a family of DAGs, say $\cE^* \subseteq \Gamma^p$ such that
\begin{align}\label{char:E*}
\begin{split}
\text{both} \quad 
%&H^\gamma(\tilde{b}^\gamma, \tilde{\theta}^\gamma) = H^*(\tilde{b}^*, \tilde{\theta}^*),
&h_\gamma = h_*,
\quad \text{and} \quad |\gamma| = |\gamma^*|, \qquad \text{if} \;\; \gamma \in \cE^*, \;\; \text{and}\\
\text{either} \quad 
%&H^\gamma(\tilde{b}^\gamma, \tilde{\theta}^\gamma) > H^*(\tilde{b}^*, \tilde{\theta}^*), 
&h_\gamma > h_*,
\quad \text{or} \;\;\; \quad |\gamma| > |\gamma^*|, \qquad \text{otherwise.}
\end{split}
\end{align}
Then, we show that the remainder term $R_n$ in \eqref{eq:approx_BF} is $O_p(1)$ if $\gamma \supseteq \gamma^*$, see Lemma \ref{lem:wilks_missp}, and $O_p(\sqrt{n})$ otherwise.
Subsequently, we propose appropriate DAG priors 
%which utilize \eqref{eq:approx_BF} 
to ensure that the posterior odds $\Pi_n(\gamma^*, \gamma)$ diverges to $\infty$ if $\gamma \not\in \cE^*$. 
%the desired model selection consistency \eqref{eq:goal_pmsc} under certain scenarios, as we show later in Section \ref{sec:asym_prop}
%immediately implying that $\sfBF_n(\gamma^*, \gamma)$ diverges to $\infty$ if $\gamma \not\in \cE^*$
%from \eqref{eq:approx_BF} it is immediate that, up on showing $R_n$ is a smaller order term, $\sfBF_n(\gamma^*, \gamma)$ diverges to $\infty$ if $\gamma \not\in \cE^*$ and converges to 1 otherwise. 
Moreover, in Theorem \ref{thm:E=DE}, we will not only show the existence of $\cE^*$ but also establish that it coincides with the distribution equivalence class $\cE(\gamma^*, n\cG^*)$, which facilitates deriving the desired posterior consistency results.
To do so, we exploit the key advantage that the pseudo-true parameters are theoretically tractable under the Laplace-error model as shown in Lemma \ref{lem:H_gamma_basic}.%which is shown explicitly in Remark \ref{rem:why_lap}.

\begin{remark}[Gaussianity, tractability, and non-identifiability]\label{rem:why_not_Gauss}
To obtain an analytically tractable marginal likelihood, it is appealing to model the errors $e_j, j \in [p]$ by Gaussian distributions. However, it does not lead to identifiability of  $\gamma^*$, %even under favorable circumstances, 
as we demonstrate in the following. Indeed, if we consider some $\gamma$ that is Markov equivalent to $\gamma^*$, %implying $|\gamma| = |\gamma^*|$ \cite{andersson1997characterization}, 
and in \eqref{eq:lap_model_alt}, let 
\begin{align*}
(e_j^\gamma/\theta_j^\gamma) \overset{\rm iid}{\sim} \text{N} (0, 1),
\end{align*} 
then because under Gaussianity, Markov equivalence implies distribution equivalence \cite{geiger2002parameter}, for every $b^*, \theta^*$, there exist some $b^\gamma, \theta^\gamma$ such that
\begin{align*}
f^\gamma(x |b^\gamma, \theta^\gamma, \gamma) = f^*(x |b^*, \theta^*, \gamma^*) \qquad \text{for every} \;\; x \in \bR^p.
\end{align*}
In other words, $\bM_\gamma$ is  equivalent to $\bM_{\gamma^*}$, %and as a consequence, 
%we have $h_\gamma = h_*$. 
%$H^\gamma(\tilde{b}^\gamma, \tilde{\theta}^\gamma) = H^*(\tilde{b}^*, \tilde{\theta}^*)$. 
%Then, following \eqref{eq:approx_BF}, both the leading terms of $\log\sfBF_n(\gamma^*, \gamma)$ vanish %converges to $1$ \textcolor{red}{(Is this correct?)}, 
 resulting in non-identifiability between $\gamma^*$ and $\gamma$. %as a consequence.
%; refer to Appendix \ref{app:post_const} for more details on this point. 
However, if $\epsilon_j, j \in [p]$ are all non-Gaussian, as in LiNGAM \cite{shimizu2006linear}, then $\gamma^*$ must be uniquely identifiable. Therefore, for identifiability beyond Markov equivalence classes, it is necessary to use some non-Gaussian error distributions at the cost of losing tractability of the marginal likelihood.
\end{remark}

\begin{remark}[Laplace vs other parametric and semiparametric error distributions]\label{rem:why_lap}
%\textcolor{red}{(AB:revisit)} 
Modeling the errors with the Laplace distribution with unknown scale parameters offers several advantages over other parametric families of non-Gaussian distributions. %First, it is more favorable to work with the function $H^\gamma(\cdot)$ due to its simple expression, and thus, 
As Lemma \ref{lem:H_gamma_basic} shows, under the Laplace-error model, the functions $H^\gamma(\cdot)$ in \eqref{def:H_fun} assume tractable forms for all $\gamma$, and moreover, exploiting log-concavity, they admit unique
population targets $(\tilde{b}^\gamma, \tilde{\theta}^\gamma)$ having analytically tractable expressions. 
%Specifically, we obtain in Lemma \ref{lem:minH} that for every $j \in [p]$,
% \begin{align*}
% \tilde{b}_j^\gamma = \argmin_{b_j^\gamma} \; \sfE_*\bigg[\bigg|X_j - \sum_{k \in {\rm pa}(j)} b_{jk}^\gamma X_{k}\bigg|\bigg] \quad \text{and} \quad
% \tilde{\theta}_j^\gamma = \bigg(\sfE_*\bigg[\bigg|X_j - \sum_{k \in {\rm pa}(j)} \tilde{b}_{jk}^\gamma X_{k}\bigg|\bigg]\bigg)^{-1}.
% \end{align*}
% In particular, when $\gamma = \gamma^*$, it can be shown that, see Lemma \ref{lem:minH*}, %as
% \begin{align*}
% \tilde{b}^*_{jk} = \beta^*_{jk} \qquad \text{for every} \;\; k \in \pa^*(j).
% \end{align*} 
These expressions are convenient for deriving subsequent identifiability theory, since they allow us to smoothly connect between the probabilistic and graph-theoretic properties as established in Theorem \ref{thm:H<H}; see Section \ref{subsec:pf_sketch_iden} for a brief proof sketch regarding this point. Such analytical simplicity is not immediately obvious if we consider, for example, Cauchy, t, or many other parametric non-Gaussian error distribution families. Furthermore, in spite of non-smoothness of the log-likelihoods and intractability of the marginal likelihoods, it is possible to establish Laplace approximations; see Theorem \ref{thm:lap_app}. 
%(see Appendix \ref{app:lap}) for them by exploiting the log-concavity of Laplace distribution, as shown in Lemma \ref{lem:log-concave}.

Outside parametric families, a potentially attractive choice is to employ semiparametric mixture distributions with large support on the space of symmetric unimodal distributions. For example, one may consider scale mixture of Gaussians $\int \eta^{-1} \phi\lt(x/\eta\rt) d\sfP(\eta)$ or mixtures of uniforms $\int (2 \theta)^{-1} \mathbf{1}_{[-\theta, \theta]}(x) d \sfP(\theta)$ with the mixing distribution $\sfP$ assigned a Dirichlet process (DP) prior or its many variants. While such flexible error distributions appear routinely in nonparametric Bayesian modeling \cite[Chapter 5]{ghosal2017fundamentals}, and more sporadically in the structure learning context \cite{hoyer2009bayesian, shimizu2014bayesian}, their rigorous performance characterization in model selection contexts is comparatively limited \cite{kundu2014bayes}. In addition to analytic intractability of the marginal likelihood due to the presence of infinite-dimensional nuisance parameters associated with the mixing distribution $\sfP$, the validity of the Laplace approximation becomes less immediate. Moreover, the function $H^\gamma(\cdot)$ loses its tractability and as a consequence, it becomes more complicated to understand the target of estimation in \eqref{def:H_fun}.

\end{remark}

\section{Identifiability}\label{sec:identif}
%\textcolor{red}{(The opening of this section needs slight modification to link with modified Section 3.2. You may use $h_\gamma$ below to make the notation simpler.)}
In this section, we develop our theory of identifying the underlying DAG $\gamma^*$ up to the distribution equivalence class $\cE(\gamma^*, n\cG^*)$, which is a prerequisite for our posterior DAG selection consistency theory. In light of \eqref{char:E*}, we introduce the following class %which is completely determined by the working model,
\begin{align}\label{def:E*}
\cE^* := \Big\{\gamma \in \Gamma^p : \;
%H^\gamma(\tilde{b}^\gamma, \tilde{\theta}^\gamma) = H^*(\tilde{b}^*, \tilde{\theta}^*) 
h_\gamma = h_*
\quad \text{and} \quad |\gamma| = |\gamma^*|\Big\},
\end{align}
which consists of DAGs $\gamma$ with $|\gamma| = |\gamma^*|$ that achieve the same population risk as $\gamma^*$. Intuitively, this implies the posterior $\pi(\cdot \mid D_n)$ should concentrate on the set $\cE^*$, since the number of model parameters under $M_\gamma$ and $M_{\gamma^*}$ are the same for any $\gamma \in \cE^*$.  
%and aim to show that it 
Interestingly, we show below that $\cE^*$ coincides with the distribution equivalence class $\cE(\gamma^*, n\cG^*)$. Observe that $\cE(\gamma^*, n\cG^*)$ is purely a property of the true underlying data generation process, whereas $\cE^*$ arises via interaction between the working model and the true distribution through \eqref{def:H_fun}. Therefore, such a result signifies the ability of our postulated working model to accurately recover the distribution equivalence class. 
%which is, however, resulted solely from the true underlying data generation process. 
Furthermore, we characterize this class by deriving necessary and sufficient conditions for a DAG to belong to $\cE^*$.

\subsection{Risk, Markov, and distribution equivalence}
To begin with, we consider the \textit{risk function} $\gamma \mapsto h_\gamma$ %H^\gamma(\tilde{b}^\gamma, \tilde{\theta}^\gamma)$ 
defined in \eqref{eq:h_gamma_id}.  
% defined over $\Gamma^p$: 
% \begin{align}\label{eq:crit}
% \gamma \; \to \; H^\gamma(\tilde{b}^\gamma, \tilde{\theta}^\gamma).
% \end{align}
% We specify the above as \textit{risk function} since as per its definition in \eqref{def:H_fun}, it quantifies the risk of considering $\bM_\gamma$ to model the data generating process. 
We introduce below the notion of \textit{risk equivalence} and the \textit{risk equivalence class} of DAGs.

\begin{definition}[Risk equivalence class]
Two DAGs $\gamma_1, \gamma_2 \in \Gamma^p$ are said to be \textit{risk equivalent} if
%\begin{align*}
%H^{\gamma_1}(\tilde{b}^{\gamma_1}, \tilde{\theta}^{\gamma_1}) = H^{\gamma_2}(\tilde{b}^{\gamma_2}, \tilde{\theta}^{\gamma_2})$.
$h_{\gamma_1} = h_{\gamma_2}$.
%\end{align*}
 For any $\gamma \in \Gamma^p$, its \textit{risk equivalence class} is defined as
\begin{align*}
\{\gamma' \in \Gamma^p : \; h_\gamma = h_{\gamma'} \,
%H^{\gamma'}(\tilde{b}^{\gamma'}, \tilde{\theta}^{\gamma'}) = H^{\gamma}(\tilde{b}^{\gamma}, \tilde{\theta}^{\gamma})
\}.
\end{align*}
\end{definition}

Next, in order to establish the identifiability of $\gamma^*$ at least up to a certain class, our primary step is to establish \eqref{ineq:H>H*}, that is, the risk function $h_\gamma$ %$H^\gamma(\tilde{b}^\gamma, \tilde{\theta}^\gamma)$ 
is indeed minimized at $\gamma^*$ and further characterize the set of its minimizers, which is the risk equivalence class of $\gamma^*$.
In this regard, we first define $\bar{\cE}^* \supseteq \cE^*$ as the \emph{risk equivalence class} of $\gamma^*$, i.e.,
%see Figure \ref{fig:illst_DE}, i.e., 
\begin{align}\label{def:barE*}
\bar{\cE}^* := \Big\{\gamma \in \Gamma^p : %H^\gamma(\tilde{b}^\gamma, \tilde{\theta}^\gamma) = H^*(\tilde{b}^*, \tilde{\theta}^*)
h_\gamma = h_*
\Big\}.
\end{align}

Clearly, $\gamma^* \in \bar{\cE}^*$. As shown in Lemma \ref{lem:supset_h}, more generally, $\gamma \in \bar{\cE}^*$ if $\gamma$ is a supergraph of $\gamma^*$, i.e.,
\begin{align*}%\label{def:epsS}
    \cS^* := \lt\{\gamma \in \Gamma^p : \gamma\supseteq \gamma^* \rt\} \subseteq \bar{\cE}^*.
\end{align*} 
Moreover, there may be more elements in $\bar{\cE}^*$, i.e., $\bar{\cE}^* \setminus \cS^* \neq \emptyset$, and when and \emph{only when} it is the case, we consider the additional assumption of \textit{faithfulness} \cite{spirtes2001causation}, formally defined below.

\begin{definition}[Faithfulness \cite{spirtes2001causation}]\label{def:faith}
Let $\bI(\sfP_X^*)$ denote the set of conditional independence relationships under $\sfP_X^*$. Then $\sfP_X^*$ is called faithful to $\gamma^*$ if \ $\bI(\sfP_X^*) \subseteq \bI(\gamma^*)$.
\end{definition}
%Furthermore, in light of Theorem \ref{thm:E=DE}, 
Therefore, %in the rest of this section, 
we assume that, 
\begin{align}\label{assum:faith}
%\text{in the case of} \;\; \bar{\cE}^*_M \supset \{\gamma^*\}, \quad \sfP_X^* \;\; \text{is faithful to} \;\; \gamma^*.
\text{in the case of} \;\; \bar{\cE}^* \setminus \cS^* \neq \emptyset, \quad \sfP_X^* \;\; \text{is faithful to} \;\; \gamma^*.
\end{align}
We emphasize that the assumption of faithfulness is not needed when $\bar{\cE}^* = \cS^*$, as we validate this shortly in Corollary \ref{cor:E=DE=gam}.
%and we elucidate this later in Proposition \ref{prop:spl_cases} and Remark \ref{rem:no_faith}.
In the next theorem, we show that, under the above assumption, the risk function $h_\gamma$ %$H^\gamma(\tilde{b}^\gamma, \tilde{\theta}^\gamma)$ 
is minimized over $\bar{\cE}^*$ and subsequently characterize $\bar{\cE}^*$ in Corollary \ref{cor:barE*} under the assumption \eqref{assum:faith}.  

\begin{theorem}[Minimized risk]\label{thm:H<H}
%Suppose that \eqref{assum:faith} holds. Then 
For every $\gamma' \in \Gamma^p$, we have
\begin{align*}
%H^*(\tilde{b}^*, \tilde{\theta}^*) \leq H^{\gamma'}(\tilde{b}^{\gamma'}, \tilde{\theta}^{\gamma'}),
h_* \leq h_{\gamma'},
\end{align*}
where the equality holds if and only if \, $\gamma' \supseteq \gamma$ \ for which \ $\sfP_X^* \in \cP(\gamma, n\cG)$ \ for some $n\cG \subseteq [p]$.

Under the assumption \eqref{assum:faith}, the last part of the condition is equivalent to $\sfP_X^* \in \cP(\gamma, n\cG^*)$, which in turn holds, if and only if 
$\gamma$ satisfies the following conditions:
\begin{enumerate}
\item[(1)] 
for every $j \in  n\cG^*$, $\pa^\gamma(j) = \pa^*(j)$, and
\item[(2)] for every $j \notin  n\cG^*$, 
$\pa^\gamma(j)$ is such that there exists non-zero \ $\beta_{jk}^\gamma, k \in \pa^\gamma(j)$ \ for which
\begin{align}\label{eq:Gaus_barE}
\eta_j^\gamma := \Big(X_j - \sum_{k \in \pa^\gamma(j)} \beta_{jk}^\gamma X_k\Big) %\qquad  j \notin n\cG^* 
\end{align}
is some linear combination of the Gaussian errors \ $\epsilon_j, j \notin n\cG^*$, and $\eta_j^\gamma, \, j \notin n\cG^*$ are pairwise independent.
\end{enumerate}
\end{theorem}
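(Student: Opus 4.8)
The plan is to prove this in three stages: first the inequality $h_* \le h_{\gamma'}$ with the characterization of equality in terms of supergraphs of DAGs representing $\sfP_X^*$; then the reduction to $n\cG^*$ under faithfulness; and finally the graph-theoretic conditions (1)--(2). Throughout I will lean on Lemma \ref{lem:H_gamma_basic}, which gives $h_\gamma = p(1+\log 2) - \sum_j \log\tilde\theta_j^\gamma$ where $1/\tilde\theta_j^\gamma = \sfE_*\big[\,\big|X_j - \sum_{k\in\pa^\gamma(j)}\tilde b_{jk}^\gamma X_k\big|\,\big]$ is the minimal mean-absolute-residual when regressing $X_j$ on its parents in $\gamma$. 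So minimizing $h_\gamma$ over $\gamma$ amounts to \emph{maximizing} $\sum_j \log\tilde\theta_j^\gamma$, i.e. making each conditional $L^1$ prediction error as small as possible, node by node.

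For the inequality, fix any $\gamma'$ and pick a permutation $\sigma'$ consistent with $\gamma'$. Write $X_j - \sum_{k\in\pa^{\gamma'}(j)}\tilde b_{jk}^{\gamma'} X_k$ and compare against the residual obtained if node $j$ were instead allowed \emph{all} predecessors under $\sigma'$; the latter can only shrink the $L^1$ error, so it suffices to handle the case of a fully-connected DAG on the order $\sigma'$. The point is that for a full DAG on order $\sigma'$, the optimal residuals are exactly the innovations of $X$ in that order, whose product of densities reconstructs $\sfp_X^*$; and the sum $\sum_j \sfE_*|\text{residual}_j|$ relates, via the log-concavity/Laplace structure, to (a lower bound on) $-\sfE_*[\log \sfp_X^*(X)]$, which is order-independent. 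This gives $h_{\gamma'} \ge -\sfE_*[\log\sfp_X^*(X)] + \text{const}$ with equality iff the residuals of $\gamma'$ already match those of \emph{some} causal order — equivalently, iff $\sfP_X^*$ factorizes according to a sub-DAG $\gamma$ of $\gamma'$, i.e. $\sfP_X^* \in \cP(\gamma, n\cG)$ for some $n\cG$. I would make the ``equality iff'' rigorous by tracking when each node-wise comparison is tight: the $L^1$-optimal predictor is unique a.s. and matches the SEM predictor precisely when the residual $X_j - \sum_{\pa^{\gamma}(j)}\beta_{jk} X_k$ is independent of the predictors, i.e. when $\gamma'\supseteq\gamma$ with $\gamma$ a representing DAG.

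Next, the faithfulness reduction: if $\bar\cE^*\setminus\cS^*\neq\emptyset$ we assume $\sfP_X^*$ faithful to $\gamma^*$, so $\bI(\sfP_X^*)=\bI(\gamma^*)$. If $\gamma'\supseteq\gamma$ with $\sfP_X^*\in\cP(\gamma,n\cG)$, then $\sfP_X^*$ is Markov w.r.t.\ $\gamma$, hence $\bI(\gamma)\subseteq\bI(\sfP_X^*)=\bI(\gamma^*)$; combined with $\sfP_X^*$ being Markov w.r.t.\ $\gamma^*$ (from \eqref{eq:true_dens}) this forces $\gamma$ and $\gamma^*$ to be Markov equivalent, and then an identifiability-class argument (the non-Gaussian nodes are pinned down — this is essentially where the $n\cG$ label is forced) upgrades ``for some $n\cG$'' to $n\cG=n\cG^*$, i.e. $\sfP_X^*\in\cP(\gamma,n\cG^*)$. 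This last upgrade is the delicate bit and I expect it to invoke an earlier structural result (the mixed-graph characterization referenced in Section \ref{sec:identif}); I would cite whichever lemma establishes that within a Markov equivalence class the non-Gaussianity pattern determines the node-wise parent sets at non-Gaussian nodes.

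Finally, conditions (1)--(2): $\sfP_X^*\in\cP(\gamma,n\cG^*)$ means there is a linear recursive SEM on $\gamma$ generating $X$ with error $\eta_j^\gamma := X_j-\sum_{k\in\pa^\gamma(j)}\beta_{jk}^\gamma X_k$ independent across $j$, non-Gaussian exactly on $n\cG^*$. For $j\in n\cG^*$: the Darmois--Skitovich theorem (the engine behind LiNGAM \cite{shimizu2006linear}) forces the non-Gaussian coordinate's parent set to be uniquely determined, yielding $\pa^\gamma(j)=\pa^*(j)$ with the same coefficients. For $j\notin n\cG^*$: the error $\eta_j^\gamma$ must be Gaussian and, expressing $X$ in terms of the true errors $\epsilon$ via the true SEM, $\eta_j^\gamma$ is a fixed linear functional of $(\epsilon_1,\dots,\epsilon_p)$; a Gaussian linear combination of independent variables, some non-Gaussian, cannot load on any non-Gaussian $\epsilon_k$ (again Darmois--Skitovich), so $\eta_j^\gamma$ lies in the span of the Gaussian errors $\{\epsilon_k: k\notin n\cG^*\}$, and the independence-across-$j$ requirement restricted to these nodes becomes pairwise independence of the $\eta_j^\gamma$, $j\notin n\cG^*$ — which for jointly Gaussian variables is the stated condition. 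Conversely, given (1)--(2) one reads off a valid SEM on $\gamma$ with the prescribed non-Gaussianity pattern, so $\sfP_X^*\in\cP(\gamma,n\cG^*)$. \textbf{The main obstacle} I anticipate is not conditions (1)--(2) — those follow cleanly from Darmois--Skitovich — but rather pinning down the equality case in Theorem \ref{thm:H<H} at the level of $L^1$ residuals and then correctly threading the faithfulness assumption so that ``$\exists n\cG$'' becomes ``$n\cG=n\cG^*$''; getting the logic of \eqref{assum:faith} to apply exactly when needed (and no assumption when $\bar\cE^*=\cS^*$, per Corollary \ref{cor:E=DE=gam}) requires care.
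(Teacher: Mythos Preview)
Your treatment of conditions (1)--(2) via Darmois--Skitovich is on the right track and matches the paper's route, but the argument you sketch for the core inequality $h_*\le h_{\gamma'}$ does not go through. The entropy/innovation idea---factorize $\sfp_X^*$ along an order and note that entropy is order-independent---would close the loop \emph{if the working model were correctly specified}, because for Laplace errors the maximum-entropy bound $h(Y)\le 1+\log(2\,\sfE|Y|)$ is tight. Under misspecification it is not: both $\sum_j\log\sfE_*|e_j^{\gamma'}|$ and $\sum_j\log\sfE_*|\epsilon_j|$ sit above the same entropy floor $\sum_j h(\epsilon_j)-p(1+\log 2)$, but there is no reason the latter meets that floor, so you cannot compare them. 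The ``log-concavity/Laplace structure'' you invoke supplies a one-sided inequality pointing the wrong way.

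The paper's proof exploits the scale-mixture hypothesis \eqref{eq:error_dist} in an essential way. Writing $e_\sigma^\gamma=A\epsilon$ with $\det A=1$ (Lemma~\ref{lem:detA}), the exact identity $\sfE_*|a_j^T\epsilon|=\sqrt{2/\pi}\,\sfE_*\|a_j\circ\lambda\|$ (Lemma~\ref{lem:exp_mod}) reduces the inequality $\prod_j\sfE_*|\epsilon_j|\le\prod_j\sfE_*|e_j|$ to $\prod_j\sfE_*[\lambda_j]\le\prod_j\sfE_*\|a_j\circ\lambda\|$, and this is obtained by applying Hadamard's inequality to the random matrix $A\circ\Lambda$ (Lemmas~\ref{lem:Hadamard}--\ref{lem:ineq_a_lam}). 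The equality case of Hadamard (orthogonality of rows) then feeds directly into the structural analysis (Lemma~\ref{lem:Aprop}) and, together with Darmois--Skitovich, yields conditions (1)--(2) via Lemmas~\ref{lem:R=C=emp} and~\ref{lem:R,C=non_emp}; faithfulness enters only to ensure $\beta^*_{\ell\gets k}\neq 0$ (Lemma~\ref{lem:ancest_infl}), which is needed when $\cR_A\neq\emptyset$. So the missing idea in your sketch is precisely this Hadamard step, and it is not an incidental shortcut---it is why the Laplace working model was chosen (see Remark~\ref{rem:why_lap}) and why the scale-mixture assumption on the true errors is imposed.
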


\begin{proof}
The proof can be found in Appendix \ref{pf:thm:H<H}.
\end{proof}

The above result implies that %under the assumption \eqref{assum:faith}, 
the risk is minimized by some $\gamma' \in \Gamma^p$ if $\sfP_X^*$ can be \textit{represented} by $(\gamma, n\cG^*)$ where $\gamma'\supseteq\gamma$, that is, under $\sfP_X^*$, the variables can be alternatively generated by an SEM %that can not only be represented according to 
under $\gamma$ 
%but also again with 
whose nodes with non-Gaussian errors are indicated by $n\cG^*$. In fact, it is not difficult to observe from the conditions in Theorem \ref{thm:H<H} (see also Lemma \ref{lem:R=C=emp} and Lemma \ref{lem:R,C=non_emp}) that the structural equations corresponding to the nodes in $n\cG^*$ must be identical to those in \eqref{eq:model}, and for the rest, they follow from \eqref{eq:Gaus_barE}, that is,
\begin{align*}
X_j &= \sum_{k \in {\rm pa}^*(j)} \beta^*_{jk} X_{k} + \epsilon_{j}, \qquad\ \text{for every} \;\; j \in n\cG^*,\\
X_j &= \sum_{k \in {\rm pa}^\gamma(j)} \beta^\gamma_{jk} X_{k} + \eta_{j}^\gamma, \qquad \text{for every} \;\; j \notin n\cG^*.
\end{align*}
Thus, in light of Theorem \ref{thm:H<H}, we define the \emph{minimal risk equivalence class} of $\gamma^*$,
\begin{align}\label{def:epsR}
    \bar{\cE}_R^* %:= &\{\gamma \in \Gamma^p : \sfP_X^* \in \cP(\gamma, n\cG^*)\}\\
:=  \{\gamma \in \Gamma^p : \gamma \;\; \text{satisfies conditions} \; (1) \; \text{and} \; (2) \; \text{in Theorem \ref{thm:H<H}}\} \, \subseteq \, \bar{\cE}^*.
\end{align}
%It is minimal in the sense that for any DAG in this class, its proper subgraph is not in the same class.
It is minimal in the sense that any risk equivalent DAG must be a supergraph of some DAG in this class.
We refer to Figure \ref{fig:illst_DE} for a pictorial representation of the aforementioned classes of DAGs. Furthemore, following \eqref{def:E*}, \eqref{def:barE*} and \eqref{def:epsR}, it is clear that
%respectively, it is clear that $\gamma^* \in \cE^*$, and
%\begin{align*}
$\cE^* = \big\{\gamma \in \bar{\cE}^* : \; |\gamma| = |\gamma^*|\big\}$, and $\gamma^* \in \bar{\cE}^*_R \cap \cE^*$. More interestingly, in the following result we show that when, in particular, $\bar{\cE}^* = \cS^*$, both $\bar{\cE}^*_R$ and $\cE^*$ along with $\cE(\gamma^*, n\cG^*)$ reduce to $\{\gamma^*\}$, resulting in the \textit{unique identification} of $\gamma^*$, without any additional assumption on $\sfP_X^*$, as indicated earlier.

\begin{corollary}[Unique identifiability]\label{cor:E=DE=gam}
If \; $\bar{\cE}^* = \cS^*$, \; then \; $\bar{\cE}^*_R =  \cE^* = \cE(\gamma^*, n\cG^*) =  \{\gamma^*\}$.
\end{corollary}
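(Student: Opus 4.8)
The plan is to show that, under the hypothesis $\bar{\cE}^* = \cS^*$, each of $\bar{\cE}^*_R$, $\cE^*$ and $\cE(\gamma^*, n\cG^*)$ is squeezed between $\{\gamma^*\}$ and a set collapsing to $\{\gamma^*\}$. The workhorse is the claim: \emph{if $\sfP_X^* \in \cP(\gamma, n\cG)$ for some $n\cG \subseteq [p]$ and $\gamma \supseteq \gamma^*$, then $\gamma = \gamma^*$}. I would prove it node by node: for each $j$, no $k \in \pa^\gamma(j)$ is a descendant of $j$ in $\gamma^*$ (otherwise $\gamma \supseteq \gamma^*$ would contain a cycle through $j$ and $k$), so in \eqref{eq:model} each such $X_k$ is a function of $(\epsilon_m)_{m \neq j}$, whence the true residual $\epsilon_j = X_j - \sum_{k\in\pa^*(j)}\beta^*_{jk}X_k$ is independent of $(X_k)_{k\in\pa^\gamma(j)}$, and likewise the residual $X_j - \sum_{k\in\pa^\gamma(j)}\tilde\beta_{jk}X_k$ of any SEM w.r.t.\ $\gamma$ representing $\sfP_X^*$ is independent of $(X_k)_{k\in\pa^\gamma(j)}$. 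A short characteristic-function argument (using only $\sfE_*[|\epsilon_m|] < \infty$, guaranteed by $\sfE_*[|\lambda_m|]<\infty$, together with invertibility of $I - B^*$ for the SEM coefficient matrix $B^*$) shows such a residual is essentially unique: if $X_j - \sum c_k X_k$ and $X_j - \sum c'_k X_k$ are both independent of $(X_k)_{k\in\pa^\gamma(j)}$, then $\sum(c_k-c'_k)X_k$ is a.s.\ constant, hence $0$, hence $c = c'$. Taking $(c_k) = (\tilde\beta_{jk})$ and $(c'_k) = (\beta^*_{jk})$ padded with zeros on $\pa^\gamma(j)\setminus\pa^*(j)$ forces $\tilde\beta_{jk} = 0$ off $\pa^*(j)$; since a representing SEM w.r.t.\ $\gamma$ has all edge coefficients nonzero, $\pa^\gamma(j) = \pa^*(j)$, and this for all $j$ gives $\gamma = \gamma^*$.

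Granting the claim, I would first record $\gamma^* \in \bar{\cE}^*_R$ (conditions (1)--(2) of Theorem~\ref{thm:H<H} hold for $\gamma^*$ with the coefficients in (2) taken equal to $\beta^*_{jk}$, so that $\eta^{\gamma^*}_j = \epsilon_j$ is Gaussian for $j \notin n\cG^*$ and these are mutually independent), $\gamma^* \in \cE^*$, and $\gamma^* \in \cE(\gamma^*, n\cG^*)$; thus each class contains $\{\gamma^*\}$. For the reverse inclusions: $\cE^* \subseteq \bar{\cE}^* = \cS^*$ by \eqref{def:E*} and the hypothesis, and a supergraph of $\gamma^*$ with $|\gamma| = |\gamma^*|$ must equal $\gamma^*$, so $\cE^* = \{\gamma^*\}$. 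Next $\bar{\cE}^*_R \subseteq \bar{\cE}^* = \cS^*$, and by Theorem~\ref{thm:H<H} any $\gamma \in \bar{\cE}^*_R$ satisfies $\sfP_X^* \in \cP(\gamma, n\cG^*)$, so the claim yields $\bar{\cE}^*_R = \{\gamma^*\}$. Finally, if $\gamma \in \cE(\gamma^*, n\cG^*)$ with $\cP(\gamma, n\cG) = \cP(\gamma^*, n\cG^*)$, then $\sfP_X^* \in \cP(\gamma, n\cG)$, so Theorem~\ref{thm:H<H} gives $h_\gamma = h_*$, i.e.\ $\gamma \in \bar{\cE}^* = \cS^*$, and the claim gives $\gamma = \gamma^*$; hence $\cE(\gamma^*, n\cG^*) = \{\gamma^*\}$ (alternatively this also follows from $\cE(\gamma^*, n\cG^*) = \cE^*$ in Theorem~\ref{thm:E=DE}). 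Since all three sets equal $\{\gamma^*\}$, the stated chain of equalities follows.

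The main obstacle is the claim: pinning down that two SEMs representing the same $\sfP_X^*$ --- one structured by $\gamma^*$, one by a supergraph $\gamma$ --- must have identical residuals at every node, which is where the acyclicity of $\gamma$ and the unique-projection structure of linear SEMs with independent errors carry the argument. Everything else is a routine combination of the hypothesis with the definitions of the three classes and Theorem~\ref{thm:H<H}.
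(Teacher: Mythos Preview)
Your proposal is correct, and its overall skeleton matches the paper's: both reduce the corollary to the assertion that no $\gamma \neq \gamma^*$ satisfies $\sfP_X^* \in \cP(\gamma, n\cG)$, and then read off $\bar{\cE}^*_R = \cE(\gamma^*, n\cG^*) = \{\gamma^*\}$ from the definitions and obtain $\cE^* = \{\gamma^*\}$ from $|\gamma| = |\gamma^*|$ on $\cS^*$.

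Where you differ is in the treatment of that key assertion. The paper's proof simply cites Theorem~\ref{thm:H<H} for it; you instead isolate and prove the sharper claim that $\sfP_X^* \in \cP(\gamma, n\cG)$ together with $\gamma \supseteq \gamma^*$ forces $\gamma = \gamma^*$, via uniqueness of the residual $X_j - c^T X_{\pa^\gamma(j)}$ that is independent of $X_{\pa^\gamma(j)}$. This is the right ingredient, and your characteristic-function route to it is valid (and in fact does not even need the first-moment condition you mention: multiplicativity of $\phi_{c^T Z}$ already forces $c^T Z$ to be a.s.\ constant). The paper establishes the same uniqueness elsewhere as Lemma~\ref{lem:uniq_rep}, but by a different mechanism: an induction down the causal order using that $\epsilon_\ell$ first appears in $X_\ell$, so the top index on each side can be peeled off. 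Your argument is more probabilistic (independence of residuals), theirs more algebraic (triangular structure of $X = \cB^*\epsilon$); both buy exactly the node-by-node identity $\pa^\gamma(j) = \pa^*(j)$ that collapses $\gamma$ to $\gamma^*$. One small note: your appeal to Theorem~\ref{thm:H<H} for ``$\gamma \in \bar{\cE}^*_R \Rightarrow \sfP_X^* \in \cP(\gamma, n\cG^*)$'' uses the part of that theorem stated under assumption~\eqref{assum:faith}, which is fine here since $\bar{\cE}^* = \cS^*$ makes~\eqref{assum:faith} vacuous---you might want to say this explicitly.
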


\begin{proof}
    The proof can be found in Appendix \ref{pf:cor:E=DE=gam}.
\end{proof}

  \begin{comment}
\begin{corollary}\label{cor:barE*}
We have $\bar{\cE}^*_S \; \cap \; \bar{\cE}_R^* = \{\gamma^*\}$, and under the assumption \eqref{assum:faith}, 
\begin{align*}
\bar{\cE}^* \; = \; \{\gamma \in \Gamma^p : \gamma \supseteq \gamma^* \;\; \text{or} \;\; \sfP_X^* \in \cP(\gamma, n\cG^*)\} \;
= \; \bar{\cE}^*_S \; \cup \; \bar{\cE}_R^*.  %\{\gamma \in \Gamma^p : \gamma \;\; \text{satisfies conditions} \; (1) \; \text{and} \; (2) \; \text{in Theorem \ref{thm:H<H}}\}.
\end{align*}
\end{corollary}
\end{comment}

\begin{corollary}[Characterization of risk equivalence class]\label{cor:barE*}
%\mathop{\cup}\limits$ $\displaystyle\cup$
Under the assumption \eqref{assum:faith}, we have
\begin{align*}
\bar{\cE}^* \; = \; \lt\{\gamma' \in \Gamma^p : \, \gamma' \supseteq \gamma \quad \text{for which} \quad \sfP_X^* \in \cP(\gamma, n\cG^*)\rt\} \; = \;  \mathop{\scalebox{1.5}{$\cup$}}_{\gamma \in \bar{\cE}_R^*} \; \cS^\gamma.  %\{\gamma \in \Gamma^p : \gamma \;\; \text{satisfies conditions} \; (1) \; \text{and} \; (2) \; \text{in Theorem \ref{thm:H<H}}\}.
\end{align*}
Moreover, %and thus, \, 
$\bar{\cE}^* = \cS^*$ if and only if  $\bar{\cE}^*_R = \{\gamma^*\}$.
\end{corollary}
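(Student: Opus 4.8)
The plan is to read the corollary off directly from Theorem~\ref{thm:H<H} and Corollary~\ref{cor:E=DE=gam}; essentially no new analytic work is required, only careful bookkeeping. First I would establish the first chain of equalities. By the definition \eqref{def:barE*}, $\bar{\cE}^*$ is exactly the set of minimizers of the risk function, $\bar{\cE}^* = \{\gamma' \in \Gamma^p : h_{\gamma'} = h_*\}$. Theorem~\ref{thm:H<H} asserts that $h_{\gamma'} = h_*$ holds if and only if $\gamma' \supseteq \gamma$ for some $\gamma$ with $\sfP_X^* \in \cP(\gamma, n\cG)$ for some $n\cG \subseteq [p]$, and that, under \eqref{assum:faith}, this last clause is equivalent to $\sfP_X^* \in \cP(\gamma, n\cG^*)$. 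Substituting into the above description of $\bar{\cE}^*$ immediately gives the first displayed equality. I would also remark that the sub-case $\bar{\cE}^* \setminus \cS^* = \emptyset$ is harmless: then \eqref{assum:faith} imposes nothing, but the same chain of equivalences from Theorem~\ref{thm:H<H} still applies verbatim (one may equivalently invoke Corollary~\ref{cor:E=DE=gam}).

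Next, for the second equality I would rewrite the inner condition using the second biconditional in Theorem~\ref{thm:H<H}: $\sfP_X^* \in \cP(\gamma, n\cG^*)$ if and only if $\gamma$ satisfies conditions (1)--(2) there, i.e.\ (by the definition \eqref{def:epsR}) if and only if $\gamma \in \bar{\cE}^*_R$. Since $\cS^\gamma = \{\gamma' \in \Gamma^p : \gamma' \supseteq \gamma\}$, the set $\{\gamma' : \gamma' \supseteq \gamma \text{ for some } \gamma \in \bar{\cE}^*_R\}$ is precisely $\bigcup_{\gamma \in \bar{\cE}^*_R} \cS^\gamma$, which yields the claimed identity. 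In particular each $\gamma \in \bar{\cE}^*_R$ lies in $\cS^\gamma$, reconfirming $\bar{\cE}^*_R \subseteq \bar{\cE}^*$ as asserted in \eqref{def:epsR}.

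Finally, for the ``moreover'' part: the implication $\bar{\cE}^*_R = \{\gamma^*\} \Rightarrow \bar{\cE}^* = \cS^*$ is immediate from the representation just proved, since the union over the singleton $\{\gamma^*\}$ collapses to $\cS^{\gamma^*} = \cS^*$. For the converse, if $\bar{\cE}^* = \cS^*$ then Corollary~\ref{cor:E=DE=gam} directly gives $\bar{\cE}^*_R = \{\gamma^*\}$. If a self-contained argument is preferred, one notes first that $\gamma^* \in \bar{\cE}^*_R$ always --- condition (1) holds trivially and condition (2) holds with $\beta^\gamma_{jk} = \beta^*_{jk}$, since then $\eta_j^{\gamma^*} = \epsilon_j$ is Gaussian for $j \notin n\cG^*$ and these errors are jointly, hence pairwise, independent --- and then argues that no strict supergraph of $\gamma^*$ lying in $\cS^*$ can satisfy conditions (1)--(2). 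This last step is, however, exactly the substantive content of the proof of Corollary~\ref{cor:E=DE=gam}, so citing it is cleanest. Overall there is no serious obstacle: Theorem~\ref{thm:H<H} and Corollary~\ref{cor:E=DE=gam} carry all the weight, and the only points demanding any care are threading the conditional faithfulness hypothesis \eqref{assum:faith} correctly through Theorem~\ref{thm:H<H}, and recognizing that the forward direction of the ``moreover'' inherits its real content (ruling out strict supergraphs of $\gamma^*$ from $\bar{\cE}^*_R$) from Corollary~\ref{cor:E=DE=gam}.
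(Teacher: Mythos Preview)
Your proposal is correct and follows essentially the same approach as the paper: the paper's proof is a one-line citation of the definition of $\bar{\cE}^*$, the equality conditions in Theorem~\ref{thm:H<H}, and Corollary~\ref{cor:E=DE=gam}, and you have simply unpacked those citations in the appropriate order. Your handling of the conditional faithfulness hypothesis \eqref{assum:faith} and the two directions of the ``moreover'' clause matches the paper's intent exactly.
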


\begin{proof}
The proof immediately follows from the definition of $\bar{\cE}^*$ in \eqref{def:barE*}, the conditions for equality stated in Theorem \ref{thm:H<H}, and Corollary \ref{cor:E=DE=gam}.
\end{proof}
  
It is important to particularly identify under what conditions both $\cE^*$ and $\cE(\gamma^*, n\cG^*)$ reduce to the smallest possible size, i.e., $\cE^* = \cE(\gamma^*, n\cG^*) = \{\gamma^*\}$, thereby ensuring the unique identifiability of $\gamma^*$. We show three such conditions each
leading to $\bar{\cE}^* = \cS^*$, which, by Corollary \ref{cor:E=DE=gam}, in turn implies that both $\cE^*$ and $\cE(\gamma^*, n\cG^*)$ contain only $\gamma^*$. %We emphasize that 
%i.e., when in Figure \ref{fig:illst_DE} both the yellow and green regions is a singleton forcing the green one to simultaneously become the same.

\begin{proposition}[Sufficient conditions for unique identifiability]\label{prop:spl_cases}
We have $\bar{\cE}^* = \cS^*$ if any of the following conditions holds:
\begin{enumerate}
\item[(a)] there is at most one Gaussian error, i.e., $|n\cG^*| \geq (p-1)$,
\item[(b)] all error variances are equal, or
\item[(c)] %$\sfP_X^*$ is faithful to $\gamma^*$, and variances of all Gaussian errors are equal.
the assumption 
\eqref{assum:faith} holds, and $\bar{\cE}^*_R = \{\gamma^*\}$, e.g., when variances of all Gaussian errors are equal.
\end{enumerate}   
%Furthermore, this results in having \, 
\end{proposition}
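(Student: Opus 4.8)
The plan is to reduce all three parts to a single structural claim and then verify that claim under each hypothesis. By Theorem~\ref{thm:H<H}, $h_*\le h_{\gamma'}$ for every $\gamma'\in\Gamma^p$, with equality precisely when $\gamma'$ is a supergraph of some DAG $\gamma$ that \emph{represents} $\sfP_X^*$, i.e.\ $\sfP_X^*\in\cP(\gamma,n\cG)$ for some $n\cG\subseteq[p]$. Combined with $\cS^*\subseteq\bar{\cE}^*$ (Lemma~\ref{lem:supset_h}), this yields
\[
\bar{\cE}^*=\bigcup\bigl\{\cS^\gamma:\ \sfP_X^*\in\cP(\gamma,n\cG)\ \text{for some}\ n\cG\subseteq[p]\bigr\},
\]
so it suffices to show, under each hypothesis, that every DAG representing $\sfP_X^*$ is a supergraph of $\gamma^*$; for (a) and (b) we shall in fact obtain equality with $\gamma^*$, whence $\bar{\cE}^*=\cS^{\gamma^*}=\cS^*$. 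Once $\bar{\cE}^*=\cS^*$ is established, Corollary~\ref{cor:E=DE=gam} upgrades it automatically to $\bar{\cE}^*_R=\cE^*=\cE(\gamma^*,n\cG^*)=\{\gamma^*\}$, the asserted unique identifiability.

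Part (c) is immediate from Corollary~\ref{cor:barE*}: under \eqref{assum:faith} one has $\bar{\cE}^*=\bigcup_{\gamma\in\bar{\cE}^*_R}\cS^\gamma$, so if moreover $\bar{\cE}^*_R=\{\gamma^*\}$ then $\bar{\cE}^*=\cS^{\gamma^*}=\cS^*$. For the parenthetical instance one must show that equal variances among the Gaussian errors force $\bar{\cE}^*_R=\{\gamma^*\}$: since membership in $\bar{\cE}^*_R$ is conditions (1)--(2) of Theorem~\ref{thm:H<H}, condition (1) already fixes $\pa^\gamma(j)=\pa^*(j)$ at every non-Gaussian node, and conditions (1)--(2) together exhibit an SEM for $\sfP_X^*$ whose equations over $n\cG^*$ are those of \eqref{eq:model} and whose residuals $(\eta_j^\gamma)_{j\notin n\cG^*}$ form a Gaussian linear SEM on the residual subsystem, all of whose errors share the common variance of the $\epsilon_j$, $j\notin n\cG^*$; applying equal-variance identifiability of Gaussian SEMs \cite{peters2014identifiability, loh2014high, chen2019causal} to that subsystem pins down the parents of every Gaussian node and gives $\gamma=\gamma^*$.

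Parts (a) and (b) feed a classical identifiability theorem into the reduction. For (a), with at most one of $\epsilon_1,\dots,\epsilon_p$ Gaussian, the reduced forms $X=(I-B^*)^{-1}\epsilon=(I-B)^{-1}\eta$ (independent coordinates, at most one Gaussian coordinate in $\epsilon$) place us in the ICA/Darmois--Skitovich regime \cite{comon1994independent}: $(I-B)^{-1}=(I-B^*)^{-1}P\Lambda$ for a permutation $P$ and invertible diagonal $\Lambda$, and the LiNGAM argument \cite{shimizu2006linear} (triangularising $I-B^*$ and $I-B$ along topological orders of $\gamma^*$ and $\gamma$, with acyclicity forcing the consistent permutation to be unique) yields $P=I$, $\Lambda=I$, hence $B=B^*$ and $\gamma=\gamma^*$; note that faithfulness is not used, consistent with the remark preceding the proposition. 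For (b), with all error variances equal to a common $\sigma^2$, the covariance $\Sigma=\Cov(X)$ factors both as $\sigma^2(I-B^*)^{-1}(I-B^*)^{-\top}$ and as $(I-B)^{-1}\Omega(I-B)^{-\top}$ with $\Omega$ diagonal, and equal-variance identifiability of linear SEMs \cite{peters2014identifiability, loh2014high, chen2019causal} again forces $\gamma=\gamma^*$.

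I expect the main obstacle to be the last step of part (b) (and, analogously, the parenthetical step of part (c)): the cited equal-variance results are most naturally stated as uniqueness of the DAG \emph{within the equal-variance class}, whereas the reduction requires excluding every re-representation $\sfP_X^*\in\cP(\gamma,n\cG)$, including those whose own error variances are heterogeneous. Closing this gap should require more than the covariance factorisation --- one would iteratively identify a source node of $\gamma^*$ from the data-determined conditional/residual variances, peel it off, and interleave this with the non-Gaussian identifiability governing any non-Gaussian coordinates, so that the two mechanisms together leave $\gamma^*$ as the only representing DAG. By contrast, part (a) and the core of (c) are comparatively routine once Theorem~\ref{thm:H<H}, Corollary~\ref{cor:barE*}, and the cited classical theorems are in place.
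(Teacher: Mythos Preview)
Your reduction via Theorem~\ref{thm:H<H} and your handling of the core of part~(c) match the paper's. For part~(a) you take a different but legitimate route: you invoke ICA/LiNGAM identifiability externally, whereas the paper argues internally that $|n\cG^*|\ge p-1$ forces $\cC_A=\emptyset$ (condition~(i) of Lemma~\ref{lem:ineq_eps_e} requires every $k\in\cC_A$ to have Gaussian $\epsilon_k$ and $|\cC_A|\ge 2$ whenever $\cC_A\ne\emptyset$), after which Lemma~\ref{lem:R=C=emp} yields $\gamma=\gamma^*$ without appealing to outside results. Your route is fine but imports machinery the paper has deliberately rebuilt from scratch.

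The gap you flag in~(b) and in the parenthetical of~(c) is genuine for your approach: the equal-variance results you cite give uniqueness \emph{within} the equal-variance class, not against an arbitrary competing factorisation $(I-B)^{-1}\Omega(I-B)^{-\top}$ with heterogeneous diagonal~$\Omega$, so your peeling-and-interleaving sketch would have to be carried out in full. The paper does not close the gap along your sketch; instead it bypasses the representing-DAG reduction entirely for~(b) and works directly with the matrix $A$ satisfying $e_\sigma=A\epsilon$. Risk equality forces the $e_i$'s to be pairwise independent (Lemma~\ref{lem:ineq_eps_e}); under $\Var_*(\epsilon_j)\equiv V^*$ this makes the rows $a_i$ orthogonal with common norm, and $\det A=1$ (Lemma~\ref{lem:detA}) then gives $\|a_i\|=1$ for all~$i$. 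Comparing the two expansions of $X_{\sigma^{-1}(\ell)}$ at $\ell=\min\cR_A$ yields $a_{\ell,\sigma^{-1}(\ell)}=1$ while $|\supp(a_\ell)|\ge 2$, hence $\|a_\ell\|>1$, a contradiction; thus $\cR_A=\emptyset$ and Lemma~\ref{lem:R=C=emp} finishes. The parenthetical of~(c) is handled by the same mechanism restricted to indices in $\cR_A$, $\cC_A$, using Lemma~\ref{lem:non-G_err_nodes} for $i\notin\cR_A$. In short, what lets the paper give a complete proof of~(b) is precisely the $A$-matrix machinery of Appendix~\ref{app:working}, which replaces the external equal-variance citations your argument leans on.
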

\begin{proof}
The proof can be found in Appendix \ref{pf:prop:spl_cases}.
\end{proof}
 
\begin{remark}[LiNGAM]
LiNGAM \cite{shimizu2006linear} assumed that all errors are non-Gaussian, i.e., $|n\cG^*| = p$, which can also be slightly relaxed to condition (a) in Proposition \ref{prop:spl_cases} by following the identifiability properties of ICA. In this work, we also achieve this identifiability result, although with an alternative proof technique that is crucial in our context; see Appendices \ref{app:working} and \ref{app:identif} for further detail. 
\end{remark}

\begin{comment}
\begin{proposition}
    Suppose that \eqref{assum:faith} holds. Then we have $\bar{\cE}^* = \cS^*$ if $\bar{\cE}^*_R = \{\gamma^*\}$.
\end{proposition}
\begin{proof}
    
\end{proof}
\end{comment}

\begin{remark}[Equal error variance]
It has been shown in numerous works \cite{peters2014identifiability, chen2019causal, loh2014high, rothenhausler2018causal} that under the assumption of all error variances being equal, unique identification is possible, which is also formalized in condition (b) in Proposition \ref{prop:spl_cases}. Moreover, in condition (c), we show that this can be partially relaxed in the present context by requiring the equality of variances only for the nodes with Gaussian errors, under the assumption \eqref{assum:faith}.
\end{remark}

\begin{remark}[Sufficiency and non-necessity]\label{rem:barE=gam*}
Although sufficient, neither of the restrictions regarding the number of non-Gaussian errors or error variances stated in Proposition \ref{prop:spl_cases} is necessary for having $\bar{\cE}^*_R =  \{\gamma^*\}$, as demonstrated in the following example.
%We show in the following example a $\bar{\cE}^*_R =  \cE^* = \{\gamma^*\}$

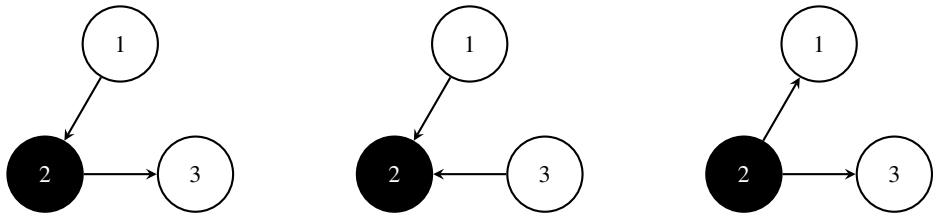
\begin{figure}[H]
\centering
   \begin{minipage}{0.32\textwidth}
        \centering
        \begin{tikzpicture}[->, >=stealth, thick]
    % Define nodes in an equilateral triangle
    \node (1) [circle, draw, minimum size=1cm] at (1,1.732) {1}; % Node 1 is unfilled
    \node (2) [circle, draw, fill=black, text=white, minimum size=1cm] at (0,0) {2}; % Node 2 is filled with black
    \node (3) [circle, draw, minimum size=1cm] at (2,0) {3}; % Node 3 remains unfilled
    % Draw directed edges
    \draw (1) -- (2);
    \draw (2) -- (3);
\end{tikzpicture}        % Label below the first figure
    \end{minipage}
    \begin{minipage}{0.32\textwidth}
        \centering
        \begin{tikzpicture}[->, >=stealth, thick]
    % Define nodes in an equilateral triangle
    \node (1) [circle, draw, minimum size=1cm] at (1,1.732) {1}; % Node 1 is unfilled
    \node (2) [circle, draw, fill=black, text=white, minimum size=1cm] at (0,0) {2}; % Node 2 is filled with black
    \node (3) [circle, draw, minimum size=1cm] at (2,0) {3}; % Node 3 remains unfilled
    % Draw directed edges
    \draw (1) -- (2);
    \draw (3) -- (2);
\end{tikzpicture}        % Label below the first figure
    \end{minipage}
    \begin{minipage}{0.32\textwidth}
        \centering
        \begin{tikzpicture}[->, >=stealth, thick]
    % Define nodes in an equilateral triangle
    \node (1) [circle, draw, minimum size=1cm] at (1,1.732) {1}; % Node 1 is unfilled
    \node (2) [circle, draw, fill=black, text=white, minimum size=1cm] at (0,0) {2}; % Node 2 is filled with black
    \node (3) [circle, draw, minimum size=1cm] at (2,0) {3}; % Node 3 remains unfilled
    % Draw directed edges
    \draw (2) -- (1);
    \draw (2) -- (3);
\end{tikzpicture}        % Label below the first figure
    \end{minipage}
    \caption{Examples to illustrate non-necessity of the conditions in Proposition \ref{prop:spl_cases}.}
    \label{fig:eps_2_nonG}
    \end{figure}
\noindent    Consider $\gamma^*$ to be any of the three DAGs in Figure \ref{fig:eps_2_nonG}, with $\epsilon_2$ being the only non-Gaussian error (i.e., $n\cG^* = \{2\}$) and no restriction on the error variances. Clearly, neither condition (a) or (b) in Proposition \ref{prop:spl_cases} holds. But since no other DAG satisfies the conditions in Theorem \ref{thm:H<H}, we have $\bar{\cE}^*_R = \{\gamma^*\}$.
\end{remark}

\begin{remark}[Faithfulness]\label{rem:no_faith}
It has been shown that faithfulness of $\sfP_X^*$ is not required for unique identification of $\gamma^*$ under the scenarios of all errors being non-Gaussian, as in LiNGAM \cite{shimizu2006linear}, or all error variances being equal, as in \cite{peters2014identifiability, chen2019causal, loh2014high, rothenhausler2018causal}. Indeed, these scenarios are specifically included as conditions (a) and (b) in Proposition \ref{prop:spl_cases}, where the assumption of faithfulness is not needed.
In condition (c) we further demonstrate that under the assumption \eqref{assum:faith}, the unique identification is feasible 
%generalized this fact in Theorem \ref{thm:E=DE} and Theorem \ref{thm:H<H}, by showing that faithfulness of $\sfP_X^*$ is not required 
even under a more general case, when $\bar{\cE}^*_R =  \{\gamma^*\}$, i.e., there is no other DAG that can represent $\sfP_X^*$. %as stated in assumption \eqref{assum:faith}. 
Indeed, this not only encompasses the aforementioned scenarios but also includes other interesting cases such as the example illustrated in Remark \ref{rem:barE=gam*}.
\end{remark}

%\end{align*}

\begin{comment}
\begin{remark}\label{rem:E=barE} 
Since for every $\gamma \in \bar{\cE}^*_S$, \ $\gamma \neq \gamma^*$, we have $\gamma \supset \gamma^*$, implying $|\gamma| > |\gamma^*|$, and thus, $\gamma \notin \cE^*$.  Moreover, under the assumption \eqref{assum:faith}, we have, by following Corollary \ref{cor:barE*},
\begin{align*}
    \cE^* \cap \, \bar{\cE}^*_S = \{\gamma^*\} \qquad \text{and} \qquad \cE^* \subseteq \bar{\cE}^*_R.
\end{align*}
Therefore, if $\bar{\cE}^*_R = \{\gamma^*\}$ then $\cE^* = \{\gamma^*\}$. We refer to Figure \ref{fig:illst_DE} for a pictorial representation of the relationships between the above classes.
\end{remark}
\end{comment}

\begin{comment}
\begin{remark}\label{rem:E=barE} 
It is straightforward that $\gamma^* \in \bar{\cE}^*_R \cap \cE^*$. In particular, when $\bar{\cE}^* \setminus \cS^* = \emptyset$, both sets contain only $\gamma^*$, i.e., $\bar{\cE}^*_R = \cE^* = \{\gamma^*\}$. Indeed, in that case, following Theorem \ref{thm:H<H}, it is not difficult to observe that there exists no $\gamma \neq \gamma^*$ for which $\sfP_X^* \in \cP(\gamma, n\cG^*)$ that implies $\bar{\cE}^*_R = \{\gamma^*\}$. In addition, since $\bar{\cE}^* = \cS^*$ and for every $\gamma \in \cS^*$, $\gamma \neq \gamma^*$, we have $\gamma \supset \gamma^*$, suggesting $|\gamma| > |\gamma^*|$, thereby, $\gamma \notin \cE^*$, it is immediate that $\cE^* = \{\gamma^*\}$.
\end{remark}
\end{comment}

% However, the converse of Remark \ref{rem:E=barE} may not hold,
Generally, $\bar{\cE}^*_R$ may contain DAGs other than $\gamma^*$ and may or may not coincide with $\cE^*$, as shown in the following two concrete examples. 
% However, the above does not hold in general, as in the following we provide two concrete examples to illustrate the above classes and present some scenarios where $\bar{\cE}^*_R$ may contain some DAG other than $\gamma^*$, and may not even coincide with $\cE^*$, having $\bar{\cE}^*_R \setminus \cE^*$ nonempty. %purely marked with yellow in Figure \ref{fig:illst_DE}, 
 \begin{example}\label{ex:barE-E} 
Consider $\gamma^*$ to be the DAG in Figure \ref{fig:risk_equiv}(a) with the following SEM:
\begin{align*}
X_1 &= \epsilon_1,\\
X_2 &= \beta^*_{21}X_1 + \epsilon_2,\\
X_3 &= \beta^*_{32}X_2 + \epsilon_3,
\end{align*}
where $\epsilon_1$ is the only non-Gaussian error, i.e., $n\cG^* = \{1\}$, and $\epsilon_2, \epsilon_3 \overset{\text{iid}}{\sim} \text{N}(0, 1)$. 

\begin{figure}[H]
  \centering
    \begin{minipage}{0.32\textwidth}
        \centering
        \begin{tikzpicture}[->, >=stealth, thick]
            % Define nodes in an equilateral triangle
            \node (1) [circle, draw, fill=black, text=white, minimum size=1cm] at (1,1.732) {1};
            \node (2) [circle, draw, minimum size=1cm] at (0,0) {2};
            \node (3) [circle, draw, minimum size=1cm] at (2,0) {3};
            % Draw directed edges
            \draw (1) -- (2);
            \draw (2) -- (3);
        \end{tikzpicture}
        % Label below the first figure
        \par (a) 
    \end{minipage}
    \begin{minipage}{0.32\textwidth}
        \centering
        \begin{tikzpicture}[->, >=stealth, thick]
            % Define nodes in an equilateral triangle
            \node (1) [circle, draw, fill=black, text=white, minimum size=1cm] at (1,1.732) {1};
            \node (2) [circle, draw, minimum size=1cm] at (0,0) {2};
            \node (3) [circle, draw, minimum size=1cm] at (2,0) {3};
            % Draw directed edges
            \draw (1) -- (3);
            \draw (1) -- (2);
            \draw (3) -- (2);
        \end{tikzpicture}
        % Label below the second figure
        \par (b)
    \end{minipage}
    \begin{minipage}{0.32\textwidth}
        \centering
        \begin{tikzpicture}[->, >=stealth, thick]
            % Define nodes in an equilateral triangle
            \node (1) [circle, draw, fill=black, text=white, minimum size=1cm] at (1,1.732) {1};
            \node (2) [circle, draw, minimum size=1cm] at (0,0) {2};
            \node (3) [circle, draw, minimum size=1cm] at (2,0) {3};
            % Draw directed edges
            \draw (1) -- (3);
            \draw (1) -- (2);
            \draw (2) -- (3);
        \end{tikzpicture}
        % Label below the second figure
        \par (c)
    \end{minipage}
    \caption{The DAGs in {\rm (a)}, {\rm (b)} and {\rm (c)} are $\gamma^*$, $\gamma$ and $\gamma'$, respectively, in Example \ref{ex:barE-E}.}
    \label{fig:risk_equiv}
\end{figure}

Let $\gamma$ be the DAG in Figure \ref{fig:risk_equiv}(b). Then, the variables can be alternatively generated by the following SEM based on $\gamma$, also with only node $1$ having a non-Gaussian error: 
\begin{align*}
X_1 &= \epsilon_1,\\
X_3 &= \beta^\gamma_{31}X_1 + \eta^\gamma_3,\\
X_2 &= \beta^\gamma_{23}X_3 + \beta^\gamma_{21}X_1 + \eta^\gamma_2,
\end{align*}
where the SEM coefficients are
\begin{align*}
\beta^\gamma_{31} = \beta^*_{32}\beta^*_{21}, \qquad \;\; \beta^\gamma_{23} = \frac{\beta^*_{32}}{1+\beta^{*2}_{32}} \qquad \text{and} \qquad \beta^\gamma_{21} = \frac{\beta^*_{21}}{1 + \beta^{*2}_{32}},
\end{align*}
and the error variables $\eta_2^\gamma$ and $\eta_3^\gamma$ are
\begin{align*}
\eta_2^\gamma = \frac{1}{1+\beta^{*2}_{32}}\epsilon_2 - \frac{\beta^*_{32}}{1+\beta^{*2}_{32}}\epsilon_3 \qquad \text{and} \qquad \eta_3^\gamma = \beta^*_{32}\epsilon_2 + \epsilon_3, 
\end{align*}
which are Gaussian and independent. This implies that $\gamma$ satisfies that conditions in Theorem \ref{thm:H<H}, or equivalently, $\sfP_X^* \in \cP(\gamma, n\cG^*)$ and in fact, there is no other DAG that satisfies these conditions. %Therefore, following the definition \eqref{def:epsR},
Furthermore, if $\gamma'$ denotes the DAG in Figure \ref{fig:risk_equiv}(c), then clearly $\gamma' \supset \gamma^*$, implying $\cS^* = \{\gamma^*, \gamma'\}$, and also, $\cS^\gamma = \{\gamma\}$. 
Therefore, %under the assumption \eqref{assum:faith},
following \eqref{def:E*}, \eqref{def:barE*}, \eqref{def:epsR}, and Corollary \ref{cor:barE*},
\begin{align*}
    \bar{\cE}^*_R = \{\gamma^*, \gamma\}, \;\; 
    \text{and under the assumption \eqref{assum:faith}}, \;\;
    \bar{\cE}^* = \{\gamma^*, \gamma, \gamma'\}, \;\; \text{and} \;\;
    \cE^* = \{\gamma^*\}. %\qquad \bar{\cE}^*_S = \{\gamma^*, \gamma'\}, 
\end{align*}
\end{example}

\begin{example}\label{ex:barE=E} 
Consider $\gamma^*$ to be the DAG in Figure \ref{fig:risk_equiv_E=RE}(a) with the following SEM:
\begin{align*}
X_1 &= \epsilon_1,\\
X_2 &= \beta^*_{21}X_1 + \epsilon_2,\\
X_3 &= \beta^*_{32}X_2 + \beta^*_{31}X_1 + \epsilon_3,
\end{align*}
where $\epsilon_1$ is the only non-Gaussian error, i.e., $n\cG^* = \{1\}$, and $\epsilon_2, \epsilon_3 \overset{\text{iid}}{\sim} \text{N}(0, 1)$. 

\begin{figure}[H]
  \centering
    \begin{minipage}{0.45\textwidth}
        \centering
        \begin{tikzpicture}[->, >=stealth, thick]
            % Define nodes in an equilateral triangle
            \node (1) [circle, draw, fill=black, text=white, minimum size=1cm] at (1,1.732) {1};
            \node (2) [circle, draw, minimum size=1cm] at (0,0) {2};
            \node (3) [circle, draw, minimum size=1cm] at (2,0) {3};
            % Draw directed edges
            \draw (1) -- (2);
            \draw (2) -- (3);
            \draw (1) -- (3);
        \end{tikzpicture}
        % Label below the first figure
        \par (a) 
    \end{minipage}
    \begin{minipage}{0.45\textwidth}
        \centering
        \begin{tikzpicture}[->, >=stealth, thick]
            % Define nodes in an equilateral triangle
            \node (1) [circle, draw, fill=black, text=white, minimum size=1cm] at (1,1.732) {1};
            \node (2) [circle, draw, minimum size=1cm] at (0,0) {2};
            \node (3) [circle, draw, minimum size=1cm] at (2,0) {3};
            % Draw directed edges
            \draw (1) -- (3);
            \draw (1) -- (2);
            \draw (3) -- (2);
        \end{tikzpicture}
        % Label below the second figure
        \par (b)
    \end{minipage}
    \caption{The DAGs in {\rm (a)} and {\rm (b)} are $\gamma^*$ and $\gamma$, respectively, in Example \ref{ex:barE=E}.}
    \label{fig:risk_equiv_E=RE}
\end{figure}

Let $\gamma$ be the DAG in Figure \ref{fig:risk_equiv_E=RE}(b). Then, the variables can be alternatively generated by the following SEM based on $\gamma$, also with only node $1$ having a non-Gaussian error: 
\begin{align*}
X_1 &= \epsilon_1,\\
X_3 &= \beta^\gamma_{31}X_1 + \eta^\gamma_3,\\
X_2 &= \beta^\gamma_{23}X_3 + \beta^\gamma_{21}X_1 + \eta^\gamma_2,
\end{align*}
where the SEM coefficients are
\begin{align*}
\beta^\gamma_{31} = \beta^*_{32}\beta^*_{21} + \beta^*_{31}, \qquad \;\; \beta^\gamma_{23} = \frac{\beta^*_{32}}{1+\beta^{*2}_{32}} \qquad \text{and} \qquad \beta^\gamma_{21} = \frac{\beta^*_{21} - \beta^*_{32}\beta^*_{31}}{1 + \beta^{*2}_{32}},
\end{align*}
and the error variables $\eta_2^\gamma$ and $\eta_3^\gamma$ are
\begin{align*}
\eta_2^\gamma = \frac{1}{1+\beta^{*2}_{32}}\epsilon_2 - \frac{\beta^*_{32}}{1+\beta^{*2}_{32}}\epsilon_3 \qquad \text{and} \qquad \eta_3^\gamma = \beta^*_{32}\epsilon_2 + \epsilon_3, 
\end{align*}
which are Gaussian and independent. This implies that $\gamma$ satisfies the conditions in Theorem \ref{thm:H<H}, or equivalently, $\sfP_X^* \in \cP(\gamma, n\cG^*)$, and in fact, there is no other DAG that satisfies these conditions. %Therefore, following the definition \eqref{def:epsR},
%Furthermore, if $\gamma'$ denotes the DAG in Figure \ref{fig:risk_equiv}(c), then clearly $\gamma' \supset \gamma^*$, implying $\cS^* = \{\gamma^*, \gamma'\}$, 
Furthermore, it is clear that $\cS^* = \{\gamma^*\}$ and $\cS^\gamma = \{\gamma\}$, and 
therefore, %under the assumption \eqref{assum:faith},
following \eqref{def:E*}, \eqref{def:barE*}, \eqref{def:epsR}, and Corollary \ref{cor:barE*},
\begin{align*}
    \bar{\cE}^*_R = \{\gamma^*, \gamma\}, \;\; 
    \text{and under the assumption \eqref{assum:faith}}, \qquad
    \bar{\cE}^* = \cE^* = \{\gamma^*, \gamma\}. %\qquad \bar{\cE}^*_S = \{\gamma^*, \gamma'\}, 
\end{align*}
\end{example}

Therefore, for the general case $\bar{\cE}^*_R \supseteq \{\gamma^*\}$, it still remains to confirm the equality between $\cE^*$ and $\cE(\gamma^*, n\cG^*)$.
In this regard, in the next theorem, we establish two important results regarding the class $\cE^*$. First, we establish more generally that under the assumption \eqref{assum:faith}, the family $\cE^*$ can be characterized as the set of DAGs that are not only risk equivalent but also Markov equivalent to $\gamma^*$, and second, it coincides with the distribution equivalence class $\cE(\gamma^*, n\cG^*)$ defined in \eqref{eq:dist_equiv}, thereby fulfilling the objective of this section.
%generalizing the result of Corollary \ref{cor:E=DE=gam}. 

%Furthermore, in the special case, when $\bar{\cE}^* = \cS^*$, both of them reduce to $\{\gamma^*\}$

%Suppose that either of the following cases holds:
%\begin{enumerate}
%\item[(a)] $\bar{\cE}^* = \{\gamma^*\}$, or
%\item[(b)] $\bar{\cE}^* \supset \{\gamma^*\}$, and $\sfP_X^*$ is faithful to $\gamma^*$.
%\end{enumerate}

\begin{theorem}[Risk, Markov, and distribution equivalence]\label{thm:E=DE}
Suppose that \eqref{assum:faith} holds.
Then we have
\begin{align*}
\cE^* = \Big\{\gamma \in \Gamma^p : \, %H^\gamma(\tilde{b}^\gamma, \tilde{\theta}^\gamma) = H^*(\tilde{b}^*, \tilde{\theta}^*) 
h_\gamma = h_*
\quad \text{and} \quad \bI(\gamma) = \bI(\gamma^*)\Big\} = \cE(\gamma^*, n\cG^*).
\end{align*}
%and in particular, when $\bar{\cE}^* = \cS^*$, we have $\cE^* = \cE(\gamma^*, n\cG^*) = \{\gamma^*\}$.
\end{theorem}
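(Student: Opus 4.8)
The plan is to establish the two set equalities $\cE^* = \cE^\dagger$ and $\cE^\dagger = \cE(\gamma^*, n\cG^*)$, where $\cE^\dagger := \{\gamma \in \Gamma^p : h_\gamma = h_* \text{ and } \bI(\gamma) = \bI(\gamma^*)\}$ is the middle set in the displayed identity. I will use two classical graph facts: (i)~Markov equivalent DAGs share the same skeleton, hence the same number of edges \cite{andersson1997characterization}; and (ii)~if $\bI(\gamma_1) \subseteq \bI(\gamma_2)$ then $|\gamma_2| \le |\gamma_1|$, with equality if and only if $\gamma_1$ and $\gamma_2$ are Markov equivalent \cite{chickering2002optimal}. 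I will also record a consequence of Theorem~\ref{thm:H<H}, Corollary~\ref{cor:barE*} and faithfulness \eqref{assum:faith}: since $\sfP_X^* \in \cP(\gamma^*, n\cG^*)$ and $\sfP_X^*$ is faithful to $\gamma^*$, one has $\bI(\sfP_X^*) = \bI(\gamma^*)$; and every $\gamma \in \bar{\cE}^*$ is a supergraph of some $\gamma_0 \in \bar{\cE}_R^*$, so $\bI(\gamma) \subseteq \bI(\gamma_0) \subseteq \bI(\sfP_X^*) = \bI(\gamma^*)$, i.e. $\sfP_X^*$ is Markov with respect to every DAG in $\bar{\cE}^*$. \textbf{Step 1 ($\cE^* = \cE^\dagger$).} If $\gamma \in \cE^\dagger$, then $\bI(\gamma) = \bI(\gamma^*)$ forces $|\gamma| = |\gamma^*|$ by (i), so $\gamma \in \cE^*$. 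Conversely, if $\gamma \in \cE^*$ then $h_\gamma = h_*$ puts $\gamma$ in $\bar{\cE}^*$, so $\bI(\gamma) \subseteq \bI(\gamma^*)$ by the remark above; together with $|\gamma| = |\gamma^*|$, fact~(ii) forces $\bI(\gamma) = \bI(\gamma^*)$. The same reasoning yields $\cE^* \subseteq \bar{\cE}_R^*$: picking $\gamma_0 \in \bar{\cE}_R^*$ with $\gamma \supseteq \gamma_0$, we get $\bI(\gamma_0) \subseteq \bI(\gamma) = \bI(\gamma^*)$, hence $|\gamma_0| \ge |\gamma^*| = |\gamma| \ge |\gamma_0|$ by~(ii), and $\gamma \supseteq \gamma_0$ with equal edge counts gives $\gamma = \gamma_0 \in \bar{\cE}_R^*$.

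\textbf{Step 2 ($\cE(\gamma^*, n\cG^*) \subseteq \cE^\dagger$).} Let $\gamma \in \cE(\gamma^*, n\cG^*)$, so $\cP(\gamma, n\cG) = \cP(\gamma^*, n\cG^*)$ for some $n\cG$. Since $\sfP_X^* \in \cP(\gamma, n\cG)$, Theorem~\ref{thm:H<H} gives $h_\gamma = h_*$. For the conditional-independence part I would pick a distribution $\sfP^{\mathrm f} \in \cP(\gamma^*, n\cG^*)$ faithful to $\gamma^*$ and a distribution $\sfP^{\mathrm g} \in \cP(\gamma, n\cG)$ faithful to $\gamma$ — such linear SEMs exist for a generic choice of the nonzero coefficients. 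As the two families coincide, $\sfP^{\mathrm f} \in \cP(\gamma, n\cG)$ is Markov with respect to $\gamma$, so $\bI(\gamma) \subseteq \bI(\sfP^{\mathrm f}) = \bI(\gamma^*)$; symmetrically $\bI(\gamma^*) \subseteq \bI(\sfP^{\mathrm g}) = \bI(\gamma)$. Hence $\bI(\gamma) = \bI(\gamma^*)$ and $\gamma \in \cE^\dagger$.

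\textbf{Step 3 ($\cE^\dagger \subseteq \cE(\gamma^*, n\cG^*)$).} By Step~1, any $\gamma \in \cE^\dagger = \cE^*$ lies in $\bar{\cE}_R^*$, so it satisfies conditions (1)–(2) of Theorem~\ref{thm:H<H} and $\bI(\gamma) = \bI(\gamma^*)$. Condition~(1) gives $\pa^\gamma(j) = \pa^*(j)$ for all $j \in n\cG^*$; since $\gamma$ and $\gamma^*$ also have the same skeleton and reversing any edge incident to an $n\cG^*$-node would change that node's parent set, $\gamma$ and $\gamma^*$ agree on every edge incident to $n\cG^*$ and can differ only, in a Markov-equivalent manner, within the induced subgraph on the Gaussian nodes. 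It then remains to upgrade this to the distributional identity $\cP(\gamma, n\cG^*) = \cP(\gamma^*, n\cG^*)$: given an SEM on $\gamma^*$ realizing some $\sfP_X \in \cP(\gamma^*, n\cG^*)$, one keeps the structural equations of the $n\cG^*$-nodes verbatim and re-expresses those of the Gaussian nodes via the Gaussian distribution-equivalence theorem \cite{geiger2002parameter} applied to the Gaussian residuals $\eta_j^\gamma$ of \eqref{eq:Gaus_barE} — this is the content of the structural Lemmas~\ref{lem:R=C=emp} and \ref{lem:R,C=non_emp} — and the reverse inclusion is symmetric, $\gamma$ and $\gamma^*$ playing interchangeable roles (both in $\bar{\cE}_R^*$, both Markov equivalent). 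This gives $\gamma \in \cE(\gamma^*, n\cG^*)$ with $n\cG = n\cG^*$. The main obstacle is precisely this last part of Step~3: converting "risk-equivalent, Markov-equivalent, and rigid on $n\cG^*$ via condition~(1)" into the full equality of distribution families requires simultaneously respecting the rigidity of the non-Gaussian equations and the Markov-equivalence-only flexibility of the Gaussian ones, and controlling their coupling when Gaussian nodes have non-Gaussian parents; a secondary technical point is the genericity argument of Step~2, which in the non-Gaussian regime rests on d-separation coinciding with conditional independence for a generic linear SEM whose errors admit densities.
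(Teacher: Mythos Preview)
Your proof is correct and follows essentially the same architecture as the paper's: both first establish $\cE^* = \cE^\dagger$ (the paper packages your facts (i)–(ii) and the remark ``risk equivalence forces $\sfP_X^*$ to be Markov with respect to $\gamma$'' as Lemmas~\ref{lem:IgamsubIgam*} and~\ref{lem:|gam|and|gam*|}), and both then connect $\cE^\dagger$ with $\cE(\gamma^*, n\cG^*)$ via parental preservation plus Markov equivalence. The paper simply factors your Steps~2--3 through a separately stated Theorem~\ref{thm:char_E}, whose first half is exactly your faithful-distribution argument in Step~2 and whose second half is your Step~3.

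One refinement worth noting for Step~3: the paper resolves the coupling issue you flag not by working on the Gaussian subgraph, but by a ``Gaussian hypothetical'' applied to the \emph{whole} DAG. Given $\sfP_X \in \cP(\gamma^*, n\cG^*)$, it temporarily replaces \emph{all} errors by Gaussians, invokes Gaussian distribution equivalence on the full graph to obtain a $\gamma$-representation, checks that the structural equations at $n\cG^*$-nodes are literally unchanged (since $\pa^\gamma(j)=\pa^*(j)$ and the Gaussian residual on a fixed parent set is unique), deduces by independence that each Gaussian-node error $\eta_j^\gamma$ is linear in only the $\eta_k^*$ with $k \notin n\cG^*$, and finally observes that these identities are purely linear-algebraic and therefore persist when the $n\cG^*$-errors are restored to their original non-Gaussian form. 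This completely sidesteps analyzing a Gaussian subgraph coupled to external non-Gaussian parents. By contrast, Lemmas~\ref{lem:R=C=emp} and~\ref{lem:R,C=non_emp} concern the $A$-matrix structure for the specific $\sfP_X^*$ and do not by themselves deliver $\cP(\gamma, n\cG^*) = \cP(\gamma^*, n\cG^*)$ for arbitrary $\sfP_X$, so they are not the right citation here.
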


\begin{proof}
The proof can be found in Appendix \ref{pf:thm:E=DE}.
\end{proof}
The following corollary of Theorem \ref{thm:E=DE} immediately establishes \eqref{char:E*}, which, as discussed in Section \ref{sec:sketch}, facilitates the development of the consistency theory in Section \ref{sec:asym_prop}. 
For every $\gamma \in \Gamma^p$, we denote by $\delta_\gamma$ and $\psi_\gamma$ the difference in the value of the risk function and the difference in the number of edges, respectively, between $\gamma$ and $\gamma^*$, i.e.,
\begin{align}\label{def:del_psi}
\delta_\gamma := h_\gamma - h_*
%H^\gamma(\tilde{b}^\gamma, \tilde{\theta}^\gamma) - H^*(\tilde{b}^*, \tilde{\theta}^*) 
\qquad \text{and} \qquad
\psi_\gamma := |\gamma| - |\gamma^*|.
\end{align}
%In light of Corollary \ref{cor:ineq_risk_and_param},
%Then clearly from the definition in \eqref{def:barE*}, we have
%\begin{align*}
    %\text{for every}\;\; \gamma \in \Gamma^p, \qquad \delta_\gamma \geq 0, \quad \, \text{with equality iff} \;\; \gamma \in \bar{\cE}^*.
    %&\text{for every}\;\; \gamma \in \bar{\cE}^*, \qquad \psi_\gamma \geq 0, \quad \text{with equality iff} \;\; \gamma \in \cE^*.
%\end{align*}
\begin{corollary}\label{cor:ineq_risk_and_param}
Fix any $\gamma \in \Gamma^p$. Then we have
\begin{align*}
    %\text{for every}\;\; \gamma \in \Gamma^p, \qquad 
    \delta_\gamma \geq 0, \quad \, \text{with equality being achieved if and only if} \;\; \gamma \in \bar{\cE}^*.
    %&\text{for every}\;\; \gamma \in \bar{\cE}^*, \qquad \psi_\gamma \geq 0, \quad \text{with equality iff} \;\; \gamma \in \cE^*.
\end{align*}
Moreover, if the assumption \eqref{assum:faith} holds, then we have $\cE^* \subseteq \bar{\cE}^*_R$, and in the case of $\gamma \in \bar{\cE}^*$,
\begin{align*}
    %&\text{for every}\;\; \gamma \in \Gamma^p, \qquad \delta_\gamma \geq 0, \quad \, \text{with equality iff} \;\; \gamma \in \bar{\cE}^*,\\
    %&\text{for every}\;\; \gamma \in \bar{\cE}^*, \qquad 
    \psi_\gamma \geq 0, \quad \text{with equality being achieved if and only if} \;\; \gamma \in \cE^*.
\end{align*}
%Under the assumption \eqref{assum:faith}, we have, for every $\gamma \notin \cE^*$,
%\begin{align*}
%\text{either} \qquad &H^\gamma(\tilde{b}^\gamma, \tilde{\theta}^\gamma) > H^*(\tilde{b}^*, \tilde{\theta}^*) \qquad \text{or} \;\;\; \qquad |\gamma| > |\gamma^*|.
%\end{align*}
Thus, %for every $\gamma \in \Gamma^p$, 
$\max\{\delta_\gamma, \psi_\gamma\} \geq 0$, where equality holds if and only if $\gamma \in \cE^*$.
\end{corollary}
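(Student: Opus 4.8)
The plan is to derive everything from Theorem~\ref{thm:H<H} together with Corollaries~\ref{cor:E=DE=gam} and \ref{cor:barE*}. The first assertion, $\delta_\gamma = h_\gamma - h_* \ge 0$ with equality exactly on $\bar{\cE}^*$, is immediate: Theorem~\ref{thm:H<H} gives $h_* \le h_\gamma$ for every $\gamma \in \Gamma^p$, and by the definition \eqref{def:barE*} of $\bar{\cE}^*$ the equality $h_\gamma = h_*$ is precisely the statement $\gamma \in \bar{\cE}^*$. So this requires no further work.

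The substantive ingredient for the part under assumption \eqref{assum:faith} is the edge-count bound: any DAG $\gamma'$ over which $\sfP_X^*$ factorizes satisfies $|\gamma'| \ge |\gamma^*|$. I would prove this via a skeleton argument. If $\sfP_X^*$ is Markov with respect to $\gamma'$ then $\bI(\gamma') \subseteq \bI(\sfP_X^*)$, and when faithfulness is in force, $\bI(\sfP_X^*) \subseteq \bI(\gamma^*)$, hence $\bI(\gamma') \subseteq \bI(\gamma^*)$; since nodes adjacent in $\gamma^*$ cannot be d-separated by any set in $\gamma^*$ they cannot be d-separated in $\gamma'$ either, and as non-adjacent nodes in a DAG are always d-separable, those nodes must be adjacent in $\gamma'$. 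Thus the skeleton of $\gamma^*$ sits inside that of $\gamma'$ and $|\gamma'| \ge |\gamma^*|$. In the vacuous case $\bar{\cE}^* = \cS^*$ (where \eqref{assum:faith} imposes nothing), the same bound is trivial since $\bar{\cE}^*_R = \{\gamma^*\}$ by Corollary~\ref{cor:E=DE=gam} and every $\gamma \in \bar{\cE}^* = \cS^*$ is a supergraph of $\gamma^*$.

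With this in hand the remaining claims are bookkeeping. For $\gamma \in \bar{\cE}^*$, Corollary~\ref{cor:barE*} supplies $\gamma' \in \bar{\cE}^*_R$ with $\gamma \supseteq \gamma'$; since $\sfP_X^* \in \cP(\gamma', n\cG^*)$ it factorizes over $\gamma'$, so $|\gamma| \ge |\gamma'| \ge |\gamma^*|$, i.e.\ $\psi_\gamma \ge 0$, and $\psi_\gamma = 0$ together with $h_\gamma = h_*$ is by \eqref{def:E*} exactly $\gamma \in \cE^*$ (the converse being trivial). For $\cE^* \subseteq \bar{\cE}^*_R$: given $\gamma \in \cE^*$ we have $\gamma \in \bar{\cE}^*$, hence $\gamma \supseteq \gamma'$ for some $\gamma' \in \bar{\cE}^*_R$, and the chain $|\gamma^*| = |\gamma| \ge |\gamma'| \ge |\gamma^*|$ forces $|\gamma| = |\gamma'|$, so $\gamma = \gamma' \in \bar{\cE}^*_R$. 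Finally, $\max\{\delta_\gamma,\psi_\gamma\} \ge 0$ with equality iff $\gamma \in \cE^*$ follows by cases: if $\gamma \notin \bar{\cE}^*$ then $\delta_\gamma > 0$; if $\gamma \in \bar{\cE}^*$ then $\delta_\gamma = 0$ and $\psi_\gamma \ge 0$, so $\max = 0$ iff $\psi_\gamma = 0$ iff $\gamma \in \cE^*$.

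I expect the only real obstacle to be the edge-count comparison $|\gamma'| \ge |\gamma^*|$ for $\gamma' \in \bar{\cE}^*_R$ (equivalently, that $\sfP_X^*$ has no strictly sparser I-map than $\gamma^*$); this is exactly where faithfulness is \emph{indispensable}, since without it a distribution can factorize over a sparser DAG, and it is also the point requiring care in separating the vacuous regime $\bar{\cE}^* = \cS^*$ (dispatched via Corollary~\ref{cor:E=DE=gam}) from the genuine one. The rest is a routine recombination of the already-established characterizations of $\bar{\cE}^*$, $\bar{\cE}^*_R$, and $\cE^*$.
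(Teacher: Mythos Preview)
Your proof is correct. The core engine in both arguments is the same d-separation/skeleton fact: if $\sfP_X^*$ is Markov with respect to some DAG $\gamma'$ and faithful to $\gamma^*$, then $\bI(\gamma') \subseteq \bI(\gamma^*)$, which forces every adjacency of $\gamma^*$ to appear in $\gamma'$ and hence $|\gamma'| \ge |\gamma^*|$. The paper packages this as Lemmas~\ref{lem:IgamsubIgam*} and~\ref{lem:|gam|and|gam*|}; you prove it inline.

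Where you diverge is in how you reach that point. The paper invokes Theorem~\ref{thm:E=DE} to obtain the dichotomy ``either $h_\gamma \neq h_*$ or $\bI(\gamma) \neq \bI(\gamma^*)$'' for $\gamma \notin \cE^*$, then applies the skeleton lemma directly to $\gamma$. You instead route through Corollary~\ref{cor:barE*}: for $\gamma \in \bar{\cE}^*$ you extract $\gamma' \in \bar{\cE}^*_R$ with $\gamma \supseteq \gamma'$, apply the skeleton bound to $\gamma'$, and chain $|\gamma| \ge |\gamma'| \ge |\gamma^*|$. This buys you two things: you avoid the heavier Theorem~\ref{thm:E=DE} entirely, and the containment $\cE^* \subseteq \bar{\cE}^*_R$ falls out cleanly from the squeeze $|\gamma^*| = |\gamma| \ge |\gamma'| \ge |\gamma^*|$ forcing $\gamma = \gamma'$. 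The paper's appendix proof does not explicitly address that containment. Your separate handling of the vacuous case $\bar{\cE}^* = \cS^*$ via Corollary~\ref{cor:E=DE=gam} is also a clean way to respect the conditional nature of assumption~\eqref{assum:faith}.
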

\begin{proof}
The proof can be found in Appendix \ref{pf:cor:ineq_risk_and_param}.
\end{proof}

Furthermore, we %present an illustration of the aforementioned classes and 
also depict the above results in Figure \ref{fig:illst_DE}, under the assumption \eqref{assum:faith}.
%\begin{comment}
\begin{figure}[!htb]%[H]
   % \begin{minipage}{0.45\textwidth}
        \centering
\begin{tikzpicture}
  % Set C - largest oval
  \draw[fill=red!50, draw=purple, thick] (0,0) ellipse (6.3cm and 3.8cm);

  \draw[fill=orange!50, draw=red, thick] (0,0) ellipse (4.9cm and 3cm);
  
  % Set B - medium oval
  \draw[fill=yellow!50, draw=orange, thick] (0,0) ellipse (3.2cm and 2cm);
  
  % Set A - smallest oval
  \draw[fill=green!30, draw=green!90!black, thick] (0,0) ellipse (1.8cm and 1.2cm);

  % Optional: labels inside sets
  \node at (0, 0.8) {\textbf{$\cE^* = \cE(\gamma^*, n\cG^*)$}};
  \node at (1, 0.25) {$\delta_\gamma = 0$};
  \node at (1, -0.25) {$\psi_\gamma = 0$};
  \node at (2.4, 0.25) {$\delta_\gamma = 0$};
  \node at (2.4, -0.25) {$\psi_\gamma > 0$};
  \node at (5.6, 0) {$\delta_\gamma > 0$};
  \node at (2.2, 1.25) {\textbf{$\bar{\cE}^*_R$}};
  \node at (3.7, 1.8) {\textbf{$\bar{\cE}^*$}};
  \node at (3.8, 0.5) {$\delta_\gamma = 0$};
  \node at (3.8, -0.5) {$\psi_\gamma > 0$};
  \node at (4.05, 0) {$\gamma \supset \gamma' \in \bar{\cE}^*_R$};
  \node %[circle, draw, fill=white, text=black, minimum size=0.125cm] 
  at (-0.1, 0.1) {\textbf{$\gamma^*$}};
  \node at (0, 0) {\textbf{$\cdot$}};
  \node at (4.9, 2.2) {\textbf{$\Gamma^p$}};
\end{tikzpicture}
\caption{The classes of DAGs \, $\Gamma^p, \bar{\cE}^*, \bar{\cE}^*_R$ and $\cE^*$ are represented by the ovals marked with red, orange, yellow, and green, respectively. Therefore, the class $\bar{\cE}^*_R \setminus \cE^*$ is represented by the region purely marked with yellow, and due to \eqref{assum:faith}, the green region equivalently represents the class $\cE(\gamma^*, n\cG^*)$. Each region is characterized by $\delta_\gamma$ and $\psi_\gamma$.
%The classes of DAGs $\cE^*$ and $\bar{\cE}^*$ with some characteristic properties.
}
    \label{fig:illst_DE}
    \end{figure}
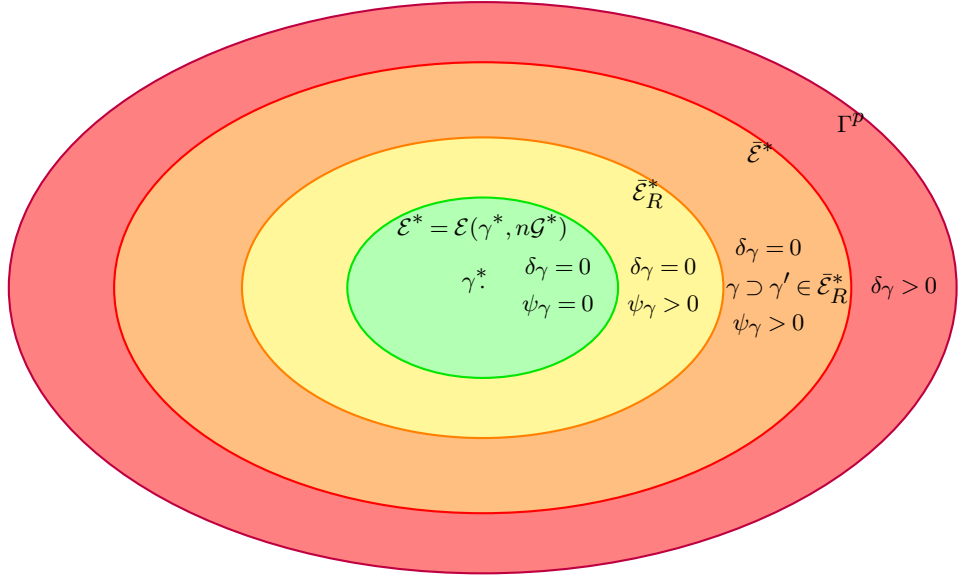

\subsection{Proof sketch of the identifiability results}\label{subsec:pf_sketch_iden}
We provide a brief proof sketch of the identifiability theory, specifically Theorem \ref{thm:H<H} and Theorem \ref{thm:E=DE}, to demonstrate how the Laplace-error working model allows us to seamlessly connect between the probabilistic and graph-theoretic properties. First, without loss of generality, suppose the true causal order $\sigma^*$ is such that $\sigma^*(j) = j$ for every $j \in [p]$, and fix any arbitrary $\gamma \in \Gamma^p$ with the corresponding causal order $\sigma$, and the model parameters $b^\gamma$ and $\theta^\gamma$. Let $e_\sigma^\gamma$ be the random vector whose elements are $e_j^\gamma, j \in [p]$ and ordered according to $\sigma$, i.e., 
\begin{align}\label{eq:e_sig_inv}
    e_\sigma^\gamma := (e_{\sigma^{-1}(1)}^\gamma, e_{\sigma^{-1}(2)}^\gamma, \dots, e_{\sigma^{-1}(p)}^\gamma), \quad \text{implying that} \;\; e^\gamma_{\sigma^{-1}(j)} = a_j^T\epsilon, \;\; j \in [p],%\text{where} \;\; \epsilon = (\epsilon_1, \dots, \epsilon_p)
\end{align} 
where $\epsilon = (\epsilon_1, \dots, \epsilon_p)$ collects the error variables from the data generating process \eqref{eq:model}, and the elements of $a_j \in \bR^p$ are some functions of $b_{jk}^\gamma$'s and $\beta^*_{jk}$'s; %under the working model \eqref{eq:lap_model_alt} or the true model \eqref{eq:model}, respectively; 
see the discussion around \eqref{eq:defA} in Appendix \ref{app:working}. 
Therefore, if we let $A = ((a_{jk})) \in \bR^{p \times p}$ be such that for every $j \in [p]$, its $j^{\rm th}$ row is $a_j^T$, then clearly $e^\gamma_\sigma = A\epsilon$, and
also, as we show in Lemma \ref{lem:detA}, $\det(A) = 1$ . Now, to prove that $h_\gamma \geq h_*$, it suffices to show, in view of Lemma \ref{lem:H_gamma_basic}, that
\begin{align}\label{ineq:eps_e}
\prod_{j \in [p]} \sfE_*[|\epsilon_j|] \; \leq \; \prod_{j \in [p]}\sfE_*[|e^\gamma_j|].
\end{align}
Due to \eqref{eq:error_dist}, i.e., the errors being scale mixture of Gaussian, and thereby exploiting the closed-form expression of their first absolute moment, it is equivalent to having, as shown in Lemma \ref{lem:exp_mod}, that
\begin{align*}
\prod_{j \in [p]} \sfE_*[\lambda_j] \leq \prod_{j \in [p]} \sfE_*\Big[\big(\sum_{k \in [p]} a_{jk}^2\lambda_k^2\big)^{1/2}\Big]. %\sfE_*\big[||a_j \circ \\lambda||\big],
\end{align*}
We prove the above by first constructing an appropriate square matrix, on which we apply Hadamard's inequality \cite{hadamard1893determinants} (Lemma \ref{lem:Hadamard}), and then employing the fact that $\det(A) = 1$, see Lemma \ref{lem:ineq_a_lam}.
Consequently, the equality in the above follows from the conditions of equality in the Hadamard's inequality, which are in turn shown to be equivalent to satisfying either of the following, for every $i, j \in [p]$: %letting $S_{ij} := \{k \in [p] : a_{ik}a_{jk} \neq 0\}$ 
\begin{enumerate}
\item[(1)] for every $k \in [p]$, $a_{ik}a_{jk} = 0$, or %${\rm supp}(a_i \circ a_j) = \emptyset$.
\item[(2)] for every $k \in [p]$, such that $a_{ik}a_{jk} \neq 0$, %for every $k \in {\rm supp}(a_i \circ a_j)$, 
$\lambda_k$ is almost surely degenerate, satisfying 
\begin{align*}
\sum_{k \in [p]}%{\rm supp}(a_i \circ a_j)} 
a_{ik}a_{jk} \lambda^2_k \;\; \overset{\rm a.s.}{=} 0, \quad \text{i.e.,} \quad (a_i \circ \lambda)^T(a_j \circ \lambda) \overset{\rm a.s.}{=} 0.
\end{align*}
%which necessarily implies that $|{\rm supp}(a_i \circ a_j)| \geq 2$.
\end{enumerate}
Next, based on the two conditions above, we extract the structural form of $A$ by using important results from linear algebra, see Lemma \ref{lem:Aprop}. Furthermore, by using Darmois-Skitovic Theorem \cite{darmois1953analyse, skitovitch1953property} (Lemma \ref{lem:DarSki}), the assumption \eqref{assum:faith} in appropriate scenarios, and a series of intermediate lemmas, we derive that, the error terms $e_j^\gamma, j \in [p]$ must be pairwise independent along with the following structure: %for every $j \in n\cG^*$, $e^\gamma_j = \epsilon_j$.
\begin{align*}
e_j^\gamma =
\begin{cases}
\epsilon_j \qquad &\text{if} \;\; j \in n\cG^*\\
\text{some linear combination of} \;\; \epsilon_j, j \notin n\cG^* \qquad &\text{otherwise}
\end{cases}.
\end{align*}
We show that the former case leads us to the parental preservation, i.e., condition (1) in Theorem \ref{thm:H<H}, and the latter one to condition (2) in the same. Moreover, a limiting argument over the SEM coefficients, as shown in Lemma \ref{lem:supset_h}, ensures that if the equality in \eqref{ineq:eps_e} holds, then it also holds for any $\gamma' \supseteq \gamma$, thereby establishing Theorem \ref{thm:H<H}. 

Finally, due to the independence of the errors $e_j^\gamma, j \in [p]$, and assumption \eqref{assum:faith}, we prove that if the equality in \eqref{ineq:eps_e} holds, then $\bI(\gamma) \subseteq \bI(\gamma^*)$, see Lemma \ref{lem:IgamsubIgam*}. Given that, it follows from the probabilistic properties of the graphical models, as shown in Lemma \ref{lem:|gam|and|gam*|}, that $|\gamma^*| \leq |\gamma|$, where equality holds if and only if $\bI(\gamma) = \bI(\gamma^*)$. This is crucial for characterizing $\cE^*$ in terms of Markov equivalence and thereby establishing its equality with the distribution equivalence class $\cE(\gamma^*, n\cG^*)$ in Theorem \ref{thm:E=DE}.

\subsection{Characterization of the distribution equivalence class}
Now that Theorem \ref{thm:E=DE} has established the equality between $\cE^*$ and $\cE(\gamma^*, n\cG^*)$, it only remains to justify the asymptotic approximation \eqref{eq:lap_missp} to advance towards posterior DAG selection consistency. 
In addition, %it is important to characterize the elements of $\cE^*$ and 
when %$\bar{\cE}^*_R \supset  \{\gamma^*\}$, i.e., 
$\cE^*$ may include more DAGs other than $\gamma^*$, it is of interest to graphically characterize the elements in $\cE^*$ as they will be indistinguishable from $\gamma^*$. Following Theorem \ref{thm:E=DE}, this is equivalent to characterizing the distribution equivalence class $\cE(\gamma^*, n\cG^*)$ by developing some graphical criteria. For this reason, we first introduce some graph-theoretic notations that will be useful in the rest of this paper.

\paragraph*{Graph theoretic notations}

Consider a DAG $\gamma \in \Gamma^p$. We say that there is a \textit{path} between node $k$ to node $j$ if there is a sequence $k = k_0, k_1, \dots, k_q = j$ such that either $(k_{\ell-1} \to k_\ell) \in \gamma$ or $(k_\ell \to k_{\ell-1}) \in \gamma$ for every $\ell \in [q]$. In particular, we say that the path is \textit{directed from} node $k$ to node $j$ if $(k_{\ell-1} \to k_\ell) \in \gamma$ for every $\ell \in [q]$, and node $k$ is called an \textit{ancestor} of node $j$, and node $j$ is called a \textit{descendant} of node $k$ in $\gamma$. We denote by $\an^\gamma(j)$ and $\de^\gamma(j)$ the set of ancestors and the set of descendants of node $j$ in $\gamma$, respectively, and further define $\bar{\an}^\gamma(j) := \an^\gamma(j) \cup \{j\}$, and $\bar{\de}^\gamma(j) := \de^\gamma(j) \cup \{j\}$. In particular, when $\gamma = \gamma^*$, we denote them by $\an^*(j)$, $\bar{\an}^*(j)$, $\de^*(j)$ and $\bar{\de}^*(j)$.

\begin{theorem}[Parental preservation]\label{thm:char_E}
%Suppose that \eqref{assum:faith} holds. Then we 
We have
\begin{align*}
\cE(\gamma^*, n\cG^*) &= \{\gamma \in \Gamma^p :  \pa^\gamma(j) = \pa^*(j) \quad \text{for every} \;\; j \in n\cG^* \quad \text{and} \quad \bI(\gamma) = \bI(\gamma^*)\}.
\end{align*}
\end{theorem}
\begin{proof}
The proof can be found in Appendix \ref{pf:thm:char_E}.
\end{proof}

\begin{comment}
\begin{remark}
Imposing stricter or additional restrictions on error distributions, such as the ones in Proposition \ref{prop:spl_cases}, it is not difficult to show that $\cE(\gamma^*, n\cG^*)$ reduces to $\{\gamma^*\}$ by following the proof technique of Proposition \ref{prop:spl_cases}.
\end{remark}
\end{comment}

Theorem \ref{thm:char_E} implies that in order for any DAG $\gamma \in \Gamma^p$ to be distribution equivalent to $(\gamma^*, n\cG^*)$, it not only needs to be Markov equivalent to $\gamma^*$ but also requires every node in $n\cG^*$ to have the same parents as in $\gamma^*$. We call the second property the \textit{parental preservation}, which emerges solely because of the presence of non-Gaussian errors. %and also incorporates their specific locations.
%identity of their specific locations.

\begin{remark}
If we assume all errors are Gaussian, i.e., $n\cG^* = \emptyset$, or equivalently, $\sfP_X^*$ is multivariate Gaussian, then by Theorem \ref{thm:char_E}, $\cE(\gamma^*, n\cG^*)$ clearly reduces to the Markov equivalence class of $\gamma^*$. This leads us to the well-known result in literature (e.g., \cite{geiger2002parameter}) that for Gaussian DAG models, Markov equivalence is equivalent to distribution equivalence.
\end{remark}

In the following corollary of Theorem \ref{thm:char_E}, we present an interesting property of any distributional equivalent DAG that arises as a consequence of parental preservation and Markov equivalence.

 \begin{corollary}
 [Ancestral restriction]\label{cor:anc_res}
 For any $\gamma \in \cE(\gamma^*, n\cG^*)$ and any $k, \ell \in [p]$, if there exists any $j \in n\cG^*$ such that $k \in \bar{\an}^*(j)$ and $\ell \in \de^*(j)$, that is, 
 $k$ is an ancestor of $\ell$ through $j$ in $\gamma^*$,
 then $\ell \notin \an^\gamma(k)$, that is,
 $\ell$ cannot be an ancestor of $k$ in $\gamma$.
 \end{corollary}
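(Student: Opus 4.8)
The plan is to reduce the assertion to the single graph-theoretic fact that the distinguished non-Gaussian node $j$ has the same strict descendants in $\gamma$ as in $\gamma^*$, that is, $\de^\gamma(j) = \de^*(j)$, and then to derive this from parental preservation and Markov equivalence. First I would record what Theorem~\ref{thm:char_E} gives for $\gamma \in \cE(\gamma^*, n\cG^*)$: namely $\pa^\gamma(j) = \pa^*(j)$, and $\bI(\gamma) = \bI(\gamma^*)$; by the classical characterization of Markov equivalence (Verma--Pearl), the latter says $\gamma$ and $\gamma^*$ share the same skeleton and the same set of v-structures (unshielded colliders). Together with $\pa^\gamma(j) = \pa^*(j)$, the skeleton identity also forces $j$ to have the same children in both DAGs, since an edge between $j$ and a child $c$ of $j$ in $\gamma^*$ cannot point into $j$ in $\gamma$---that would put $c \in \pa^\gamma(j) = \pa^*(j)$, contradicting acyclicity of $\gamma^*$---and symmetrically.

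The core step is $\de^\gamma(j) = \de^*(j)$. I would prove $\de^*(j) \subseteq \de^\gamma(j)$; the reverse inclusion then follows by interchanging $\gamma$ and $\gamma^*$, as every hypothesis in play ($\pa^\gamma(j) = \pa^*(j)$, equality of children, of skeleton, and of v-structures) is symmetric in the two DAGs. Given $v \in \de^*(j)$, fix a \emph{shortest} directed path $j = q_0 \to q_1 \to \cdots \to q_s = v$ in $\gamma^*$. Minimality forces this path to be chordless (a forward chord would yield a shorter directed path, a backward chord a directed cycle), so among $q_0, \dots, q_s$ the only edges of $\gamma^*$---and hence, by the common skeleton, of $\gamma$---are the consecutive ones. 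I then claim each edge $q_i \to q_{i+1}$ survives in $\gamma$, by induction on $i$: for $i = 0$, $q_1$ is a child of $j$ in $\gamma^*$ hence in $\gamma$; and granting $q_{i-1} \to q_i$ in $\gamma$, if the edge between $q_i$ and $q_{i+1}$ were instead oriented $q_{i+1} \to q_i$ in $\gamma$, then $q_i$ would acquire in $\gamma$ the two non-adjacent parents $q_{i-1}$ and $q_{i+1}$, i.e.\ a v-structure $q_{i-1} \to q_i \leftarrow q_{i+1}$ that is absent from $\gamma^*$ (where $q_i \to q_{i+1}$, so $q_{i+1} \notin \pa^*(q_i)$), contradicting equality of v-structures; hence $q_i \to q_{i+1}$ in $\gamma$. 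So the entire path lies in $\gamma$ and $v \in \de^\gamma(j)$.

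To finish: suppose, for contradiction, that $\ell \in \an^\gamma(k)$, that is, $k \in \de^\gamma(\ell)$. Since $\ell \in \de^*(j) = \de^\gamma(j)$ by the core step, we get $\de^\gamma(\ell) \subseteq \de^\gamma(j) = \de^*(j)$, whence $k \in \de^*(j)$; but $k \in \bar{\an}^*(j)$ and $\bar{\an}^*(j) \cap \de^*(j) = \emptyset$ (a common element would be both an ancestor-or-$j$ and a strict descendant of $j$ in $\gamma^*$, closing a directed cycle; in particular $j \notin \de^*(j)$, which also handles the case $k = j$), a contradiction, so $\ell \notin \an^\gamma(k)$. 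I expect the only genuine obstacle to be the core step, and the one idea carrying it is that a \emph{shortest} directed path is chordless, which is exactly what lets v-structure preservation pin down the orientation of its edges one at a time from $j$ outward; given $\de^\gamma(j) = \de^*(j)$, the concluding reduction is immediate.
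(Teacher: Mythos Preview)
Your proof is correct and takes a genuinely different route from the paper. The paper argues directly by contradiction: assuming $\ell \in \an^\gamma(k)$, it picks intermediate nodes $k', \ell'$ adjacent to $j$, traces the would-be directed path from $\ell$ to $k$ through the common skeleton, and shows via a somewhat intricate case analysis that maintaining Markov equivalence forces some node $\ell_i$ into $\pa^\gamma(j) = \pa^*(j)$, closing a cycle in $\gamma^*$. Your argument instead isolates a clean structural lemma, $\de^\gamma(j) = \de^*(j)$, proved by the shortest-path trick (chordlessness plus v-structure preservation propagates the orientation outward from $j$), after which the corollary is a two-line contradiction with acyclicity. The trade-offs: your approach is more modular and yields a reusable fact about non-Gaussian nodes that is stronger than the stated corollary (equality of full descendant sets, not just the ancestral restriction); the paper's approach avoids stating an auxiliary lemma but is harder to follow and more tightly coupled to the specific contradiction sought. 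Both rely on the same ingredients---parental preservation at $j$, common skeleton, common v-structures---so neither is more elementary in terms of prerequisites.
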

\begin{proof}
The proof can be found in Appendix \ref{pf:cor:anc_res}.
\end{proof}

In other words, for any $k, \ell \in [p]$, if there exists any $j \in n\cG^*$ such that there is a directed path from $k$ to $\ell$ through $j$, then there will be no directed path from $\ell$ to $k$ in any $\gamma \in \cE(\gamma^*, n\cG^*)$. We call this property the \textit{ancestral restriction}:
% , is imposed on every distribution equivalent DAG $\gamma$ that for every $j \in n\cG^*$, 
\textit{no true descendant of $j$ is allowed in $\gamma$ to be an ancestor of any true ancestor of $j$}; see Figure \ref{fig:anc_res} for an illustration.

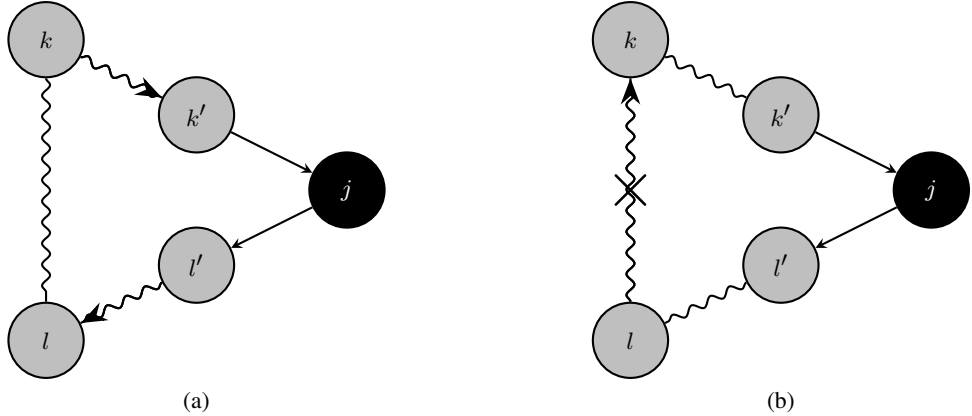
\begin{figure}%[H]
\centering
\begin{minipage}{0.45\textwidth}
        \centering
\begin{tikzpicture}
    % First DAG (Left Side) - Adjusted k position
    \node[draw, circle, fill=lightgray, minimum size=1cm, thick] (k1) at (0,5) {$k$}; % Moved k up to align with k' and j
    \node[draw, circle, fill=lightgray, minimum size=1cm, thick] (kp1) at (2,4) {$k'$};
    \node[draw, circle, fill=black, text=white, minimum size=1cm, thick] (j1) at (4,3) {$j$}; % Kept in original position
    \node[draw, circle, fill=lightgray, minimum size=1cm, thick] (lp1) at (2,2) {$l'$};
    \node[draw, circle, fill=lightgray, minimum size=1cm, thick] (l1) at (0,1) {$l$};

    % Edges for First DAG
    \draw[decorate, decoration={snake, amplitude=1.5pt, segment length=8pt}, thick, postaction={midway, draw, ->, >={Stealth[scale=1.5]}}] (k1) -- (kp1);
    \draw[->, >=stealth, thick] (kp1) -- (j1);
    \draw[->, >=stealth, thick] (j1) -- (lp1);
    \draw[decorate, decoration={snake, amplitude=1.5pt, segment length=8pt}, thick, postaction={midway, draw, ->, >={Stealth[scale=1.5]}}] (lp1) -- (l1);
    \draw[decorate, decoration={snake, amplitude=1.5pt, segment length=8pt}, thick] (k1) -- (l1);
    
    %\node at (2,0) {$(a)$};
     \end{tikzpicture}
        % Label below the first figure
        \\ (a) %$\gamma^*$
        %\caption{$\gamma^*$}
    \end{minipage}
    \hfill
\begin{minipage}{0.45\textwidth}
        \centering
        \begin{tikzpicture}

    % Second DAG (Right Side) - Adjusted k position
    \node[draw, circle, fill=lightgray, minimum size=1cm, thick] (k2) at (8,5) {$k$}; % Moved k up to align with k' and j
    \node[draw, circle, fill=lightgray, minimum size=1cm, thick] (kp2) at (10,4) {$k'$};
    \node[draw, circle, fill=black, text=white, minimum size=1cm, thick] (j2) at (12,3) {$j$}; % Kept in original position
    \node[draw, circle, fill=lightgray, minimum size=1cm, thick] (lp2) at (10,2) {$l'$};
    \node[draw, circle, fill=lightgray, minimum size=1cm, thick] (l2) at (8,1) {$l$};

    % Edges for Second DAG
    \draw[decorate, decoration={snake, amplitude=1.5pt, segment length=8pt}, thick] (k2) -- (kp2);
    \draw[->, >=stealth, thick] (kp2) -- (j2);
    \draw[->, >=stealth, thick] (j2) -- (lp2);
    \draw[decorate, decoration={snake, amplitude=1.5pt, segment length=8pt}, thick] (lp2) -- (l2); % Removed the arrowhead
    \draw[decorate, decoration={snake, amplitude=1.5pt, segment length=8pt}, thick, postaction={midway, draw, ->, >={Stealth[scale=1.5]}}] (l2) -- (k2);
    
    %\node at (10,0) {$(b)$};
    
    % Cross beside arrowhead
    \node at (8,3) {\fontsize{200pt}{240pt}\selectfont $\times$}; %{\Huge$\times$}; 

\end{tikzpicture}
% Label below the second figure
        \\ (b) %$\gamma$
        %\caption{$\gamma$}
    \end{minipage}
    \caption{The DAG in {\rm (a)} is $\gamma^*$ and in {\rm (b)} is some DAG $\gamma$ that is distribution equivalent to $\gamma^*$. Some nodes are labeled in gray as the corresponding errors are not specified to be non-Gaussian. The wiggly edges denote the existence of paths between the connecting nodes, and if directed, they indicate a directed path. In {\rm (b)}, there is no directed path possible from node $\ell$ to node $k$ due to the ancestral restriction (Corollary \ref{cor:anc_res}).}
    \label{fig:anc_res}
\end{figure}

Furthermore, it is possible to graphically represent the class $\cE(\gamma^*, n\cG^*)$ in a unique fashion. For this, first, we state the notion of \textit{completed partially directed acyclic graph} (CPDAG) which is a graphical representation that uniquely encodes the Markov equivalence class of a DAG, see \cite{spirtes2001causation}. 

\begin{definition}[CPDAG]
The CPDAG of $\gamma \in \Gamma^p$, denoted by ${\rm CPDAG}(\gamma)$, is the mixed graph with the same skeleton of $\gamma$ such that for any %$j, k \in [p]$ forming an
edge $(j \to k) \in \gamma$, we have 
\begin{align*}
&(j \to k) \in {\rm CPDAG}(\gamma) \;\; \text{if and only if} \;\; (j \to k) \in \gamma' \;\; \text{for every} \;\; \gamma' \;\; \text{such that} \;\; \bI(\gamma') = \bI(\gamma);
\end{align*}
otherwise, we omit the direction to represent it as an undirected edge $(j \mathrel{-\mkern-10mu-} k)$.
\end{definition}

Now, based on ${\rm CPDAG}(\gamma^*)$, we present the following corollary to characterize the set of edges in $\gamma^*$ that must retain their direction in every distribution equivalent DAG.

\begin{corollary}[Graphical criteria for distribution equivalence class]\label{cor:enc_DAG}
For 
%every $j, k \in [p]$, such that 
any $(j \to k) \in \gamma^*$, we have $(j \to k) \in \gamma$ for every $\gamma \in \cE(\gamma^*, n\cG^*)$ if and only if any of the following condition holds:
\begin{enumerate}
\item[(1)] $(j \to k) \in {\rm CPDAG}(\gamma^*)$, or
\item[(2)] either $j \in n\cG^*$ or $k \in n\cG^*$, or
\item[(3)] if $(j \to k)$ were reversed, then it would either create a new v-structure, produce a cycle, or violate the parental preservation in $\gamma$.
%$(j \to k)$ is necessitated by the consequences of parental preservation plus Markov equivalence, for example, the ancestral restriction.
\end{enumerate}
\end{corollary}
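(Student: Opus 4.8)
The plan is to reduce everything to Theorem \ref{thm:char_E}, which identifies $\cE(\gamma^*, n\cG^*)$ with the set of DAGs that are simultaneously Markov equivalent to $\gamma^*$ and parental-preserving on $n\cG^*$. Since Markov equivalent DAGs share the same skeleton, the pair $\{j,k\}$ is adjacent in every $\gamma \in \cE(\gamma^*, n\cG^*)$, so in each such $\gamma$ the edge is oriented one way or the other; hence the target statement ``$(j\to k)\in\gamma$ for every $\gamma\in\cE(\gamma^*,n\cG^*)$'' is equivalent to ``$(k\to j)\notin\gamma$ for every such $\gamma$'', i.e.\ the edge is \emph{compelled} within the class. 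The proof then splits into showing that each of (1)--(3) is sufficient for this, and, conversely, that if none of (1)--(3) holds one can exhibit a member of $\cE(\gamma^*,n\cG^*)$ in which the edge between $j$ and $k$ is reversed.

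Sufficiency of (1) and (2) is short. If $(j\to k)\in{\rm CPDAG}(\gamma^*)$, then by definition $(j\to k)$ lies in every DAG Markov equivalent to $\gamma^*$, hence a fortiori in every DAG of the smaller class $\cE(\gamma^*,n\cG^*)$. If $k\in n\cG^*$, parental preservation forces $\pa^\gamma(k)=\pa^*(k)\ni j$, so $(j\to k)\in\gamma$; if instead $j\in n\cG^*$, then $\pa^\gamma(j)=\pa^*(j)$, and since $\gamma^*$ is acyclic and $(j\to k)\in\gamma^*$ we have $k\notin\pa^*(j)=\pa^\gamma(j)$, so the edge between $j$ and $k$ in $\gamma$ cannot point into $j$ and must be $(j\to k)$. (Note in passing that (2) is in fact a special case of (3), since reversing an edge incident to a node of $n\cG^*$ always alters that node's parent set; I would record this to streamline the argument.)

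The delicate case is (3), which encodes the propagation that produces the remaining compelled orientations. Here I would first observe that if (1) fails, then $(j\to k)$ is not part of a v-structure of $\gamma^*$ --- otherwise both of its edges would be compelled in the Markov-equivalence sense --- so reversing $(j\to k)$ can only \emph{create} new v-structures, never destroy one of $\gamma^*$. Consequently, for a candidate $\gamma$ with the edge reversed to be both Markov equivalent to $\gamma^*$ (same skeleton, same v-structures) and parental-preserving on $n\cG^*$, the reversal must produce no new v-structure, no directed cycle, and no change to the parent set of any node of $n\cG^*$; condition (3) asserts that at least one of these obstructions is unavoidable, which rules out every such $\gamma$ and gives compelledness --- iterating this test over edges already forced by (1) and (2) is precisely a Meek-type orientation argument. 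For the converse, when none of (1)--(3) holds I would take $\gamma^{\mathrm{rev}}$ to be $\gamma^*$ with $(j\to k)$ reversed: failure of (3) guarantees $\gamma^{\mathrm{rev}}$ is acyclic, creates no new v-structure, and respects parental preservation, while failure of (1) guarantees (as above) that no v-structure of $\gamma^*$ is lost; thus $\gamma^{\mathrm{rev}}$ has the same skeleton and v-structures as $\gamma^*$, hence lies in $\cE(\gamma^*,n\cG^*)$ by Theorem \ref{thm:char_E}, and it contains $(k\to j)$, witnessing that $(j\to k)$ is not compelled.

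The main obstacle I anticipate is making the ``propagation'' half of (3) fully rigorous: one must show that this one-edge-at-a-time reversal test, seeded by (1) and (2), captures \emph{all} compelled edges, which invokes the standard but somewhat intricate machinery relating Markov equivalence classes, covered-edge reversals \cite{chickering2002optimal}, and the completeness of Meek's orientation rules --- now further constrained by the parental-preservation requirement on $n\cG^*$. Verifying that this extra constraint meshes cleanly with the reversal operations, and in particular that it never forces compelled edges beyond those recorded in (2) and its propagation, is the part that will need the most care.
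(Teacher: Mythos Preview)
Your approach is essentially the same as the paper's: reduce to Theorem \ref{thm:char_E} and argue that conditions (1)--(3) respectively encode Markov-equivalence constraints, parental preservation, and their Meek-style propagation. Your treatment is in fact considerably more thorough than the paper's brief sketch, which only addresses sufficiency in a sentence per condition; you handle both directions explicitly, construct the reversed-edge witness for necessity, and correctly flag the completeness of the propagation under the additional parental-preservation constraint as the one point requiring care --- a subtlety the paper leaves implicit.
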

\begin{proof}
The proof can be found in Appendix \ref{pf:cor:enc_DAG}.
\end{proof}

Following Corollary \ref{cor:enc_DAG}, we uniquely encode $\cE(\gamma^*, n\cG^*)$ by a mixed graph that we define as ${\rm resCPDAG}(\gamma^*; n\cG^*)$, obtained by imposing further \textit{restrictions} on ${\rm CPDAG}(\gamma^*)$ through the steps below.
\begin{enumerate}
\item[(A)] Extract ${\rm CPDAG}(\gamma^*)$.
\item[(B)] For every undirected edge $(j \mathrel{-\mkern-10mu-} k) \in {\rm CPDAG}(\gamma^*)$, such that either $j \in n\cG^*$ or $k \in n\cG^*$, restore its direction as per $\gamma^*$.
\item[(C)] Finally, orient any additional undirected edges according to Meek's rules \cite{meek1995causal} while ensuring parental preservation, for example, to satisfy the necessary ancestral restrictions. %which are necessary to ensure parental preservation while avoiding creation of any new v-structure.
\end{enumerate}
These steps are in accordance with the conditions depicted in Corollary \ref{cor:enc_DAG}. Specifically, step (A) guarantees Markov equivalence reflected in condition (1), step (B) ensures parental preservation inscribed in condition (2), and step (C) corresponds to condition (3). A similar algorithm appeared in \cite{hoyer2008causal}, which attempts to derive distribution equivalence patterns under arbitrary error distributions. 
Finally, consider the following illustration. 

\begin{comment}
Write a latex code to plot this DAG of 12 nodes marked as 1 to 12:

The edges are:
8 to 4, 4 to 3, 3 to 2, 2 to 1,
8 to 7, 2 to 6, 1 to 5, 2 to 5,
8 to 9, 6 to 7, 7 to 9, 6 to 9,
5 to 12, 5 to 11, 11 to 10, 9 to 10 and 12 to 11
\end{comment}

\begin{example}[Restricted CPDAG]\label{ex:rcpdag}
Consider $\gamma^*$ to be the DAG in Figure \ref{fig:resCP}(a). Then the distribution equivalence class is encoded by ${\rm resCPDAG}(\gamma^*; n\cG^*)$, as shown in Figure \ref{fig:resCP}(c).

\begin{figure}
\centering

% (a) DAG: gamma^*
\begin{minipage}{0.45\textwidth}
        \centering
        \begin{tikzpicture}[
        ->, 
        >=stealth, 
        every node/.style={circle, draw, minimum size=1cm}, thick,
        scale=1
    ]
    % Define nodes for first DAG
    \node[fill=black, text=white] (A1) at (-3, 4) {1};
    \node[fill=black, text=white] (A2) at (-1, 4) {2};
    \node[fill=black, text=white] (A3) at (1, 4) {3};
    \node (A4) at (3, 4) {4};
    
    \node[fill=black, text=white] (A5) at (-2, 2) {5};
    \node (A6) at (0, 2) {6};
    \node (A7) at (2, 2) {7};
    \node (A8) at (4, 2) {8};
    
    \node (A12) at (-3, 0) {12};
    \node (A11) at (-1, 0) {11};
    \node[fill=black, text=white] (A10) at (1, 0) {10};
    \node (A9) at (3, 0) {9};

    % Draw edges for first DAG
    \draw (A8) -> (A4);
    \draw (A4) -> (A3);
    \draw (A3) -> (A2);
    \draw (A2) -> (A1);
    \draw (A8) -> (A7);
    \draw (A2) -> (A6);
    \draw (A1) -> (A5);
    \draw (A2) -> (A5);
    \draw (A8) -> (A9);
    \draw (A6) -> (A7);
    \draw (A7) -> (A9);
    \draw (A6) -> (A9);
    \draw (A5) -> (A12);
    \draw (A5) -> (A11);
    \draw (A11) -> (A10);
    \draw (A9) -> (A10);
    \draw (A12) -> (A11);
    \draw[bend right=45] (A8) to (A6);
    
    \end{tikzpicture}
        % Label below the first figure
        \\ (a) %$\gamma^*$
        %\caption{$\gamma^*$}
    \end{minipage}
    \vspace{1cm}
    
    \begin{minipage}{0.45\textwidth}
        \centering
        \begin{tikzpicture}[
        ->,
        >=stealth, 
        every node/.style={circle, draw, fill=white, minimum size=1cm}, thick,
        scale=1
    ]
    % Define nodes for first DAG
    \node [fill=black, text=white] (A1) at (-3, 4) {1};
    \node [fill=black, text=white] (A2) at (-1, 4) {2};
    \node [fill=black, text=white] (A3) at (1, 4) {3};
    \node (A4) at (3, 4) {4};

    \node [fill=black, text=white] (A5) at (-2, 2) {5};
    \node (A6) at (0, 2) {6};
    \node (A7) at (2, 2) {7};
    \node (A8) at (4, 2) {8};

    \node (A12) at (-3, 0) {12};
    \node (A11) at (-1, 0) {11};
    \node [fill=black, text=white] (A10) at (1, 0) {10};
    \node (A9) at (3, 0) {9};

    % Draw edges for CPDAG
    \draw [-] (A8) -- (A4);
    \draw [-] (A4) -- (A3);
    \draw [-] (A3) -- (A2);
    \draw [-] (A2) -- (A1);
    \draw [-] (A8) -- (A7);
    \draw (A2) -> (A6);
    \draw [-] (A1) -- (A5);
    \draw [-] (A2) -- (A5);
    \draw [-] (A8) -> (A9);
    \draw [-] (A6) -- (A7);
    \draw [-] (A7) -- (A9);
    \draw [-] (A6) -- (A9);
    \draw [-] (A5) -- (A12);
    \draw [-] (A5) -- (A11);
    \draw (A11) -> (A10);
    \draw (A9) -> (A10);
    \draw [-] (A12) -- (A11);
    \draw[bend right=45] (A8) to (A6);

    \end{tikzpicture}
        % Label below the first figure
        \\ (b) %$\gamma^*$
        %\caption{$\gamma^*$}
    \end{minipage}
\vspace{1cm}
    
    \begin{minipage}{0.45\textwidth}
        \centering
        \begin{tikzpicture}[
        ->, 
        >=stealth, 
        every node/.style={circle, draw, minimum size=1cm}, thick,
        scale=1
    ]
    % Define nodes for second DAG (with undirected edges) shifted right
    \node[fill=black, text=white] (B1) at (6, 4) {1};
    \node[fill=black, text=white] (B2) at (8, 4) {2};
    \node[fill=black, text=white] (B3) at (10, 4) {3};
    \node (B4) at (12, 4) {4};
    
    \node[fill=black, text=white] (B5) at (7, 2) {5};
    \node (B6) at (9, 2) {6};
    \node (B7) at (11, 2) {7};
    \node (B8) at (13, 2) {8};
    
    \node (B12) at (6, 0) {12};
    \node (B11) at (8, 0) {11};
    \node[fill=black, text=white] (B10) at (10, 0) {10};
    \node (B9) at (12, 0) {9};

    % Draw edges for second DAG (with undirected edges)
    \draw [-] (B8) -- (B4);
    \draw [red, ultra thick, densely dashed] (B4) -> (B3);
    \draw [red, ultra thick, densely dashed] (B3) -> (B2);
    \draw [red, ultra thick, densely dashed] (B2) -> (B1);
    \draw [blue, ultra thick, loosely dotted] (B8) -> (B7);
    \draw (B2) -> (B6);
    \draw [red, ultra thick, densely dashed] (B1) -> (B5);
    \draw [red, ultra thick, densely dashed] (B2) -> (B5);
    \draw [blue, ultra thick, loosely dotted] (B8) -> (B9);
    \draw [-] (B6) -- (B7);
    \draw [-] (B7) -- (B9);
    \draw [-] (B6) -- (B9);
    \draw [red, ultra thick, densely dashed] (B5) -> (B12);
    \draw [red, ultra thick, densely dashed] (B5) -> (B11);
    \draw (B11) -> (B10);
    \draw (B9) -> (B10);
    \draw [-] (B12) -- (B11);
    \draw[bend right=45] (B8) to (B6);
    
    \end{tikzpicture}
        % Label below the first figure
        \\ (c) %$\gamma^*$
        %\caption{$\gamma^*$}
    \end{minipage}

\caption{
%(a) DAG $\gamma^*$, (b) Corresponding CPDAG ignoring node colors, (c) Ancestral resCPDAG.
%also show the CPDAG and highlight the additional restrictions in resCPDAG
The DAG in {\rm (a)} is $\gamma^*$ in Example \ref{ex:rcpdag}. Its CPDAG is shown in {\rm (b)}, which corresponds to step {\rm (A)}. The ${\rm resCPDAG}(\gamma^*; n\cG^*)$ is shown in {\rm (c)}, where the highlighted directed edges correspond to the additional restrictions beyond those imposed by the CPDAG: the dashed red ones are due to step {\rm (B)}, and the blue dotted ones are due to step {\rm (C)}.}
\label{fig:resCP}
\end{figure}
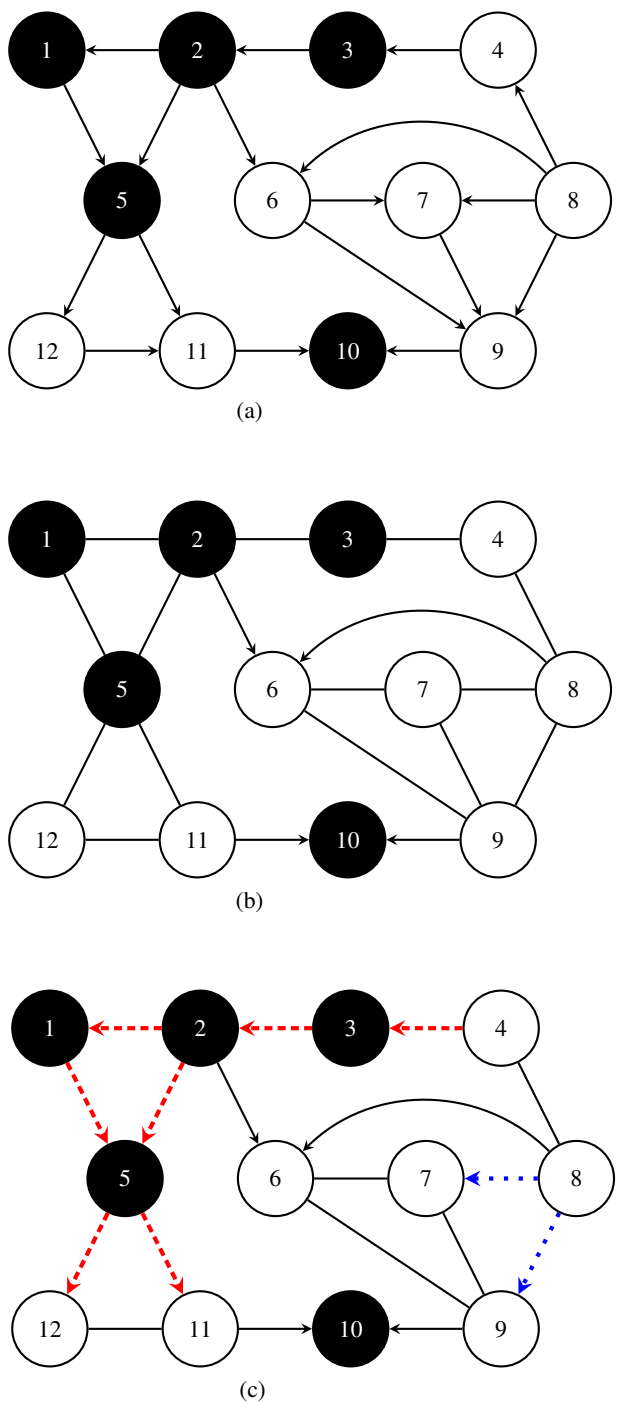

\end{example}

\section{Posterior DAG selection consistency}\label{sec:asym_prop}

In this section, we establish the posterior DAG selection consistency of the proposed method. Specifically, we prove that under the assumption of finite second moment of the mixing variables and, in certain cases, the assumption of faithfulness, the posterior probability of the distribution equivalent class $\cE(\gamma^*, n\cG^*)$ converges to unity as the sample size grows.

 \paragraph*{Distributional assumptions}
 
%Formally, we consider the following distributional assumptions that are essential to establish our asymptotic results in this section.
Formally, we assume that the mixing variables, i.e., the scale parameters in \eqref{eq:error_dist}, have (unknown) finite second moments,
\begin{align}\label{assum:fin_sec}
\sfE_*[\lambda_j^2] < \infty \quad \text{for every} \;\; j \in [p].
\end{align}
This implies that the errors $\epsilon_j, j \in [p]$ also have finite second moments.
 
 \begin{remark}[Error distribution generality]
 This assumption encompasses most of the well-known choices that we mentioned earlier such as contaminated Gaussian, Laplace, Logistic, Student's t, etc. More importantly, it includes various heavy-tailed distributions, for example, Student's t with degree of freedom larger than $2$ and generalized hyperbolic distribution.% that arises due to considering generalized inverse Gaussian as the mixing distribution.
 \end{remark}
 
 %Furthermore, in light of Theorem \ref{thm:E=DE}, we assume that, 
 %\begin{align}\label{assum:faith}
 %\text{in the case of} \;\; \bar{\cE}^* \supset \{\gamma^*\}, \quad \sfP_X^* \;\; \text{is faithful to} \;\; \gamma^*.
 %\end{align}
 %We emphasize that the assumption of faithfulness is not required when $\bar{\cE}^* = \{\gamma^*\}$, as we have also previously indicated in Remark \ref{rem:no_faith}.
 
 \paragraph*{Laplace approximation}
 
 The consistency property of our method is established based on some approximation results, as we have described briefly in Section \ref{sec:sketch}. These results are formally curated in the following theorem that provides us with a strong foundation for our asymptotic theory, and in fact, could also be of independent interest to the readers. To be specific, we derive a version of the Laplace approximation for the logarithm of the marginal likelihood in terms of the corresponding risk value and the number of associated parameters. Here and elsewhere in this section, we assume by default {\em local priors} such as the g-prior or ridge prior (see Remark \ref{rem:prior}) on $b_j^\gamma$.  
 \begin{theorem}[Laplace approximation]\label{thm:lap_app}
 Suppose that \eqref{assum:fin_sec} holds. Then for every $\gamma \in \Gamma^p$, we have, as $n \to \infty$,
\begin{align*}
\log m\lt(D_n | \gamma\rt) %= -\lt(n \, H^\gamma(\tilde{b}^\gamma, \tilde{\theta}^\gamma) + \frac{p + |\gamma|}{2} \log n\rt)(1 + o_p(1)).
&=\max_{(b^\gamma, \theta^\gamma)} \, \log \cL(D_n | b^\gamma, \theta^\gamma, \gamma) - \frac{p + |\gamma|}{2} \, \log n + c_\gamma + o_p(1)\\
&= - \, n \, h_\gamma %H^\gamma(\tilde{b}^\gamma, \tilde{\theta}^\gamma)
(1 + O_p(n^{-1/2})) \; - \; \frac{p+|\gamma|}{2} \log n + c_\gamma + o_p(1),
%\; - \; \frac{1}{2}\lt(p + |\gamma|\rt) \log n.
\end{align*}
where $c_\gamma$ is some positive constant (free of $n$) depending on $\gamma$, and the $O_p$ and $o_p$ statements are under $\sfP^*$.
 \end{theorem}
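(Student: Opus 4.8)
\emph{Strategy.} The plan is to marginalize the scale parameters $\theta^\gamma$ out analytically and then perform a Laplace-type approximation of the remaining integral over the SEM coefficients $b^\gamma$. The likelihood \eqref{eq:likeli_L} factorizes over nodes as $\cL(D_n\mid b^\gamma,\theta^\gamma,\gamma)=\prod_{j\in[p]}\cL_j$, where $\cL_j$ depends only on $(b_j^\gamma,\theta_j^\gamma)$ and the data, and the priors are independent across $j$; hence $m(D_n\mid\gamma)=\prod_{j\in[p]}m_j(D_n\mid\gamma)$ and it suffices to treat one node at a time. Writing $S_j(b_j^\gamma):=\sum_{i\in[n]}\bigl|X_j^{(i)}-\sum_{k\in\pa^\gamma(j)}b_{jk}^\gamma X_k^{(i)}\bigr|$ for the least-absolute-deviation (LAD) objective, the inverse-Gamma prior on $\theta_j^\gamma$ combines with the factor $(2\theta_j^\gamma)^{-n}\exp(-S_j(b_j^\gamma)/\theta_j^\gamma)$ to give a closed-form one-dimensional integral equal, up to a constant in $b_j^\gamma$, to $(S_j(b_j^\gamma)+\beta)^{-(n+\alpha+1)}$. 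Thus
\begin{align*}
m_j(D_n\mid\gamma)=\kappa_n\int\bigl(S_j(b_j^\gamma)+\beta\bigr)^{-(n+\alpha+1)}\,\pi_{b,j}^\gamma(b_j^\gamma)\,db_j^\gamma,\qquad\kappa_n=\frac{\Gamma(n+\alpha+1)\,\beta^{\alpha}}{2^{n}\,\Gamma(\alpha)},
\end{align*}
and the problem reduces to a Laplace approximation of this $|\pa^\gamma(j)|$-dimensional integral, whose exponent $-(n+\alpha+1)\log(S_j(b_j^\gamma)+\beta)$ is convex but only piecewise smooth. (For concreteness I argue with the ridge prior $\pi_{b,j}^\gamma$; the g-prior version follows from the same arguments together with the law of large numbers $\tfrac1n D_{n,j}^{\gamma\top}D_{n,j}^\gamma\to$ a positive-definite limit, which holds under \eqref{assum:fin_sec}.)

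\emph{The Laplace step for the coefficient integral.} This is the technical heart of the argument and the place where the non-smoothness of the Laplace likelihood must be confronted, since the classical (Tierney--Kadane) expansion does not apply directly. First, the LAD estimator $\hat b_j^\gamma:=\argmin_{b_j^\gamma}S_j(b_j^\gamma)$ exists and is unique with probability tending to one and satisfies $\hat b_j^\gamma-\tilde b_j^\gamma=O_p(n^{-1/2})$, where $\tilde b_j^\gamma$ is the pseudo-true value of Lemma \ref{lem:H_gamma_basic}; this is M-estimation with a convex criterion, using that the population map $b\mapsto g_j(b):=\sfE_*\bigl[\bigl|X_j-\sum_{k\in\pa^\gamma(j)}b_k X_k\bigr|\bigr]$ is convex and twice continuously differentiable at its unique minimizer $\tilde b_j^\gamma$ with Hessian $V_j:=2\,\sfE_*\bigl[f_j(0\mid X_{\pa^\gamma(j)})\,X_{\pa^\gamma(j)}X_{\pa^\gamma(j)}^{\top}\bigr]\succ 0$, where $f_j(\cdot\mid X_{\pa^\gamma(j)})$ is the conditional density of the residual $X_j-\tilde b_j^{\gamma\top}X_{\pa^\gamma(j)}$ (continuous and strictly positive at the origin, because this residual is a non-degenerate linear combination of the errors \eqref{eq:error_dist}, hence itself a scale mixture of Gaussians). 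Next one establishes the local asymptotic quadratic expansion
\begin{align*}
S_j(b_j^\gamma)=S_j(\hat b_j^\gamma)+\tfrac{n}{2}(b_j^\gamma-\hat b_j^\gamma)^{\top}V_j(b_j^\gamma-\hat b_j^\gamma)+r_n(b_j^\gamma),\qquad\sup_{\|b_j^\gamma-\hat b_j^\gamma\|\le n^{-1/2}\log n}|r_n(b_j^\gamma)|=o_p(1),
\end{align*}
as the non-smooth analogue of Taylor's theorem, via stochastic equicontinuity of the empirical process indexed by the Lipschitz maps $b\mapsto\bigl|X_j-\sum_k b_k X_k\bigr|$ (finiteness of the second moments \eqref{assum:fin_sec} enters here and below). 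Splitting $\int(S_j+\beta)^{-(n+\alpha+1)}\pi_{b,j}^\gamma$ into $\{\|b_j^\gamma-\hat b_j^\gamma\|\le n^{-1/2}\log n\}$ and its complement: on the neighborhood one inserts the quadratic expansion, uses $S_j(\hat b_j^\gamma)+\beta=n\,g_j(\tilde b_j^\gamma)(1+o_p(1))$ and the continuity and positivity of $\pi_{b,j}^\gamma$ at $\tilde b_j^\gamma$, and evaluates the resulting Gaussian integral; on the complement, convexity of $S_j$ (equivalently, log-concavity of $f^\gamma$ after the reparameterization of Lemma \ref{lem:log-concave}) together with a uniform law of large numbers $\tfrac1n S_j\to g_j$ and coercivity of $g_j$ bound the contribution by $e^{-c(\log n)^2}$ times the leading term, hence negligible. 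This yields
\begin{align*}
\log\!\int\!\bigl(S_j(b_j^\gamma)+\beta\bigr)^{-(n+\alpha+1)}\pi_{b,j}^\gamma(b_j^\gamma)\,db_j^\gamma=-(n+\alpha+1)\log\!\bigl(S_j(\hat b_j^\gamma)+\beta\bigr)-\tfrac{|\pa^\gamma(j)|}{2}\log n+c_{\gamma,j}'+o_p(1).
\end{align*}

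\emph{Assembling.} Combining this with Stirling's expansion $\log\Gamma(n+\alpha+1)=n\log n-n+(\alpha+\tfrac12)\log n+\tfrac12\log(2\pi)+o(1)$ and the elementary identity $\max_{b_j^\gamma,\theta_j^\gamma}\log\cL_j(D_n\mid b_j^\gamma,\theta_j^\gamma,\gamma)=-n\log 2-n\log S_j(\hat b_j^\gamma)+n\log n-n$ (obtained by profiling out $\theta_j^\gamma$, whose maximizer is $S_j(\hat b_j^\gamma)/n$), the $O(n)$ and $O(\log n)$ terms reorganize so that, for a constant $c_{\gamma,j}$,
\begin{align*}
\log m_j(D_n\mid\gamma)=\max_{b_j^\gamma,\theta_j^\gamma}\log\cL_j(D_n\mid b_j^\gamma,\theta_j^\gamma,\gamma)-\tfrac{1+|\pa^\gamma(j)|}{2}\log n+c_{\gamma,j}+o_p(1);
\end{align*}
summing over $j\in[p]$ and using $\sum_j|\pa^\gamma(j)|=|\gamma|$ gives the first displayed equality of the theorem with $c_\gamma=\sum_j c_{\gamma,j}$. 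For the second equality, a uniform law of large numbers over $(b^\gamma,\theta^\gamma)$ together with the central limit theorem applied to $\tfrac1n\log\cL(D_n\mid\tilde b^\gamma,\tilde\theta^\gamma,\gamma)$ — whose summands have finite variance by \eqref{assum:fin_sec} — and the fact that the optimization contributes only an $O_p(1)$ correction give $\max_{(b^\gamma,\theta^\gamma)}\log\cL(D_n\mid b^\gamma,\theta^\gamma,\gamma)=-n\,h_\gamma+O_p(\sqrt n)=-n\,h_\gamma\,(1+O_p(n^{-1/2}))$, with $h_\gamma$ as in Lemma \ref{lem:H_gamma_basic}.

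\emph{Main obstacle.} The crux is that $\log\cL(D_n\mid b^\gamma,\theta^\gamma,\gamma)$ is not differentiable in $b^\gamma$, so no direct Taylor argument is available; the work lies in establishing both the $\sqrt n$-rate of the LAD estimator under the misspecified Gaussian-scale-mixture truth and the local asymptotic quadratic expansion with a remainder uniformly $o_p(1)$ on $n^{-1/2}$-neighborhoods. Both rest on convexity — through the reparameterization of Lemma \ref{lem:log-concave} — combined with empirical-process control, and convexity is also what makes the tail part of the Laplace split decay exponentially from a single boundary estimate. A secondary technical point is verifying $V_j\succ 0$, i.e., that the conditional residual density stays bounded away from zero near the origin, which is precisely where the scale-mixture-of-Gaussians structure of the errors is used.
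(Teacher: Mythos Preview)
Your proposal is correct but takes a genuinely different route from the paper. You first integrate out the scale parameters $\theta_j^\gamma$ analytically via the inverse-Gamma prior, reducing the problem to a Laplace approximation over the $|\gamma|$-dimensional coefficient vector $b^\gamma$ alone, and then invoke standard LAD asymptotics (stochastic equicontinuity, the Hessian $V_j=2\,\sfE_*[f_j(0\mid X_{\pa^\gamma(j)})X_{\pa^\gamma(j)}X_{\pa^\gamma(j)}^{\top}]$, and convexity for the tail). The paper instead keeps all $p+|\gamma|$ parameters, reparameterizes $(b_j,\theta_j)\mapsto(\eta_j,w_j)=(1/\theta_j,\,b_j/\theta_j)$ so that $\log g(x,(\eta,w))$ is \emph{jointly} concave, and performs the Laplace approximation in one shot: it uses the explicit Hjort--Pollard decomposition $|y-(\mu+t)|-|y-\mu|=D(y)t+U(y,t)$ to obtain the quadratic expansion of $A_n(t):=\ell_n(\hat\eta_n,\hat w_n)-\ell_n((\hat\eta_n,\hat w_n)+t/\sqrt n)$ on compacts, and then a one-line convexity argument to dominate the tail, followed by dominated convergence.

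What each buys: your route is lower-dimensional and plugs directly into textbook LAD-regression asymptotics, but it is tied to the inverse-Gamma prior on $\theta_j^\gamma$ (the $\theta$-integral is what makes the reduction possible). The paper's route is prior-agnostic---any continuous prior positive at the pseudo-true $(\tilde\eta,\tilde w)$ suffices---and the $(\eta,w)$ reparameterization does double duty, delivering both the uniqueness of the pseudo-true value (Lemma~\ref{lem:H_gamma_basic}) and the convexity that controls the tail uniformly. A minor slip: your gamma integral gives $\Gamma(n+\alpha)\,(S_j+\beta)^{-(n+\alpha)}$, not exponent $-(n+\alpha+1)$; this does not affect the argument.
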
 
 
 \begin{proof}
  The proof can be found in Appendix \ref{pf:thm:lap_app}.
 \end{proof}
 
Theorem \ref{thm:lap_app} allows us to bypass the analytical intractability of the marginal likelihoods in calculating the Bayes factors and posterior odds. 
%However, establishing it is not straightforward as it needs to be derived under model misspecification. 
The first equality in Theorem \ref{thm:lap_app} gives the more familiar version of the Laplace approximation, where the logarithm of the marginal likelihood is related to the maximized log-likelihood. The second equality further connects it to the negative expected log-likelihood $H^\gamma(\cdot)$ evaluated at the pseudo-true value $(\tilde{b}^\gamma, \tilde{\theta}^\gamma)$, and thereby incurs an additional stochastic term -- this connection is achieved using Lemma \ref{lem:conv_Ln} which exploits a representation of the maximized log-likelihood function in terms of the MLEs of the scale parameters, thereby avoiding more involved empirical process based arguments. The standard Bayesian penalty for model complexity shows up in terms of the number of model parameters times $\log n/2$. 

Establishing this result encounters challenges due to model misspecification and non-differentiability of the likelihood function. In well-specified models with thrice differentiable log-likelihood functions plus additional standard regularity conditions, the Laplace approximation follows from a quadratic expansion of the log-likelihood function around the maximum likelihood estimator; see  \cite[Remark 1.4.5]{GR2003}. 
Although there are existing works that obtain relevant asymptotic results under such model misspecification \cite{kleijn2012bernstein, bhattacharya2020nonasymptotic}, they are not applicable in our setup for various reasons such as non-differentiability of the associated likelihood function \eqref{eq:likeli_L} preventing necessary Taylor expansions and lack of stronger probability conditions regarding the tail behavior of the errors. In order to circumvent this challenge, we establish an alternative Taylor-like decomposition of the log-likelihood function exploiting %incorporating 
log-concavity of the likelihood function in the spirit of \cite{hjort2011asymptotics}, and {\it only} use finiteness of the second moment to obtain the desired asymptotic approximations; see Appendix \ref{app:lap} for a cascade of results leading to Theorem \ref{thm:lap_app}.%an in-depth discussion.

\begin{comment}  
\paragraph*{Bayes factor asymptotics}
 
  We first analyze the asymptotic behavior of Bayes factors by showing that the Bayes factor in favor of $\gamma^*$ over any $\gamma$ outside of the family $\cE^*$ becomes arbitrarily large as the sample size grows. 
  
 \begin{proposition}\label{thm:BF_consistency_true_gam}
 Suppose that \eqref{assum:faith} and \eqref{assum:fin_sec} hold. Then we have, in $\sfP^*$-probability, as $n \to \infty$,
  \begin{align*}
\sfBF_n(\gamma^*, \gamma) \; {\to} \; 
\begin{cases}
1 \qquad &\text{for every} \;\; \gamma \in \cE^*\\
\infty   \qquad &\text{otherwise}
\end{cases}.  
  \end{align*} 
  \end{proposition}
  \begin{proof}
  The proof can be found in Appendix ... .
  \end{proof}
    
 In the following corollary, we extend the previous result by replacing $\gamma^*$ with any other DAG that belongs to $ \cE^*$.
  \begin{corollary}\label{cor:arbit_g}
  Fix any arbitrary $\gamma \in \cE^*$. Then under the assumptions \eqref{assum:fin_sec} and \eqref{assum:faith}, we have, in $\sfP^*$-probability, as $n \to \infty$,
  \begin{align*}
\sfBF_n(\gamma, \gamma') \; {\to} \; 
\begin{cases}
1 \qquad &\text{for every} \;\; \gamma' \in \cE^*\\
\infty   \qquad &\text{otherwise}
\end{cases}.  
  \end{align*} 
   \end{corollary}
  \begin{proof}
  The proof can be found in Appendix ... .
  \end{proof}
\end{comment}  
  
  \paragraph*{Posterior DAG selection consistency}

Following the identifiability theory derived in Section \ref{sec:identif}, and using the Laplace approximation in Theorem \ref{thm:lap_app}, we now establish the main results of this section that the proposed method achieves posterior DAG selection consistency, as follows. First, we consider the case when $\cE^* = \cS^*$, or in other words, 
every risk equivalent DAG must be a superset of $\gamma^*$, 
%every DAG that can represent $\sfP_X^*$  is distribution equivalent to $\gamma^*$, i.e., when the region purely marked with yellow in Figure \ref{fig:illst_DE} is empty, 
and show that, in this case, any typical non-informative DAG prior would be sufficient to achieve the desired consistency.

 \begin{theorem}\label{thm:post_cons_exac}
     Suppose that \eqref{assum:fin_sec} holds, and $\bar{\cE}^* = \cS^*$, for example, when any of the conditions in Proposition \ref{prop:spl_cases} is true. Consider any DAG prior $\pi_g(\cdot)$ such that there exists $C > 0$ satisfying $\pi_g(\gamma)/\pi_g(\gamma') \leq C$ for every $\gamma, \gamma' \in \Gamma^p$. Then we have
    \begin{align*}
        \pi(\gamma^* | D_n) \to 1, \quad \text{in} \;\; \sfP^*\text{-probability}.
    \end{align*}
     \end{theorem}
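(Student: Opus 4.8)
The plan is to control the posterior odds of every competing DAG against $\gamma^*$. Since $p$ is fixed, $\Gamma^p$ is a finite set, and
\begin{align*}
\pi(\gamma^* \mid D_n) = \Bigl(1 + \sum_{\gamma \in \Gamma^p \setminus \{\gamma^*\}} \Pi_n(\gamma, \gamma^*)\Bigr)^{-1},
\end{align*}
so it suffices to show that $\Pi_n(\gamma, \gamma^*) \to 0$ in $\sfP^*$-probability for each fixed $\gamma \neq \gamma^*$. By the decomposition $\Pi_n(\gamma, \gamma^*) = \sfBF_n(\gamma, \gamma^*)\,\pi_g(\gamma)/\pi_g(\gamma^*)$ from \eqref{eq:def_BF} and the assumed prior-ratio bound $\pi_g(\gamma)/\pi_g(\gamma^*) \le C$, this reduces to showing $\sfBF_n(\gamma, \gamma^*) \to 0$, equivalently $\log \sfBF_n(\gamma^*, \gamma) \to +\infty$, in $\sfP^*$-probability.

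Next I would invoke the Laplace approximation of Theorem \ref{thm:lap_app} (applicable under \eqref{assum:fin_sec}) for $m(D_n \mid \gamma^*)$ and $m(D_n \mid \gamma)$ and subtract, which yields the expansion \eqref{eq:approx_BF},
\begin{align*}
\log \sfBF_n(\gamma^*, \gamma) = n\,\delta_\gamma + \tfrac12\,\psi_\gamma \log n + R_n,
\end{align*}
with $\delta_\gamma$ and $\psi_\gamma$ as in \eqref{def:del_psi}, where the remainder satisfies $R_n = O_p(\sqrt n)$ in general and, crucially, $R_n = O_p(1)$ whenever $\gamma \supseteq \gamma^*$ by the Wilks-type Lemma \ref{lem:wilks_missp}. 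By Corollary \ref{cor:ineq_risk_and_param} we have $\delta_\gamma \ge 0$, with $\delta_\gamma = 0$ exactly when $\gamma \in \bar{\cE}^*$; and under the hypothesis $\bar{\cE}^* = \cS^*$ every such $\gamma$ satisfies $\gamma \supseteq \gamma^*$, hence $\gamma \supsetneq \gamma^*$ and $\psi_\gamma = |\gamma| - |\gamma^*| \ge 1$ once $\gamma \neq \gamma^*$.

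I would then split into two cases. If $\gamma \notin \bar{\cE}^*$, then $\delta_\gamma > 0$, so the linear term $n\,\delta_\gamma$ dominates $\tfrac12\psi_\gamma \log n = O(\log n)$ and $R_n = O_p(\sqrt n)$, whence $\log \sfBF_n(\gamma^*, \gamma) \to +\infty$ in $\sfP^*$-probability. If instead $\gamma \in \bar{\cE}^* = \cS^*$ with $\gamma \neq \gamma^*$, then $\delta_\gamma = 0$, $\gamma \supseteq \gamma^*$ so $R_n = O_p(1)$, and $\psi_\gamma \ge 1$, giving $\log \sfBF_n(\gamma^*, \gamma) = \tfrac12\psi_\gamma \log n + O_p(1) \to +\infty$ in $\sfP^*$-probability. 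In either case $\sfBF_n(\gamma, \gamma^*) \to 0$, so $\Pi_n(\gamma, \gamma^*) \le C\,\sfBF_n(\gamma, \gamma^*) \to 0$ in $\sfP^*$-probability; summing the finitely many terms and using the displayed identity for $\pi(\gamma^* \mid D_n)$ finishes the proof.

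The main obstacle is the case $\gamma \supsetneq \gamma^*$: there the leading risk gap vanishes ($\delta_\gamma = 0$), so the divergence of $\log \sfBF_n(\gamma^*, \gamma)$ is carried entirely by the dimension penalty $\tfrac12\psi_\gamma \log n$, which is of strictly smaller order than the generic $O_p(\sqrt n)$ remainder in \eqref{eq:approx_BF}. One therefore genuinely needs the sharper $O_p(1)$ control on the misspecified, non-differentiable likelihood-ratio remainder supplied by the Wilks-type Lemma \ref{lem:wilks_missp}; establishing that estimate — together with the Laplace approximation of Theorem \ref{thm:lap_app} itself, whose proof must accommodate the non-smoothness of the Laplace likelihood — is the substantive content behind this otherwise short deduction.
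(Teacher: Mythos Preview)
Your proposal is correct and follows essentially the same approach as the paper: the paper packages the Bayes-factor expansion into Lemma~\ref{lem:BF_conv} and the reduction to posterior odds into Lemma~\ref{lem:PrE}, but the underlying argument---split by whether $\gamma \in \cS^*$ (where Lemma~\ref{lem:wilks_missp} gives $R_n = O_p(1)$ and $\psi_\gamma \ge 1$) or $\gamma \notin \cS^* = \bar{\cE}^*$ (where $\delta_\gamma > 0$)---is identical to yours. You also correctly identify the substantive point, namely that the $O_p(1)$ control from the Wilks-type lemma is what makes the supergraph case work.
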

     \begin{proof}
         The proof can be found in Appendix \ref{pf:thm:post_cons_exac}.
     \end{proof}
The condition on the prior $\pi_g(\cdot)$ is very mild, and is satisfied by any DAG prior which is strictly positive over $\Gamma^p$ and free of $n$. A key ingredient in the proof of Theorem \ref{thm:post_cons_exac} is Lemma \ref{lem:wilks_missp}, which establishes that the remainder term $R_n$ in \eqref{eq:approx_BF} is $O_p(1)$ whenever $\gamma \in \cS^*$. In the well-specified setting, this is a ramification of classical Wilk's phenomenon \cite{wilks1938large}, which however does not directly apply to the present setting due to model misspecification and non-differentiability of the likelihood function. 

%\textcolor{red}{(Anamitra: should we use $\cS^* \subsetneq \bar{\cE}^*$ to make the containment more explicit?)}
Now, we focus on the more general case, when $\cS^* \subseteq \bar{\cE}^*$ (especially when the containment is strict), or equivalently, in view of Corollary \ref{cor:ineq_risk_and_param}, $\cE^* \subseteq \bar{\cE}^*_R$, i.e., there may exist some %non-superset risk equivalent 
DAG $\gamma$ outside $\cE^*$ that can \textit{represent} $\sfP_X^*$, but with \textit{more edges} than $\gamma^*$, see Corollary \ref{cor:ineq_risk_and_param}. Thus, in this case, it becomes imperative to exclude such DAGs to recover $\cE^*$, and for that, we consider a \textit{complexity} prior, as stated in the next theorem, which penalizes DAGs with more edges (complexity) appropriately, and in turn facilitates the desired consistency.

\begin{theorem}\label{thm:post_cons_gen}
    Suppose that \eqref{assum:faith} and \eqref{assum:fin_sec} hold. %, and $\cE^* \subseteq \bar{\cE}^*_R$. 
    Consider the DAG prior $\pi_g(\cdot)$ such that for any arbitrary constant $\alpha \in (1/2, 1)$, %and data-dependent quantity $d_n > 0$, $d_n = O_p(1)$,
    \begin{align*}
        \pi_g(\gamma) \propto \exp\lt(-n^\alpha d_n |\gamma|\rt), \qquad \text{for every} \;\; \gamma \in \Gamma^p,
    \end{align*}
    %where $0 < d_n = O_p(1)$ is any data-dependent quantity.
    %where $0 < d_n$ is any $O_p(1)$ random variable.
    where $d_n$ is any (stochastically) bounded positive sequence, possibly data-dependent.
    Then %we have
    \begin{align*}
        {\rm Pr}(\gamma \in \cE(\gamma^*, n\cG^*) | D_n) \; {\to} \; 1, \quad \text{in} \;\; \sfP^*\text{-probability}.
    \end{align*}
\end{theorem}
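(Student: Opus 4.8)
The plan is to show that the posterior odds $\Pi_n(\gamma^*, \gamma)$ diverge to $\infty$ in $\sfP^*$-probability for every $\gamma \notin \cE(\gamma^*, n\cG^*) = \cE^*$ (the equality coming from Theorem \ref{thm:E=DE}), and simultaneously that $\Pi_n(\gamma^*, \gamma)$ stays bounded (indeed is $\Theta_p(1)$ up to the prior ratio) for $\gamma \in \cE^*$, so that $\sum_{\gamma \in \cE^*} \pi(\gamma \mid D_n) \to 1$. Since $|\Gamma^p|$ is finite (as $p$ is fixed), it suffices to handle each $\gamma$ individually. Starting from \eqref{eq:approx_BF} together with the prior ratio $\pi_g(\gamma^*)/\pi_g(\gamma) = \exp(n^\alpha d_n(|\gamma| - |\gamma^*|)) = \exp(n^\alpha d_n \psi_\gamma)$, we get
\begin{align*}
\log \Pi_n(\gamma^*, \gamma) = n\,\delta_\gamma - \frac{-\psi_\gamma}{2}\log n + R_n + n^\alpha d_n \psi_\gamma,
\end{align*}
where $\delta_\gamma = h_\gamma - h_*$ and $\psi_\gamma = |\gamma| - |\gamma^*|$ as in \eqref{def:del_psi}, and $R_n$ is the remainder from the two Laplace approximations, which by Theorem \ref{thm:lap_app} is $O_p(\sqrt n)$ in general and $O_p(1)$ when $\gamma \supseteq \gamma^*$ (Lemma \ref{lem:wilks_missp}). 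The argument then splits along the trichotomy supplied by Corollary \ref{cor:ineq_risk_and_param}: (i) $\delta_\gamma > 0$; (ii) $\delta_\gamma = 0$ but $\psi_\gamma > 0$ (i.e. $\gamma \in \bar{\cE}^* \setminus \cE^*$); (iii) $\delta_\gamma = 0$ and $\psi_\gamma = 0$ (i.e. $\gamma \in \cE^*$).

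In case (i), the dominant term is $n\delta_\gamma$, which grows linearly and beats $R_n = O_p(\sqrt n)$, the $O(\log n)$ term, and the prior term $n^\alpha d_n \psi_\gamma = O_p(n^\alpha)$ since $\alpha < 1$ and $d_n$ is stochastically bounded; hence $\log\Pi_n(\gamma^*,\gamma) \to \infty$. In case (ii), we have $\delta_\gamma = 0$, so $\log\Pi_n(\gamma^*,\gamma) = \frac{\psi_\gamma}{2}\log n + R_n + n^\alpha d_n\psi_\gamma$; here the decisive term is the prior penalty $n^\alpha d_n \psi_\gamma$, which is of order $n^\alpha$ with $\alpha > 1/2$, hence dominates $R_n = O_p(\sqrt n) = O_p(n^{1/2})$ and the $O(\log n)$ term, and since $\psi_\gamma > 0$ and $d_n > 0$ this forces $\log\Pi_n(\gamma^*,\gamma)\to\infty$. (Here it is crucial that $\bar{\cE}^* = \cup_{\gamma'\in\bar{\cE}^*_R}\cS^{\gamma'}$ by Corollary \ref{cor:barE*}, so every such $\gamma$ is a supergraph of some risk-minimizing DAG; we may need the sharper bound $R_n = O_p(1)$ on the subclass $\cS^*$ via Lemma \ref{lem:wilks_missp} only for the $\cE^*$-vs-$\gamma^*$ comparisons, whereas for $\gamma \in \bar{\cE}^*\setminus\cS^*$ the crude $O_p(\sqrt n)$ bound already suffices because $n^\alpha$ with $\alpha>1/2$ overwhelms it.) In case (iii), $\gamma \in \cE^*$, so $\delta_\gamma = 0$, $\psi_\gamma = 0$, and the prior term vanishes; $\gamma$ is Markov equivalent to $\gamma^*$, and whether or not $\gamma \supseteq \gamma^*$, we argue $R_n$ is $O_p(1)$ — for $\gamma \supseteq \gamma^*$ this is Lemma \ref{lem:wilks_missp}, and for a general member of $\cE^*$ one can route through a common supergraph or apply the Laplace approximation symmetrically, since the number of parameters matches — so $\log\Pi_n(\gamma^*,\gamma) = O_p(1)$ and the corresponding posterior mass neither vanishes nor blows up. Summing, $\pi(\cE^* \mid D_n) = \big(1 + \sum_{\gamma\notin\cE^*}\Pi_n(\gamma^*,\gamma)^{-1}\big)^{-1}\big/\big(\text{normalization over }\cE^*\big)$-type manipulation (more precisely, dividing numerator and denominator of the posterior by $\pi(\gamma^*\mid D_n)$) gives $\pi(\cE^*\mid D_n)\to 1$, which is the claim by Theorem \ref{thm:E=DE}.

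The main obstacle is controlling the remainder $R_n$ on exactly the right sets with exactly the right rate: the whole scheme hinges on the gap between the $O_p(\sqrt n)$ bound available for arbitrary $\gamma$ and the $O_p(1)$ bound for supergraphs of $\gamma^*$, matched against the deliberately chosen prior rate $n^\alpha$ with $1/2 < \alpha < 1$. One must verify that for $\gamma \in \bar{\cE}^*\setminus\cS^*$ — where $R_n$ might only be $O_p(\sqrt n)$ — the prior penalty still wins (it does, since $\psi_\gamma \ge 1$ and $n^\alpha \gg n^{1/2}$), while for $\gamma \in \cE^*$ one genuinely needs $R_n = O_p(1)$ to avoid spurious divergence, and for $\gamma$ with $\psi_\gamma < 0$ (strict subgraphs, necessarily having $\delta_\gamma > 0$ by Corollary \ref{cor:ineq_risk_and_param}) one must check the prior term $n^\alpha d_n \psi_\gamma < 0$ is dominated by $n\delta_\gamma > 0$, which again holds since $\alpha < 1$. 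A secondary technical point is that $d_n$ is allowed to be data-dependent and only stochastically bounded, so all the rate comparisons must be phrased as $O_p$/$o_p$ statements under $\sfP^*$ and combined with a union bound over the finitely many DAGs.
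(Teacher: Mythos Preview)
Your approach is essentially the same as the paper's: invoke Theorem~\ref{thm:E=DE} to identify $\cE^* = \cE(\gamma^*, n\cG^*)$, expand $\log \Pi_n(\gamma^*,\gamma)$ via the Laplace approximation (Theorem~\ref{thm:lap_app} / Lemma~\ref{lem:BF_conv}) plus the prior ratio, and then split into the cases $\gamma \notin \bar{\cE}^*$ (where $n\delta_\gamma$ dominates, using $\alpha<1$) and $\gamma \in \bar{\cE}^*\setminus\cE^*$ (where $n^\alpha d_n \psi_\gamma$ dominates the $O_p(\sqrt n)$ remainder, using $\alpha>1/2$).

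The one unnecessary detour is your case~(iii). You do \emph{not} need to show $R_n = O_p(1)$ for $\gamma \in \cE^*$. The paper's Lemma~\ref{lem:PrE} shows that once $\Pi_n(\gamma^*,\gamma)\to\infty$ for every $\gamma\notin\cE^*$, one gets $\pi(\gamma\mid D_n)\to 0$ for each such $\gamma$ simply because $\pi(\gamma\mid D_n) = \big(\sum_{\gamma'}\Pi_n(\gamma',\gamma)\big)^{-1} \le \Pi_n(\gamma^*,\gamma)^{-1}$, and then $\Pr(\gamma\in\cE^*\mid D_n) = 1 - \sum_{\gamma\notin\cE^*}\pi(\gamma\mid D_n)\to 1$. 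No control on $\Pi_n(\gamma^*,\gamma)$ for $\gamma\in\cE^*$ is required. This matters because your justification there (``route through a common supergraph or apply the Laplace approximation symmetrically'') is hand-wavy: Lemma~\ref{lem:wilks_missp} only gives $R_n = O_p(1)$ for $\gamma \in \cS^*$, not for an arbitrary $\gamma\in\cE^*$, and the paper never proves the stronger claim. So drop case~(iii) entirely and conclude directly from cases~(i)--(ii) via Lemma~\ref{lem:PrE}; the parenthetical about Corollary~\ref{cor:barE*} in your case~(ii) is likewise irrelevant and can be removed.
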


\begin{proof}
  The proof can be found in Appendix \ref{pf:thm:post_cons_gen}.
  \end{proof}

\begin{comment}
\begin{theorem}\label{thm:BF_consistency_gam}
Suppose that assumptions \eqref{assum:fin_sec} and \eqref{assum:faith} hold. Then we have, in $\sfP^*$-probability, as $n \to \infty$,
\begin{align*}
\pi(\gamma  | D_n) \;\; {\to} \;\; \frac{1}{|\cE(\gamma^*, n\cG^*)|} \qquad \text{for every} \;\; \gamma \in \cE(\gamma^*, n\cG^*),
\end{align*}
which further implies that
$$\text{Pr}(\gamma \in \cE(\gamma^*, n\cG^*) | D_n) \; {\to} \; 1.$$
  \end{theorem}
  \begin{proof}
  The proof can be found in Appendix ... .
  \end{proof}
\end{comment}
  
The above theorem establishes that in the long run, the proposed method correctly identifies the distribution equivalence class $\cE(\gamma^*, n\cG^*)$ by specifically showing that the posterior probability that the DAG must belong to this class tends, in probability, to one as the sample size grows. We briefly comment on the role of the complexity prior $\pi_g(\cdot)$ in Theorem \ref{thm:post_cons_gen}. Unlike Theorem \ref{thm:post_cons_exac}, we now have $\bar{\cE}^* \setminus \cS^* \ne \emptyset$, i.e., there exist risk-equivalent DAGs that are not supergraphs of $\gamma^*$ (refer to Example \ref{ex:barE-E} and Figure \ref{fig:risk_equiv} for a concrete example). For any $\gamma \in \bar{\cE}^* \setminus \cS^*$, all we can claim about the remainder term $R_n$ in 
\eqref{eq:approx_BF} is that it is $O_p(\sqrt{n})$ (in contrast, recall from the discussion after Theorem \ref{thm:post_cons_exac} that $R_n = O_p(1)$ for $\gamma \in \cS^*$), which therefore becomes the leading contribution to the log-Bayes factor in \eqref{eq:approx_BF} due to risk-equivalence, $h_\gamma = h_*$. To differentiate such $\gamma$ from $\gamma^*$, we exploit the fact that such $\gamma$ must involve more edges, i.e., $\psi_\gamma > 0$ (see Corollary \ref{cor:ineq_risk_and_param}). The condition $\alpha > 1/2$ in the complexity prior adequately {\it penalizes} these additional edges to overcome the stochastic contribution from $R_n$, whereas the condition $\alpha < 1$ ensures that for all $\gamma \notin \bar{\cE}^*$, the term $n(h_\gamma - h_*)$ remains the leading contribution to $\log \Pi_n(\gamma^*, \gamma)$.

Note that, for nested models, non-local priors \citep{johnson2012bayesian} are known to discard spurious parameters at a faster rate compared to local priors. However, since the main purpose of the proposed complexity prior is to distinguish between $\gamma\in\bar {\cE}^* \setminus \cS^*$ and $\gamma^*$, which are non-nested models, it is not immediate whether standard non-local priors can achieve the same. We leave this as an avenue for future work.    

%More precisely, the posterior probability of every individual DAG in this class converges identically to the same limiting value, and thereby, in the asymptotic regime, the posterior probability mass becomes uniformly distributed over the class $\cE(\gamma^*, n\cG^*)$. 

\begin{remark}[Difference in convergence rate]
The in-probability convergences stated in Theorems \ref{thm:post_cons_exac} and \ref{thm:post_cons_gen} are primarily attributed to
the divergence of the posterior odds $\Pi_n(\gamma^*, \gamma), \; \gamma \notin \cE^*$; see Lemma \ref{lem:PrE}.
We obtain some interesting facts about the rate of divergence of the posterior odds. %$\Pi_n(\gamma^*, \gamma), \; \gamma \notin \cE^*$, which contributes to the in-probability convergences stated in the above two results; see Lemma \ref{lem:PrE}. %in Appendix \ref{}.  
Specifically, in Appendix \ref{pf:thm:post_cons_exac} we show that in the context of Theorem \ref{thm:post_cons_exac},
$\Pi_n(\gamma^*, \gamma)$ diverges to infinity in a \textit{polynomial} rate when $\gamma$ is a superset of $\gamma^*$, i.e., %lies outside of $\cE^* = \{\gamma^*\}$, but is still risk equivalent, i.e., 
$\gamma \in \bar{\cE}^* \setminus \cE^*$, whereas the divergence is \textit{exponentially} fast when $\gamma$ is not risk equivalent, i.e., $\gamma \notin \bar{\cE}^*$.
Formally, we derive that, when $n$ is large,
\begin{comment}
  \begin{align*}
\Pi_n(\gamma^*, \gamma) \; = \; 
\begin{cases}
 n^{\psi_\gamma/2 \, + \, o_p(1)}\qquad &\text{if} \;\; \gamma \in \bar{\cE}^* \setminus \cE^*\\
\exp(n(\delta_\gamma + o_p(1)))   \qquad &\text{if} \;\; \gamma \notin \bar{\cE}^*
\end{cases},  
\end{align*}
\end{comment}
\begin{align*}
\Pi_n(\gamma^*, \gamma) = 
\begin{cases}
        n^{\psi_\gamma/2}e^{O_p(1)} \qquad &\text{if} \;\; \gamma \in \bar{\cE}^* \setminus \cE^*\\
         \exp\lt(n(\delta_\gamma + O_p(n^{-1/2}))\rt)   %n \, \delta_\gamma + \frac{\psi_\gamma}{2} \, \log n + O_p(\sqrt{n})
         \qquad &\text{otherwise}
        \end{cases},
        %n \, \delta_\gamma + O_p(\sqrt{n}).,
\end{align*}
where $\psi_\gamma$ and $\delta_\gamma$ are the differences in the numbers
of edges and the risks, respectively, defined in \eqref{def:del_psi}. This polynomial versus exponential rates of divergence of the Bayes factor has been observed more generally %for nested and non-nested alternatives respectively 
\cite{johnson2010use}. Careful usage of non-local priors \cite{johnson2012bayesian,rossell2021approximate} on the coefficients $b_j^\gamma$ may improve the polynomial rate in Theorem \ref{thm:post_cons_exac} to a faster polynomial or even exponential rate.  

Furthermore, in Appendix \ref{pf:thm:post_cons_gen} we derive that under the complexity prior, the polynomial divergence rate above becomes exponential but with an exponent of order $n^\alpha$, or more specifically, of the form
$\exp(n^\alpha(d_n \psi_\gamma + O_p(n^{1/2-\alpha})))$.  
%denote the increase in the number of parameters and the value of risk, respectively,  
%\begin{align*}
%\alpha = |\gamma| - |\gamma^*| > 0, \qquad \text{and} \qquad \delta = H^\gamma(\tilde{b}^\gamma, \tilde{\theta}^\gamma) - H^*(\tilde{b}^*, \tilde{\theta}^*) > 0.
%\end{align*}
In this way, the difference in the risk values and the number of edges are reflected in the rates of divergences of the posterior odds. 
%associated with the above results.
Thus, when $\bar{\cE}^* \setminus \cE^* \neq \emptyset$, applying the above we obtain the following rate (see Lemma \ref{lem:PrE}), %Appendix \ref{}, 
\begin{align*}
    1 - {\rm Pr}(\gamma \in \cE(\gamma^*, n\cG^*) | D_n)
    \;\, \text{is of order} \;
\begin{cases}
%\exp(-n(\delta + o_p(1))) + 
n^{-\psi/2}e^{O_p(1)} &\text{in Theorem \ref{thm:post_cons_exac}}\\
%\exp(-n(\delta + o_p(1))) +
\exp(-n^\alpha(d_n\psi + O_p(n^{1/2-\alpha}))) %+ \exp(-n(\delta + o_p(1))) 
 &\text{in Theorem \ref{thm:post_cons_gen}}
\end{cases}  
\end{align*}
where $\psi := \min_{\gamma \in \bar{\cE}^* \setminus \cE^*}\psi_\gamma$ 
%and $\delta := \min_{\gamma \notin \bar{\cE}^*} \delta_\gamma$.
is positive due to Corollary \ref{cor:ineq_risk_and_param}. %except. In particular, 
When $\cE^* = \bar{\cE}^*$, for example, when $\gamma^*$ is a complete graph, the above rate is exponential, specifically of the form $\exp\lt(-n(\delta + O_p(n^{-1/2}))\rt)$, where $\delta := \min_{\gamma \notin \bar{\cE}^*}\delta_\gamma$ is positive again due to Corollary \ref{cor:ineq_risk_and_param}. 
%The positivity of $\psi$ and $\delta$ clearly follow from Theorem \ref{thm:H<H} and Corollary \ref{cor:ineq_risk_and_param}. 
%In this way, the difference in the risk values and the difference in the number of parameters are reflected in the exponential rates of convergences associated with Theorem \ref{thm:post_cons_exac} and Theorem \ref{thm:post_cons_gen}. %We further illustrate this variation of rates later in our simulation study.
\end{remark}

\begin{remark}[Practical choice of $d_n$]\label{rem:d_n}
% The in-probability convergences stated in the above two results are primarily attributed to
% the divergence of the posterior odds $\Pi_n(\gamma^*, \gamma), \; \gamma \notin \cE^*$; see Lemma \ref{lem:PrE}.
%\textcolor{red}{Anamitra: another look pls}
Although deterministic choices of $d_n$ already guarantee the posterior consistency in Theorem \ref{thm:post_cons_gen}, we allow it to be stochastic mainly for improved finite sample model selection performance. %faster divergence in practice.
Specifically, we first derive in Appendix \ref{pf:thm:post_cons_gen} that,
\begin{align*}
\log \Pi_n(\gamma^*, \gamma) %&= \frac{1}{n}\,\log \sfBF_n(\gamma^*, \gamma) + \frac{1}{n}\,\log (\pi_g(\gamma^*)/\pi_g(\gamma))\\
&= n\, \delta_\gamma + n^{\alpha}d_n \psi_\gamma + \frac{\psi_\gamma}{2} \, \log n + O_p(n^{1/2}),  \qquad \text{for every} \;\; \gamma \in \Gamma^p, %= \delta_\gamma + o_p(1),
\end{align*}
and then apply Corollary \ref{cor:ineq_risk_and_param}. However, in the case when $\delta_\gamma > 0$ and $\psi_\gamma < 0$ for some $\gamma \notin \bar{\cE^*}$ (Figure \ref{fig:illst_DE}), the rate of divergence depends on the magnitude of $\delta_\gamma$. To be precise, if $\delta_\gamma$ is very close to $0$, and $d_n$ is chosen as a relatively large constant, then the above divergence is quite slow, thereby affecting the overall rate of in-probability convergence. Therefore, a favorable choice of $d_n$ is some constant $d^*$ such that $\delta_\gamma + d^*\,\psi_\gamma > 0$ for every $\gamma \notin \bar{\cE}^*$; refer to Section \ref{sec:sim_stud} where we approximate such $d^*$ based on data, and by this means, recommend a data-dependent choice of $d_n$ to implement it in the simulation studies.
\begin{comment}
and accordingly, to approximate $d^*$ based on data, we recommend a data-dependent choice as
\begin{align*}
    d_n = (1/K) \; %\frac{1}{{p\choose 2}} \; 
    \min\{\hat{\delta}_n(\gamma, \gamma') : \;{\hat{\delta}_n(\gamma, \gamma') > 0, \; \gamma, \gamma' \in \Gamma^p}\}, %\qquad \text{where} \; K = {{p\choose 2}}.
\end{align*}
where $K = {{p\choose 2}}$ and $\hat{\delta}_n(\gamma, \gamma')$ is some estimate of the quantity $(\delta_\gamma - \delta_{\gamma'})$. %$H^\gamma(\tilde{b}^\gamma, \tilde{\theta}^\gamma) - H^{\gamma'}(\tilde{b}^{\gamma'}, \tilde{\theta}^{\gamma'})$.
\end{comment}
\end{remark}

\section{Simulation studies}\label{sec:sim_stud}

In this section, we present the results of three simulation studies to illustrate our theoretical results established in Section \ref{sec:identif} and Section \ref{sec:asym_prop}. 
Specifically, in the first study, we consider a setup where $\bar{\cE}^* = \cS^*$, i.e., unique identifiability of the true underlying DAG is feasible, and thereby show posterior consistency with the uniform DAG prior, whereas in the second and third studies, we consider $\cS^* \subset \bar{\cE}^*$, and thus, it is necessary for us to implement the complexity prior to achieve the desired consistency. In each study, we consider $p = 3$, fix some underlying $\gamma^*$, and as mentioned in Remark \ref{rem:prior}, for every $\gamma \in \Gamma^p$ and $j \in [p]$, consider the following priors:
\begin{align*}
    \pi_{b, j}^\gamma(\cdot) \;\; \equiv \;\; \text{N}(\boldsymbol{0}, 100 \,  I_{|\pa^\gamma(j)|}) \qquad \text{and} \qquad \pi_\theta^\gamma (\cdot) \;\; \equiv \;\;  \text{Inv.G}(1, 1).
\end{align*} 
Since the marginal likelihoods $m\lt(D_n | \gamma\rt), \; \gamma \in \Gamma^p$ in \eqref{eq:marg_m} are analytically intractable, we consider \textit{importance sampling} to compute each of them numerically with $10^4$ Monte Carlo iterations. To be specific, for the \textit{importance distributions} of $b_{j}^\gamma, \theta_j^\gamma, \, j \in [p]$, we consider that
\begin{align*}
    b_j^\gamma \;\; \overset{{\rm ind}}{\sim} \;\; \text{multivar.t}_\nu(\hat{b}_{j, n}^\gamma, \hat{\Sigma}_{j,n}^\gamma) \qquad \text{and} \qquad \theta_j^\gamma \;\; \overset{{\rm iid}}{\sim} \;\; %\text{Lognormal}(\,\text{mean} = \hat{\theta}_{j,n}^\gamma, \, \text{var.} = \hat{\theta}_{j,n}^{\gamma\, 2}/n),
    \text{Lognormal}\lt(\log\hat{\theta}_{j,n}^\gamma - c_n/2, \, c_n\rt),
\end{align*}
where the parameters are specified by 
\begin{align*}
&c_n = \log(1+1/n), \qquad \nu = 5, \qquad
&&\hat{b}_{j, n}^\gamma = \min_{b_j} \sum_{i \in [n]} \lt|X_j^{(i)} - b_j^TX_{\pa(j)}^{(i)}\rt|,\\
&\hat{\theta}_{j, n}^\gamma = \frac{1}{n}\sum_{i \in [n]} \lt|X_j^{(i)} - \hat{b}_{j, n}^TX_{\pa(j)}^{(i)}\rt|,
&&\hat{\Sigma}^\gamma_{j, n} = \frac{\nu-2}{\nu}\times \frac{\hat{\theta}_{j, n}^\gamma}{2} (D_{n, j}^{\gamma \, T}D_{n, j}^\gamma)^{-1}.
\end{align*}
In order to portray the asymptotic properties, or more specifically, the asymptotic behavior of the posterior probability of the distribution equivalence class, we consider a range of sample sizes $n \in \{100 \times 2^k : k = 4, 5, 6, 7\}$, and for each sample size $n$, we consider $100$ replications of data $D_n$ simulated from the underlying distribution. For each replication, we compute ${\rm Pr}(\gamma \in \cE(\gamma^*, n\cG^*) | D_n)$. %$\pi(\gamma | D_n)$ 
%based on the DAG priors $\pi_g(\cdot)$ in Section \ref{sec:asym_prop}. %$100, 200, 400, 800, 1600, 3200, 6400, 12800, 25600, 51200$.
% Furthermore, the results of the studies are summarized in the following figures, where we include in part (a) the boxplots of these replications, and in part (b) the histogram of the replications at $n = 100 \times2^7$ for a better visual understanding of their distribution for large $n$.
%include them along with the following studies to illustrate how the proposed method achieves posterior consistency with the particular choices of DAG priors, mentioned in Section \ref{sec:asym_prop}.

\paragraph*{First study}
Consider $\gamma^*$ to be the DAG in Figure \ref{fig:sim_stud1} with the following SEM:
\begin{align}\label{eq:sim_stud_SEM1}
\begin{split}
X_1 &= \epsilon_1,\\
X_2 &= 2.5X_1 + \epsilon_2,\\
X_3 &= 1.8X_2 + \epsilon_3,
\end{split}
\end{align}
where  %and $\epsilon_2 \sim \text{N}(0, 0.25)$. 
\begin{align*}
 \epsilon_1\ | \ \lambda_1 \sim \text{N}(0, \lambda_1^2), \quad \text{where} \quad \lambda_1 \sim  \text{Unif}\,[0.2, 0.4], \quad \epsilon_2 \sim \text{N}(0, 0.25), \;\; \text{and} \;\; \epsilon_3 \sim \text{t}_3. %\text{N}(0, 0.4).
 \end{align*}
Hence, $\epsilon_1, \epsilon_3$ are non-Gaussian, i.e., $n\cG^* = \{1, 3\}$.
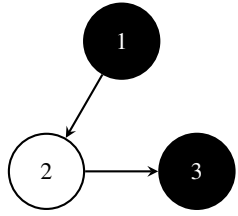
\begin{figure}[H]
  \centering
    %\begin{minipage}{0.45\textwidth}
        %\centering
        \begin{tikzpicture}[->, >=stealth, thick]
            % Define nodes in an equilateral triangle
            \node (1) [circle, draw, fill=black, text=white, minimum size=1cm] at (1,1.732) {1};
            \node (2) [circle, draw, minimum size=1cm] at (0,0) {2};
            \node (3) [circle, draw, fill=black, text=white, minimum size=1cm] at (2,0) {3};
            % Draw directed edges
            \draw (1) -- (2);
            \draw (2) -- (3);
        \end{tikzpicture}
        % Label below the first figure
        %\\ (a) 
    %\end{minipage}
    \caption{DAG $\gamma^*$ in the first study.}
    \label{fig:sim_stud1}
\end{figure}
\noindent By proposition \ref{prop:spl_cases}, we have $\bar{\cE}^* = \cS^*$, i.e., $\cE(\gamma^*, n\cG^*) = \cE^* = \bar{\cE}^*_R = \{\gamma^*\}$, and therefore, following Theorem \ref{thm:post_cons_exac} the uniform DAG prior $\pi_g(\cdot) \propto 1$ is sufficient to lead us to the desired posterior consistency. Indeed, as shown in Figure \ref{fig:study1}(a), the boxplots of ${\rm Pr}(\gamma \in \cE(\gamma^*, n\cG^*) | D_n)$ approaches $1$, demonstrating in-probability convergence of $\pi(\gamma^*|D_n)$ as established in Theorem \ref{thm:post_cons_exac}; see also the histogram of ${\rm Pr}(\gamma \in \cE(\gamma^*, n\cG^*) | D_n)$ 
 for $n = 100 \times2^7$ in Figure \ref{fig:study1}(b), which clearly concentrates at 1.

\begin{figure}[ht]
  \centering
  \begin{minipage}[b]{0.42\textwidth}
    \centering
    \includegraphics[width=\textwidth]{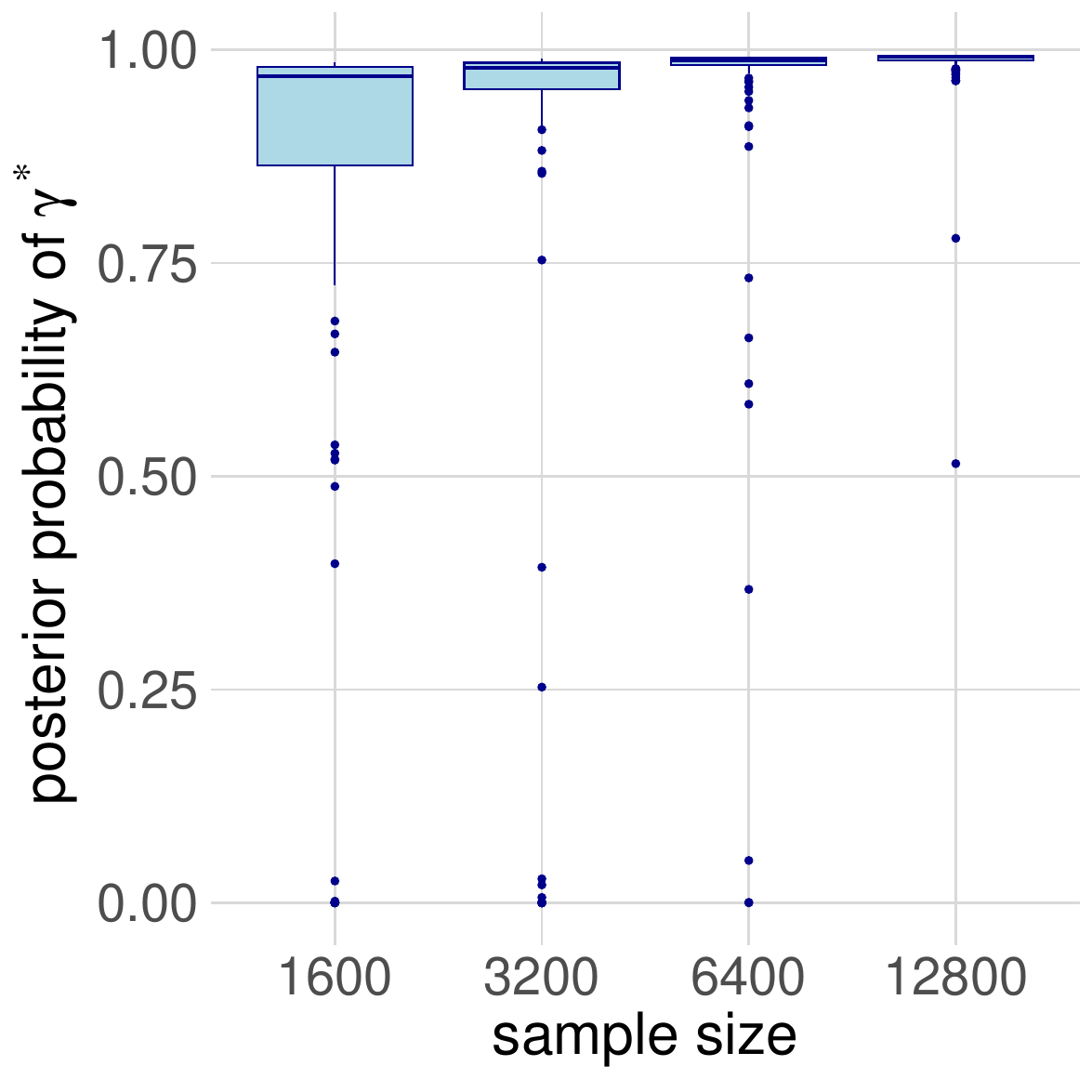}\\
    (a)
    %\caption{Plot 1 caption}
    %\label{fig:plot1}
  \end{minipage}
  \hfill
  \begin{minipage}[b]{0.42\textwidth}
    \centering
    \includegraphics[width=\textwidth]{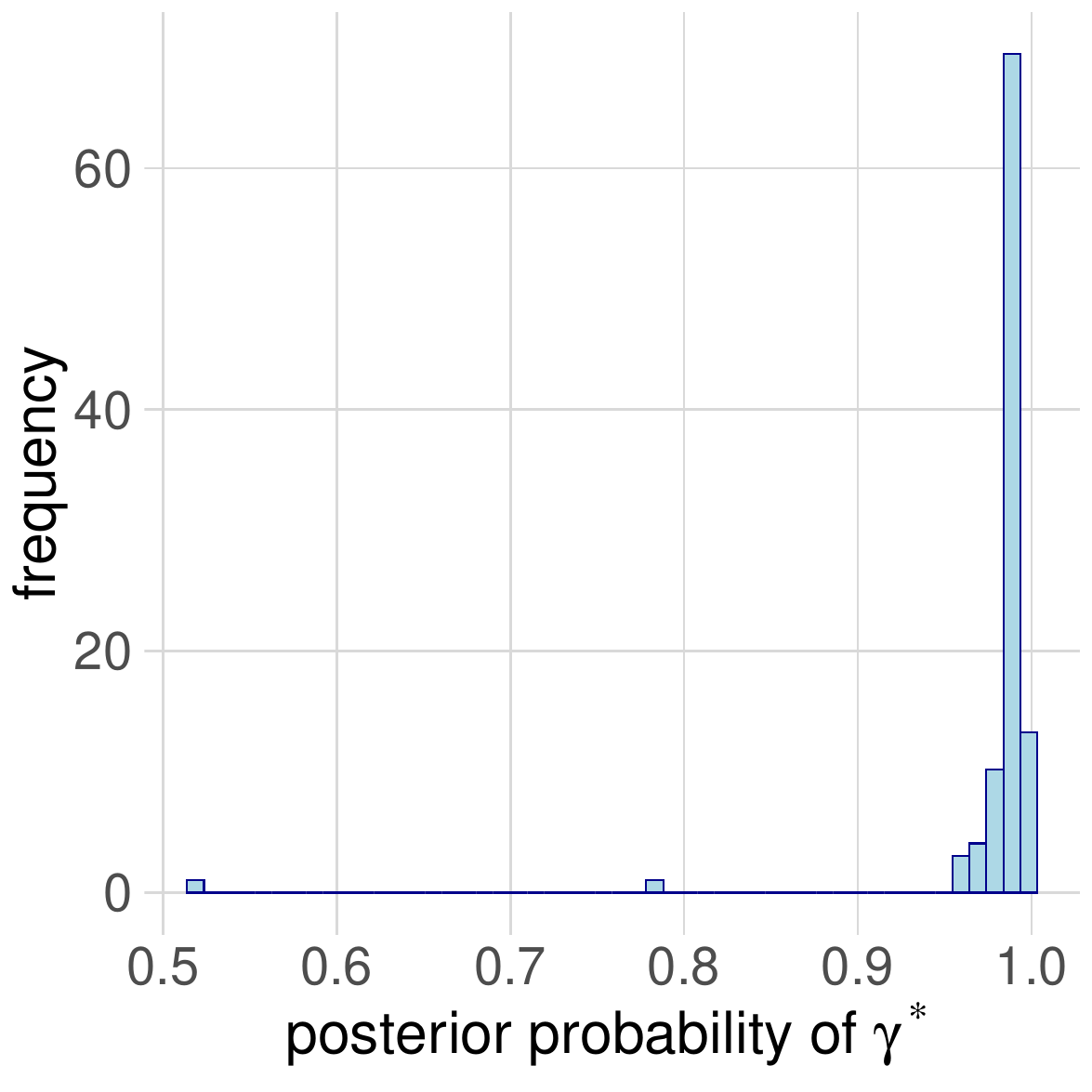}\\
    (b)
    %\caption{Plot 2 caption}
    %\label{fig:plot2}
  \end{minipage}
  \caption{Results of the first study. Panel (a): boxplots of ${\rm Pr}(\gamma \in \cE(\gamma^*, n\cG^*) | D_n)$ over 100 replicates for four different sample sizes. Panel (b): histogram of ${\rm Pr}(\gamma \in \cE(\gamma^*, n\cG^*) | D_n)$ over 100 replicates for sample size $n = 100 \times2^7$.}
  \label{fig:study1}
\end{figure}

\paragraph*{Second study} 
Consider the setup of Example \ref{ex:barE-E} with the SEM given by \eqref{eq:sim_stud_SEM1}, i.e., the same as in the first study, except that $\epsilon_3$ is Gaussian as $\epsilon_3 \sim \text{N}(0, 0.16)$, and thus, $\gamma^*$ is the DAG in Figure \ref{fig:risk_equiv}(a). If we consider $\gamma$ to be the DAG in Figure \ref{fig:risk_equiv}(b), then as shown in Example \ref{ex:barE-E}, we have $\gamma \in \bar{\cE}^*_R$, and $\{\gamma^*\} = \cE(\gamma^*, n\cG^*) = \cE^* \subset \bar{\cE}^*_R$. Therefore, as indicated earlier, the uniform prior $\pi_g(\cdot) \propto 1$ fails to lead us to the desired posterior consistency, which is clear from Figure \ref{fig:study2_unif}(a), and more specifically, the histogram in Figure \ref{fig:study2_unif}(b) strongly suggests the possibility of in-distribution convergence of $\pi(\gamma^* | D_n)$ to $\text{Ber}(1/2)$. To address this issue, following Theorem \ref{thm:post_cons_gen}, we next employ the complexity prior $\pi_g(\gamma) \propto \exp\lt(-n^\alpha d_n |\gamma|\rt)$, where we choose $\alpha = 0.99$, and in light of Remark \ref{rem:d_n}, $d_n$ is considered as 
%prescribed in Remark \ref{rem:d_n}. 
%and accordingly, to approximate $d^*$ based on data, we recommend a data-dependent choice as
\begin{align*}
    d_n = (1/K) \; %\frac{1}{{p\choose 2}} \; 
    \min\{\hat{\delta}_n(\gamma, \gamma') : \;{\hat{\delta}_n(\gamma, \gamma') > 0, \; \gamma, \gamma' \in \Gamma^p}\}, %\qquad \text{where} \; K = {{p\choose 2}}.
\end{align*}
where $K = {{p\choose 2}}$ and $\hat{\delta}_n(\gamma, \gamma')$ is the maximum likelihood estimate of the quantity $(\delta_\gamma - \delta_{\gamma'})$. %$H^\gamma(\tilde{b}^\gamma, \tilde{\theta}^\gamma) - H^{\gamma'}(\tilde{b}^{\gamma'}, \tilde{\theta}^{\gamma'})$.
Indeed, the in-probability convergence of $\pi(\gamma^* | D_n)$ to $1$ is apparent from the shrinking boxplots in Figure \ref{fig:study2_comp}(a), and the histogram in Figure \ref{fig:study2_comp}(b) concentrating at $1$.  

\begin{figure}[ht]
  \centering
  \begin{minipage}[b]{0.42\textwidth}
    \centering
    \includegraphics[width=\textwidth]{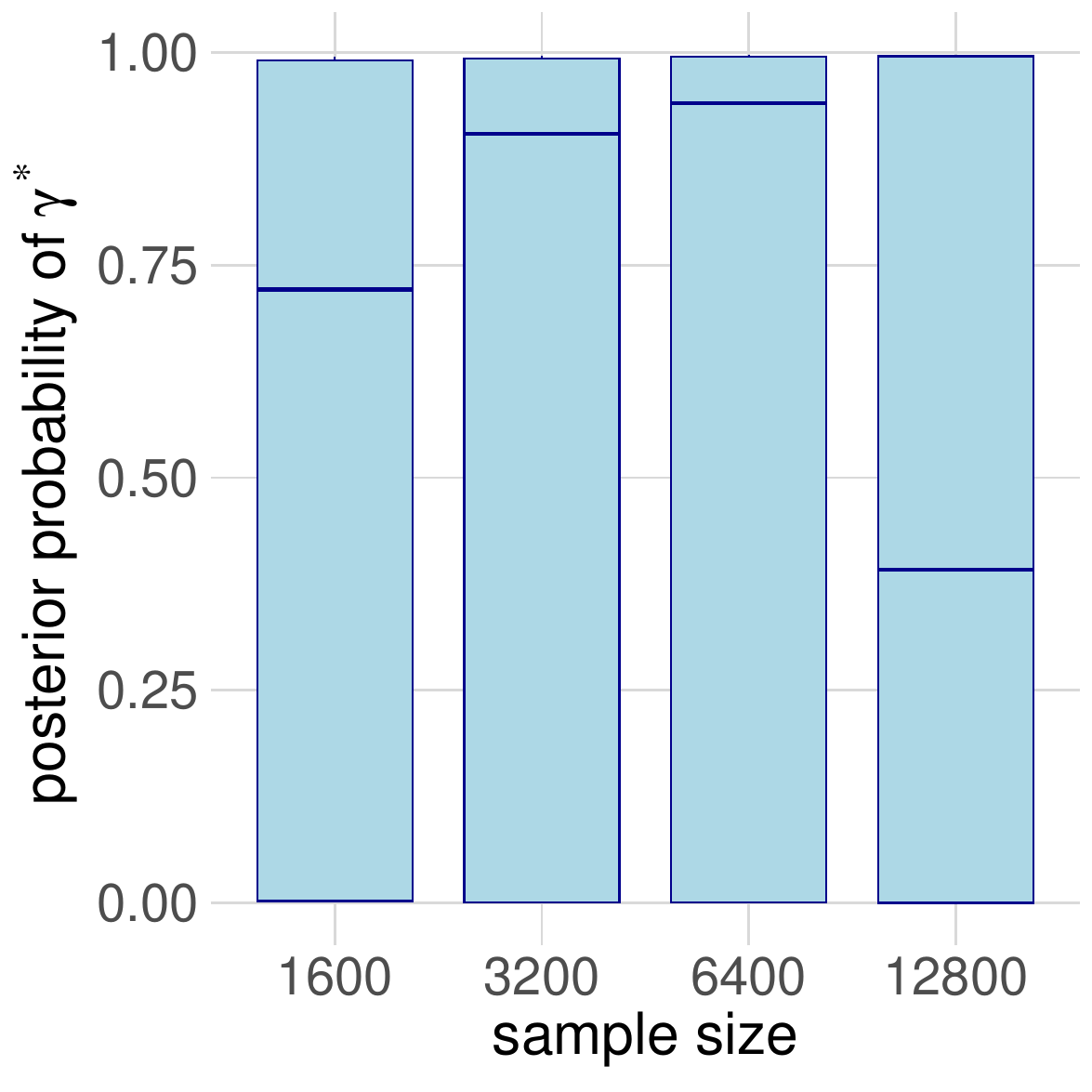}\\
    (a)
    %\caption{Plot 1 caption}
    %\label{fig:plot1}
  \end{minipage}
  \hfill
  \begin{minipage}[b]{0.42\textwidth}
    \centering
    \includegraphics[width=\textwidth]{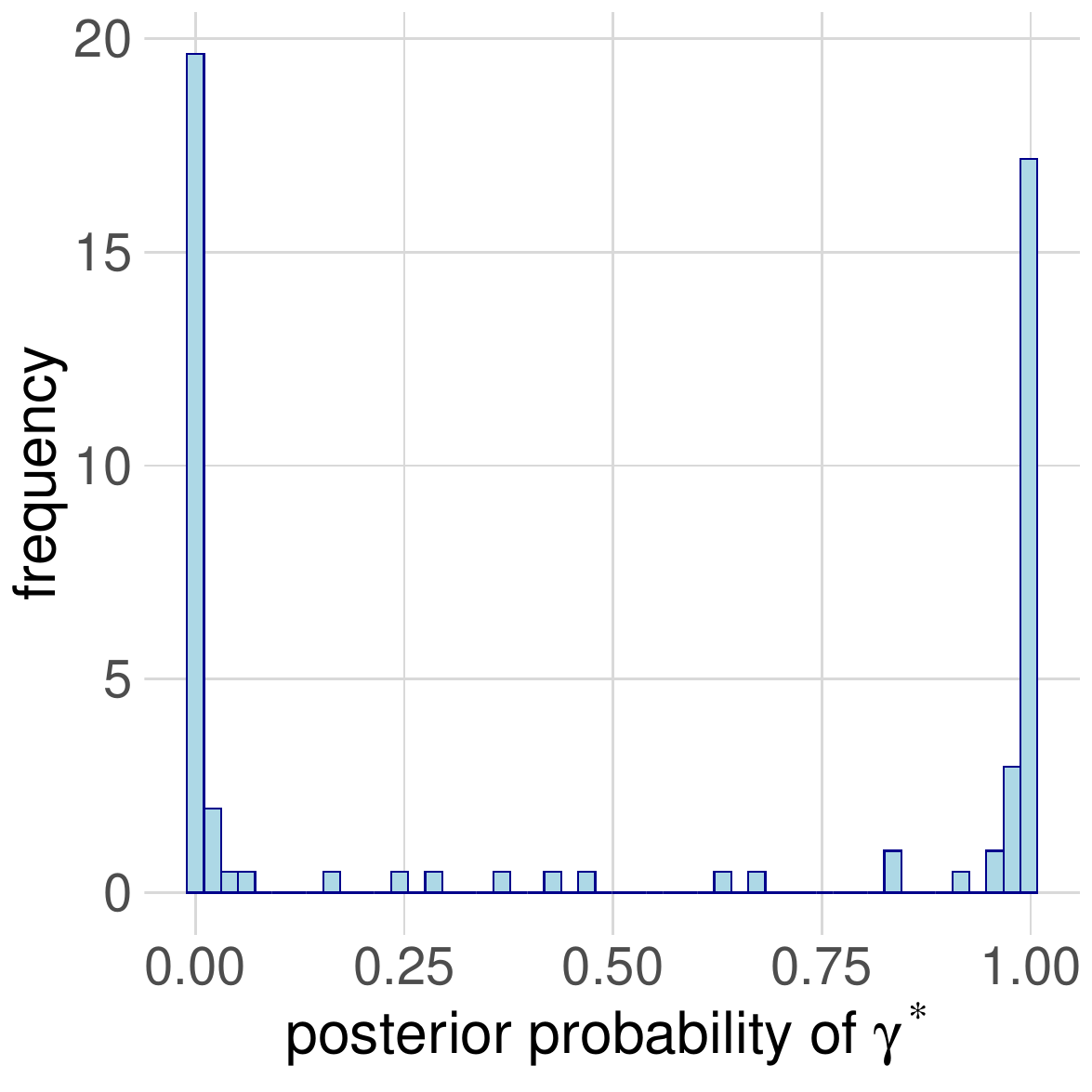}\\
    (b)
    %\caption{Plot 2 caption}
    %\label{fig:plot2}
  \end{minipage}
  \caption{Same as Figure \ref{fig:study1} but for the second study with the uniform DAG prior.}
  \label{fig:study2_unif}
\end{figure}

\begin{figure}[ht]
  \centering
  \begin{minipage}[b]{0.42\textwidth}
    \centering
    \includegraphics[width=\textwidth]{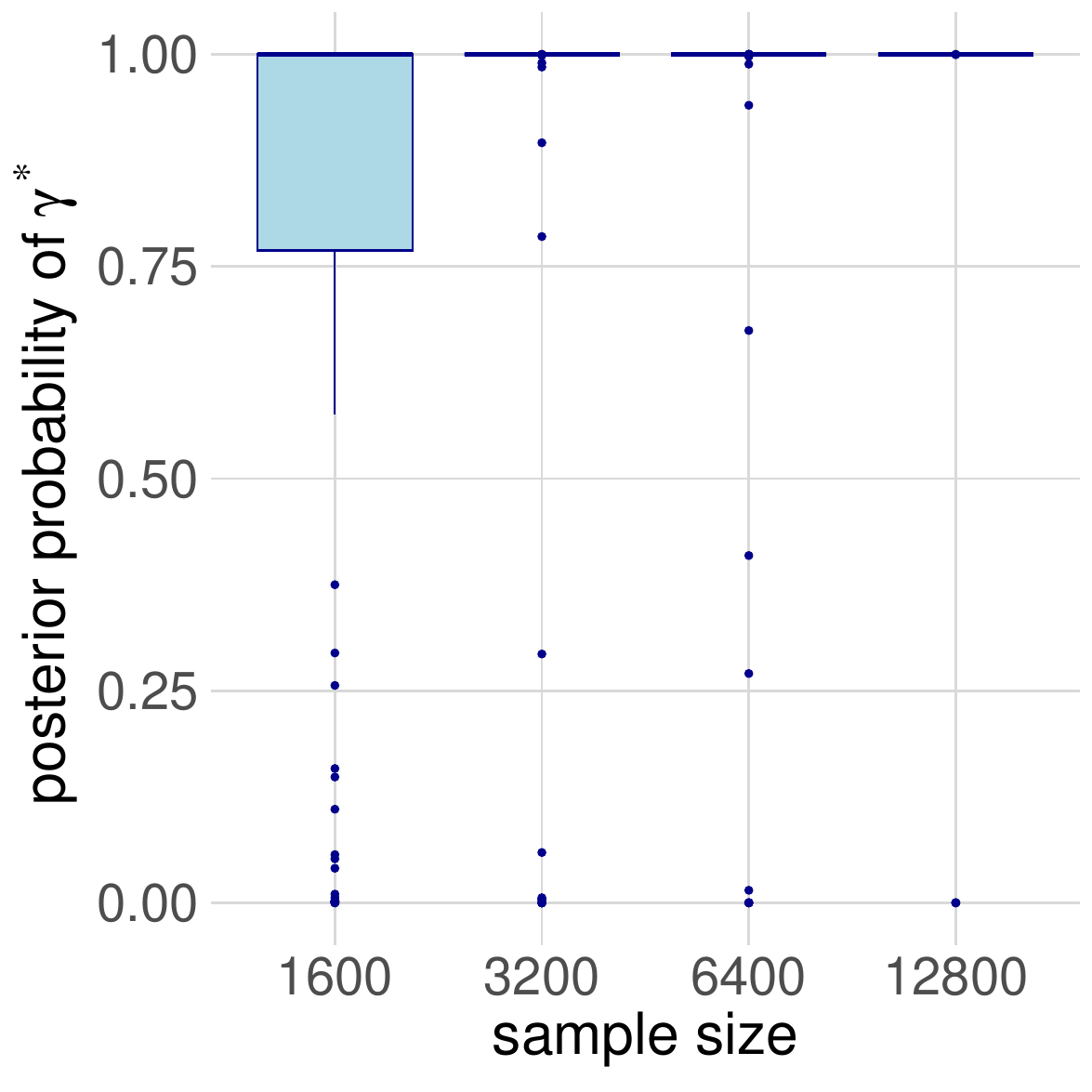}\\
    (a)
    %\caption{Plot 1 caption}
    %\label{fig:plot1}
  \end{minipage}
  \hfill
  \begin{minipage}[b]{0.42\textwidth}
    \centering
    \includegraphics[width=\textwidth]{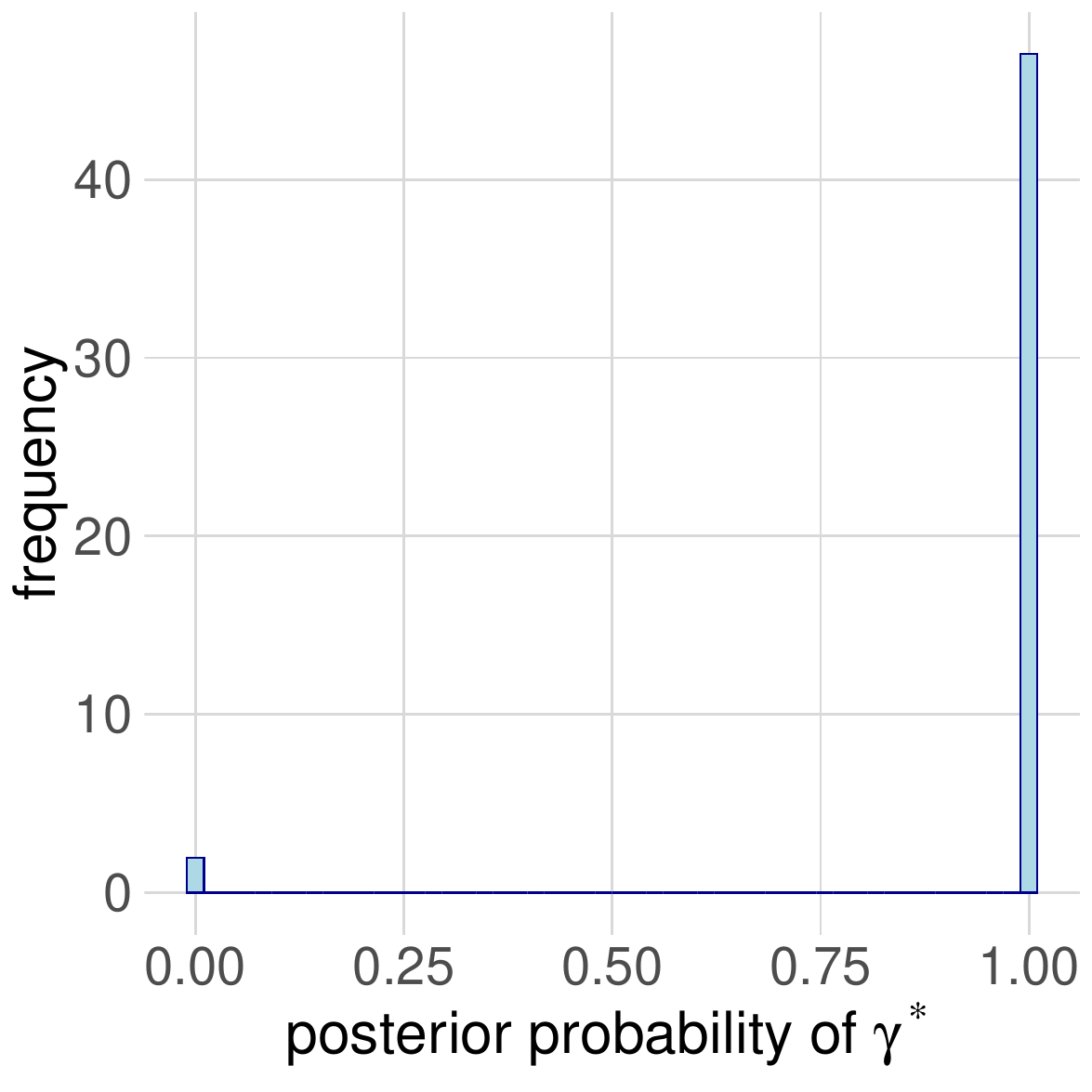}\\
    (b)
    %\caption{Plot 2 caption}
    %\label{fig:plot2}
  \end{minipage}
  \caption{Same as Figure \ref{fig:study1} but for the second study with the complexity prior.}
  \label{fig:study2_comp}
\end{figure}

\begin{comment}
 The errors $\epsilon_1, \epsilon_2, \epsilon_3$ are independent with 
 \begin{align*}
 \epsilon_1\ | \ \lambda_1 \sim \text{N}(0, \lambda_1^2), \quad \text{where} \quad \lambda_1 \sim  \text{Unif}\,[0.2, 0.4], \quad \epsilon_2 \sim \text{N}(0, 0.25), \;\; \text{and} \;\; \epsilon_3 \sim \text{N}(0, 0.4).
 \end{align*}
\end{comment}

\paragraph*{Third study}

Consider the setup of Example \ref{ex:barE=E} with $\gamma^*$ being the DAG in Figure \ref{fig:risk_equiv_E=RE}(a) and the following SEM:
\begin{align*}
X_1 &= \epsilon_1,\\
X_2 &= 2.5X_1 + \epsilon_2,\\
X_3 &= 1.8X_2 + 2.2X_1 + \epsilon_3,
\end{align*}
where $\epsilon_1$ is the only non-Gaussian error, i.e., $n\cG^* = \{1\}$. The distribution of $\epsilon_1$ is the same as in the first study, and those of $\epsilon_2$ and $\epsilon_3$ are the same as in the second study. If we consider $\gamma$ to be the DAG in Figure \ref{fig:risk_equiv_E=RE}(b), then as shown in Example \ref{ex:barE=E}, we have $\gamma \in \bar{\cE}^*_R$, and $\cE(\gamma^*, n\cG^*) = \cE^* = \bar{\cE}^*_R = \{\gamma^*, \gamma\}$. Furthermore, following Theorem \ref{thm:post_cons_gen}, we consider the complexity prior, as outlined in the previous study, to compute ${\rm Pr}(\gamma \in \cE(\gamma^*, n\cG^*) | D_n) = \pi(\gamma^* | D_n) + \pi(\gamma | D_n)$. As expected, the posterior consistency is evident from the boxplots in Figure \ref{fig:study3_pc}(a) and the histogram in Figure \ref{fig:study3_pc}(b). 

\begin{figure}[ht]
  \centering
  \begin{minipage}[b]{0.42\textwidth}
    \centering
    \includegraphics[width=\textwidth]{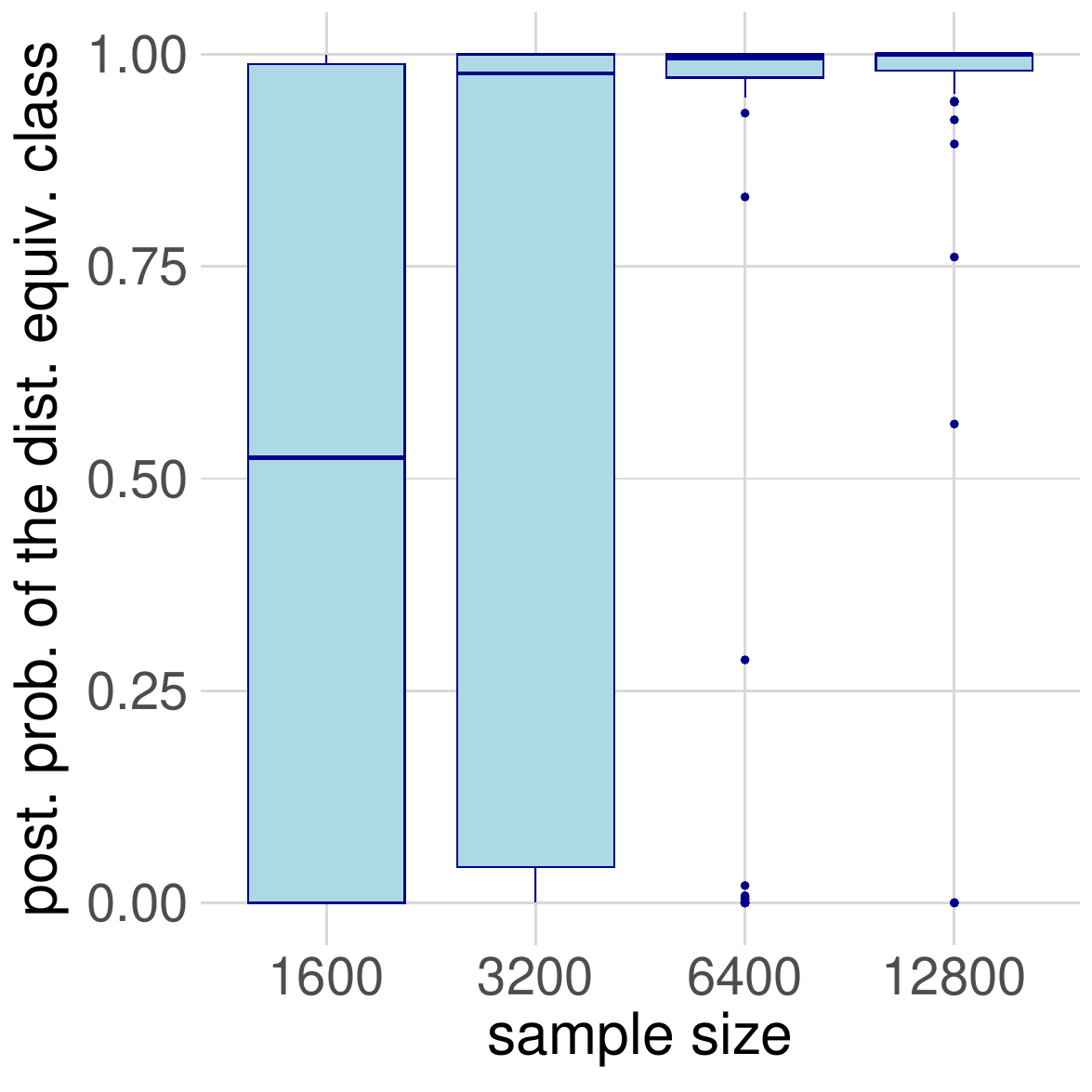}\\
    (a)
    %\caption{Plot 1 caption}
    %\label{fig:plot1}
  \end{minipage}
  \hfill
  \begin{minipage}[b]{0.42\textwidth}
    \centering
    \includegraphics[width=\textwidth]{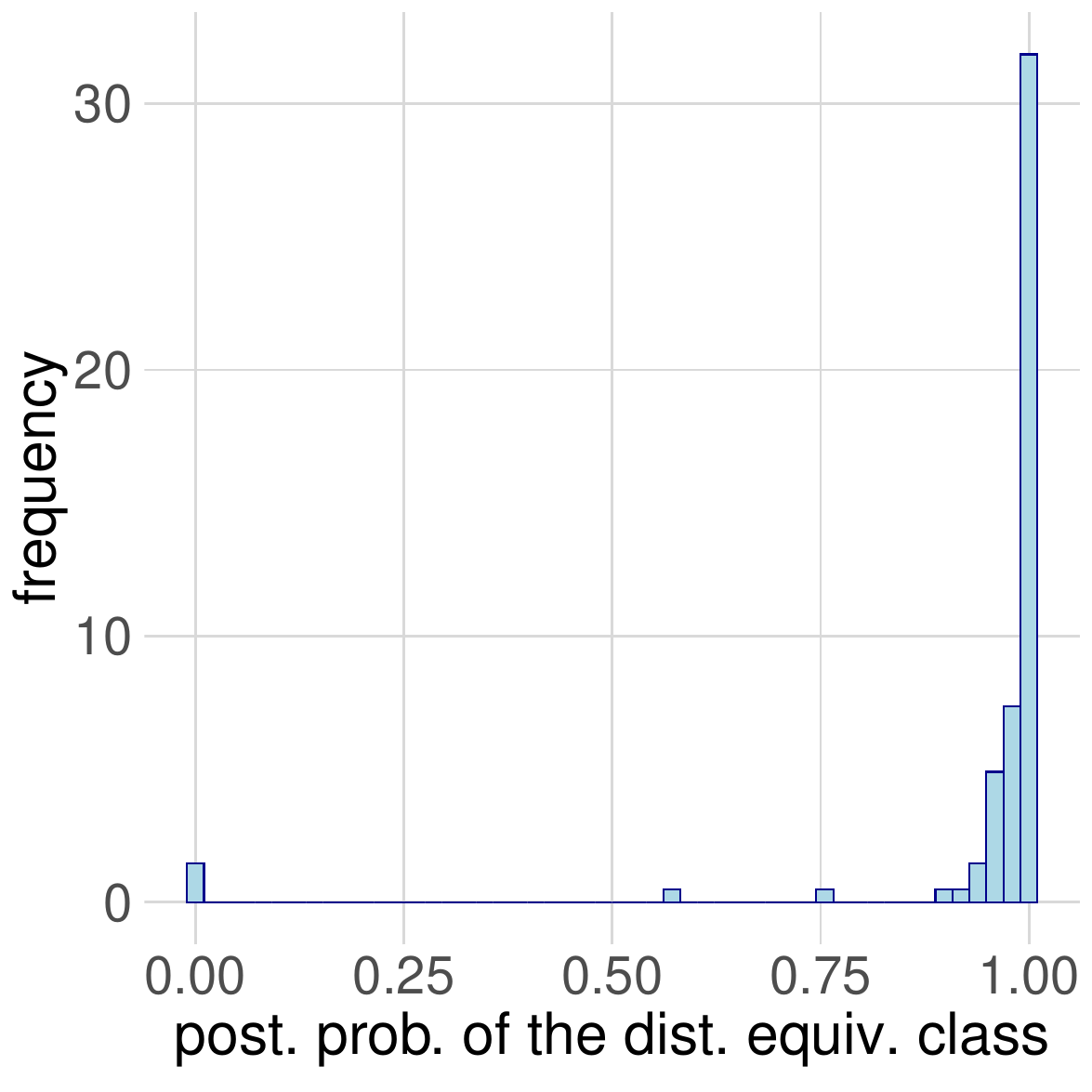}\\
    (b)
    %\caption{Plot 2 caption}
    %\label{fig:plot2}
  \end{minipage}
  \caption{Same as Figure \ref{fig:study1} but for the third study with the complexity prior.}
  \label{fig:study3_pc}
\end{figure}

In this context, since $\gamma$ is an equivalent DAG model, it is also of interest, in spirit of \cite{huggins2023reproducible}, to investigate the asymptotic behavior of the \textit{posterior share} of $\gamma^*$, defined as the quantity $\pi(\gamma^*|D_n)/(\pi(\gamma^*|D_n) + \pi(\gamma | D_n))$. For this, we include the associated histograms in Figure \ref{fig:study3_ps} representing its asymptotic behavior at different sample sizes under consideration, which suggests the in-distribution convergence of the posterior share to $\text{Ber}(1/2)$.

\begin{comment}
\begin{figure}[ht]
  \centering
  \begin{minipage}[b]{0.4\textwidth}
    \centering
    \includegraphics[width=\textwidth]{BCD AoS/}\\
    (a)
    %\caption{Plot 1 caption}
    %\label{fig:plot1}
  \end{minipage}
  \hfill
  \begin{minipage}[b]{0.4\textwidth}
    \centering
    \includegraphics[width=\textwidth]{BCD AoS/}\\
    (b)
    %\caption{Plot 2 caption}
    %\label{fig:plot2}
  \end{minipage}
  \caption{Asymptotic behavior of the posterior ratio for $\gamma^*$.}
  \label{fig:study3_pr}
\end{figure}
\end{comment}

\begin{figure}[ht]
  \centering
  % Top row
  \begin{minipage}[b]{0.42\textwidth}
    \centering
    \includegraphics[width=\textwidth]{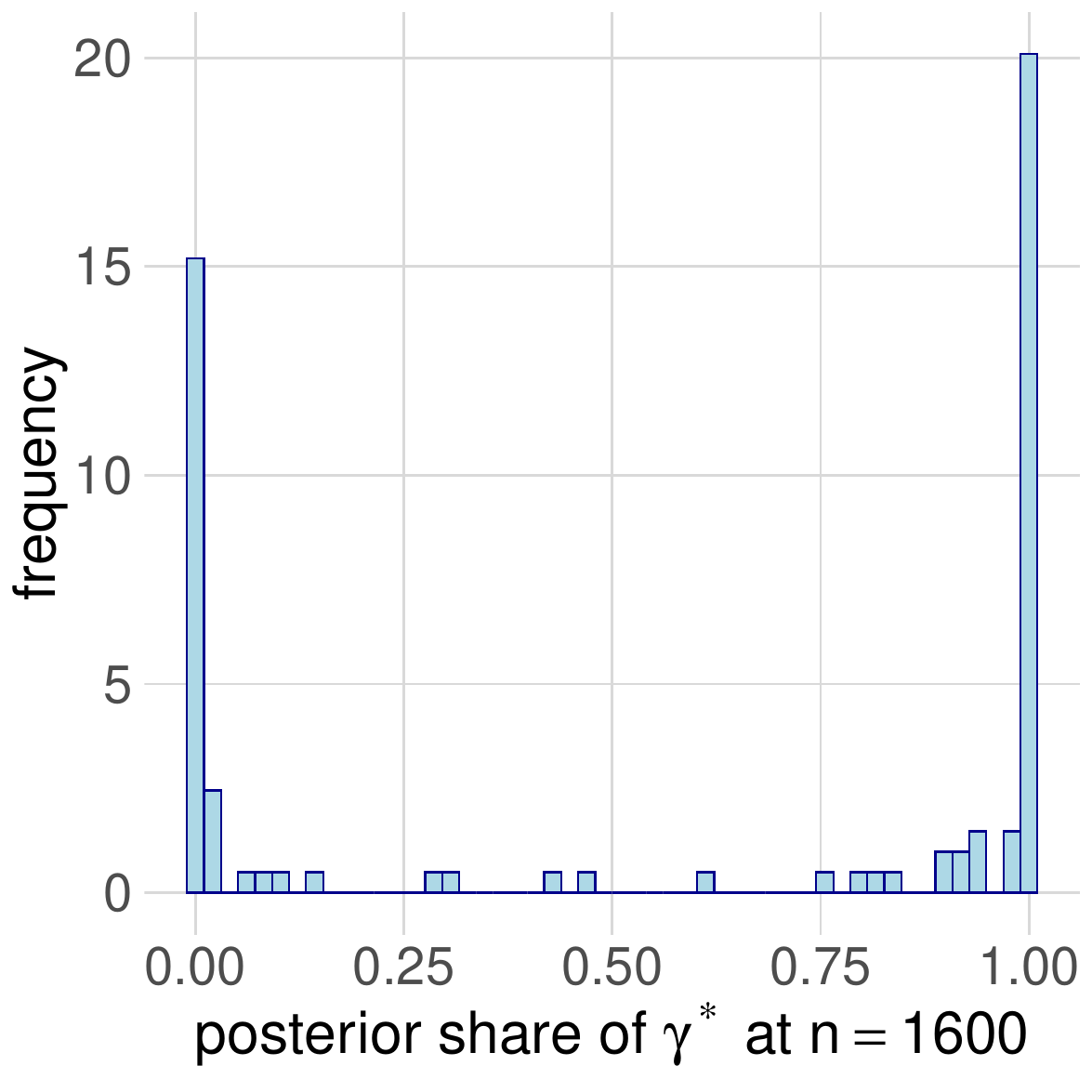}\\
    (a)
  \end{minipage}
  \hfill
  \begin{minipage}[b]{0.42\textwidth}
    \centering
    \includegraphics[width=\textwidth]{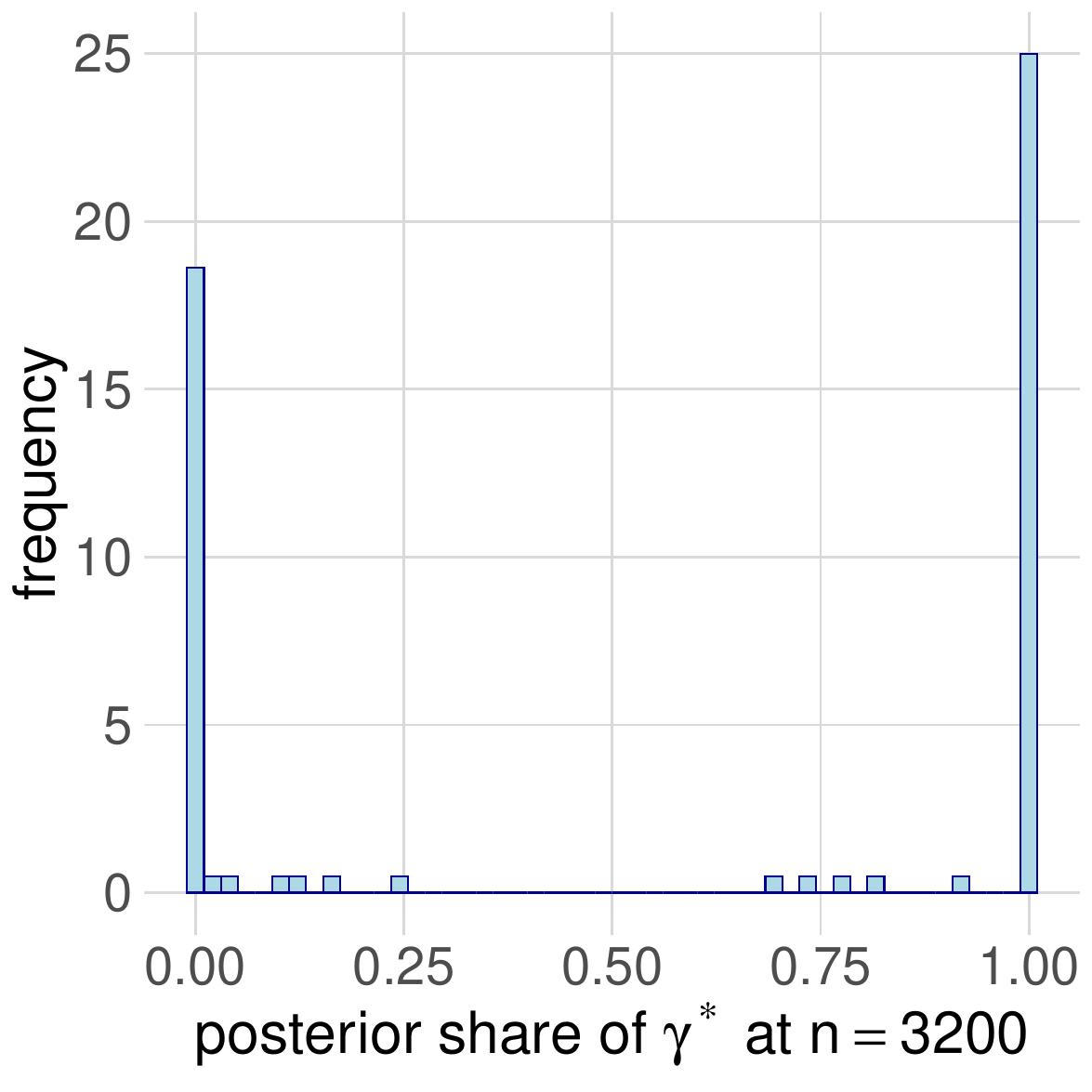}\\
    (b)
  \end{minipage}

  \vspace{0.2cm} % Space between rows

  % Bottom row
  \begin{minipage}[b]{0.42\textwidth}
    \centering
    \includegraphics[width=\textwidth]{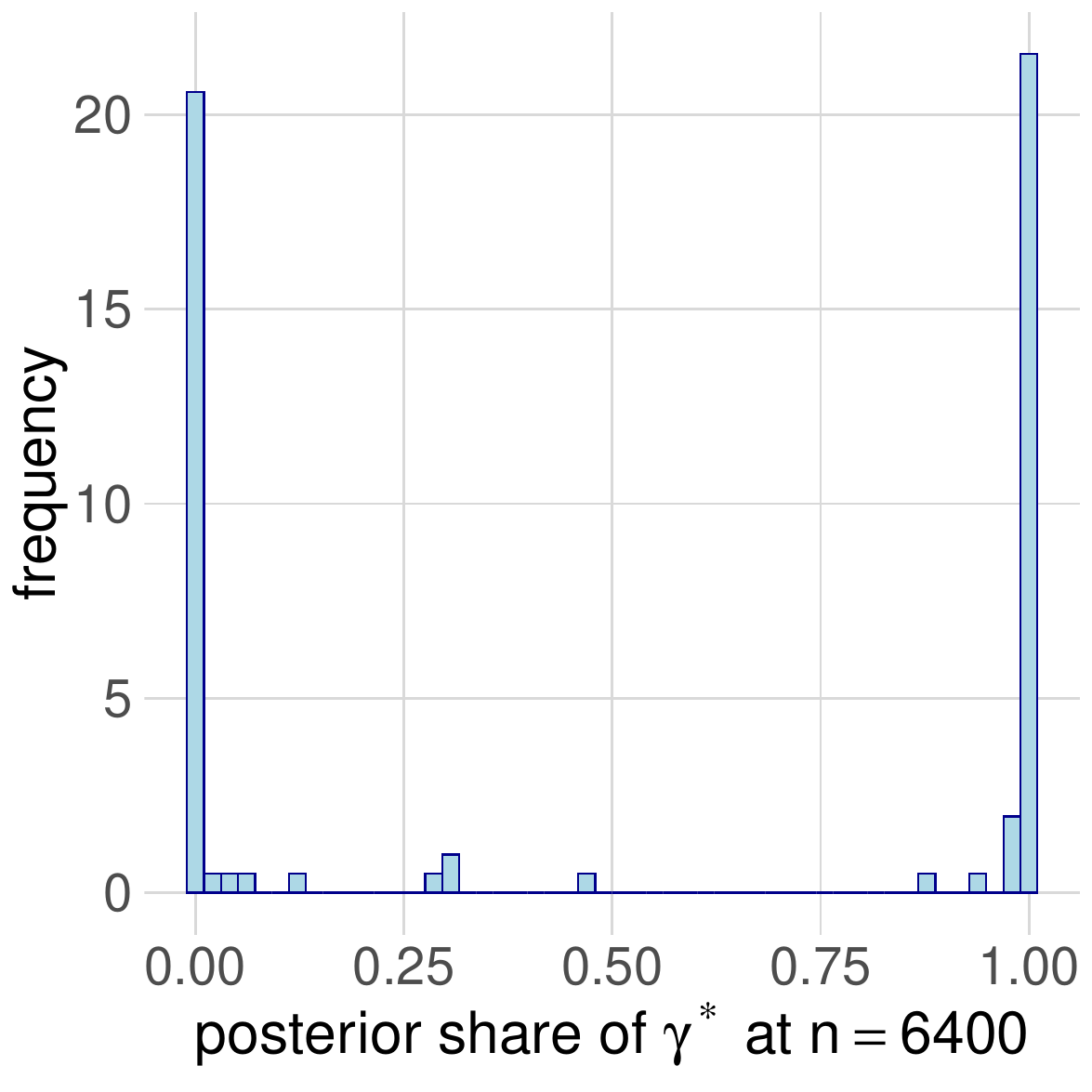}\\
    (c)
  \end{minipage}
  \hfill
  \begin{minipage}[b]{0.42\textwidth}
    \centering
    \includegraphics[width=\textwidth]{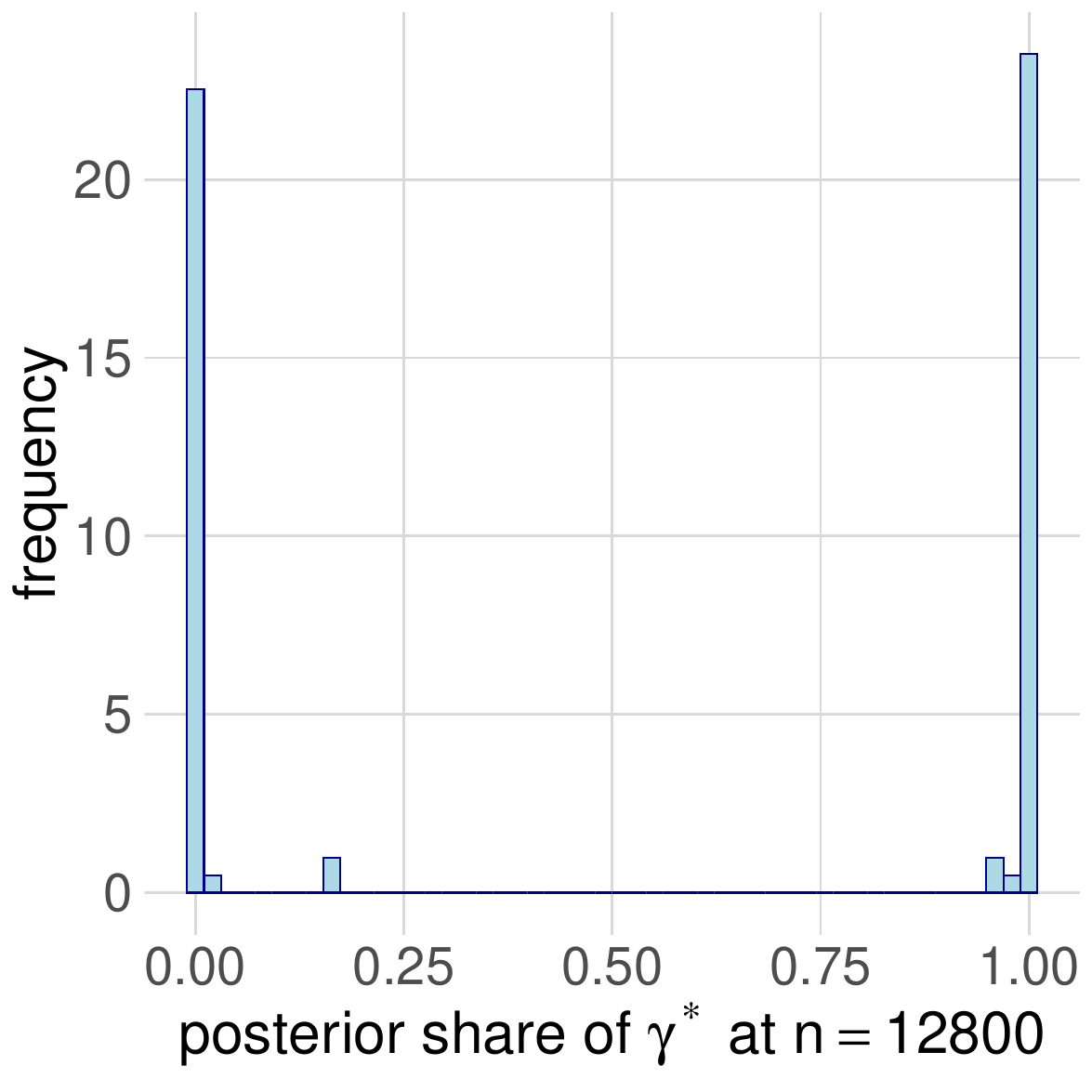}\\
    (d)
  \end{minipage}

  %\caption{Asymptotic behavior of the posterior share of $\gamma^*$.}
  \caption{Histograms of the posterior share of $\gamma^*$ over 100 replicates for sample sizes $n = 100 \times 2^k, k = 4, 5, 6, 7$.}
  \label{fig:study3_ps}
\end{figure}

\begin{comment}
\begin{figure}[H]
  \centering
    \begin{minipage}{0.45\textwidth}
        \centering
        \begin{tikzpicture}[->, >=stealth, thick]
            % Define nodes in an equilateral triangle
            \node (1) [circle, draw, fill=black, text=white, minimum size=1cm] at (1,1.732) {1};
            \node (2) [circle, draw, minimum size=1cm] at (0,0) {2};
            \node (3) [circle, draw, minimum size=1cm] at (2,0) {3};
            % Draw directed edges
            \draw (1) -- (2);
            \draw (1) -- (3);
            \draw (2) -- (3);
        \end{tikzpicture}
        % Label below the first figure
        \\ (a) 
    \end{minipage}
    \hfill
    \begin{minipage}{0.45\textwidth}
        \centering
        \begin{tikzpicture}[->, >=stealth, thick]
            % Define nodes in an equilateral triangle
            \node (1) [circle, draw, fill=black, text=white, minimum size=1cm] at (1,1.732) {1};
            \node (2) [circle, draw, minimum size=1cm] at (0,0) {2};
            \node (3) [circle, draw, minimum size=1cm] at (2,0) {3};
            % Draw directed edges
            \draw (1) -- (3);
            \draw (1) -- (2);
            \draw (3) -- (2);
        \end{tikzpicture}
        % Label below the second figure
        \\ (b)
    \end{minipage}
    \caption{The DAGs in {\rm (a)} and {\rm (b)} are $\gamma^*$ and $\gamma$ respectively, in the third study.}
    \label{fig:sim_stud3}
\end{figure}
\end{comment}

\section{Conclusion}\label{sec:cnclsn}

In this work, we consider the problem of learning the DAG structure of a linear recursive SEM. The associated error variables in the SEM are assumed to follow some scale mixture of Gaussian, which, unlike most existing works, provides the flexibility that we can not only incorporate non-Gaussian errors but also allow some errors to be Gaussian. In order to identify the unknown data-generating DAG, we propose a Bayesian SEM with Laplace error variables and theoretically study its property when the data-generating SEM does not necessarily have Laplace errors. We establish that our proposed method can consistently recover the true underlying DAG up to its distribution equivalence class, that is, the posterior probability of this class converges to unity as the sample size grows to infinity. Therefore, apart from consistency, our method is also shown to achieve optimality in that further refinement of the equivalence class is not possible without additional assumptions. En route to proving the consistency, we additionally characterize the distribution equivalence classes under an arbitrary combination of Gaussian and non-Gaussian errors, which can be of independent interest to the readers. Finally, our theoretical results show distinct rates of divergence of the Bayes factors depending on the structure of competing DAGs.

There are several natural generalizations of the current work. For instance, it would be interesting to consider more general non-Gaussian distributions and establish similar consistency results for the proposed method. Moreover, we can extend the results of the present work to the high-dimensional setting under additional assumptions, if needed, such as equal error variances, specific tail behaviors of the error distributions, and sparsity conditions \cite{johnson2012bayesian,castillo2015bayesian}.

Finally, there are many open questions for future research, such as designing efficient DAG selection methods for nonlinear SEM and developing similar asymptotic theory. Although it is possible to use basis expansion to accommodate nonlinearity, with the growth in sample size, we typically need to allow the number of basis functions to increase, which induces a high-dimensional scenario even when the number of variables does not grow with the sample size, and thereby appoints some fresh theoretical challenges. Another important direction is to consider directed cyclic graphs or non-recursive SEMs,  which is significantly more challenging because, unlike DAGs, their factorization and Markov equivalence characterizations are more intricate. Apart from that, the absence of conjugate priors and consequently, the intractability of marginal likelihoods poses additional challenges in theoretical analysis similar to the present work. Lastly, another avenue of interest is to consider the presence of latent confounders or correlated errors \cite{wang2023causal, salehkaleybar2020learning, drton2011global, chen2024discovery, li2024nonlinear}.

\begin{funding}
The research of A. Chaudhuri and Y. Ni were supported by NIH R01 GM148974. The research of Y. Ni was additionally supported by NSF DMS-2112943.  The research of A. Bhattacharya was supported partially by NSF DMS-2210689 and NSF DMS-1916371. 
\end{funding}

%%%%%%%%%%%%%%%%%%%%%%%%%%%%%%%%%%%%%%%%%%%%%%
%% Supplementary Material, if any, should   %%
%% be provided in {supplement} environment  %%
%% with title and short description.        %%
%%%%%%%%%%%%%%%%%%%%%%%%%%%%%%%%%%%%%%%%%%%%%%
\begin{supplement}
\textbf{Supplement to "Consistent DAG selection for Bayesian causal discovery under general error distributions"} In the supplement we prove all results and present additional technical lemmas. In Appendix \ref{app:working}, we derive some essential properties of our working model which are utilized to obtain the results in Appendix \ref{app:identif} regarding the identifiability theory. In Appendix \ref{app:lap}, we obtain the Laplace approximation which plays a crucial role in establishing the posterior consistency in Appendix \ref{app:post_const}.
\end{supplement}

\newpage

\begin{appendix}

\section{Some properties of the working model}\label{app:working}

\paragraph*{Notations}

For $p \in \bN$, the family of all permutations of $[p]$ is denoted by $\cT_p$.
For any vector $x$, we denote its $\ell_1$ norm and $\ell_2$ norm by $|x|$ and $||x||$, respectively. Moreover, if $k^{\rm th}$ element of $x$ is denoted by $x_k$, then the support of $x$, denoted by ${\rm supp}(x)$, is defined to be the set of indices of its non-zero elements, i.e., ${\rm supp}(x) = \{k : x_k \neq 0\}$. For any two matrices $A$ and $B$ of the same dimension, we denote their Hadamard product by $A \circ B$.\\

Fix an arbitrary $\gamma \in \Gamma^p$ and consider the corresponding model in \eqref{eq:lap_model_alt}.
Thus, for notational simplicity, in the rest of the paper, we omit the superscript $\gamma$ from the notations $b^\gamma$, $b_{jk}^\gamma$, $\theta^\gamma$, $\theta_j^\gamma$, $e^\gamma_j$, $H^\gamma(\cdot)$, $\pa^\gamma(j)$, $\de^\gamma(j)$, $\an^\gamma(j)$, $\bar{\de}^\gamma(j)$, $\bar{\an}^\gamma(j)$, where $j \in [p]$, $k \in \pa^\gamma(j)$.\\

Now, let the causal order of $\gamma$ be denoted by $\sigma$. 
Then, we define a quantity that captures the total causal effect of an ancestor on a node in $\gamma$, as follows. 
Specifically, we define recursively over the causal order 
 \begin{align}\label{eq:defb*}
\text{for every $j \in [p]$ \, and \, $s \in \an(j)$}, \quad b_{j \gets s} := \sum_{k \in \pa(j) \cap \bar{\de}(s)} b_{jk} b_{k \gets s}, \quad \text{and} \quad b_{j \gets j} \equiv 1.
\end{align}
Moreover, when $\gamma = \gamma^*$, we use the notation $b^*_{j \gets s}$ in an analogous manner,
and in particular, if for every $j \in [p]$ and $k \in \pa^*(j)$, $b^*_{jk} = \beta^*_{jk}$, then we further adapt the notation as $\beta^*_{j \gets s}$.

%when $\gamma = \gamma^*$, we adapt the notation $b^*_{j \gets s} = \beta^*_{j \gets s}$ in an analogous manner
%if for every $j \in [p]$ and $k \in \pa^*(j)$, we let $b^*_{jk} = \beta^*_{jk}$. 
%Also in that case, according to the true model \eqref{eq:model}, and the proposed model \eqref{eq:lap_model_alt}, $e_j = \epsilon_j$ for every $j \in [p]$.
\begin{lemma}\label{lem:rep_anc}
For every $j \in [p]$, we have
\begin{align*}
X_j = \sum_{\ell \in \bar{\an}(j)} b_{j \gets \ell} e_\ell.
\end{align*}
\end{lemma}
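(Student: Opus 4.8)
The plan is to establish the identity by strong induction on the causal order $\sigma(j)$ of node $j$ in $\gamma$.

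For the base case, I would take a source node $j$, i.e. one with $\sigma(j)$ minimal, so that $\pa(j) = \emptyset$. The structural equation in \eqref{eq:lap_model_alt} then reads $X_j = e_j$, while $\bar{\an}(j) = \{j\}$ and $b_{j \gets j} = 1$ by the convention in \eqref{eq:defb*}; hence both sides equal $e_j$.

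For the inductive step, fix $j$ and assume the identity for all nodes of strictly smaller causal order. Every $k \in \pa(j)$ satisfies $\sigma(k) < \sigma(j)$, so substituting the induction hypothesis into $X_j = \sum_{k \in \pa(j)} b_{jk} X_k + e_j$ gives
\begin{align*}
X_j = e_j + \sum_{k \in \pa(j)} b_{jk} \sum_{\ell \in \bar{\an}(k)} b_{k \gets \ell}\, e_\ell .
\end{align*}
I would then interchange the order of summation and collect the coefficient of each error variable. First, $j$ does not lie in $\bar{\an}(k)$ for any $k \in \pa(j)$: the case $j = k$ is excluded since $\gamma$ has no self-loops, and $j \in \an(k)$ together with $(k \to j) \in \gamma$ would create a directed cycle. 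Hence $e_j$ has total coefficient $1 = b_{j \gets j}$. For $\ell \ne j$, the variable $e_\ell$ occurs iff $\ell \in \bar{\an}(k)$ for some $k \in \pa(j)$, equivalently iff $k \in \pa(j) \cap \bar{\de}(\ell)$; by transitivity of the ancestor relation this forces $\ell \in \an(j)$, and conversely every $\ell \in \an(j)$ arises in this way (take a directed path from $\ell$ to $j$ and let $k$ be its penultimate node). Therefore the errors appearing on the right-hand side are precisely those indexed by $\bar{\an}(j)$, and for each $\ell \in \an(j)$ the coefficient equals
\begin{align*}
\sum_{k \in \pa(j) \cap \bar{\de}(\ell)} b_{jk}\, b_{k \gets \ell} = b_{j \gets \ell},
\end{align*}
by the recursion \eqref{eq:defb*}. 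This yields $X_j = \sum_{\ell \in \bar{\an}(j)} b_{j \gets \ell}\, e_\ell$ and closes the induction.

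The computation is routine once the index bookkeeping is set up; the only point that needs care — and it is not a genuine obstacle — is verifying that after interchanging the sums the index set is exactly $\bar{\an}(j)$ and that the collected coefficients coincide with the recursively defined $b_{j \gets \ell}$. Both facts follow from the acyclicity of $\gamma$ together with the transitivity of the ancestor relation.
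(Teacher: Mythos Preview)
Your proof is correct and follows essentially the same approach as the paper: strong induction on the causal order, substituting the induction hypothesis into the structural equation, interchanging the order of summation, and identifying the resulting index set with $\bar{\an}(j)$ via the identity $\an(j) = \bigcup_{k \in \pa(j)} \bar{\an}(k)$ so that the collected coefficients match the recursion \eqref{eq:defb*}. Your additional remark that $j \notin \bar{\an}(k)$ for $k \in \pa(j)$ by acyclicity makes explicit a point the paper leaves implicit, but otherwise the two arguments are the same.
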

\begin{proof}
We prove this by induction over the causal order $\sigma$. Note that, the hypotheses is trivially true for $j$ such that $\sigma(j) = 1$ since $\bar{\an}(j) = \{j\}$ and $X_j = e_j$. Now, fix $j \in [p]$ for which $\sigma(j) = m$ for some $m > 1$, and suppose that the hypotheses is true for every $j \in \{\ell : 1 \leq \sigma(\ell) \leq m-1\}$. Then,
\begin{align*}
X_j &= \sum_{k \in {\rm pa}(j)} b_{jk} X_{k} + e_j\\
&= \sum_{k \in \pa(j)} b_{jk} \sum_{\ell \in \bar{\an}(k)} b_{k \gets \ell} e_\ell + e_j\\
&= \sum_{k \in \pa(j)} \sum_{\ell \in \bar{\an}(k)} b_{jk} b_{k \gets \ell} e_\ell + e_j\\
&= \sum_{\ell \in \an(j)} \sum_{k \in \pa(j) \cap \bar{\de}(\ell)} b_{jk}b_{k \gets \ell} e_\ell + e_j\\
&= \sum_{\ell \in \an(j)} b_{j \gets \ell} e_\ell + b_{j \gets j} e_j =  \sum_{\ell \in \bar{\an}(j)} b_{j \gets \ell} e_\ell,
\end{align*} 
where the first equality is from \eqref{eq:lap_model_alt} and the second one follows from the induction hypotheses as $\pa(j) \subseteq \{\ell : 1 \leq \sigma(\ell) \leq m-1\}$. The fourth equality follows by rearranging the sum using the fact that, 
\begin{align*}
\an(j) = \pa(j) \cup \bigcup_{k \in \pa(j)} {\an}(k) =  \bigcup_{k \in \pa(j)} \bar{\an}(k),
\end{align*}
i.e., for every $\ell \in \an(j)$, there exists a parent of $j$, $k \in \pa(j)$ such that $\ell \in \bar{\an}(k)$ or equivalently, $k \in \bar{\de}(\ell)$. Finally, the last equality follows by using the definition in \eqref{eq:defb*}. The proof is complete.
\end{proof}

Note that, for every $i \in [p]$, $\sigma^{-1}(i)$ determines the variable whose causal order is $i$. Now, suppose we denote by $X_{\sigma}$ the random vector whose elements are $X_j, j \in [p]$ and ordered according to $\sigma$, i.e.,
\begin{align*}
X_{\sigma} := (X_{\sigma^{-1}(1)}, X_{\sigma^{-1}(2)}, \dots, X_{\sigma^{-1}(p)}),
\end{align*}
and we define $e_{\sigma}$ in a similar way.
Then, in view of Lemma \ref{lem:rep_anc}, we can represent
\begin{align}\label{eq:X_Be}
X_{\sigma} = B e_{\sigma},
\end{align}
where $B \in \bR^{p \times p}$ is a lower triangular matrix such that for every $u, v \in [p]$, $u \geq v$, the $(u, v)^{\rm th}$ element of $B$ is $b_{j \gets k}$, i.e.,
\begin{align*}
B_{uv} = b_{j \gets k}, \qquad \text{where} \;\; \sigma(j) = u \;\;\;  \text{and} \;\;\; \sigma(k) = v,
\end{align*}
and hence, all its diagonal elements are $1$. 

Now, without loss of generality, suppose the true causal order $\sigma^*$ is such that $\sigma^*(j) = j$ for every $j \in [p]$.
Then, according to the true model in \eqref{eq:model}, we have the lower triangular matrix $\cB^*$ such that, in a similar way as above, for every $u, v \in [p]$, $u \geq v$, we have its $(u, v)^{\rm th}$ element $\cB^*_{uv} = \beta^*_{u \gets v}$
to obtain the similar representation
\begin{align}\label{eq:X_Beps}
X = \cB^* \epsilon, \qquad \text{where} \;\; \epsilon = (\epsilon_1, \epsilon_2, \dots, \epsilon_p).
\end{align}

Next, note that there exists a permutation matrix $P$ for which $X_\sigma = P X$, which leads to
\begin{align}\label{eq:defA}
\begin{split}
\qquad &e_\sigma = B^{-1}X_\sigma = B^{-1}P X = B^{-1}P\cB^* \epsilon = A \epsilon,\\ 
\quad &\text{where} \qquad A := B^{-1}P\cB^*.
\end{split}
\end{align}
For every $i \in [p]$, we denote the $i^{\rm th}$ row of $A = ((a_{ij}))$ by $a_i^T$, where $a_i \in \bR^p$.
Moreover, we define the following set of indices related to $A$, which will be useful later,
\begin{align*}
\cR_A := \{i \in [p] : \supp(a_i \circ a_j) \neq \emptyset \;\;\; \text{for some} \;\;\; j \neq i\} \quad \text{and} \quad \cC_A := \bigcup_{i, j \in [p]} {\rm supp}(a_i \circ a_j).
\end{align*}
\begin{lemma}\label{lem:detA}
We have \ $\det(A) = 1$.
\end{lemma}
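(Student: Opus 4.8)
The plan is to read off $\det(A)$ from the factorization $A = B^{-1}P\cB^*$ in \eqref{eq:defA} via multiplicativity, writing $\det(A) = \det(B^{-1})\,\det(P)\,\det(\cB^*)$, and to control each factor separately. The two structured factors are immediate. By the construction in \eqref{eq:X_Be}, $B$ is lower triangular, and by \eqref{eq:defb*} its diagonal entries are $b_{j\gets j} = 1$; hence $\det(B) = 1$ and $\det(B^{-1}) = 1$. Likewise, by \eqref{eq:X_Beps}, $\cB^*$ is lower triangular with diagonal entries $\cB^*_{jj} = \beta^*_{j\gets j} = 1$, so $\det(\cB^*) = 1$. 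This collapses the claim to the single permutation factor, leaving $\det(A) = \det(P)$, where $P$ is the order-matching matrix defined by $X_\sigma = PX$.

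Since $P$ is a permutation matrix, this already gives $\det(A)^2 = \det(P)^2 = 1$, i.e.\ $\det(A) \in \{-1, +1\}$, so the entire content of the lemma is to pin the sign to $+1$; a crude application of multiplicativity only delivers $|\det(A)| = 1$, and pinning the sign is the main obstacle. I would resolve it by passing from the $\sigma$-ordered vector $e_\sigma = A\epsilon$ to the node-natural ordering \emph{before} taking determinants. Concretely, I set $\check{A} := P^{-1}A = (P^{-1}B^{-1}P)\,\cB^*$, which is exactly the matrix expressing the errors $e^\gamma_j$ in their natural node labelling as linear functions of $\epsilon$. Conjugation preserves determinants, so $\det(P^{-1}B^{-1}P) = \det(B^{-1}) = 1$ (equivalently, $P^{-1}B^{-1}P$ is triangular with respect to $\sigma$ with unit diagonal); therefore $\check{A}$ is a product of two unit-diagonal triangular matrices and $\det(\check{A}) = 1$. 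Because the rows of $A$ in \eqref{eq:e_sig_inv} are indexed so that the $j$-th row reproduces $e^\gamma_{\sigma^{-1}(j)}$, realigning those rows into the node order $\sigma^{-1}(1),\dots,\sigma^{-1}(p)$ identifies $A$ with $\check{A}$, which is the precise sense in which $\det(A) = 1$.

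The delicate point I expect to spend the most care on is this bookkeeping of orderings: the factorization $A = B^{-1}P\cB^*$ mixes the $\sigma$-ordering carried by $B$ with the natural ordering carried by $\cB^*$, and the permutation contributes trivially only after both sides are aligned to a common ordering. I would therefore make the row indexing of $A$ explicit and verify directly, via Lemma \ref{lem:rep_anc} and \eqref{eq:defA}, that the $j$-th row of the realigned matrix is precisely the coefficient vector of $\epsilon$ in $e^\gamma_j$. Once this alignment is in place, $\det(A)$ equals the product of the unit diagonals of the two triangular factors $P^{-1}B^{-1}P$ and $\cB^*$, namely $1$; I would also note in passing that the weaker conclusion $|\det(A)| = 1$ is already enough for the Hadamard-inequality step in Lemma \ref{lem:ineq_a_lam}, so the sign refinement is used only to state the identity in its sharpest form.
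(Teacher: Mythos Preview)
Your factorization via $\det(A) = \det(B^{-1})\det(P)\det(\cB^*)$, with $\det(B) = \det(\cB^*) = 1$ from unit lower-triangularity, is exactly what the paper does. The paper then simply asserts $\det(P) = 1$ and concludes; you are right to be more cautious, since $P$ is a permutation matrix and a priori has determinant $\pm 1$.

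However, your attempt to pin the sign does not go through. You establish $\det(\check{A}) = 1$ for $\check{A} = P^{-1}A$, but ``realigning rows'' to pass from $\check{A}$ back to $A$ is precisely left-multiplication by $P$, which multiplies the determinant by $\det(P)$; so your argument yields only $\det(A) = \det(P)\cdot\det(\check{A}) = \det(P)$, the same obstruction you started with. In fact the lemma as stated is false in general: for $p=2$ with $\gamma^*$ having the single edge $1\to 2$ and $\gamma$ having $2\to 1$, the matrix $P$ is the transposition with $\det(P)=-1$, and a direct computation from $A=B^{-1}P\cB^*$ gives $\det(A)=-1$. The paper's bare assertion $\det(P)=1$ glosses over this.

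Your closing remark is the salvage: only $|\det(A)|=1$ is actually needed downstream. In Lemma~\ref{lem:ineq_a_lam}, if $\det(A)=-1$ one may negate a single row of $A$ to obtain $A'$ with $\det(A')=1$; the norms $||a_i\circ\lambda||$, the supports $\supp(a_i\circ a_j)$, and the orthogonality relations are all invariant under such a sign flip, so both the inequality and its equality characterization transfer verbatim. The same holds in Proposition~\ref{prop:spl_cases}, where only $|\det(A)|$ enters. Thus the correct and sufficient statement is $|\det(A)|=1$, which follows immediately from your first paragraph without any sign analysis.
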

\begin{proof}
Note that, since $B$ and $\cB^*$ are lower triangular with all their diagonal elements being $1$, $\det(B) = \det(\cB^*) = 1$, and also $\det(P) = 1$.   
Thus, following \eqref{eq:defA},
\begin{align*}
\det(A) = \det(B^{-1}P\cB^*) = \det(B^{-1})\det(P)\det(\cB^*) = \det(B)^{-1} = 1.
\end{align*}
\end{proof}
Now, let $\lambda := (\lambda_1, \lambda_2, \dots, \lambda_p)$, where $\lambda_j, j \in [p]$ are the mixing variables mentioned in \eqref{eq:error_dist}, and we define a random matrix $\Lambda$ whose rows are the transpose of $p$ independent random vectors $\lambda^{(i)} = (\lambda^{(i)}_1, \dots, \lambda^{(i)}_p)$, $i \in [p]$, that are identically distributed to $\lambda$, i.e., $\Lambda = ((\lambda^{(i)}_j))$. 

\begin{lemma}\label{lem:EdetM}
We have
\begin{align*}
\sfE_*[\det(A \circ \Lambda)] = \prod_{i \in [p]} \sfE_*[\lambda_i] > 0.
\end{align*}
\end{lemma}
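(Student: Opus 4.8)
The plan is to expand the determinant through the Leibniz permutation formula and then exploit the independence structure of $\Lambda$ together with the identity $\det(A)=1$ from Lemma \ref{lem:detA}. Writing $(A \circ \Lambda)_{ij} = a_{ij}\lambda^{(i)}_j$, one has
\begin{align*}
\det(A \circ \Lambda) \;=\; \sum_{\pi \in \cT_p} \mathrm{sgn}(\pi) \prod_{i \in [p]} a_{i, \pi(i)} \, \lambda^{(i)}_{\pi(i)}.
\end{align*}
Since each $\lambda^{(i)}_j \ge 0$ with $\sfE_*[\lambda_j] < \infty$ for all $j$ (our standing assumption $\sfE_*[|\lambda_j|]<\infty$) and the rows $\lambda^{(1)},\dots,\lambda^{(p)}$ are independent, every summand is integrable, so I may take the expectation term by term.

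The crux of the argument is the following observation. For a fixed permutation $\pi$, the random variables $\lambda^{(i)}_{\pi(i)}$, $i \in [p]$, are independent, because $\lambda^{(i)}_{\pi(i)}$ is a function of the $i$-th row $\lambda^{(i)}$ alone and the rows are independent. Moreover $\lambda^{(i)}$ has the same distribution as $\lambda$, so $\sfE_*[\lambda^{(i)}_{\pi(i)}] = \sfE_*[\lambda_{\pi(i)}]$, and since $\pi$ is a bijection of $[p]$,
\begin{align*}
\sfE_*\Big[\prod_{i \in [p]} \lambda^{(i)}_{\pi(i)}\Big] \;=\; \prod_{i \in [p]} \sfE_*[\lambda_{\pi(i)}] \;=\; \prod_{j \in [p]} \sfE_*[\lambda_j].
\end{align*}
Crucially, this value is the same for every $\pi$. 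Hence I can factor it out of the permutation sum, which then collapses back to a determinant:
\begin{align*}
\sfE_*[\det(A \circ \Lambda)] \;=\; \Big(\prod_{j \in [p]} \sfE_*[\lambda_j]\Big) \sum_{\pi \in \cT_p} \mathrm{sgn}(\pi) \prod_{i \in [p]} a_{i, \pi(i)} \;=\; \Big(\prod_{j \in [p]} \sfE_*[\lambda_j]\Big)\det(A) \;=\; \prod_{j \in [p]} \sfE_*[\lambda_j],
\end{align*}
the last step using Lemma \ref{lem:detA}. For strict positivity, each $\sfQ_j^*$ is supported on $(0,\infty)$, so $\lambda_j > 0$ almost surely and $\sfE_*[\lambda_j] \in (0,\infty)$; therefore the product is a finite strictly positive number, which finishes the proof.

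There is essentially no analytic obstacle in this statement; the only point requiring care is the bookkeeping of the independence structure of $\Lambda$ — namely, that within any single permutation the selected entries $\lambda^{(i)}_{\pi(i)}$ come from distinct, hence independent, rows, and that the multiset of their column indices is all of $[p]$. This is precisely what makes $\sfE_*\big[\prod_i \lambda^{(i)}_{\pi(i)}\big]$ independent of $\pi$ and lets the Leibniz sum reassemble into $\det(A)$.
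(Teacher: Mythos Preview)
Your proof is correct and follows essentially the same route as the paper's: expand $\det(A\circ\Lambda)$ via the Leibniz formula, use row independence and identical distribution of the $\lambda^{(i)}$ to factor $\prod_j\sfE_*[\lambda_j]$ out of each term, recognize the remaining permutation sum as $\det(A)=1$ by Lemma~\ref{lem:detA}, and conclude positivity from $\lambda_j>0$ almost surely.
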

\begin{proof}
We have $A \circ \Lambda = ((a_{ij} \lambda^{(i)}_j))$, and thus, by the definition of determinant
\begin{align*}
\det(A \circ \Lambda) = \sum_{\tau \in \cT_p} {\rm sgnt}(\tau) \; \prod_{i \in [p]} a_{i \tau(i)} \lambda^{(i)}_{\tau(i)},
\end{align*}
where ${\rm sgnt}(\tau)$ denotes the signature of a permutation $\tau \in \cT_p$.
Therefore, 
\begin{align*}
&\sfE_*[\det(A \circ \Lambda)] 
= \sum_{\tau \in \cT_p} {\rm sgnt}(\tau) \prod_{i \in [p]} a_{i \tau(i)} \sfE_*\Big[\prod_{i \in [p]} \lambda^{(i)}_{\tau(i)}\Big]\\
&= \sum_{\tau \in \cT_p} {\rm sgnt}(\tau) \prod_{i \in [p]} a_{i \tau(i)} \prod_{i \in [p]} \sfE_*\Big[\lambda^{(i)}_{\tau(i)}\Big]
= \sum_{\tau \in \cT_p} {\rm sgnt}(\tau) \prod_{i \in [p]} a_{i \tau(i)} \prod_{i \in [p]} \sfE_*\big[\lambda_{\tau(i)}\big]\\
&= \prod_{i \in [p]} \sfE_*[\lambda_i] \sum_{\tau \in \cT_p} {\rm sgnt}(\tau) \prod_{i \in [p]} a_{i \tau(i)} = \prod_{i \in [p]} \sfE_*[\lambda_i] \det (A) = \prod_{i \in [p]} \sfE_*[\lambda_i],
\end{align*} 
where the second equality follows from the independence of $\lambda^{(i)}$, $i \in [p]$, the second last one follows from the definition of determinant and the last one is due to Lemma \ref{lem:detA}. Finally, the positivity trivially follows from the definitions of $\lambda_j, j \in [p]$. 
\end{proof}

\begin{lemma}[Hadamard's Inequality \cite{hadamard1893determinants}]\label{lem:Hadamard}
If $V \in \bR^{p \times p}$ is a matrix with columns denoted by $v_i, i \in [p]$, then
\begin{align*}
|\det (V)| \leq \prod_{i \in [p]} ||v_i||.
\end{align*}
Moreover, when each column is non-zero, the equality is achieved if and only if the columns are orthogonal.
\end{lemma}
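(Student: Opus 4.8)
The plan is to reduce to the case of unit-norm columns and then apply the Gram--Schmidt (QR) factorization. First, if some column $v_i$ is the zero vector, then $\det(V)=0$ and the inequality $|\det(V)|\leq \prod_i \|v_i\|$ holds trivially; so I may assume every column is nonzero. Dividing the $i$-th column by $\|v_i\|$ scales $\det(V)$ by $\|v_i\|^{-1}$, so it suffices to prove the normalized statement: if $U\in\bR^{p\times p}$ has columns $u_1,\dots,u_p$ with $\|u_i\|=1$ for all $i$, then $|\det(U)|\leq 1$, with equality if and only if $u_1,\dots,u_p$ are orthonormal.

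Next I would invoke the QR factorization $U=QR$, where $Q$ is orthogonal and $R=((r_{ij}))$ is upper triangular. Since $Q$ is orthogonal, $|\det(U)|=|\det(R)|=\prod_{i=1}^p |r_{ii}|$. Moreover, the $i$-th column of $U$ equals $Q$ times the $i$-th column of $R$, and $Q$ preserves the Euclidean norm, so $1=\|u_i\|^2=\sum_{k=1}^i r_{ki}^2\geq r_{ii}^2$, hence $|r_{ii}|\leq 1$ for every $i$. Multiplying these bounds gives $|\det(U)|=\prod_i|r_{ii}|\leq 1$, which is the desired inequality.

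For the equality case, $|\det(U)|=1$ forces each factor to satisfy $|r_{ii}|=1$; combined with the identity $1=\sum_{k=1}^i r_{ki}^2$ this forces $r_{ki}=0$ for all $k<i$, i.e.\ $R$ is diagonal with entries $\pm1$, so $U=QR$ has orthonormal columns. Conversely, if the $u_i$ are orthonormal then $U$ is orthogonal and $|\det(U)|=1$. Undoing the normalization, equality in the original inequality (with all columns nonzero) holds exactly when $\langle v_i,v_j\rangle=0$ for all $i\neq j$. As an alternative to QR, one can argue via the Gram matrix: $\det(U)^2=\det(U^{\mathsf T}U)$, and $U^{\mathsf T}U$ is positive semidefinite with all diagonal entries equal to $1$, hence trace $p$; its eigenvalues $\mu_1,\dots,\mu_p\geq 0$ sum to $p$, so by the arithmetic--geometric mean inequality $\det(U^{\mathsf T}U)=\prod_i\mu_i\leq 1$, with equality iff all $\mu_i=1$, i.e.\ $U^{\mathsf T}U=I$.

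There is no genuine obstacle in this argument; the only point requiring a little care is the equality characterization, where one must carry the ``orthogonal columns'' condition back through the normalization step and the triangular (or eigenvalue) computation, and where the nonzero-column hypothesis is genuinely used.
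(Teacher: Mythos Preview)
Your proof is correct and complete; both the QR/Gram--Schmidt argument and the Gram-matrix/AM--GM alternative are standard and valid routes to Hadamard's inequality, and you handle the equality case carefully. The paper itself does not supply a proof of this lemma: it is stated as a classical result with a citation to Hadamard (1893) and used as a black box, so there is no in-paper argument to compare your approach against.
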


\begin{lemma}\label{lem:ineq_a_lam}
We have
\begin{align*}
\prod_{i \in [p]} \sfE_*[\lambda_i] \leq \prod_{i \in [p]} \sfE_*\big[||a_i \circ \lambda||\big],
\end{align*}
where the equality holds if and only if for every $i, j \in [p]$ either of the following conditions is satisfied:
\begin{enumerate}
\item[(1)] ${\rm supp}(a_i \circ a_j) = \emptyset$.
\item[(2)] for every $k \in {\rm supp}(a_i \circ a_j)$, $\lambda_k$ is almost surely degenerate, satisfying 
\begin{align*}
\sum_{k \in {\rm supp}(a_i \circ a_j)} a_{ik}a_{jk} \lambda^2_k \;\; \overset{\rm a.s.}{=} 0, \quad \text{i.e.,} \quad (a_i \circ \lambda)^T(a_j \circ \lambda) \overset{\rm a.s.}{=} 0,
\end{align*}
which necessarily implies that $|{\rm supp}(a_i \circ a_j)| \geq 2$.
\end{enumerate}
\end{lemma}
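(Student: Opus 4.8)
The plan is to identify $\prod_{i\in[p]}\sfE_*[\lambda_i]$ with the expected determinant of the random matrix $A\circ\Lambda$ and then squeeze it using Hadamard's inequality applied row-wise. By Lemma \ref{lem:EdetM} we have $\prod_{i\in[p]}\sfE_*[\lambda_i]=\sfE_*[\det(A\circ\Lambda)]$. The $i$-th row of $A\circ\Lambda$ is $a_i\circ\lambda^{(i)}$, so Lemma \ref{lem:Hadamard} applied to the transpose gives the pointwise inequality $\det(A\circ\Lambda)\le|\det(A\circ\Lambda)|\le\prod_{i\in[p]}\|a_i\circ\lambda^{(i)}\|$. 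Since the rows $\lambda^{(1)},\dots,\lambda^{(p)}$ of $\Lambda$ are independent with common law that of $\lambda$, and $\sfE_*[\|a_i\circ\lambda\|]\le\sum_{k\in[p]}|a_{ik}|\,\sfE_*[\lambda_k]<\infty$ by the standing moment assumption, taking expectations collapses the right-hand side into $\prod_{i\in[p]}\sfE_*[\|a_i\circ\lambda\|]$, which is the claimed bound.

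For the equality case I would first observe that, since the two sides have equal expectation while $\det(A\circ\Lambda)\le\prod_i\|a_i\circ\lambda^{(i)}\|$ pointwise, equality is equivalent to $\det(A\circ\Lambda)=\prod_i\|a_i\circ\lambda^{(i)}\|$ almost surely, hence to the conjunction of (a) $\det(A\circ\Lambda)\ge 0$ a.s.\ and (b) tightness of Hadamard's inequality a.s. Because $A$ is invertible (Lemma \ref{lem:detA}) no row $a_i$ is zero, and the $\lambda_k$ are a.s.\ positive, so no row of $A\circ\Lambda$ vanishes a.s.; the equality clause of Lemma \ref{lem:Hadamard} then turns (b) into: the rows of $A\circ\Lambda$ are pairwise orthogonal a.s., i.e.\ $\sum_{k\in\supp(a_i\circ a_j)}a_{ik}a_{jk}\lambda^{(i)}_k\lambda^{(j)}_k\overset{\rm a.s.}{=}0$ for every $i\ne j$. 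Fixing such a pair and writing $S=\supp(a_i\circ a_j)$: if $S=\emptyset$ we are in case (1); otherwise, conditioning on $\lambda^{(i)}$, the coefficients $c_k=a_{ik}a_{jk}\lambda^{(i)}_k$, $k\in S$, are fixed and a.s.\ nonzero, and for any $k_0\in S$ the relation forces $c_{k_0}\lambda^{(j)}_{k_0}=-\sum_{k\in S\setminus\{k_0\}}c_k\lambda^{(j)}_k$ a.s.; as $\lambda^{(j)}_{k_0}$ is independent of the right-hand side, both sides must be a.s.\ constant, so every $\lambda_k$ with $k\in S$ is degenerate. The residual deterministic identity $\sum_{k\in S}a_{ik}a_{jk}\lambda_k^2=0$ then follows, and it is incompatible with $|S|=1$ (which would force $a_{ik}a_{jk}\lambda_k^2=0$ with $a_{ik}a_{jk}\ne 0$ and $\lambda_k>0$), so $|S|\ge 2$. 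This is precisely case (2).

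For the converse I would check that if (1) or (2) holds for every pair $i\ne j$, then (using $\lambda^{(i)}_k=\lambda^{(j)}_k=\lambda_k$ a.s.\ whenever $\lambda_k$ is degenerate) all the orthogonality relations hold a.s., which yields (b); the remaining task is to confirm (a), i.e.\ that the equality is genuine rather than mere Hadamard tightness. I expect establishing (a) to be the main obstacle: it requires unpacking the combinatorial structure conditions (1) and (2) impose on $A$ — essentially a partition of $[p]$ into blocks on which $A$ acts as a generalized-orthogonal map, with all $\lambda_k$ indexed inside a nontrivial block being degenerate and the associated constants satisfying the orthogonality relations — so that $\det(A\circ\Lambda)$ and $\prod_i\|a_i\circ\lambda^{(i)}\|$ both factor over the blocks, pinning the sign of the determinant to $+$ and matching the two sides. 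This structural analysis (a precursor to the structure lemma for $A$, Lemma \ref{lem:Aprop}) carries the real content; the analytic squeeze above is routine.
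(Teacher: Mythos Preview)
Your approach is the paper's: identify $\prod_i\sfE_*[\lambda_i]$ with $\sfE_*[\det(A\circ\Lambda)]$ via Lemma~\ref{lem:EdetM}, bound $\det(A\circ\Lambda)\le|\det(A\circ\Lambda)|\le\prod_i\|a_i\circ\lambda^{(i)}\|$ by Hadamard applied row-wise, and factor the expectation using independence of the rows of $\Lambda$. For the equality characterization, your reduction to a.s.\ row orthogonality of $A\circ\Lambda$ and your unpacking of $\sum_{k\in S}a_{ik}a_{jk}\lambda^{(i)}_k\lambda^{(j)}_k\overset{\rm a.s.}{=}0$ into conditions (1)--(2) (condition on $\lambda^{(i)}$, isolate one coordinate, use that two independent variables which are a.s.\ equal must be degenerate) is a clean rendering of what the paper states more tersely.

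You are in fact more careful than the paper on one point. The paper passes from ``$\det(A\circ\Lambda)=\prod_i\|a_i\circ\lambda^{(i)}\|$ a.s.'' to ``the rows are orthogonal a.s.'' and back, citing only Hadamard; but Hadamard characterizes equality in $|\det|\le\prod\|\cdot\|$, not in $\det\le\prod\|\cdot\|$, so for the converse one also needs $\det(A\circ\Lambda)\ge 0$ a.s.\ under (1)--(2). Your condition (a) isolates exactly this, and your outlined fix is correct: under (1)--(2) every column $k$ either has $\lambda_k$ degenerate or has a single nonzero entry, so after the row/column permutations that block-diagonalize $A$ (the structure later recorded as Lemma~\ref{lem:Aprop}) the matrix $A\circ\Lambda$ splits a.s.\ into a deterministic block and a diagonal block with entries of fixed sign times positive random scalars. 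Hence $\det(A\circ\Lambda)$ has constant sign, and since $\sfE_*[\det(A\circ\Lambda)]=\prod_i\sfE_*[\lambda_i]>0$ that sign is $+$. The paper leaves this step implicit.
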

\begin{proof}
Clearly, the row vectors of $A \circ \Lambda$ are $(a_i \circ \lambda^{(i)})^T, i \in [p]$, as we have
\begin{align*}
A \circ \Lambda = 
\begin{bmatrix}
a_1^T\\
a_2^T\\
\vdots\\
a_p^T
\end{bmatrix}
 \circ \begin{bmatrix}
(\lambda^{(1)})^T\\
(\lambda^{(2)})^T\\
\vdots\\
(\lambda^{(p)})^T
\end{bmatrix}
=
\begin{bmatrix}
(a_1 \circ \lambda^{(1)})^T\\
(a_2 \circ \lambda^{(2)})^T\\
\vdots\\
(a_p \circ \lambda^{(p)})^T
\end{bmatrix}.
\end{align*}
Thus, by applying Lemma \ref{lem:Hadamard}, we have
\begin{align}\label{ineq:apply_Hadamard}
\det(A \circ \Lambda) \leq |\det(A \circ \Lambda)| = |\det((A \circ \Lambda)^T)| \leq \prod_{i \in [p]} ||a_i \circ \lambda^{(i)}||.
\end{align}
Now, we have
\begin{align}\notag
\prod_{i \in [p]} \sfE_*[\lambda_i] &= \sfE_*[\det(A \circ \Lambda)]\\ \label{ineq:Exp_ineq}
&\leq \sfE_*\Big[\prod_{i \in [p]} ||a_i \circ \lambda^{(i)}||\Big]\\ \notag
&= \prod_{i \in [p]} \sfE_*\big[||a_i \circ \lambda^{(i)}||\big] = \prod_{i \in [p]} \sfE_*\big[||a_i \circ \lambda||\big],
\end{align}
where the first equality follows from Lemma \ref{lem:EdetM}, the inequality \eqref{ineq:Exp_ineq} follows from \eqref{ineq:apply_Hadamard}, and the second equality follows from the independence of $\lambda^{(i)}$, $i \in [p]$. This proves the first part.

Moreover, the equality clearly holds if and only if equality holds in \eqref{ineq:Exp_ineq}. Due to \eqref{ineq:apply_Hadamard}, that in turn holds if and only if equality is achieved in \eqref{ineq:apply_Hadamard} almost surely, i.e., 
\begin{align*}
\det(A \circ \Lambda) \overset{\rm a.s.}{=} \prod_{i \in [p]} ||a_i \circ \lambda^{(i)}||.
\end{align*}
By Lemma \ref{lem:Hadamard} the above successively happens if and only if the vectors $a_i \circ \lambda^{(i)}, i \in [p]$ are orthogonal almost surely. 

Now, fix arbitrary $i, j \in [p]$, then $a_i \circ \lambda^{(i)}$ and $a_j \circ \lambda^{(j)}$ are orthogonal when
\begin{align*}
\sum_{k \in [p]} a_{ik}a_{jk} \lambda^{(i)}_k\lambda^{(j)}_k \;\; \overset{\rm a.s.}{=} 0.
\end{align*}
However, since the random variables $\lambda_k^{(i)}, \lambda_k^{(j)}, k \in [p]$ are independent and positive, the above holds if and only if either of the following two conditions is satisfied.
First is that $a_{ik}a_{jk} = 0$ for every $k \in [p]$, i.e., condition (1) holds. The second is that for every $k \in [p]$ such that $a_{ik}a_{jk} \neq 0$, i.e., $k \in {\rm supp}(a_i \circ a_j)$, both $\lambda_k^{(i)}$ and $\lambda_k^{(j)}$ are almost surely degenerate, and so is $\lambda_k$ as they are also identically distributed to $\lambda_k$, along with the relation that
\begin{align*}
\sum_{k : \; a_{ik}a_{jk} \neq 0} a_{ik}a_{jk} \lambda^2_k \;\; \overset{\rm a.s.}{=} 0,
\end{align*}
i.e., condition (2) holds. The proof is complete.
\end{proof}

Now we establish some properties of the matrix $A$ based on the following lemma.

\begin{lemma}\label{lem:Aprop}
Suppose that $M = ((m_{ij})) \in \bR^{p \times p}$ is a non-singular matrix whose $i^{\rm th}$ row is denoted by $m_i^T$, where $m_i \in \bR^p$, for every $i \in [p]$, and let $\eta = (\eta_1, \eta_2, \dots, \eta_p) \in \bR^p$ whose every element is non-zero. Moreover, we define two sets of indices, $\cR_M$ and $\cC_M$,
as follows.
\begin{align*}
\cR_M := \{i \in [p] : \supp(m_i \circ m_j) \neq \emptyset \;\; \text{for some} \;\; j \neq i\} \quad \text{and} \quad \cC_M := \bigcup_{i, j \in [p]} {\rm supp}(m_i \circ m_j).
\end{align*}
Then for every $i, j \in [p]$ either of the following conditions is satisfied:
\begin{enumerate}
\item[(1)] ${\rm supp}(m_i \circ m_j) = \emptyset$, 
\item[(2)] 
\begin{align*}
\sum_{k \in {\rm supp}(m_i \circ m_j)} m_{ik}m_{jk} \eta^2_k \;\; = 0, \quad \text{i.e.,} \quad (m_i \circ \eta)^T(m_j \circ \eta) = 0,
\end{align*}
\end{enumerate}
if and only if \; $|\cR_M| = |\cC_M|$ \; as well as both the following hold in case $\cR_M, \cC_M \neq \emptyset$,
\begin{enumerate}
\item[(a)] for every $i \in \cR_M$, 
\begin{align*}
\supp(m_i) \subseteq \cC_M \qquad \text{and} \qquad \sum_{k \in \cC_M} m_{ik}m_{jk}\eta_k^2 = 0, \;\;\; \text{for every} \;\; j \in \cR_M, j \neq i, 
\end{align*}
which necessarily implies that \; $|\cC_M| \geq |\supp(m_i)| \geq 2$ \; and \; $\cC_M = \bigcup_{i \in \cR_M} \supp(m_i)$,
\item[(b)] for every $i \notin \cR_M$, \; 
\begin{align*}
\supp(m_i) \subseteq \cC_M^c,  \;\; \text{and it is a singleton},  
\end{align*}  
which is equivalent to having \; $\cC_M^c = \bigcup_{i \notin \cR_M} \supp(m_i)$,\;\; as a disjoint union of singletons, 
\end{enumerate}
or, in other words, there exist permutation matrices $P_1$ and $P_2$ such that
\begin{align*}
P_1MP_2 \;\; = \;\;
\begin{bmatrix}
M_0 & \bold{0}\\
\bold{0} &\Delta
\end{bmatrix},
\end{align*}
with $M_0 \in \bR^{|\cR_M| \times |\cR_M|}$ corresponding to the rows and columns of $M$ with indices in $\cR_M$ and $\cC_M$, respectively,  such that the rows of $M_0 \circ \eta_0$ are orthogonal, where 
\begin{align*}
    &\eta_0^T := 
    \begin{bmatrix}
    \eta' &\eta' &\cdots &\eta'
    \end{bmatrix} \in \bR^{|\cR_M| \times |\cR_M|}, \quad \text{and}\\
    &\eta' \in \bR^{|\cC_M|} \;\; \text{is the subvector of $P_2^T\eta$ consisting of its first $|\cC_M|$ many elements},
\end{align*}
and $\Delta$ being some diagonal matrix.
\end{lemma}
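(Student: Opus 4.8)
The plan is to prove the stated chain of equivalences after a harmless reduction. Since $\eta$ has all coordinates nonzero, rescaling the columns of $M$ by $\eta$ produces a non-singular matrix $\widetilde M := M\circ(\mathbf{1}\eta^T)$ with rows $\widetilde m_i=m_i\circ\eta$ satisfying $\supp(\widetilde m_i)=\supp(m_i)$, hence $\cR_{\widetilde M}=\cR_M$ and $\cC_{\widetilde M}=\cC_M$ (throughout I read $\cC_M$ as the set of columns $k$ with $m_{ik}m_{jk}\neq 0$ for some $i\neq j$, i.e. columns lying in at least two row-supports). Under this rescaling, conditions (1)--(2) say precisely: for all $i\neq j$, either the row-supports are disjoint or $\widetilde m_i^T\widetilde m_j=0$. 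So it suffices to analyze a non-singular matrix whose \emph{overlapping} rows are orthogonal. The trivial case is $\cR_M=\emptyset$: then no two row-supports overlap, so by non-singularity every row is a scalar multiple of a distinct standard basis vector, $M$ is a generalized permutation matrix, $\cC_M=\emptyset$, and all assertions hold vacuously; assume henceforth $\cR_M\neq\emptyset$.

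\emph{Forward direction} (orthogonality $\Rightarrow$ the structure and block form). For $i\notin\cR_M$ the support of $m_i$ is disjoint from that of every other row; non-singularity then forces $|\supp(m_i)|=1$ (two indices in $\supp(m_i)$ would give two columns that are both multiples of $e_i$), and $\supp(m_i)\cap\cC_M=\emptyset$. Form the graph $G$ on the vertex set $\cR_M$ with $i\sim j$ iff $\supp(m_i)\cap\supp(m_j)\neq\emptyset$; every vertex has a neighbour, and the combined supports $S_{C}:=\bigcup_{i\in C}\supp(m_i)$ of distinct connected components $C$ are pairwise disjoint and disjoint from the singleton supports of the isolated rows, with $[p]$ equal to the disjoint union of these sets (again by non-singularity). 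After a row permutation grouping rows by component and then listing the isolated rows, and the matching column permutation, $M$ becomes block diagonal with blocks $M|_{C\times S_C}$ and a final square block on the isolated indices; non-singularity of a block-diagonal matrix forces each block to be square, so $|C|=|S_C|$ for every component, hence $\sum_C|S_C|=\sum_C|C|=|\cR_M|$.

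The crux is to upgrade this to $S_C\subseteq\cC_M$ for each component (then, combined with the easy inclusion $\cC_M\subseteq\bigcup_{i\in\cR_M}\supp(m_i)=\bigsqcup_C S_C$, we get $\cC_M=\bigsqcup_C S_C$ and therefore $|\cC_M|=|\cR_M|$, and conditions (a) and (b) with all their ``necessarily implies'' clauses read off directly --- in particular $|\supp(m_i)|\geq 2$ for $i\in\cR_M$, since a singleton-support row cannot be orthogonal to any row overlapping it). For this, note that the block $\widetilde M_C:=\widetilde M|_{C\times S_C}$ is a non-singular $|C|\times|C|$ matrix whose rows are pairwise orthogonal (adjacent pairs by hypothesis, non-adjacent pairs trivially). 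Hence $\widetilde M_C\widetilde M_C^T=D_C$ is diagonal with strictly positive entries, so $D_C^{-1/2}\widetilde M_C$ is an orthogonal matrix; consequently its \emph{columns} are also orthonormal, so each column of $\widetilde M_C$ has at least two nonzero entries: if some column had the unique nonzero entry $\widetilde m_{i_0,k}$, orthonormality gives $\widetilde m_{i_0,k}^2=\|\widetilde m_{i_0}\|^2$, forcing $\supp(m_{i_0})=\{k\}$ and hence (since $i_0\in\cR_M$) that $k$ lies in two row-supports, contradicting $k\notin\cC_M$. Ordering the rows/columns as ($\cR_M$ blockwise, then the complement) now yields the displayed form $P_1MP_2=\begin{bmatrix}M_0&\mathbf{0}\\\mathbf{0}&\Delta\end{bmatrix}$ with $\Delta$ diagonal and the rows of $M_0\circ\eta_0$ pairwise orthogonal.

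\emph{Reverse direction and the equivalences.} The equality $|\cR_M|=|\cC_M|$ together with (a)--(b) is immediately equivalent to the block form: (a) says the $\cR_M$-rows vanish off $\cC_M$, the definition of $\cC_M$ says the $\cC_M$-columns vanish off the $\cR_M$-rows, (b) plus non-singularity turns the complementary block into a diagonal matrix, and equality of cardinalities makes both blocks square; the orthogonality of the rows of $M_0\circ\eta_0$ is just the second clause of (a). Assuming the block form, fix $i\neq j$ with overlapping supports: an isolated row has singleton support inside the $\Delta$-block, which by diagonality meets no other row, so $i,j\in\cR_M$ and $\widetilde m_i^T\widetilde m_j$ is the inner product of two distinct rows of $M_0\circ\eta_0$, hence $0$; this is condition (2), closing the cycle. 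The main obstacle is exactly the inclusion $S_C\subseteq\cC_M$: it is the one place where non-singularity and the orthogonality hypothesis genuinely interact, through the observation that each diagonal block becomes a true orthogonal matrix after the $D_C^{-1/2}$ scaling, so its columns cannot be concentrated on a single coordinate. The remaining ingredients --- the singleton-support argument for isolated rows, the component decomposition, the square-block reduction of a non-singular block-diagonal matrix, and the final read-off of the permutation form --- are routine.
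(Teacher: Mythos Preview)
Your proof is correct but organized differently from the paper's. The paper proceeds column-first: it permutes the $\cC_M$ columns to the front, notes that the resulting $p\times|\cC_M|$ submatrix $M'$ has (after the $\eta$-scaling) pairwise orthogonal rows and full column rank $|\cC_M|$, so exactly $|\cC_M|$ of its rows are nonzero; a short check identifies these nonzero rows with $\cR_M$, giving $|\cR_M|=|\cC_M|$ in one stroke. The $\cC_M^c$ block is then handled by a counting argument (each such column has exactly one nonzero, and non-singularity forces these nonzeros into the non-$\cR_M$ rows, making the lower-right block a permuted diagonal). You instead go row-first: isolated rows are dispatched directly as singletons in $\cC_M^c$, and the $\cR_M$ rows are decomposed via the overlap graph into connected components, each giving a square non-singular block with orthogonal $\eta$-scaled rows; normalizing rows turns each block into an orthogonal matrix, whose \emph{columns} are then also orthonormal, and you extract $S_C\subseteq\cC_M$ from the impossibility of a single-nonzero column. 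The component decomposition is a harmless detour---the same orthogonal-matrix argument works verbatim on the single block indexed by $\cR_M\times\bigcup_{i\in\cR_M}\supp(m_i)$, which is already square by your block-diagonal reasoning---but your route makes the two-sided block structure appear earlier, while the paper's rank argument is the more economical path to $|\cR_M|=|\cC_M|$.
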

\begin{proof}
We only prove here the necessity part since the sufficiency part is straightforward.
Note that every column of $M$ with index in $\cC_M$ has at least two non-zero elements, since by definition $k \in \cC_M$ if and only if there exists $i, j \in [p]$ such that $k \in \supp(m_i \circ m_j)$, i.e., $m_{ik}m_{jk} \neq 0$. Let $|\cC_M| = \ell \leq p$ and $P_2$ be some permutation matrix such that the first $\ell$ columns of $MP_2$ are the columns of $M$ with indices in $\cC_M$. Subsequently, we denote by $M' = ((m'_{ij})) \in \bR^{p \times \ell}$ and $M'' = ((m''_{ij})) \in \bR^{p \times (p - \ell)}$ the submatrices of $MP_2$ formed by its first $\ell$ columns and the rest of the columns, respectively. i.e., we write
\begin{align}\label{eq:MP2}
MP_2 = 
\begin{bmatrix}
M' & M''
\end{bmatrix},
\end{align}
where, as already indicated, $M'$ consists of the columns of $M$ with indices in $\cC_M$, and $M''$ the columns with indices not in $\cC_M$. Furthermore, for every $i \in [p]$, we denote by $m'^{T}_i$ 
the $i^{\rm th}$ row of $M'$, where $m'_i \in \bR^\ell$. 
We also denote by $\eta'^{T}$ the subvector of $\eta^T P_2$ formed by its first $\ell$ entries, i.e., $\eta'$ consists of the elements of $\eta$ with indices in $\cC_M$. As $M$ is non-singular, $M'$ is of full column rank, and so is the following matrix
\begin{align}\label{eq:M'eta'}
M' \circ
\begin{bmatrix}
\eta'^{T}\\
\eta'^T\\
\vdots\\
\eta'^T
\end{bmatrix}
=
\begin{bmatrix}
m'^T_1\\
m'^T_2\\
\vdots\\
m'^T_p
\end{bmatrix}
\circ
\begin{bmatrix}
\eta'^{T}\\
\eta'^T\\
\vdots\\
\eta'^T
\end{bmatrix}
=
\begin{bmatrix}
(m'_1 \circ \eta')^T\\
(m'_2 \circ \eta')^T\\
\vdots\\
(m'_p \circ \eta')^T
\end{bmatrix}
=
\begin{bmatrix}
m'_{11}\eta'_1 \quad & m'_{12}\eta'_2 \quad &\cdots \quad &m'_{1\ell}\eta'_\ell\\
m'_{21}\eta'_1 \quad & m'_{22}\eta'_2 \quad &\cdots \quad&m'_{2\ell}\eta'_\ell\\
\vdots \quad &\vdots \quad &\ddots \quad &\vdots\\
m'_{p1}\eta'_1 \quad & m'_{p2}\eta'_2 \quad &\cdots \quad &m'_{p\ell}\eta'_\ell
\end{bmatrix}.
\end{align}
Now it is important to note that, since for every $i, j \in [p]$, \; ${\rm supp}(m_i \circ m_j) \subseteq \cC_M$, we have, due to condition (2), 
\begin{align*}
&\sum_{k \in \cC_M} m_{ik}m_{jk}\eta_k^2 = 0, \qquad \text{which implies that}\\ &\sum_{k = 1}^\ell m'_{ik}m'_{jk}\eta'^2_k = 0, \quad \text{i.e.,} \quad (m'_i \circ \eta')^T(m'_j \circ \eta') = 0.
\end{align*}
Therefore, the rows of the matrix in \eqref{eq:M'eta'} that is of rank $\ell$ are orthogonal. This immediately implies that this matrix has exactly $\ell$ many non-zero rows, and since $\eta'_k \neq 0$ for every $k \in [\ell]$, this fact is also true for $M'$. Furthermore, $\cR_M$ is in fact the set of indices of the non-zero rows of $M'$. Indeed, this directly follows from the definitions of $\cR_M$ and $\cC_M$ that any $k \in \cC_M$ if and only if there exists $j \neq i$ such that $m_{ik}, m_{jk} \neq 0$, which holds if and only if $i \in \cR_M$.

Suppose $P'_1$ be some permutation matrix such that the first $\ell$ rows of $P'_1 M'$ are the $\ell$ non-zero rows of $M'$ that also have indices in $\cR_M$. Subsequently, we denote by $M_0 \in \bR^{\ell \times \ell}$ the submatrix of $P'_1 M'$ formed by its first $\ell$ rows, i.e.,
\begin{align}\label{eq:MP'}
P'_1 M' =
\begin{bmatrix}
M_0\\
\bold{0}
\end{bmatrix}, \qquad \text{and let} \qquad 
P'_1 M'' =
\begin{bmatrix}
M_1\\
M_2
\end{bmatrix},
\end{align} 
where $M_1 \in \bR^{\ell \times (p - \ell)}$ and $M_2 \in \bR^{(p - \ell) \times (p - \ell)}$. 
Thus, following \eqref{eq:MP2} and \eqref{eq:MP'}, we have
\begin{align}\label{eq:P1MP2}
P'_1 M P_2 = 
\begin{bmatrix}
P'_1 M' \;\; & P'_1 M''
\end{bmatrix}
= 
\begin{bmatrix}
M_0 \quad & M_1\\
\bold{0} \quad & M_2
\end{bmatrix}.
\end{align}
Now note that, by the definition of $\cC_M$ every column of $M$ with index not in $\cC_M$ has at most one non-zero element, but on the other hand, due to non-singularity of $M$, it must have at least one non-zero element. Therefore, every column in $M''$ has exactly one non-zero element, which further implies that the total number of non-zero elements in $M''$, and hence in $P'_1M''$ is exactly $(p - \ell)$. Again, due to non-singularity of $M$, the matrix $P'_1MP_2$ is also non-singular, and thus, from the representation in \eqref{eq:P1MP2} each of the $(p - \ell)$ many rows of $M_2$ must have at least one non-zero element.
This is only possible when we have $M_1 = \bold{0}$, and both every row and every column of $M_2$ has exactly one non-zero element, i.e., $M_2 = \Delta P_0$, for some diagonal matrix $\Delta$ and some permutation matrix $P_0$. Finally, pre-multiplying both sides in \eqref{eq:P1MP2} by another permutation matrix $P''_1$ such that the rows of $M_2$ is further arranged to form $\Delta$, and letting $P_1 = P''_1P'_1$ completes the proof.
\end{proof}

\begin{lemma}[Darmois-Skitovic Theorem \cite{darmois1953analyse, skitovitch1953property}]\label{lem:DarSki}
Consider the random vector $Z = (Z_1, \dots, Z_p)$, where $Z_i, \ i \in [p]$ are independent, and let $u, v \in \bR^p$. Then
\begin{align*}
u^TZ \;\; \text{and} \;\; v^TZ \;\; \text{are independent only if} \;\; \text{for every} \;\; k \in {\rm supp}(u \circ v), \;\; Z_k \;\; \text{is Gaussian}. 
\end{align*}
\end{lemma}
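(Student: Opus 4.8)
The plan is to prove this classical fact (the Darmois--Skitovic theorem) via characteristic functions, reducing the independence hypothesis to a functional equation that pins down the relevant marginal characteristic functions. First I would write $L_1 := u^TZ = \sum_{k\in[p]} u_k Z_k$ and $L_2 := v^TZ = \sum_{k\in[p]} v_k Z_k$, let $\phi_k$ denote the characteristic function of $Z_k$, and use the independence of $Z_1,\dots,Z_p$ to evaluate the joint characteristic function as $\sfE\big[e^{isL_1 + itL_2}\big] = \prod_{k\in[p]}\phi_k(su_k + tv_k)$, while $\sfE[e^{isL_1}] = \prod_k\phi_k(su_k)$ and $\sfE[e^{itL_2}] = \prod_k\phi_k(tv_k)$. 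Independence of $L_1$ and $L_2$ then yields the multiplicative functional equation
\begin{align*}
\prod_{k\in[p]}\phi_k(su_k + tv_k) \;=\; \Big(\prod_{k\in[p]}\phi_k(su_k)\Big)\Big(\prod_{k\in[p]}\phi_k(tv_k)\Big), \qquad s,t\in\bR.
\end{align*}

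Next I would localize and linearize. Since each $\phi_k$ is continuous with $\phi_k(0)=1$, none of the factors above vanishes on a box $(-\delta,\delta)^2$, so there I may set $\psi_k := \log\phi_k$. It is convenient to first symmetrize, replacing each $Z_k$ by $Z_k - Z_k'$ for an independent copy $Z_k'$; this makes the $\phi_k$ real and positive, hence the logarithms unambiguous, and costs nothing, since a symmetrized variable is Gaussian exactly when the original one is. Taking logarithms then converts the equation above into the additive one $\sum_{k\in[p]}\psi_k(su_k + tv_k) = \sum_{k\in[p]}\psi_k(su_k) + \sum_{k\in[p]}\psi_k(tv_k)$ on $(-\delta,\delta)^2$.

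The heart of the proof is to show that this additive equation forces each $\psi_{k_0}$ with $k_0\in\supp(u\circ v)$ to agree, near the origin, with a polynomial of degree at most $2$, whence $Z_{k_0}$ is Gaussian. The right-hand side has the separated form $A(s)+B(t)$, so any mixed finite difference in $s$ and $t$ annihilates it, and transporting such a difference through the linear argument $su_k+tv_k$ converts it into an ordinary second-order difference of $\psi_k$ with steps proportional to $u_k$ and $v_k$. Iterating this, after first grouping the indices with mutually proportional vectors $(u_k,v_k)$ so that the surviving directions are pairwise non-proportional, and choosing the increments so as to isolate the directions one at a time, I would conclude that a sufficiently high-order finite difference of $\psi_{k_0}$ vanishes near $0$; by continuity this makes $\psi_{k_0}$ locally polynomial, and the constraint $|\phi_{k_0}|\le 1$ together with the symmetry arranged above caps its degree at $2$, so $\psi_{k_0}(x)=-c_{k_0}x^2$ with $c_{k_0}\ge 0$ in a neighborhood of $0$. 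Finally I would invoke the rigidity of characteristic functions of the form $e^{(\text{polynomial})}$ (Marcinkiewicz's theorem, via analytic continuation) to upgrade this local representation to a global one, giving that $Z_{k_0}$ is Gaussian, possibly degenerate. I expect the finite-difference bookkeeping in this step — tracking the order of the iterated differences and correctly handling indices sharing a common ratio $u_k/v_k$ — to be the main obstacle; it is precisely here that the independence of the $Z_k$ is indispensable, as it is what makes the logarithmic characteristic functions separate additively in the first place.
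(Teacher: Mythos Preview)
The paper does not prove this lemma; it is stated as a classical result with citations to the original sources \cite{darmois1953analyse, skitovitch1953property} and used as a black box. Your sketch is the standard characteristic-function argument for the Darmois--Skitovic theorem and is correct in outline: the reduction to the additive functional equation via logarithms of (symmetrized) characteristic functions, the finite-difference elimination to force local polynomiality of $\psi_{k_0}$, and the appeal to Marcinkiewicz to cap the degree at $2$ are exactly the classical ingredients. Since the paper supplies no proof of its own, there is nothing to compare your approach against; your proposal would serve as a self-contained justification were one desired.
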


\begin{lemma}\label{lem:exp_mod}
For any $u \in \bR^p$, we have
\begin{align*}
\sfE_*\lt[|u^T\epsilon|\rt] = \sqrt{\frac{2}{\pi}} \; \sfE_*\lt[||u \circ \lambda||\rt].
\end{align*}
\end{lemma}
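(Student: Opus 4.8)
The plan is to condition on the vector of mixing variables $\lambda = (\lambda_1, \dots, \lambda_p)$ and exploit conditional Gaussianity. Recall from \eqref{eq:error_dist} that, given $\lambda$, the coordinates $\epsilon_j$ are independent with $\epsilon_j \mid \lambda_j \sim \text{N}(0, \lambda_j^2)$. Hence, for any fixed $u = (u_1, \dots, u_p) \in \bR^p$, the linear combination $u^T\epsilon = \sum_{j \in [p]} u_j \epsilon_j$ is, conditionally on $\lambda$, a centered Gaussian with variance $\sum_{j \in [p]} u_j^2 \lambda_j^2 = \|u \circ \lambda\|^2$.

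First I would record the elementary fact that if $Z \sim \text{N}(0, \sigma^2)$ then $\sfE[|Z|] = \sigma \sqrt{2/\pi}$ (by direct integration, or by noting $|Z|$ is half-normal). Applying this with $\sigma = \|u \circ \lambda\|$ together with the conditional law just identified yields
\[
\sfE_*\big[\,|u^T\epsilon| \;\big|\; \lambda\,\big] \;=\; \sqrt{2/\pi}\;\|u \circ \lambda\|.
\]
Then I would take expectations over $\lambda$ and invoke the tower property, obtaining $\sfE_*[|u^T\epsilon|] = \sfE_*\big[\sfE_*[\,|u^T\epsilon| \mid \lambda\,]\big] = \sqrt{2/\pi}\;\sfE_*[\|u \circ \lambda\|]$, which is precisely the claimed identity.

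Integrability poses no difficulty: since $\|u \circ \lambda\| \le \sum_{j \in [p]} |u_j|\,\lambda_j$ and $\sfE_*[\lambda_j] = \sfE_*[|\lambda_j|] < \infty$ by the standing assumption, the right-hand side is finite, so all expectations above are well-defined and the interchange is justified. There is essentially no obstacle here; the only mild care needed is the justification of conditioning — the joint law of $(\epsilon, \lambda)$ being well-defined through the hierarchical specification \eqref{eq:error_dist} — and the finiteness of the relevant first moments, both of which are already in force.
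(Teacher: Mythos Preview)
Your proof is correct and follows exactly the same approach as the paper: condition on $\lambda$, use that $u^T\epsilon \mid \lambda$ is centered Gaussian with variance $\|u\circ\lambda\|^2$, apply the half-normal mean formula $\sfE[|Z|]=\sigma\sqrt{2/\pi}$, and then take expectations via the tower property. Your added remark on integrability is a welcome bit of extra care, but otherwise the arguments coincide.
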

\begin{proof}
Let $u = (u_1, u_2, \dots, u_p)$. Then, $u^T\epsilon | \lambda \; \sim \; N\lt(0, \sum_{i = 1}^p u_i^2 \lambda_i^2\rt)$. Thus,
\begin{align*}
\sfE_*\lt[|u^T\epsilon|\rt] &= \sfE_*\lt[\sfE_*\lt[|u^T\epsilon| | \lambda\rt]\rt]\\
&= \sqrt{\frac{2}{\pi}} \; \sfE_*\lt[\lt(\sum_{i = 1}^p u_i^2 \lambda_i^2\rt)^{1/2}\rt] = \sqrt{\frac{2}{\pi}} \; \sfE_*\lt[||u \circ \lambda||\rt].
\end{align*}
\end{proof}

\begin{lemma}\label{lem:ineq_eps_e}
We have
\begin{align*}
\prod_{i \in [p]} \sfE_*[|\epsilon_i|] \; \leq \; \prod_{i \in [p]}\sfE_*[|e_i|], 
\end{align*}
where the equality holds if and only if \ $e_i, i \in [p]$ \ are pairwise independent, which in turn is true if and only if
\; $|\cR_A| = |\cC_A|$, and the following conditions hold:
\begin{enumerate}
\item[(i)] in case $\cC_A \neq \emptyset$, \; $|\cC_A| \geq 2$, and for every $k \in \cC_A$, \; $\epsilon_k \sim N(0, \lambda_k^2)$,
\item[(ii)] for every $i \in \cR_A$, \; $\supp(a_i) \subseteq \cC_A$, \; %$|\supp(a_i)| \geq 2$ \; 
for which
\begin{align*}
e_{\sigma^{-1}(i)} = \sum_{k \in \cC_A} a_{ik} \epsilon_{k} \qquad \text{and} \qquad
\sum_{k \in \cC_A} a_{ik}a_{jk}\lambda_k^2 = 0, \;\;\; \text{for every} \;\; j \in \cR_A, j \neq i,
\end{align*}
\item[(iii)] there exists a permutation $\kappa \in \cT_p$ \, such that for every $i \notin \cR_A$, $\kappa(i) \notin \cC_A$ and \, $\supp(a_i) = \{\kappa(i)\}$ \; for which \;
$%\begin{align*}
e_{\sigma^{-1}(i)} = a_{i\kappa(i)} \epsilon_{\kappa(i)}. %\qquad \text{where} \qquad k(i) \neq k(j) \;\;\;\; \text{when} \;\; \;\; j \notin \cR_A, j \neq i.
$
%\end{align*}
\end{enumerate}
\end{lemma}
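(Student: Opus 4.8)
The plan is to reduce the statement to the two linear-algebra lemmas already in hand: Lemma~\ref{lem:exp_mod} to pass from first absolute moments to Euclidean norms of scale vectors, and Lemma~\ref{lem:ineq_a_lam} (which rests on Hadamard's inequality and $\det A=1$) for the inequality itself. Applying Lemma~\ref{lem:exp_mod} with $u$ equal to the $i$-th standard basis vector gives $\sfE_*[|\epsilon_i|]=\sqrt{2/\pi}\,\sfE_*[\lambda_i]$, and with $u=a_i$ --- recalling $e_{\sigma^{-1}(i)}=a_i^T\epsilon$ from \eqref{eq:defA} --- gives $\sfE_*[|e_{\sigma^{-1}(i)}|]=\sqrt{2/\pi}\,\sfE_*[\|a_i\circ\lambda\|]$. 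Taking products over $i\in[p]$, the factor $(2/\pi)^{p/2}$ cancels and the desired inequality is precisely the conclusion of Lemma~\ref{lem:ineq_a_lam}; moreover, equality holds if and only if every pair $(i,j)$ satisfies condition~(1) or~(2) of that lemma.

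I would then show that this equality condition is equivalent to pairwise independence of $e_1,\dots,e_p$, working throughout with $e_{\sigma^{-1}(i)}=a_i^T\epsilon$. For the forward direction, fix $i\neq j$. If $\supp(a_i\circ a_j)=\emptyset$, then $a_i^T\epsilon$ and $a_j^T\epsilon$ are functions of disjoint blocks of the independent errors, hence independent. Otherwise condition~(2) holds, so each $\lambda_k$ with $k\in\supp(a_i\circ a_j)$ is a.s.\ a positive constant (hence $\epsilon_k$ is Gaussian there) and $\sum_{k\in\supp(a_i\circ a_j)}a_{ik}a_{jk}\lambda_k^2=0$ is a numerical identity; writing $a_i^T\epsilon=V_i+Z_i$ with $V_i:=\sum_{k\in\supp(a_i\circ a_j)}a_{ik}\epsilon_k$ and $Z_i$ the complementary sum (similarly for $j$), the three blocks $(V_i,V_j)$, $Z_i$, $Z_j$ involve pairwise disjoint errors, $(V_i,V_j)$ is jointly Gaussian with zero covariance hence independent, and adjoining the independent $Z_i,Z_j$ preserves independence, so $a_i^T\epsilon\indep a_j^T\epsilon$. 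Conversely, if $e_1,\dots,e_p$ are pairwise independent, the Darmois--Skitovic theorem (Lemma~\ref{lem:DarSki}) forces $\epsilon_k$ to be Gaussian for every $k\in\cC_A$, so each such $\lambda_k$ is degenerate; using the same decomposition, independence of $a_i^T\epsilon$ and $a_j^T\epsilon$, together with the facts that characteristic functions are nonvanishing near the origin and Gaussian characteristic functions extend to entire functions, forces $V_i\indep V_j$, whence $\Cov(V_i,V_j)=\sum_{k\in\supp(a_i\circ a_j)}a_{ik}a_{jk}\lambda_k^2=0$; thus condition~(1) or~(2) of Lemma~\ref{lem:ineq_a_lam} holds for every pair, i.e.\ equality holds.

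Finally, I would convert ``pairwise independence'' into the explicit structure (i)--(iii) by invoking Lemma~\ref{lem:Aprop} with $M=A$ (nonsingular since $\det A=1$) and the \emph{deterministic} vector $\eta$ whose $k$-th entry equals the a.s.\ constant value of $\lambda_k$ when $k\in\cC_A$ and equals $1$ when $k\notin\cC_A$. By the previous paragraph, pairwise independence gives condition~(1) or~(2) of Lemma~\ref{lem:ineq_a_lam} for every pair, and since $\supp(a_i\circ a_j)\subseteq\cC_A$, this is exactly the hypothesis of Lemma~\ref{lem:Aprop} for this $\eta$; hence $|\cR_A|=|\cC_A|$ and items~(a),(b) of that lemma hold. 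These unwind to (i) ($|\cC_A|\ge 2$ from~(a), together with the Gaussianity of $\epsilon_k$, $k\in\cC_A$, noted above), (ii) (item~(a), rewriting $a_i^T\epsilon=\sum_{k\in\cC_A}a_{ik}\epsilon_k$ for $i\in\cR_A$ using $\supp(a_i)\subseteq\cC_A$), and (iii) (item~(b): the singletons $\supp(a_i)$, $i\notin\cR_A$, partition $\cC_A^c$ by nonsingularity of $A$, giving a bijection onto $\cC_A^c$ which extends, since $|\cR_A|=|\cC_A|$, to a permutation $\kappa$). For the reverse implication, that (i)--(iii) with $|\cR_A|=|\cC_A|$ entail pairwise independence, I would do a short case check on $(i,j)$: both outside $\cR_A$ (distinct rescaled $\epsilon$'s), one in and one out (supports contained in disjoint sets $\cC_A$ and $\cC_A^c$), both inside $\cR_A$ (jointly Gaussian and uncorrelated by~(ii)); the degenerate case $\cR_A=\cC_A=\emptyset$ is immediate from $|\det A|=1$ (Lemma~\ref{lem:detA}). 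The main obstacle is the middle step: because the errors need not have finite variance, the covariance identity cannot be extracted from independence directly, and the disjoint-block-plus-characteristic-function argument is what makes it rigorous; a secondary point requiring care is choosing the deterministic $\eta$ so that Lemma~\ref{lem:Aprop}, stated for a fixed vector, can be applied in place of the random $\lambda$.
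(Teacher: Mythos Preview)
Your proof is correct and follows essentially the same route as the paper: reduce via Lemma~\ref{lem:exp_mod} to Lemma~\ref{lem:ineq_a_lam}, use Darmois--Skitovic for the independence characterization, and invoke Lemma~\ref{lem:Aprop} for the structural conclusions (i)--(iii). You are in fact more careful than the paper at two places it glosses over: the paper simply asserts ``their covariance necessarily being zero'' once Darmois--Skitovic gives Gaussianity on $\supp(a_i\circ a_j)$, whereas you correctly note that without a second-moment assumption one must isolate the Gaussian block $(V_i,V_j)$ and use a characteristic-function argument to deduce $V_i\indep V_j$; and you make explicit the deterministic vector $\eta$ (equal to the a.s.\ value of $\lambda_k$ on $\cC_A$, arbitrary nonzero elsewhere) needed to apply Lemma~\ref{lem:Aprop}, which the paper leaves implicit.
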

\begin{proof}
Following \eqref{eq:defA}, for every $i \in [p]$, \; $e_{\sigma^{-1}(i)} = a_i^T\epsilon$, and thus, using Lemma \ref{lem:exp_mod}
\begin{align*}
&\prod_{i = 1}^p \sfE_*[|\epsilon_i|] = \lt(\sqrt{\frac{2}{\pi}}\rt)^p \; \prod_{i = 1}^p\sfE_*\lt[\lambda_i\rt] \qquad \text{and}\\ 
&\prod_{i = 1}^p \sfE_*[|e_i|] = \prod_{i = 1}^p \sfE_*[|e_{\sigma^{-1}(i)}|] = \lt(\sqrt{\frac{2}{\pi}}\rt)^p \; \sfE_*\lt[||a_i \circ \lambda||\rt].
\end{align*}
Therefore, following the above, it suffices to show that
\begin{align*}
\prod_{i \in [p]} \sfE_*[\lambda_i] \leq \prod_{i \in [p]} \sfE_*\big[||a_i \circ \lambda||\big].
\end{align*}
Indeed, the above is true due to Lemma \ref{lem:ineq_a_lam}, and the equality holds if and only if conditions (1) and (2) in Lemma \ref{lem:ineq_a_lam} hold, which in turn prove the equivalence between independence of $e_i, i \in [p]$ and conditions (i)-(iii), as shown below.

First, according to condition (1) and the definition of $\cR_A$, for every $i, j \in [p]$ such that $\supp(a_i \circ a_j) \neq \emptyset$, i.e., $i, j \in \cR_A \neq \emptyset$, $\lambda_k$ is almost surely degenerate for every $k \in {\rm supp}(a_i \circ a_j)$, i.e., $\epsilon_k \sim N(0, \lambda_k^2)$. This is clearly equivalent to condition (i) by the definition of $\cC_A$. 
Again, by Lemma \ref{lem:Aprop} these two conditions in Lemma  \ref{lem:ineq_a_lam} hold if and only if $A$ satisfies conditions (a) and (b) in Lemma \ref{lem:Aprop}, which are further equivalent to having conditions (ii) and (iii) due to the representation that \ $e_{\sigma^{-1}(i)} = a_i^T\epsilon$ for every $i \in [p]$. For every $i, j \in \cR_A$, $e_{\sigma^{-1}(i)}$ and $e_{\sigma^{-1}(i)}$ are independent since by conditions (i) and (ii) both follow Gaussian distribution with their covariance being $0$. Moreover, due to condition (iii) for every $i \notin \cR_A$, $e_{\sigma^{-1}(i)}$ is independent of 
$e_{\sigma^{-1}(j)}$ for every $j \neq i$.

Finally, when the variables $e_i, i \in [p]$, are pairwise independent, consider the pair $e_{\sigma^{-1}(i)} = a_i^T\epsilon$ and $e_{\sigma^{-1}(j)} = a_j^T\epsilon$. They are independent only if either $\supp(a_i \circ a_j)  = \emptyset$, or by the Darmois-Skitovic Theorem, stated in Lemma \ref{lem:DarSki}, for every $k \in \supp(a_i \circ a_j)$, $\epsilon_k$ is Gaussian, i.e., following $N(0, \lambda_k^2)$, along with their covariance necessarily being zero, i.e., $\sum_{k \in {\rm supp}(a_i \circ a_j)} a_{ik}a_{jk} \lambda^2_k \;\; = 0$. These are precisely conditions (1) and (2) in Lemma 
\ref{lem:ineq_a_lam}, and this completes the proof.
\end{proof}

\begin{comment}
\begin{remark}
Note that, for equality to hold, if there exists a Gaussian error, then the number of Gaussian errors must be greater than $2$. Connection to literature, etc. ... [Should go the main text with the main theorem]
Furthermore, if there is a non-Gaussian error, say $\epsilon_k$ for some $k \in [p]$, then $k \notin \cC_A$, and there exists $i \in \cR^c_A$ such that $\kappa(i) = k$.
\end{remark}
\end{comment}

\begin{lemma}\label{lem:sig_inv}
%The conditions in Lemma \ref{lem:ineq_eps_e} holds only if
For every $i \in [p]$, we have
\begin{align*}
\sigma^{-1}(i) \in \{j \in \bar{\an}^*(\sigma^{-1}(i)): \beta^*_{\sigma^{-1}(i) \gets j} \neq 0\} \; \subseteq \; \bigcup_{j \in [i]} \supp(a_j).
\end{align*}
\end{lemma}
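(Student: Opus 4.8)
The statement has two parts: a membership claim and a subset claim. The plan is to handle them separately. For the membership $\sigma^{-1}(i) \in \{j \in \bar{\an}^*(\sigma^{-1}(i)) : \beta^*_{\sigma^{-1}(i) \gets j} \neq 0\}$, I would simply observe that $\sigma^{-1}(i) \in \bar{\an}^*(\sigma^{-1}(i))$ trivially (a node is its own ancestor-or-self), and that $\beta^*_{\sigma^{-1}(i) \gets \sigma^{-1}(i)} = 1 \neq 0$ by the convention $b^*_{j \gets j} \equiv 1$ from \eqref{eq:defb*}. So the left-hand membership is immediate and needs essentially no work.

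The substantive content is the inclusion $\{j \in \bar{\an}^*(\sigma^{-1}(i)) : \beta^*_{\sigma^{-1}(i) \gets j} \neq 0\} \subseteq \bigcup_{j \in [i]} \supp(a_j)$. The idea is to unwind the definitions of $A$ and $e_\sigma$. Recall from \eqref{eq:defA} that $e_{\sigma^{-1}(i)} = a_i^T \epsilon = \sum_{k \in \supp(a_i)} a_{ik}\epsilon_k$. On the other hand, from the working SEM \eqref{eq:lap_model_alt}, $e_{\sigma^{-1}(i)} = X_{\sigma^{-1}(i)} - \sum_{k \in \pa(\sigma^{-1}(i))} b_{\sigma^{-1}(i),k} X_k$, and each $X_k$ appearing here has causal order $\sigma(k) < i$, so by Lemma \ref{lem:rep_anc} (in its $\gamma^*$-form, using $X = \cB^*\epsilon$ from \eqref{eq:X_Beps}) the variable $X_{\sigma^{-1}(i)}$ and all its parents are linear combinations of $\{\epsilon_\ell : \ell \in \bar{\an}^*(\cdot)\}$. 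Thus $e_{\sigma^{-1}(i)}$ is a linear combination of $\{\epsilon_\ell : \ell \in \bar{\an}^*(\sigma^{-1}(i))\}$, and comparing with $e_{\sigma^{-1}(i)} = \sum_k a_{ik}\epsilon_k$ gives $\supp(a_i) \subseteq \bar{\an}^*(\sigma^{-1}(i))$ — more precisely, since the errors $\epsilon_\ell$ are independent (hence linearly independent as random variables), the coefficient $a_{ik}$ equals the coefficient of $\epsilon_k$ in the ancestral expansion of $e_{\sigma^{-1}(i)}$.

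Now I would argue by (strong) induction on $i$ that $\bar{\an}^*(\sigma^{-1}(i)) \cap \{j : \beta^*_{\sigma^{-1}(i) \gets j} \neq 0\} \subseteq \bigcup_{j \in [i]}\supp(a_j)$. The base case $i = 1$: then $\sigma^{-1}(1)$ has $\bar{\an}^*(\sigma^{-1}(1))$ possibly nontrivial, but the relevant point is that $e_{\sigma^{-1}(1)} = X_{\sigma^{-1}(1)}$ in $\gamma$'s causal order — wait, one must be careful, since $\sigma$ need not agree with $\sigma^*$. The cleaner route: from the explicit form $e_{\sigma^{-1}(i)} = a_i^T\epsilon$ together with the fact (derived above) that $\{j : a_{ij} \neq 0\} \subseteq \bar{\an}^*(\sigma^{-1}(i))$, and the fact from Lemma \ref{lem:rep_anc} applied to $\gamma^*$ that $X_{\sigma^{-1}(i)} = \sum_{\ell \in \bar{\an}^*(\sigma^{-1}(i))} \beta^*_{\sigma^{-1}(i)\gets \ell}\epsilon_\ell$, I would show that for each $j$ with $\beta^*_{\sigma^{-1}(i)\gets j}\neq 0$, the coefficient of $\epsilon_j$ in $X_{\sigma^{-1}(i)}$ is nonzero, and that subtracting off $\sum_{k\in\pa(\sigma^{-1}(i))} b_{\sigma^{-1}(i),k}X_k$ to form $e_{\sigma^{-1}(i)}$ can only "move" this nonzero coefficient into the $e$'s with indices among $\{e_{\sigma^{-1}(j)}: \sigma(j) < i\}$, i.e. among $a_1,\dots,a_{i-1}$, unless it survives in $a_i$ itself. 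Formalizing "move into" is where the induction bites: $X_k$ for $k \in \pa(\sigma^{-1}(i))$ is itself $\sum_{\ell \in \bar\an(k)} b_{k\gets\ell}e_\ell$ with $\sigma(\ell) \le \sigma(k) < i$, so every $\epsilon_j$-contribution to $X_{\sigma^{-1}(i)}$ that is not retained in $e_{\sigma^{-1}(i)}$ is accounted for by some $e_{\sigma^{-1}(i')}$ with $i' < i$, which by the inductive hypothesis (and the support-of-$a$ identification) carries that $\epsilon_j$ with a nonzero coefficient, giving $j \in \supp(a_{i'}) \subseteq \bigcup_{j'\in[i-1]}\supp(a_{j'})$.

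The main obstacle I anticipate is precisely this bookkeeping: tracking a \emph{nonzero} $\epsilon_j$-coefficient through the two different recursive representations (via $\gamma$'s order through $B$ and $b_{j\gets\ell}$, and via $\gamma^*$'s order through $\cB^*$ and $\beta^*_{j\gets\ell}$) and ensuring that no exact cancellation can make it vanish from \emph{all} of $a_1,\dots,a_i$ — this is where $\beta^*_{\sigma^{-1}(i)\gets j}\neq 0$ is essential, and where I'd need a careful inductive invariant such as: "for every $i$, the total $\epsilon_j$-content of $X$-variables with causal order $\le i$ equals the $\epsilon_j$-content redistributed among $e_{\sigma^{-1}(1)},\dots,e_{\sigma^{-1}(i)}$." I would state and prove this invariant first as a sub-claim, then read off the lemma.
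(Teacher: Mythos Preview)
Your inductive approach is workable and ultimately correct, but the paper dispatches the lemma by a direct, induction-free coefficient comparison that sidesteps all the cancellation worries you flag. Concretely: the paper expands $X_{\sigma^{-1}(i)}$ in two ways. First, via the true model using \eqref{eq:X_Beps} (or Lemma \ref{lem:rep_anc} applied to $\gamma^*$), giving
\[
X_{\sigma^{-1}(i)} = \sum_{k \in \bar{\an}^*(\sigma^{-1}(i))} \beta^*_{\sigma^{-1}(i)\gets k}\,\epsilon_k.
\]
Second, via the working model using \eqref{eq:X_Be} (Lemma \ref{lem:rep_anc} applied to $\gamma$) and then substituting $e_{\sigma^{-1}(j)} = a_j^T\epsilon$, giving
\[
X_{\sigma^{-1}(i)} = \sum_{\sigma^{-1}(j)\in\bar{\an}(\sigma^{-1}(i))}\;\sum_{k\in\supp(a_j)} b_{\sigma^{-1}(i)\gets\sigma^{-1}(j)}\,a_{jk}\,\epsilon_k.
\]
Since $\sigma^{-1}(j)\in\bar{\an}(\sigma^{-1}(i))$ forces $j\le i$, the second display shows that only $\epsilon_k$ with $k\in\bigcup_{j\in[i]}\supp(a_j)$ can appear. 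Comparing with the first display immediately yields the inclusion by contrapositive: if $k\notin\bigcup_{j\in[i]}\supp(a_j)$, its coefficient in the second display is zero, so $\beta^*_{\sigma^{-1}(i)\gets k}=0$ (or $k\notin\bar{\an}^*(\sigma^{-1}(i))$). The point is that expanding $X_{\sigma^{-1}(i)}$ rather than $e_{\sigma^{-1}(i)}$ already aggregates all the $e_{\sigma^{-1}(j)}$ with $j\le i$, so there is nothing to track inductively.

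Two minor notes on your writeup. First, in your inductive step you say the earlier $e_{\sigma^{-1}(i')}$ ``carries that $\epsilon_j$ with a nonzero coefficient, giving $j\in\supp(a_{i'})$'': the inductive hypothesis actually only places $j$ in $\bigcup_{m\in[i']}\supp(a_m)$, not in $\supp(a_{i'})$ itself. This is harmless for the conclusion, but the phrasing is slightly off. Second, the extra ``inductive invariant'' you propose is unnecessary: once you observe that $a_{ij}=0$ together with $\beta^*_{\sigma^{-1}(i)\gets j}\neq 0$ forces some parent $X_k$ (with $\sigma(k)<i$) to have $\beta^*_{k\gets j}\neq 0$, the lemma itself as inductive hypothesis closes the argument. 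Your cancellation concern dissolves because a nonzero total cannot be canceled by a sum of zeros.
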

\begin{proof}
Fix any $i \in [p]$. Then following the representation in \eqref{eq:X_Beps} we have
\begin{align}\label{eq:bet*repr}
X_{\sigma^{-1}(i)} = \sum_{k \in \bar{\an}^*(\sigma^{-1}(i))} \beta^*_{\sigma^{-1}(i) \gets k} \epsilon_k,
\end{align}
where $ \beta^*_{\sigma^{-1}(i) \gets \sigma^{-1}(i)} = 1$. Now, 
from the representation in \eqref{eq:X_Be} we again have
\begin{align} \notag
X_{\sigma^{-1}(i)} &= \sum_{\ell \in \bar{\an}(\sigma^{-1}(i))} b_{\sigma^{-1}(i) \gets \ell} e_\ell\\ \notag
&= \sum_{\sigma^{-1}(j) \in \bar{\an}(\sigma^{-1}(i))} b_{\sigma^{-1}(i) \gets \sigma^{-1}(j)} e_{\sigma^{-1}(j)}\\ \notag
&= \sum_{\sigma^{-1}(j) \in \bar{\an}(\sigma^{-1}(i))} b_{\sigma^{-1}(i) \gets \sigma^{-1}(j)} a_j^T\epsilon\\ \notag
&= \sum_{\sigma^{-1}(j) \in \bar{\an}(\sigma^{-1}(i))} b_{\sigma^{-1}(i) \gets \sigma^{-1}(j)} \sum_{k \in \supp(a_j)} a_{jk}\epsilon_k\\ \label{eq:bet_repr}
&= \sum_{\sigma^{-1}(j) \in \bar{\an}(\sigma^{-1}(i))} \sum_{k \in \supp(a_j)} b_{\sigma^{-1}(i) \gets \sigma^{-1}(j)} a_{jk} \epsilon_k,
\end{align}
where the second equality follows only by replacing $\ell$ with $\sigma^{-1}(j)$, the third one is due to $e_{\sigma^{-1}(j)} = a_j^T\epsilon$ that follows from \eqref{eq:defA}. Here, note that $\sigma^{-1}(j) \in \bar{\an}(\sigma^{-1}(i))$ only if $j \leq i$, or equivalently, $j \in [i]$. Therefore, if $k \notin \bigcup_{j \in [i]} \supp(a_j)$ then the coefficient of $\epsilon_k$ in \eqref{eq:bet_repr} is $0$, and thus, comparing with \eqref{eq:bet*repr}, either $k \notin \bar{\an}^*(\sigma^{-1}(i))$ or $\beta^*_{\sigma^{-1}(i) \gets k} = 0$. This completes the proof.
\end{proof}

In condition (iii) of Lemma \ref{lem:ineq_eps_e}, we derive that under equality there exists a one-one mapping from the elements of $\cR_A^c$ to that of $\cC_A^c$ via some permutation $\kappa \in \cT_p$, i.e., for every $i \in \cR_A^c$ we have $\kappa(i) \in \cC_A^c$, satisfying the relation that
\begin{align*}
e_{\sigma^{-1}(i)} = a_{i\kappa(i)} \epsilon_{\kappa(i)}.
\end{align*}
%and clearly, $\kappa(i) \neq \kappa(j)$ if $j \in \cR_A^c, j \neq i$.
In the following lemma, we further establish a necessary and sufficient condition for the conditions in Lemma \ref{lem:ineq_eps_e} in case $\cR_A = \cC_A = \emptyset$.

\begin{lemma}\label{lem:RCempty_ki}
We have $\cR_A = \cC_A = \emptyset$, and the conditions in Lemma \ref{lem:ineq_eps_e} hold if and only if $A = P$ for some permutation matrix $P$, i.e., for every $i \in [p]$,
\begin{align*}
a_{i\kappa(i)} = 1, \qquad \text{and \ also,} \qquad  \kappa(i) = \sigma^{-1}(i).
\end{align*}
\end{lemma}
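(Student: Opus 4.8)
The plan is to prove both implications directly, using Lemma~\ref{lem:sig_inv} to pin down the underlying permutation and the ancestral representation of Lemma~\ref{lem:rep_anc} to pin down the scalar entries of $A$.

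For the ``only if'' direction, assume $\cR_A = \cC_A = \emptyset$ together with the conditions of Lemma~\ref{lem:ineq_eps_e}. Since $\cR_A = \emptyset$ and $\cC_A = \emptyset$, conditions (i) and (ii) of that lemma are vacuous, so the content is condition (iii): there is a $\kappa \in \cT_p$ with $\supp(a_i) = \{\kappa(i)\}$ and $e_{\sigma^{-1}(i)} = a_{i\kappa(i)}\epsilon_{\kappa(i)}$ for every $i\in[p]$; equivalently, $A$ is a monomial matrix whose nonzero pattern is the permutation $\kappa$. First I would identify $\kappa$. By Lemma~\ref{lem:sig_inv}, $\sigma^{-1}(i) \in \bigcup_{j\in[i]}\supp(a_j) = \{\kappa(1),\dots,\kappa(i)\}$ for every $i$. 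An induction on $i$ then forces $\kappa = \sigma^{-1}$: the case $i=1$ gives $\kappa(1) = \sigma^{-1}(1)$, and if $\kappa(j) = \sigma^{-1}(j)$ for all $j<i$ then $\sigma^{-1}(i)\in\{\sigma^{-1}(1),\dots,\sigma^{-1}(i-1),\kappa(i)\}$, which by injectivity of $\sigma^{-1}$ yields $\kappa(i) = \sigma^{-1}(i)$. Consequently $e_j = a_{\sigma(j),j}\,\epsilon_j$ for every $j$.

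Next I would show $a_{\sigma(j),j}=1$ for each $j$. Substituting $e_\ell = a_{\sigma(\ell),\ell}\epsilon_\ell$ into the representation of Lemma~\ref{lem:rep_anc} for $\gamma$ gives $X_j = \sum_{\ell\in\bar{\an}(j)} b_{j\gets\ell}\,a_{\sigma(\ell),\ell}\,\epsilon_\ell$, while the same representation applied to the true model $\gamma^*$ gives $X_j = \sum_{\ell\in\bar{\an}^*(j)} \beta^*_{j\gets\ell}\,\epsilon_\ell$. Because the $\epsilon_\ell$ are independent and non-degenerate, the expansion of $X_j$ as a linear combination of the $\epsilon_\ell$'s is unique, so the two expansions must have matching coefficients; comparing the coefficient of $\epsilon_j$ (using $j\in\bar{\an}(j)\cap\bar{\an}^*(j)$) gives $b_{j\gets j}\,a_{\sigma(j),j} = \beta^*_{j\gets j}$, i.e.\ $a_{\sigma(j),j} = 1$ since $b_{j\gets j} = \beta^*_{j\gets j} = 1$ by the convention in \eqref{eq:defb*}. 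Combined with $\supp(a_i) = \{\sigma^{-1}(i)\}$, this shows $A$ is exactly the permutation matrix with unit entries in positions $(i,\sigma^{-1}(i))$, i.e.\ $\kappa(i) = \sigma^{-1}(i)$ and $a_{i\kappa(i)} = 1$, as claimed.

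For the ``if'' direction, suppose $A = P$ is a permutation matrix with $\kappa(i) = \sigma^{-1}(i)$ and $a_{i\kappa(i)} = 1$. Then $\supp(a_i) = \{\sigma^{-1}(i)\}$, and since $\sigma^{-1}$ is injective these row supports are pairwise disjoint, so $\supp(a_i\circ a_j) = \emptyset$ for all $i\ne j$, whence $\cR_A = \cC_A = \emptyset$. Then $|\cR_A| = |\cC_A|$ and conditions (i) and (ii) of Lemma~\ref{lem:ineq_eps_e} hold vacuously, while condition (iii) holds with this $\kappa = \sigma^{-1}$ because $\supp(a_i) = \{\kappa(i)\}$ and $e_{\sigma^{-1}(i)} = a_i^T\epsilon = a_{i\kappa(i)}\epsilon_{\kappa(i)}$ by definition. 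I expect the only non-formal step to be the coefficient-matching in the ``only if'' direction, which rests on uniqueness of the representation of $X_j$ in the independent, non-degenerate errors $\epsilon_\ell$; this I would justify via characteristic functions, since $\sum_\ell c_\ell\epsilon_\ell = \sum_\ell c'_\ell\epsilon_\ell$ almost surely forces $\prod_\ell \varphi_{\epsilon_\ell}((c_\ell - c'_\ell)t)\equiv 1$ and hence $c_\ell = c'_\ell$ for each $\ell$ by non-degeneracy. Everything else reduces to the combinatorics of Lemma~\ref{lem:sig_inv} together with the normalization $b_{j\gets j} = \beta^*_{j\gets j} = 1$.
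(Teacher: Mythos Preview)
Your proof is correct and follows essentially the same route as the paper: both use Lemma~\ref{lem:sig_inv} together with induction to pin down $\kappa=\sigma^{-1}$, and then compare the coefficient of $\epsilon_j$ in the two ancestral representations (from \eqref{eq:X_Be} and \eqref{eq:X_Beps}, equivalently Lemma~\ref{lem:rep_anc}) to conclude $a_{\sigma(j),j}=1$. The only cosmetic difference is that you first establish $\kappa=\sigma^{-1}$ for all $i$ and then do the coefficient comparison globally, whereas the paper interleaves these two steps within a single induction; both are equally valid.
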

\begin{proof}
When $A = P$ for some permutation matrix $P$, $\cR_A = \cC_A = \emptyset$ and condition (iii) in Lemma \ref{lem:ineq_eps_e} is clearly satisfied, which establishes the sufficiency part. Now we prove the necessity part by induction over $i \in [p]$. Note that, by Lemma \ref{lem:sig_inv}, we have
\begin{align*}
\sigma^{-1}(1) \in \supp(a_1) = \{\kappa(1)\}, 
\end{align*}
and thus, $\sigma^{-1}(1) = \kappa(1)$. Now, according to the representation in \eqref{eq:X_Be}, we have $X_{\sigma^{-1}(1)} =  X_{\kappa(1)} = a_{1\kappa(1)} \epsilon_{\kappa(1)}$, whereas according to \eqref{eq:X_Beps} the coefficient of $\epsilon_{\kappa(1)}$ in the expression of $X_{\kappa(1)}$ is $1$. Therefore,  $a_{1\kappa(1)} = 1$, and hence, the induction hypotheses is true for $i = 1$.
Next, fix $j \in [p-1]$ and suppose that the hypotheses is true for every $i \in [j]$. Then for $i = j+1$, again by Lemma \ref{lem:sig_inv} we have 
\begin{align*}
\sigma^{-1}(j + 1) \in \bigcup_{i \in [j+1]} \supp(a_i) = \bigcup_{i \in [j+1]} \{\kappa(i)\} = \{\kappa (i) : i \in [j+1]\}.
\end{align*}
Since $\sigma^{-1}(j + 1) \neq \sigma^{-1}(i) = \kappa(i)$ for every $i \in [j]$ by the induction hypotheses, for the above to hold, we must have $\sigma^{-1}(j+1) = \kappa(j + 1)$. Thus, the coefficient of $\epsilon_{\kappa(j+1)}$ in the expression of $X_{\kappa(j+1)}$ is $a_{j+1 \kappa(j + 1)}$, whereas according to \eqref{eq:X_Beps} it is $1$, and this completes the proof.
\end{proof}

\begin{lemma}\label{lem:uniq_rep}
Consider two sets of nodes $\cI, \cJ \subseteq [p]$, and the non-zero real numbers $\alpha_k, k \in \cI$ and $\alpha'_k, k \in \cJ$. Then
\begin{align}\label{eq:uniq_rep1}
\sum_{k \in \cI} \alpha_k X_{k} = \sum_{k \in \cJ} \alpha'_k X_{k} 
\end{align}
if and only if \; $\cI = \cJ$ \; and \; $\alpha_k = \alpha'_k$ \; for every $k \in \cI$.
\end{lemma}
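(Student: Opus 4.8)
The plan is to reduce the statement to the almost-sure linear independence of $X_1, \dots, X_p$, which follows from the invertibility of $\cB^*$ together with the fact that the errors $\epsilon_1, \dots, \epsilon_p$ are independent and nondegenerate (each $\epsilon_j \sim \text{N}(0, \lambda_j^2)$ with $\lambda_j > 0$). The ``if'' direction is immediate, so only the ``only if'' direction needs an argument.

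First I would set $c = (c_1, \dots, c_p) \in \bR^p$ with $c_k := \alpha_k \mathbf{1}[k \in \cI] - \alpha'_k \mathbf{1}[k \in \cJ]$, so that \eqref{eq:uniq_rep1} is precisely the statement that $c^T X = 0$ almost surely. Using the representation $X = \cB^* \epsilon$ from \eqref{eq:X_Beps}, we have $c^T X = (c^T \cB^*) \epsilon = ((\cB^*)^T c)^T \epsilon$, so it suffices to show that $c^T X = 0$ a.s.\ forces $c = 0$. Since $\cB^*$ is lower triangular with unit diagonal and hence nonsingular, this in turn reduces to the claim that $d^T \epsilon = 0$ a.s.\ implies $d = 0$ for $d \in \bR^p$.

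The latter is the standard fact that independent nondegenerate random variables are linearly independent: if $d \neq 0$, say $d_m \neq 0$, then $\epsilon_m = -d_m^{-1} \sum_{k \neq m} d_k \epsilon_k$ almost surely, and the right-hand side is a measurable function of $\{\epsilon_k : k \neq m\}$, hence independent of $\epsilon_m$; but a random variable that equals (a.s.) a quantity independent of itself must be a.s.\ constant, contradicting the nondegeneracy of $\epsilon_m$. Having established $c = 0$, I would then read off the conclusion coordinatewise: for $k \in \cI \setminus \cJ$ we get $\alpha_k = c_k = 0$, impossible since $\alpha_k \neq 0$, so $\cI \subseteq \cJ$; symmetrically $\cJ \subseteq \cI$, whence $\cI = \cJ$; and for $k \in \cI = \cJ$ we get $\alpha_k - \alpha'_k = c_k = 0$. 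The only substantive ingredient is the linear-independence step, and even that is routine given the structural representation \eqref{eq:X_Beps} already in hand, so I do not anticipate a genuine obstacle.
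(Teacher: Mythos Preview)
Your proof is correct and takes a genuinely different route from the paper's. The paper argues by induction on the largest index: it picks $\ell = \max(\cI \cup \cJ)$, expands $X_\ell$ via the ancestral representation \eqref{eq:X_Beps}, and notes that $\epsilon_\ell$ can appear on the other side only through a term indexed by $\ell$ itself (since $\ell$ is not an ancestor of any strictly smaller index). Matching the $\epsilon_\ell$-coefficient forces $\ell$ into both index sets with $\alpha_\ell = \alpha'_\ell$; one then cancels that term and repeats. By contrast, you settle everything in one stroke: the relation $c^T X = 0$ becomes $((\cB^*)^T c)^T \epsilon = 0$ via \eqref{eq:X_Beps}, and invertibility of $\cB^*$ together with the standard fact that independent nondegenerate variables admit no nontrivial a.s.\ linear relation gives $c = 0$. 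Your approach is more direct and makes the underlying probabilistic input (linear independence of the $\epsilon_j$'s) explicit, whereas the paper hides that same input inside the ``coefficients of $\epsilon_\ell$ must match'' step of the peeling argument. One minor remark: your parenthetical ``each $\epsilon_j \sim \text{N}(0, \lambda_j^2)$ with $\lambda_j > 0$'' describes the conditional law, not the marginal scale mixture; this is harmless, since nondegeneracy of the marginal is immediate from $\lambda_j > 0$ a.s.
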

\begin{proof}
The sufficiency part trivially holds, and we prove the necessity part as follows.

Let $\ell := \max \; (\cI \cup \cJ)$ and without loss of generality, suppose that $\ell \in \cJ$. Then we have
\begin{align*}
\sum_{k \in \cI} \alpha_k X_{k} &= \sum_{k \in \cJ, k \neq \ell} \alpha'_k X_{k} + \alpha'_{\ell} X_\ell\\
&= \sum_{k \in \cJ, k \neq \ell} \alpha'_k X_{k} + \alpha'_\ell \sum_{k \in \bar{\an}^*(\ell)} \beta^*_{\ell \gets k} \epsilon_k\\
&= \sum_{k \in \cJ, k \neq \ell} \alpha'_k X_{k} + \alpha'_\ell \sum_{k \in {\an}^*(\ell)} \beta^*_{\ell \gets k} \epsilon_k + \alpha'_\ell \epsilon_\ell,
\end{align*}
where the first equality follows from \eqref{eq:uniq_rep1}, and the second one is due to the representation in \eqref{eq:X_Beps}. Now by the definition of $\ell$, for every $k \in \cI$, $\ell \notin \bar{\an}^*(k)$, which implies that in order for $\epsilon_\ell$ to appear on the right hand side we must have $\ell \in \cI$, and furthermore, $\alpha_\ell = \alpha'_\ell$. This immediately leaves us with
\begin{align*}
\sum_{k \in \cI, k \neq \ell} \alpha_k X_k = \sum_{k \in \cJ, k \neq \ell} \alpha'_k X_k.
\end{align*}
Without loss of generality, suppose that $|\cI| \geq |\cJ|$. Then repeating the above argument, we have $\cJ \subseteq \cI$, and $\alpha_k = \alpha'_k$ \; for every $k \in \cJ$, which further leaves us with  
\begin{align*}
\sum_{k \in \cI \setminus \cJ} \alpha_k X_k = 0.
\end{align*}
Next, if $\cI \setminus \cJ \neq \emptyset$ and we let $m := \max (\cI \setminus \cJ)$, then using the above and the representation in \eqref{eq:X_Beps} we have, similarly as before,
\begin{align*}
\sum_{k \in \cI \setminus \cJ} \alpha_k X_{k} = \sum_{k \in \cI \setminus \cJ, k \neq m} \alpha_k X_{k} + \alpha_m \sum_{k \in {\an}^*(m)} \beta^*_{\ell \gets k} \epsilon_k + \alpha_m \epsilon_m = 0.
\end{align*}
Now by the definition of $m$, for every $k \in \cI \setminus \cJ$, $k \in \bar{\an}^*(m)$, which immediately implies that in order for the above to hold we must have $\alpha_m = 0$, which is a contradiction. Thus, $\cI \setminus \cJ = \emptyset$, and the proof is complete.
\end{proof}

\begin{comment}
\begin{lemma}\label{lem:gam=gam*}
For two DAGs $\gamma, \gamma^* \in \Gamma^p$, we have $\gamma = \gamma^*$ if and only if $\pa(i) = \pa^*(i)$ for every $i \in [p]$.
\end{lemma}
\begin{proof}
The necessity part trivially holds, and the sufficiency part follows as for $i, j \in [p]$, we have
\begin{align*}
(j \to i) \in \gamma^* \quad \text{iff} \quad j \in \pa^*(i) \quad \text{iff} \quad j \in \pa(i) \quad \text{iff} \quad (j \to i) \in \gamma.
\end{align*}
The proof is complete.
\end{proof}
\end{comment}

\begin{lemma}\label{lem:R=C=emp}
We have $\cR_A = \cC_A = \emptyset$, and the conditions in Lemma \ref{lem:ineq_eps_e} hold if and only if $\gamma = \gamma^*$, and also $b_{ij} = \beta^*_{ij}$ for every $j \in \pa(i)$.
\end{lemma}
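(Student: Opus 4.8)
The plan is to combine the structural consequence of $\cR_A = \cC_A = \emptyset$ obtained in Lemma \ref{lem:RCempty_ki} with the uniqueness of the SEM representation from Lemma \ref{lem:uniq_rep}, and then read off the equality of the two DAGs edge by edge via the total-effect coefficients $b_{j \gets s}$ and $\beta^*_{j \gets s}$.

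For the sufficiency direction, suppose $\gamma = \gamma^*$ and $b_{ij} = \beta^*_{ij}$ for every $j \in \pa(i)$. Then $\sigma$ can be taken equal to $\sigma^*$, so that $\sigma^{-1}(i) = i$ for all $i$, and by \eqref{eq:defb*} the total-effect coefficients coincide: $b_{j \gets s} = \beta^*_{j\gets s}$ for every ancestral pair. Hence the matrices $B$ and $\cB^*$ in \eqref{eq:X_Be} and \eqref{eq:X_Beps} are equal, and $P$ is the identity, so by \eqref{eq:defA} we get $A = B^{-1}P\cB^* = I$. With $A$ a permutation matrix (indeed the identity), the off-diagonal Hadamard products $a_i \circ a_j$ all vanish, so $\cR_A = \cC_A = \emptyset$, and $e_{\sigma^{-1}(i)} = a_i^T \epsilon = \epsilon_i$, so the $e_i$ are pairwise (in fact mutually) independent; thus the equality conditions of Lemma \ref{lem:ineq_eps_e} hold.

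For the necessity direction, assume $\cR_A = \cC_A = \emptyset$ together with the equality conditions of Lemma \ref{lem:ineq_eps_e}. By Lemma \ref{lem:RCempty_ki} we have $A = P$ for a permutation matrix $P$, and moreover $\kappa(i) = \sigma^{-1}(i)$ with $a_{i\kappa(i)} = 1$ for every $i$; that is, $A = I$ once we index rows by $\sigma^{-1}$, and $e_{\sigma^{-1}(i)} = \epsilon_{\sigma^{-1}(i)}$, i.e. $e_j = \epsilon_j$ for every $j \in [p]$. Now fix any node $j$ and compare the two structural equations for $X_j$: from $\bM_\gamma$ we have $X_j = \sum_{k \in \pa(j)} b_{jk} X_k + e_j = \sum_{k \in \pa(j)} b_{jk} X_k + \epsilon_j$, while from \eqref{eq:model} we have $X_j = \sum_{k \in \pa^*(j)} \beta^*_{jk} X_k + \epsilon_j$. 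Subtracting, $\sum_{k \in \pa(j)} b_{jk} X_k = \sum_{k \in \pa^*(j)} \beta^*_{jk} X_k$; since all $b_{jk}$ (for $k \in \pa(j)$, these being edges of $\gamma$) and all $\beta^*_{jk}$ are nonzero, Lemma \ref{lem:uniq_rep} forces $\pa(j) = \pa^*(j)$ and $b_{jk} = \beta^*_{jk}$ for every $k \in \pa(j)$. As $j$ was arbitrary, this gives $\gamma = \gamma^*$ (two DAGs with identical parent sets at every node are identical) and $b_{ij} = \beta^*_{ij}$ throughout, completing the proof.

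The main obstacle is the bookkeeping around the permutation $\sigma$ versus the fixed true causal order $\sigma^*$: one has to be careful that $A = P$ from Lemma \ref{lem:RCempty_ki} genuinely yields $e_j = \epsilon_j$ for each fixed label $j$ (not merely up to a reindexing), which is exactly the content of the extra conclusion $\kappa(i) = \sigma^{-1}(i)$ there. Once $e_j = \epsilon_j$ is in hand, the rest is a clean application of Lemma \ref{lem:uniq_rep} node by node; the only subtlety is ensuring the sets $\cI = \pa(j)$ and $\cJ = \pa^*(j)$ consist of nodes with nonzero coefficients, which holds by the standing convention that SEM coefficients along edges are nonzero.
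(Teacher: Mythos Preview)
Your proof is correct and follows the same approach as the paper: invoke Lemma~\ref{lem:RCempty_ki} to reduce to $e_{\sigma^{-1}(i)} = \epsilon_{\sigma^{-1}(i)}$, then compare the two structural equations for each node and apply Lemma~\ref{lem:uniq_rep}. The only cosmetic difference is that you spell out the sufficiency direction via $A = I$ (with the slightly informal ``$\sigma$ can be taken equal to $\sigma^*$''), whereas the paper wraps both directions into a single if-and-only-if chain; neither version makes explicit why the $b_{jk}$ are nonzero when invoking Lemma~\ref{lem:uniq_rep}, so your closing caveat is apt.
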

\begin{proof}
In view of Lemma \ref{lem:RCempty_ki}, %and Lemma \ref{lem:gam=gam*}, 
it suffices to show that 
\begin{align*}
&e_{\sigma^{-1}(i)} = \epsilon_{\sigma^{-1}(i)} \qquad \text{for every} \;\; i \in [p]\\
\text{if and only if} \qquad &\pa(i) = \pa^*(i), \;\;\; b_{ij} = \beta^*_{ij} \qquad \text{for every} \;\; i \in [p], \; j \in \pa(i).
\end{align*}
From \eqref{eq:model} and \eqref{eq:lap_model_alt}, we have for every $i \in [p]$,
\begin{align}\label{eq:equiv_rep}
X_{\sigma^{-1}(i)} = \sum_{k \in {\rm pa}^*(\sigma^{-1}(i))} \beta^*_{\sigma^{-1}(i)k} X_{k} + \epsilon_{\sigma^{-1}(i)} = \sum_{k \in {\rm pa}(\sigma^{-1}(i))} b_{\sigma^{-1}(i)k} X_{k} + e_{\sigma^{-1}(i)}.
\end{align}
Thus, we have, for every $i \in [p]$,
\begin{align*}
&e_{\sigma^{-1}(i)} = \epsilon_{\sigma^{-1}(i)}\\
\text{if and only if} \qquad &\sum_{k \in {\rm pa}^*(\sigma^{-1}(i))} \beta^*_{\sigma^{-1}(i)k} X_{k} = \sum_{k \in {\rm pa}(\sigma^{-1}(i))} b_{\sigma^{-1}(i)k} X_{k}\\
\text{if and only if} \qquad &\pa^*(\sigma^{-1}(i)) = \pa(\sigma^{-1}(i)), \;\;\; \beta^*_{\sigma^{-1}(i)k} = b_{\sigma^{-1}(i)k} \qquad \forall \;\; k \in  \pa(\sigma^{-1}(i)),
\end{align*}
where the second equivalence follows from Lemma \ref{lem:uniq_rep}. This completes the proof.
\end{proof}

\begin{remark}\label{lem:par_faith}
Note that, to establish the above result we do not need faithfulness of $\sfP_X^*$ as defined in Definition \ref{def:faith}. Therefore, when there is at most one Gaussian error, clearly by Lemma \ref{lem:ineq_eps_e}, $\cR_A = \cC_A = \emptyset$, and thus the faithfulness assumption is not needed.
However, when it is not the case, we assume faithfulness to establish further results. To begin with, we show next in Lemma \ref{lem:ancest_infl}, under the assumption of faithfulness, for any node the total causal effect of any of its ancestors does not vanish, i.e., for every $\ell \in [p]$, and $k \in  \an^*(\ell)$, we have $\beta^*_{\ell \gets k} \neq 0$. As a consequence, the first set in Lemma \ref{lem:sig_inv} coincides with $\bar{\an}^*(\sigma^{-1}(i))$.
\end{remark}

\begin{lemma}\label{lem:ancest_infl}
Suppose that $\sfP_X^*$ is faithful to $\gamma^*$. Then for every $\ell \in [p]$, and $k \in  \an^*(\ell)$, we have $\beta^*_{\ell \gets k} \neq 0$.
\end{lemma}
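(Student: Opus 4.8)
The plan is to argue by contradiction. Suppose there exist $\ell\in[p]$ and $k\in\an^*(\ell)$ with $\beta^*_{\ell\gets k}=0$; I will exhibit a conditional independence that holds under $\sfP_X^*$ but is not encoded by $\gamma^*$, contradicting faithfulness (Definition \ref{def:faith}, which demands $\bI(\sfP_X^*)\subseteq\bI(\gamma^*)$). As elsewhere I take $\sigma^*(j)=j$, so that by \eqref{eq:X_Beps} (equivalently Lemma \ref{lem:rep_anc} applied to the true SEM) every coordinate has the representation $X_m=\sum_{s\in\bar{\an}^*(m)}\beta^*_{m\gets s}\epsilon_s$; in particular $\beta^*_{\ell\gets k}$ is exactly the coefficient of $\epsilon_k$ in $X_\ell$, which by assumption is zero.

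The crux is the correct choice of conditioning set, namely $\cA:=\an^*(k)$. First I would record the elementary consequences of acyclicity: $\cA$ is ancestrally closed (so $\pa^*(t)\subseteq\cA$ for all $t\in\cA$), $k\notin\cA$, $\ell\notin\cA$, and $\cA\subseteq\an^*(\ell)$ since $k\in\an^*(\ell)$. Because $\cA$ is ancestrally closed, the map $(\epsilon_s)_{s\in\cA}\mapsto(X_t)_{t\in\cA}$ is an invertible triangular linear transformation, so conditioning on $X_\cA$ is the same as conditioning on $(\epsilon_s)_{s\in\cA}$. Under this conditioning, $X_k=\epsilon_k+(X_k-\epsilon_k)$ with $X_k-\epsilon_k=\sum_{u\in\pa^*(k)}\beta^*_{ku}X_u$ determined by $X_\cA$; and, using that the $\epsilon_k$-coefficient in $X_\ell$ vanishes together with $\cA\subseteq\an^*(\ell)$, I would split $X_\ell=\sum_{s\in\cA}\beta^*_{\ell\gets s}\epsilon_s+\sum_{s\in\bar{\an}^*(\ell)\setminus(\{k\}\cup\cA)}\beta^*_{\ell\gets s}\epsilon_s$, where the first sum is determined by $X_\cA$ and the second involves only errors with indices disjoint from $\{k\}\cup\cA$, hence is independent of $(\epsilon_k,(\epsilon_s)_{s\in\cA})$. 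Consequently, conditionally on $X_\cA$, $X_k$ is a deterministic shift of $\epsilon_k$ while $X_\ell$ is a deterministic shift of a variable independent of $\epsilon_k$, giving $X_k\indep X_\ell\mid X_\cA$; equivalently, this conditional independence lies in $\bI(\sfP_X^*)$.

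It then remains to check the relation is not in $\bI(\gamma^*)$. Since $k\in\an^*(\ell)$ there is a directed path $k=w_0\to w_1\to\cdots\to w_r=\ell$ in $\gamma^*$ with $r\ge1$; each $w_i$ for $i\ge1$ is a strict descendant of $k$, hence $w_i\notin\cA=\an^*(k)$ by acyclicity, and $w_0=k\notin\cA$. As every node on a directed path is a non-collider, none of these nodes is blocked by conditioning on $\cA$, so $k$ and $\ell$ are d-connected given $\cA$, i.e. $(k\indep\ell\mid\cA)\notin\bI(\gamma^*)$. Combining the two statements contradicts faithfulness, completing the proof. The only point requiring a little care is the measure-theoretic bookkeeping in the middle step — the equivalence of the two conditionings and the passage to conditional independence — but this is routine given the explicit linear representation of $X$ in the mutually independent errors, so the genuine content of the lemma is just the choice $\cA=\an^*(k)$ of conditioning set.
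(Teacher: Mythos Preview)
Your proposal is correct and follows essentially the same approach as the paper: both argue by contradiction, choose the conditioning set $\an^*(k)$, use the error representation \eqref{eq:X_Beps} to show $X_k\indep X_\ell\mid X_{\an^*(k)}$ when $\beta^*_{\ell\gets k}=0$, and then exhibit a directed (hence active) path from $k$ to $\ell$ that avoids $\an^*(k)$ to obtain the violation of faithfulness. Your write-up is in fact a bit cleaner, handling the cases $\an^*(k)=\emptyset$ and $\an^*(k)\neq\emptyset$ uniformly and making the conditional-independence step (via the invertible triangular map on the ancestrally closed set) more explicit than the paper does.
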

\begin{proof}
Fix  $\ell \in [p]$ and $k \in \an^*(\ell)$. Then following \eqref{eq:defb*}, we have 
 \begin{align*}
\beta^*_{\ell \gets k} = \sum_{j \in \pa^*(\ell) \cap \bar{\de}^*(k)} \beta^*_{\ell j} \beta^*_{j \gets k}.
\end{align*}
Therefore, in case \ $\pa(\ell) \cap \bar{\de}(k) = \{k\}$, clearly $\beta^*_{\ell \gets k} = \beta^*_{\ell k} \neq 0$, as per the definition of parent.

For the other case, there exists $\ell > j > k$, such that $j \in \pa(\ell) \cap \bar{\de}(k)$. This implies there exists an unblocked path from $k$ to $\ell$, in which $j$ is a non-collider. %, or more specifically, $j$ is the middle vertex of a chain. 
Since $j \notin \an^*(k)$, this immediately implies that $k$ and $\ell$ are $d$-connected, and furthermore, when $\an^*(k) \neq \emptyset$, they are not even $d$-separated by $\an^*(k)$. Therefore, we have  
\begin{align}\label{eq:d-sep}
\text{according to} \;\; \gamma^*, \quad k \nindep \ell, \quad \text{and when} \;\; \an^*(k) \neq \emptyset, \quad k \nindep \ell \ | \ \an^*(k).
\end{align}
Now, suppose that $\beta^*_{\ell \gets k} = 0$. Then, following the representation in \eqref{eq:X_Beps}, we have
\begin{align*}
X_{k} = \sum_{j \in {\an}^*(k)} \beta^*_{k \gets j} \epsilon_j + \epsilon_k, \qquad \text{and} \qquad X_{\ell} = \sum_{j \in {\an}^*(\ell)\setminus \{k\}} \beta^*_{\ell \gets j} \epsilon_j + \epsilon_\ell.
\end{align*}
Since $\an^*(k) \subseteq \an^*(\ell)$, the above implies that 
\begin{align}\label{eq:d-sep_viol}
\text{under} \;\; \sfP_X^*, \quad \text{when} \;\; \an^*(k) = \emptyset, \quad k \indep \ell \quad \text{and when} \;\; \an^*(k) \neq \emptyset, \quad k \indep \ell \ | \ \an^*(k).
\end{align}
Thus, comparing \eqref{eq:d-sep} and \eqref{eq:d-sep_viol}, clearly $\bI(\sfP_X^*) \nsubseteq \bI(\gamma^*)$, which violates the faithfulness of $\sfP_X^*$ to $\gamma^*$. Therefore we must have $\beta^*_{\ell \gets k} \neq 0$, and the proof is complete.
\end{proof}

\begin{lemma}\label{lem:non-G_err_nodes}
Suppose that $\sfP_X^*$ is faithful to $\gamma^*$, and $\cR_A, \cC_A \neq \emptyset$. Then the conditions in Lemma \ref{lem:ineq_eps_e} hold only if for every $i \in \cR_A^c$,
\begin{align*}
a_{i\kappa(i)} = 1 \qquad \text{and} \qquad \kappa(i) = \sigma^{-1}(i).
\end{align*}
\end{lemma}
\begin{proof}
Fix any $i \in \cR_A^c$. Continuing with the representation in \eqref{eq:bet_repr} we have
\begin{align}\notag
X_{\sigma^{-1}(i)} &= \sum_{\sigma^{-1}(j) \in \bar{\an}(\sigma^{-1}(i))} \sum_{k \in \supp(a_j)} b_{\sigma^{-1}(i) \gets \sigma^{-1}(j)} a_{jk} \epsilon_k\\ \label{eq:bet_repr_sub}
&= \sum_{\sigma^{-1}(j) \in {\an}(\sigma^{-1}(i))} \sum_{k \in \supp(a_j)} b_{\sigma^{-1}(i) \gets \sigma^{-1}(j)} a_{jk} \epsilon_k + \sum_{k \in \supp(a_i)} a_{ik}\epsilon_k\\ \label{eq:bet_repr_sub2}
&= \sum_{\sigma^{-1}(j) \in {\an}(\sigma^{-1}(i))} \sum_{k \in \supp(a_j)} b_{\sigma^{-1}(i) \gets \sigma^{-1}(j)} a_{jk} \epsilon_k + a_{i\kappa(i)}\epsilon_{\kappa(i)},
\end{align}
where the third equality follows from condition (iii) in Lemma \ref{lem:ineq_eps_e} that $\supp(a_i) = \{\kappa(i)\}$. Since $a_{i\kappa(i)} \neq 0$ and $\kappa(i) \notin \supp(a_j)$ for every $j \neq i$, by comparing with \eqref{eq:bet*repr} we must have $\kappa(i) \in \bar{\an}^*(\sigma^{-1}(i))$.

Now, by Lemma \ref{lem:sig_inv}, we have
\begin{align}\label{eq:supp_cup_k}
\begin{split}
\sigma^{-1}(i) \in \bigcup_{j \in [i]} \supp(a_j) &= \Big(\bigcup_{j \in [i-1]} \supp(a_j)\Big) \cup \supp(a_i)\\
&= \Big(\bigcup_{j \in [i-1]} \supp(a_j)\Big) \cup \{\kappa(i)\}.
\end{split}
\end{align}
In addition, we claim that
\begin{align*}
\sigma^{-1}(i) \notin \bigcup_{j \in [i-1]} \supp(a_j), \quad \text{i.e.,} \quad \sigma^{-1}(i) \notin \supp(a_j) \quad \text{for every} \;\; j \in [i-1],
\end{align*}
and prove this claim by induction over $j \in [i-1]$. Note that, following the representation in \eqref{eq:bet_repr} we have
\begin{align*}
X_{\sigma^{-1}(1)} = \sum_{k \in \supp(a_1)} a_{1k} \epsilon_k.
\end{align*}
If $\sigma^{-1}(i) \in \supp(a_1)$, then comparing the above with \eqref{eq:bet*repr}, $\sigma^{-1}(i) \in \an^*(\sigma^{-1}(1))$, and since $\kappa(i) \in \bar{\an}^*(\sigma^{-1}(i))$, we must have $\kappa(i) \in \an^*(\sigma^{-1}(1))$. Therefore, due to Lemma \ref{lem:ancest_infl} and Lemma \ref{lem:sig_inv} we must have $\kappa(i) \in \supp(a_1)$, which is a contradiction again by the fact that $\kappa(i) \notin \supp(a_j)$ for every $j \neq i$. Thus, $\sigma^{-1}(i) \notin \supp(a_1)$, and the claim is true for $j = 1$. Next, fix $\ell \in [i-1]$ and suppose that the hypotheses is true for every $j \in [\ell-1]$. Then for $j = \ell$, following \eqref{eq:bet_repr_sub}, we have
\begin{align*}
X_{\sigma^{-1}(\ell)} = \sum_{\sigma^{-1}(j) \in {\an}(\sigma^{-1}(\ell))} \sum_{k \in \supp(a_j)} b_{\sigma^{-1}(\ell) \gets \sigma^{-1}(j)} a_{jk} \epsilon_k \;\; + \sum_{k \in \supp(a_\ell)} a_{\ell k}\epsilon_k
\end{align*}
Since $\sigma^{-1}(i) \notin \supp(a_j)$ for every $j \leq \ell$, if $\sigma^{-1}(i) \in \supp(a_\ell)$, then due to \eqref{eq:bet*repr}, $\sigma^{-1}(i) \in \an^*(\sigma^{-1}(\ell))$, and as $\kappa(i) \in \bar{\an}^*(\sigma^{-1}(i))$, we must have $\kappa(i) \in \an^*(\sigma^{-1}(\ell))$. Therefore, again due to Lemma \ref{lem:ancest_infl} and Lemma \ref{lem:sig_inv}
we must also have $\kappa(i) \in \cup_{j \leq \ell} \supp(a_j)$, which is again a contradiction. Thus, $\sigma^{-1}(i) \notin \supp(a_\ell)$, and the claim is true for every $j \in [i-1]$, and due to \eqref{eq:supp_cup_k}, this immediately implies that $\sigma^{-1}(i) = \kappa(i)$. Subsequently, from the representaion in \eqref{eq:bet_repr_sub2}, we have $a_{i \kappa(i)} = 1$. The proof is complete.
\end{proof}

\begin{lemma}\label{lem:R,C=non_emp}
Suppose that $\sfP_X^*$ is faithful to $\gamma^*$. Then $\cR_A, \cC_A \neq \emptyset$, and the conditions in Lemma \ref{lem:ineq_eps_e} hold if and only if there exist at least two Gaussian errors, i.e., $|n\cG^*| \leq (p-2)$,  and $\gamma, b$ satisfy the following conditions:
\begin{enumerate}
%\item[(a)] $\bI(\gamma) \subseteq \bI(\gamma^*)$ and $\gamma \neq \gamma^*$,
\item[(a)] for every $i \notin \cR_A$,
\begin{enumerate}
\item[(i)] $\sigma^{-1}(i) \notin \cC_A$,
\item[(ii)]
$\pa(\sigma^{-1}(i)) = \pa^*(\sigma^{-1}(i))$, and also $b_{\sigma^{-1}(i)j} = \beta^*_{\sigma^{-1}(i)j}$ for every $j \in \pa(\sigma^{-1}(i))$.
%\item[(iii)] for every $j \geq i$, such that $\sigma^{-1}(i) \in \bar{\an}(\sigma^{-1}(j))$, 
%\begin{align*}
%\sigma^{-1}(j) \notin \an^*(\sigma^{-1}(\ell)) \qquad \text{for every} \;\; \ell < i, \;\; \text{such that} \;\; \sigma^{-1}(\ell)  \in \an(\sigma^{-1}(i)).
%\end{align*}
\end{enumerate}
\item[(b)] for every $i \in \cR_A$, 
\begin{enumerate}
\item[(i)] $\sigma^{-1}(i) \in \cC_A$,
\item[(ii)] $\pa(\sigma^{-1}(i))$ and $b_{\sigma^{-1}(i)j}, j \in \pa(\sigma^{-1}(i))$ are such that $e_{\sigma^{-1}(i)}$ is some linear combination of the Gaussian errors $\epsilon_k, k \in \cC_A$, and 
%\begin{align*}
$e_{\sigma^{-1}(i)}, i \in \cR_A$ are pairwise independent.
%\end{align*}
\end{enumerate}
\end{enumerate}
\end{lemma}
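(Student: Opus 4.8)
The plan is to prove the two implications by translating between the algebraic structure of the rows of $A$ and the graph-theoretic and probabilistic description of the working model, leaning on Lemmas \ref{lem:ineq_eps_e}, \ref{lem:Aprop}, \ref{lem:non-G_err_nodes}, \ref{lem:uniq_rep} and the Darmois--Skitovic theorem (Lemma \ref{lem:DarSki}). Throughout, I use the representation $e_{\sigma^{-1}(i)} = a_i^T\epsilon$ from \eqref{eq:defA} together with the structural identity \eqref{eq:equiv_rep}, and the (repeatedly used) fact that the $\epsilon_k$ are independent and non-degenerate, hence linearly independent as random variables.

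\emph{Necessity.} Assume $\cR_A,\cC_A\neq\emptyset$ and the conditions of Lemma \ref{lem:ineq_eps_e} (equivalently, all $e_i$ pairwise independent). Condition (i) there gives $|\cC_A|\ge 2$ with $\epsilon_k\sim\text{N}(0,\lambda_k^2)$ for every $k\in\cC_A$; so $\cC_A\subseteq[p]\setminus n\cG^*$ is a set of at least two Gaussian-error nodes and $|n\cG^*|\le p-2$. For $i\notin\cR_A$, Lemma \ref{lem:non-G_err_nodes} gives $\kappa(i)=\sigma^{-1}(i)$ and $a_{i\kappa(i)}=1$, while condition (iii) gives $\supp(a_i)=\{\kappa(i)\}$ and $\kappa(i)\notin\cC_A$; hence $\sigma^{-1}(i)\notin\cC_A$ (this is (a)(i)) and $e_{\sigma^{-1}(i)}=\epsilon_{\sigma^{-1}(i)}$. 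Substituting this into \eqref{eq:equiv_rep} and applying Lemma \ref{lem:uniq_rep} yields $\pa(\sigma^{-1}(i))=\pa^*(\sigma^{-1}(i))$ and $b_{\sigma^{-1}(i)j}=\beta^*_{\sigma^{-1}(i)j}$ for $j\in\pa(\sigma^{-1}(i))$, which is (a)(ii). For $i\in\cR_A$, condition (ii) gives $\supp(a_i)\subseteq\cC_A$ and $e_{\sigma^{-1}(i)}=\sum_{k\in\cC_A}a_{ik}\epsilon_k$, a linear combination of the (now Gaussian) errors $\epsilon_k$, $k\in\cC_A$; combined with the pairwise independence of the $e_i$'s, this is (b)(ii). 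Finally, from Lemma \ref{lem:Aprop}(b) and $\supp(a_i)=\{\sigma^{-1}(i)\}$ for $i\notin\cR_A$ we get $\cC_A^c=\{\sigma^{-1}(i):i\notin\cR_A\}$, hence $\cC_A=\{\sigma^{-1}(i):i\in\cR_A\}$, so $\sigma^{-1}(i)\in\cC_A$ for every $i\in\cR_A$ (this is (b)(i)).

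\emph{Sufficiency.} Assume $|n\cG^*|\le p-2$ and conditions (a), (b), in the regime where the partition is nontrivial, so $\cR_A\neq\emptyset$ and hence $\cC_A\neq\emptyset$ (since $\cR_A=\emptyset$ if and only if $\cC_A=\emptyset$). For $i\notin\cR_A$, (a)(ii) makes the two right-hand sides of \eqref{eq:equiv_rep} agree after cancelling the $X_k$-terms, so $e_{\sigma^{-1}(i)}=\epsilon_{\sigma^{-1}(i)}$, whence $\supp(a_i)=\{\sigma^{-1}(i)\}$ with $a_{i\sigma^{-1}(i)}=1$, and $\sigma^{-1}(i)\notin\cC_A$ by (a)(i). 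For $i\in\cR_A$, (b)(ii) gives $\supp(a_i)\subseteq\cC_A$. It follows that $\supp(a_i\circ a_j)=\emptyset$ whenever $i\notin\cR_A$ and $j\neq i$ (the singleton $\{\sigma^{-1}(i)\}$ lies in $\cC_A^c$ and differs from $\supp(a_j)$ in both the case $j\in\cR_A$ and $j\notin\cR_A$), so $\cC_A=\bigcup_{i,j\in\cR_A,\,i\neq j}\supp(a_i\circ a_j)$. Applying Lemma \ref{lem:DarSki} to each independent pair $e_{\sigma^{-1}(i)}=a_i^T\epsilon$, $e_{\sigma^{-1}(j)}=a_j^T\epsilon$ with $i,j\in\cR_A$, $i\neq j$, shows $\epsilon_k$ is Gaussian for every $k\in\cC_A$, and $|\cC_A|\ge 2$ follows from Lemma \ref{lem:Aprop}(a). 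It remains to check that all $e_i$ are pairwise independent: pairs inside $\cR_A$ are independent by (b)(ii); for distinct $i,j\notin\cR_A$, $e_{\sigma^{-1}(i)}=\epsilon_{\sigma^{-1}(i)}$ and $e_{\sigma^{-1}(j)}=\epsilon_{\sigma^{-1}(j)}$ are distinct components of $\epsilon$; and for $i\in\cR_A$, $j\notin\cR_A$, $\sigma^{-1}(j)\notin\cC_A\supseteq\supp(a_i)$, so $e_{\sigma^{-1}(i)}$ and $\epsilon_{\sigma^{-1}(j)}$ are functions of disjoint collections of the independent $\epsilon_k$'s. Thus all $e_i$ are pairwise independent, and Lemma \ref{lem:ineq_eps_e} delivers the stated conditions.

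The delicate part is the sufficiency direction: one must reconstruct the precise support pattern of the rows of $A$ from the purely structural hypotheses (a), (b) and then recover Gaussianity of the $\cC_A$-indexed errors via Darmois--Skitovic, which relies on the bookkeeping identity $\cC_A=\bigcup_{i,j\in\cR_A,\,i\neq j}\supp(a_i\circ a_j)$ --- i.e., that rows of $A$ with singleton support contribute nothing to $\cC_A$.
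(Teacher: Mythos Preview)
Your proof is correct and follows essentially the same approach as the paper's: both directions hinge on the identities $e_{\sigma^{-1}(i)}=a_i^T\epsilon$ and \eqref{eq:equiv_rep}, Lemma~\ref{lem:non-G_err_nodes} for the (a)-block, and the pairwise-independence characterization in Lemma~\ref{lem:ineq_eps_e}. The only difference is that in the sufficiency direction you add an explicit Darmois--Skitovic step to re-derive Gaussianity of $\epsilon_k$, $k\in\cC_A$, and invoke Lemma~\ref{lem:Aprop}(a) for $|\cC_A|\ge 2$; the paper bypasses this by observing that once all $e_i$ are pairwise independent, Lemma~\ref{lem:ineq_eps_e} already delivers conditions (i)--(iii) in full, so that extra verification is redundant (though harmless).
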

\begin{proof}
First we prove the necessity part. Since for every $k \in \cC_A$, $\epsilon_k$ is Gaussian, we have $n\cG^* \subseteq \cC_A^c$, and thus, $|\cC_A| \geq 2$ immediately implies $|n\cG^*| \leq (p-2)$. Moreover, due to Lemma \ref{lem:non-G_err_nodes} and condition (iii) in Lemma \ref{lem:ineq_eps_e}, $\sigma^{-1}(i) = \kappa(i) \notin \cC_A$, which proves condition (a)(i), and in addition, we have $a_{i\kappa(i)} = 1$. Thus, it suffices to show that
\begin{align*}
&e_{\sigma^{-1}(i)} = \epsilon_{\sigma^{-1}(i)} \qquad \text{for every} \;\; i \in \cR_A^c\\
\text{only if} \qquad &\pa(\sigma^{-1}(i)) = \pa^*(\sigma^{-1}(i)), \;\;\; b_{\sigma^{-1}(i)j} = \beta^*_{\sigma^{-1}(i)j} \qquad \forall \;\; i \in \cR_A^c, \; j \in \pa(\sigma^{-1}(i)).
\end{align*}
From \eqref{eq:model} and \eqref{eq:lap_model_alt}, we have for every $i \in [p]$,
\begin{align}\label{eq:equiv_rep}
X_{\sigma^{-1}(i)} = \sum_{k \in {\rm pa}^*(\sigma^{-1}(i))} \beta^*_{\sigma^{-1}(i)k} X_{k} + \epsilon_{\sigma^{-1}(i)} = \sum_{k \in {\rm pa}(\sigma^{-1}(i))} b_{\sigma^{-1}(i)k} X_{k} + e_{\sigma^{-1}(i)}.
\end{align}
Thus, we have, for every $i \in \cR_A^c$,
\begin{align}\label{iff:nonG}
\begin{split}
&e_{\sigma^{-1}(i)} = \epsilon_{\sigma^{-1}(i)}\\
\text{iff} \qquad &\sum_{k \in {\rm pa}^*(\sigma^{-1}(i))} \beta^*_{\sigma^{-1}(i)k} X_{k} = \sum_{k \in {\rm pa}(\sigma^{-1}(i))} b_{\sigma^{-1}(i)k} X_{k}\\
\text{iff} \qquad &\pa^*(\sigma^{-1}(i)) = \pa(\sigma^{-1}(i)), \;\;\; \beta^*_{\sigma^{-1}(i)k} = b_{\sigma^{-1}(i)k} \qquad \forall \;\; k \in  \pa(\sigma^{-1}(i)),
\end{split}
\end{align}
where the first equivalence follows from \eqref{eq:equiv_rep} and the second one is due to Lemma \ref{lem:uniq_rep}. This proves condition (a)(ii).

\begin{comment}
Next, fix $j \geq i$ such that $\sigma^{-1}(i) \in \bar{\an}(\sigma^{-1}(j))$. Then following the representation in \eqref{eq:bet_repr} we have
\begin{align*}
X_{\sigma^{-1}(j)} &= \sum_{\sigma^{-1}(m) \in \bar{\an}(\sigma^{-1}(j))} \sum_{k \in \supp(a_m)} b_{\sigma^{-1}(j) \gets \sigma^{-1}(m)} a_{mk} \epsilon_k\\ 
&= \sum_{\substack{\sigma^{-1}(m) \in \bar{\an}(\sigma^{-1}(j)),\\ m \neq i}} \sum_{k \in \supp(a_m)} b_{\sigma^{-1}(j) \gets \sigma^{-1}(m)} a_{mk} \epsilon_k \; + \; b_{\sigma^{-1}(j) \gets \sigma^{-1}(i)} \epsilon_{\sigma^{-1}(i)},
\end{align*}
where the last equality holds since from Lemma \ref{lem:non-G_err_nodes} we have $\supp(a_i) = \{\sigma^{-1}(i)\}$ and $a_{i\kappa(i)} = 1$. Since $\sigma^{-1}(i) = \kappa(i) \notin \supp(a_m)$ for every $m \neq i$, and due to faithfulness $b_{\sigma^{-1}(j) \gets \sigma^{-1}(i)} \neq 0$, by comparing the above with \eqref{eq:bet*repr} we have $\sigma^{-1}(i) \in \bar{\an}^*(\sigma^{-1}(j))$. 
Now, fix $l < i$ such that $\sigma^{-1}(\ell)  \in \an(\sigma^{-1}(i))$, and suppose that $\sigma^{-1}(j) \in \an^*(\sigma^{-1}(\ell))$. This implies that $\sigma^{-1}(i) \in {\an}^*(\sigma^{-1}(\ell))$. Therefore, again due to faithfulness, by Lemma \ref{lem:sig_inv}
we must also have $\sigma^{-1}(i) = \kappa(i) \in \cup_{j \leq \ell} \supp(a_j)$, which is a contradiction since $\ell < i$. Thus, $\sigma^{-1}(j) \notin \an^*(\sigma^{-1}(\ell))$. This proves condition (b)(iii).
\end{comment}

Next, condition (b)(i) follows directly from (a)(i) and the facts that $\sigma$ is one-to-one and $|\cR_A| = |\cC_A|$. Furthermore, condition (i) and the first part of condition (ii) in Lemma \ref{lem:ineq_eps_e} directly suggest the existence of $\pa(\sigma^{-1}(i))$ and $\{b_{\sigma^{-1}(i)j}: j \in \pa(\sigma^{-1}(i))\}$ satisfying the relation that
\begin{align*}
e_{\sigma^{-1}(i)} = X_{\sigma^{-1}(i)} - \sum_{k \in {\rm pa}(\sigma^{-1}(i))} b_{\sigma^{-1}(i)k} X_{k} = \sum_{k \in \cC_A} a_{ik} \epsilon_{k},
\end{align*}
which proves the first part of condition (b)(ii). The second part, i.e., the pairwise independence in condition (b)(ii) immediately follows from the second part of condition (ii) in Lemma \ref{lem:ineq_eps_e}.

Finally, we prove the sufficiency part. Due to condition (a)(ii) and \eqref{iff:nonG}, we have for every $i \notin \cR_A$, $e_{\sigma^{-1}(i)} = \epsilon_{\sigma^{-1}(i)}$, also with $\sigma^{-1}(i) \notin \cC_A$ by condition (a)(i). Moreover, following condition (b) clearly $e_{\sigma^{-1}(i)}, i \in \cR_A$ are pairwise independent, and furthermore, as they are function of the errors $\epsilon_k, k \in \cC_A$, they are also independent of $e_{\sigma^{-1}(i)}, i \notin \cR_A$. Therefore, $e_i, i \in [p]$ are pairwise independent, which by Lemma \ref{lem:ineq_eps_e} implies that the conditions in Lemma \ref{lem:ineq_eps_e} hold. The proof is complete.
 \end{proof}

\begin{comment}
Now, we recall from the definition of $\bar{\cE}^*$ that it is the set of all $\gamma \in \Gamma^p$ which satisfy the following properties: 
\begin{enumerate}
\item[(1)] $\bI(\gamma) \subseteq \bI(\gamma^*)$,
\item[(2)] for every $j \in  n\cG^*$,
\begin{enumerate}
\item[(i)] \ $\pa(j) = \pa^*(j)$,
\item[(ii)] if $\ell \in \de(j)$ then $\ell \notin \an^*(k)$ for every $k \in \bar{\an}(j)$.
\end{enumerate}
\item[(3)] for every $j \notin  n\cG^*$, $\pa(j)$ is such that there exists $\{b_{jk}: k \in \pa(j)\}$ for which $e_j$ is some linear combination of the Gaussian errors as well as $e_j, j \notin n\cG^*$ are pairwise independent.
\end{enumerate}
\end{comment}

\section{Proofs regarding identifiability}\label{app:identif}

\subsection{Some important lemmas}

In this subsection we establish some lemmas which are critical in establishing the results in Section \ref{sec:identif}.

\begin{lemma}\label{lem:min_f}
For some $a > 0$, we have
\begin{align*}
\argmin_{x > 0} \;\; \log x + \frac{a}{x} = a.
\end{align*} 
\end{lemma}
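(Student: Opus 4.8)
The plan is to treat this as a one-dimensional calculus exercise on the function $g(x) := \log x + a/x$ for $x > 0$, with $a > 0$ fixed. First I would compute the derivative $g'(x) = 1/x - a/x^2 = (x - a)/x^2$. On $(0, a)$ the numerator $x - a$ is negative, so $g'(x) < 0$ and $g$ is strictly decreasing; on $(a, \infty)$ we have $g'(x) > 0$ and $g$ is strictly increasing. Hence $x = a$ is the unique global minimizer over $(0, \infty)$, which is exactly the claimed identity.

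Alternatively — and perhaps cleaner for a quick write-up — I would invoke convexity: $g''(x) = -1/x^2 + 2a/x^3 = (2a - x)/x^3$, which is not globally of one sign, so strict convexity fails on all of $(0,\infty)$; thus I would instead note that $g$ is strictly convex on $(0, 2a)$ while the sign analysis of $g'$ already pins down the unique critical point, and strict monotonicity on either side of $x=a$ plus $g(x)\to\infty$ as $x\to 0^+$ and as $x\to\infty$ confirms it is the global minimum. Since there are no interior difficulties and the domain is open with blow-up at both ends, the first-derivative argument is self-contained and I would present that.

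There is essentially no obstacle here; the only thing to be careful about is to state that the minimum is attained (not merely an infimum), which follows because $g$ is continuous, tends to $+\infty$ at both boundary behaviors of the open interval, and has a unique stationary point. I would also remark in passing that the minimized value is $g(a) = 1 + \log a$, since this is the form in which the lemma is used elsewhere (e.g., in deriving the closed-form expression for $h_\gamma$ in Lemma~\ref{lem:H_gamma_basic}).

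\begin{proof}
Let $g(x) := \log x + a/x$ for $x > 0$, where $a > 0$. Then $g$ is differentiable on $(0, \infty)$ with
\begin{align*}
g'(x) = \frac{1}{x} - \frac{a}{x^2} = \frac{x - a}{x^2}.
\end{align*}
Since $x^2 > 0$, the sign of $g'(x)$ equals the sign of $x - a$. Hence $g'(x) < 0$ for $x \in (0, a)$ and $g'(x) > 0$ for $x \in (a, \infty)$, so that $g$ is strictly decreasing on $(0, a]$ and strictly increasing on $[a, \infty)$. Consequently $g(x) > g(a)$ for every $x \neq a$, which shows that the minimum of $g$ over $(0, \infty)$ is attained uniquely at $x = a$. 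This proves $\argmin_{x > 0} \left(\log x + a/x\right) = a$, and moreover the minimized value equals $g(a) = 1 + \log a$.
\end{proof}
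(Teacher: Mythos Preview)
Your proof is correct and follows essentially the same approach as the paper: both compute $g'(x) = 1/x - a/x^2$ and identify $x=a$ as the unique critical point, with the paper invoking the second derivative test $g''(a)=1/a^2>0$ while you use the sign of $g'$ on either side of $a$. Your monotonicity argument is arguably a touch more complete since it directly yields global (not just local) minimality, but the difference is cosmetic.
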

\begin{proof}
Let $f(x) := \log x + a/x$, then the result follows from the fact that the only root of $f'(x) = 0$ is $x = a$ and $f''(a) = 1/a^2 > 0$.
\end{proof}

\begin{lemma}\label{lem:minH}
We have
\begin{align*}
\min_{(b, \theta)} \;\; H(b, \theta) \; = \; p (1 + \log 2) \; + \;  \log  \Bigg(\min_b \; \prod_{j \in [p]} \sfE_*\bigg[\bigg|X_j - \sum_{k \in {\rm pa}(j)} b_{jk} X_{k}\bigg|\bigg]\Bigg).
\end{align*}
\end{lemma}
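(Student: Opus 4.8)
The plan is to evaluate $H(b,\theta)$ in closed form and then perform the minimization in two stages: first over the scale parameters $\theta$ with $b$ held fixed, and then over $b$. Substituting the explicit working density $f^\gamma$ from \eqref{eq:pX} into the definition \eqref{def:H_fun} and using linearity of expectation, I would first obtain
\begin{equation*}
H(b,\theta) = \sum_{j\in[p]}\left(\log 2 + \log\theta_j + \frac{1}{\theta_j}\,g_j(b)\right), \qquad g_j(b) := \sfE_*\!\left[\left|X_j - \sum_{k\in\pa(j)}b_{jk}X_k\right|\right].
\end{equation*}
I would record that $g_j(b)\in(0,\infty)$ for every $b$: it is finite because, under the standing assumption $\sfE_*[|\lambda_i|]<\infty$, each $\epsilon_i$ and hence each $X_i$ has finite first absolute moment (Lemma \ref{lem:exp_mod}), so any linear combination does too; and it is strictly positive because $X_j-\sum_k b_{jk}X_k$ is precisely the working-model error $e_j$, which by \eqref{eq:defA} equals $a_{\sigma(j)}^T\epsilon$ with $a_{\sigma(j)}\neq \mathbf 0$ (since $\det A=1$ by Lemma \ref{lem:detA}), hence is a non-degenerate, absolutely continuous random variable.

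Next I would minimize over $\theta\in(0,\infty)^p$. Since $H(b,\theta)$ is a sum of terms each involving a single $\theta_j$, the minimization decouples coordinatewise, and by Lemma \ref{lem:min_f} applied with $a=g_j(b)>0$, the minimizing value of $\theta_j$ is $g_j(b)$ and the resulting minimized term equals $1+\log g_j(b)$. Summing over $j$ gives
\begin{equation*}
\min_{\theta}\,H(b,\theta) = p(1+\log 2) + \sum_{j\in[p]}\log g_j(b) = p(1+\log 2) + \log\prod_{j\in[p]}g_j(b).
\end{equation*}
Finally, using the elementary identity $\inf_{(b,\theta)}H=\inf_b\inf_\theta H(b,\theta)$ for an infimum over a product domain, together with monotonicity of $\log$, I would conclude
\begin{equation*}
\min_{(b,\theta)}\,H(b,\theta) = p(1+\log 2) + \log\!\left(\min_{b}\,\prod_{j\in[p]}g_j(b)\right),
\end{equation*}
which is the assertion. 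That the outer minimum over $b$ is genuinely attained (so ``$\min$'' is legitimate rather than merely ``$\inf$'') is not needed for this identity and is in any case supplied by the log-concavity and coercivity argument of Lemma \ref{lem:log-concave}, which is invoked in the proof of Lemma \ref{lem:H_gamma_basic}.

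I do not expect a substantive obstacle in this lemma; the argument is essentially bookkeeping once the density is written out. The only points requiring a moment's care are the finiteness and strict positivity of $g_j(b)$ — which guarantee that the inner minimization over each $\theta_j$ is well posed and does not collapse to $-\infty$ — and the observation that the two-stage minimization correctly reproduces the joint one; both are dispatched above.
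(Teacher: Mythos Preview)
Your proposal is correct and follows essentially the same approach as the paper: expand $H(b,\theta)$ from the Laplace density, minimize over $\theta$ coordinatewise via Lemma~\ref{lem:min_f}, and then pass the minimum over $b$ through, with log-concavity (Lemma~\ref{lem:log-concave}) securing attainment. Your additional care in verifying $g_j(b)\in(0,\infty)$ is a nice touch that the paper leaves implicit.
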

\begin{proof}
According to \eqref{eq:pX}, we have, for any $(b, \theta)$,
\begin{align}\label{eq:H}
H(b, \theta) = p \, \log 2 + \sum_{j \in [p]} \Bigg(\log \theta_j + \frac{1}{\theta_j} \sfE_*\bigg[\bigg|X_j - \sum_{k \in {\rm pa}(j)} b_{jk} X_{k}\bigg|\bigg]\Bigg).
\end{align} 
Thus, for any fixed $b$, if we define $\tilde{\theta}(b) = (\tilde{\theta}_j(b) : j \in [p]) := \argmin_\theta H(b, \theta)$, then by Lemma \ref{lem:min_f}, for every $j \in [p]$,
\begin{align*}
\tilde{\theta}_j(b) = \sfE_*\bigg[\bigg|X_j - \sum_{k \in {\rm pa}(j)} b_{jk} X_{k}\bigg|\bigg],
\end{align*}
which further implies from \eqref{eq:H} that
\begin{align*}
\min_{\theta} \;\; H(b, \theta) \; = \; H(b, \tilde{\theta}(b)) \; = \; p (1 + \log 2) \; + \;  \log  \Bigg(\prod_{j \in [p]} \sfE_*\bigg[\bigg|X_j - \sum_{k \in {\rm pa}(j)} b_{jk} X_{k}\bigg|\bigg]\Bigg).
\end{align*}
Finally, taking minimum over $b$ on both sides finishes the proof due to log-concavity of $f^\gamma(X|b^\gamma, \theta^\gamma, \gamma)$ in $(b^\gamma, \theta^\gamma)$, see Lemma \ref{lem:log-concave}, resulting in the
convexity of $H(b, \theta)$.
\end{proof}

\begin{lemma}\label{lem:sum_sym_rvs}
If $X$ and $Y$ are two independent random variables with their distributions being symmetric with respect to $0$, then the distribution of $X + Y$ is also symmetric with respect to $0$.
\end{lemma}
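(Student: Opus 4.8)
The plan is to argue at the level of laws rather than densities, since no absolute continuity is assumed. Recall that a real random variable $Z$ being symmetric with respect to $0$ means precisely that $Z \overset{d}{=} -Z$. First I would record the hypotheses in this form: $X \overset{d}{=} -X$ and $Y \overset{d}{=} -Y$. Next I would upgrade this to a statement about the pair $(X,Y)$: since $X \indep Y$, and consequently $-X \indep -Y$, the joint law of $(X,Y)$ is the product of the marginal laws of $X$ and $Y$, which equals the product of the marginal laws of $-X$ and $-Y$, i.e.\ the joint law of $(-X,-Y)$; hence $(X,Y) \overset{d}{=} (-X,-Y)$. Applying the continuous (hence measurable) map $(u,v)\mapsto u+v$ to both sides preserves equality in law, giving $X+Y \overset{d}{=} (-X)+(-Y) = -(X+Y)$, which is exactly the assertion that $X+Y$ is symmetric with respect to $0$.

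As a cleaner alternative, I would instead give the characteristic-function argument: a real random variable $Z$ is symmetric with respect to $0$ if and only if $t \mapsto \sfE_*[e^{itZ}]$ is real-valued on $\bR$. By hypothesis $\varphi_X(t) := \sfE_*[e^{itX}]$ and $\varphi_Y(t) := \sfE_*[e^{itY}]$ are real-valued for every $t$, and by independence $\varphi_{X+Y}(t) = \varphi_X(t)\,\varphi_Y(t)$, which is therefore real-valued for every $t$; hence $X+Y$ is symmetric with respect to $0$.

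There is no substantive obstacle here — the content is simply that symmetry about the origin is preserved by independent sums — so in the final write-up I would present only the short distributional-equality argument. The one point worth a sentence is that one should not invoke a convolution-of-densities computation, since the lemma is stated for arbitrary symmetric laws, not necessarily absolutely continuous; both arguments above avoid this by working with equalities in law, respectively with characteristic functions.
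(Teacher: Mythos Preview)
Your proposal is correct. Both the distributional-equality argument and the characteristic-function argument are valid and complete, and your remark that one should not appeal to densities is well taken since the lemma is stated for arbitrary symmetric laws.

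The paper takes a slightly different, more hands-on route: it fixes $t>0$ and computes $\sfP(X+Y\le -t)$ directly by conditioning on $Y$, using the CDF identity $F(-x)=1-\bar F(x)$ (which encodes the symmetry of $X$) together with $Y\overset{d}{=}-Y$, to arrive at $\sfP(X+Y\le -t)=\sfP(X+Y\ge t)$. Your argument is cleaner and more conceptual---passing through $(X,Y)\overset{d}{=}(-X,-Y)$ and then pushing forward by $(u,v)\mapsto u+v$ avoids any CDF bookkeeping and makes the role of independence transparent---while the paper's computation has the virtue of being entirely self-contained at the level of distribution functions. Either would be acceptable here; yours is shorter.
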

\begin{proof}
If for every $x \in \bR$, we define $F(x) := \sfP(X \leq x)$ and $\bar{F}(x) := \sfP(X < x)$, then for any $t > 0$,
\begin{align*}
\sfP (X + Y \leq -t) &= \sfE[\sfP(X \leq -Y - t | Y)] = \sfE [F(-Y - t)]\\ 
&= 1 - \sfE [\bar{F}(Y + t)] = 1 - \sfE [\bar{F}(-Y + t)] \\
&= 1 - \sfE[\sfP(X < -Y + t | Y)] = 1 - \sfP(X + Y < t) = \sfP (X + Y \geq t),
\end{align*}
where the third equality follows from the fact that, due to symmetry of $X$, $F(-x) = \sfP(X \leq -x) = \sfP(X \geq x) = 1 - \bar{F}(x)$, and the fourth equality follows since $Y$ is equally distributed with $-Y$ due to symmetry of $Y$. Thus, the result follows.
\end{proof}

\begin{lemma}\label{lem:min_abs_exp}
If $U, V_1, V_2, \dots, V_k$ are independent random variables whose distributions are symmetric with respect to $0$, and $\sfE[|U|], \sfE[|V_i|] < \infty$ for every $i \in [k]$, then 
\begin{align*}
\argmin_{(t_1, \dots, t_k) \in \bR^k} \;\; \sfE \big[\big|U + \sum_{i \in [k]} t_i V_i\big|\big] = (0, 0, \dots, 0).
\end{align*}
\end{lemma}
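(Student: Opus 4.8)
The plan is to show that the function $g(t_1,\dots,t_k) := \sfE\big[\big|U + \sum_{i\in[k]} t_i V_i\big|\big]$ is convex and symmetric about the origin, and then conclude the minimizer is $\mathbf 0$. Convexity is immediate: for fixed $\omega$, the map $(t_1,\dots,t_k)\mapsto |U(\omega) + \sum_i t_i V_i(\omega)|$ is convex (an absolute value of an affine function), and convexity is preserved under taking expectations, so $g$ is convex on $\bR^k$. Finiteness of $g$ everywhere follows from $\sfE[|U|],\sfE[|V_i|]<\infty$ and the triangle inequality. So it suffices to exhibit a point where $g$ attains its minimum and that point is forced to be $\mathbf 0$.

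The key observation is the symmetry. For any fixed $(t_1,\dots,t_k)$, consider the random variable $W := U + \sum_{i\in[k]} t_i V_i$. Each summand $t_i V_i$ has a distribution symmetric about $0$ (scaling a symmetric random variable by a constant preserves symmetry), and all summands together with $U$ are independent; hence by repeated application of Lemma~\ref{lem:sum_sym_rvs}, $W$ has a distribution symmetric about $0$. In particular, for every choice of signs $\varepsilon_i\in\{-1,+1\}$, the vector $(\varepsilon_1 t_1,\dots,\varepsilon_k t_k)$ gives the \emph{same} value $g(\varepsilon_1 t_1,\dots,\varepsilon_k t_k)=g(t_1,\dots,t_k)$, since $U + \sum_i \varepsilon_i t_i V_i \overset{d}{=} U + \sum_i t_i V_i$ by independence and symmetry of each $V_i$. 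Thus $g$ is invariant under all coordinatewise sign flips.

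Now combine convexity with this symmetry. Fix any $(t_1,\dots,t_k)\in\bR^k$. Averaging over the $2^k$ sign-flipped copies, which all have equal $g$-value, Jensen's inequality gives
\begin{align*}
g(0,\dots,0) \;=\; g\Big(\tfrac{1}{2^k}\sum_{\varepsilon\in\{\pm1\}^k} \varepsilon\circ t\Big) \;\leq\; \frac{1}{2^k}\sum_{\varepsilon\in\{\pm1\}^k} g(\varepsilon\circ t) \;=\; g(t_1,\dots,t_k),
\end{align*}
where $\varepsilon\circ t := (\varepsilon_1 t_1,\dots,\varepsilon_k t_k)$ and the first equality holds because $\sum_{\varepsilon}\varepsilon\circ t = \mathbf 0$. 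Hence $\mathbf 0$ is a global minimizer. For the claim that it is \emph{the} minimizer (so that $\argmin$ is a singleton), one would need strict convexity, which is not available in general here (e.g.\ if the $V_i$ are linearly dependent or degenerate); I expect this is the main subtlety, and the intended reading — consistent with how the lemma is used via Lemma~\ref{lem:minH} and log-concavity arguments — is that $\mathbf 0$ is \emph{a} minimizer, with uniqueness handled separately under the reparameterization that makes the objective strictly convex (cf.\ Lemma~\ref{lem:log-concave}). If a genuinely unique minimizer is required, I would argue it along the line through $\mathbf 0$ in any direction $v$: $\phi(s):=g(sv)$ is convex, even in $s$ by the sign-flip symmetry, hence minimized at $s=0$, and strict unless $\sum_i v_i V_i = 0$ almost surely; ruling out the latter requires a non-degeneracy hypothesis on the $V_i$, which in the application is supplied by the structure of the SEM.
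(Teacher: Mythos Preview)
Your proof is correct and takes a genuinely different route from the paper's. The paper proceeds coordinate by coordinate: conditioning on $V_k$, it uses that the $L^1$ loss $c \mapsto \sfE[|Z + c|]$ is minimized at $c=0$ whenever $Z$ is symmetric about $0$ (the median of $Z$), and since $(U+\sum_{i<k}t_iV_i)/v$ is symmetric by Lemma~\ref{lem:sum_sym_rvs}, the partial map $t_k \mapsto g(t_1,\dots,t_k)$ is minimized at $t_k=0$ for any fixed $t_1,\dots,t_{k-1}$; iterating over coordinates yields the conclusion. Your approach instead exploits a global symmetry---$g$ is invariant under all coordinatewise sign flips---and combines it with convexity through a single Jensen step, avoiding both conditioning and the coordinate induction. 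The paper's route makes explicit the $L^1$/median connection that is thematically tied to the Laplace working model; yours is more self-contained and arguably cleaner. On uniqueness, your concern is well-placed: neither argument establishes strict uniqueness without a non-degeneracy condition on the $V_i$, so the paper's proof has exactly the same status on this point.
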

\begin{proof} 
%Suppose the underlying probability distribution of the variables under consideration is denoted by $\sfP$.
Suppose the underlying probability distribution of $V_k$ is denoted by $\sfP_k$.
Then
\begin{align*}
&\sfE \big[\big|U + \sum_{i \in [k]} t_i V_i\big|\big]\\ 
&= \sfE \bigg[ \sfE\bigg[\big|U + \sum_{i \in [k]} t_i V_i \big| \;\; \bigg| \;\; V_k\bigg]\bigg]\\
&= \sfE \bigg[ \sfE\bigg[\big|U + \sum_{i \in [k]} t_i V_i \big| \;\; \bigg| \;\; V_k\bigg] \mathbbm{1}\{V_k \neq 0\} \bigg] + \sfE \bigg[ \sfE\bigg[\big|U + \sum_{i \in [k]} t_i V_i \big| \;\; \bigg| \;\; V_k\bigg] \mathbbm{1}\{V_k = 0\} \bigg]\\
&= \int_{V_k \neq 0} \sfE\bigg[\big|U + \sum_{i \in [k]} t_i V_i \big| \;\; \bigg| \;\; V_k\bigg] d\sfP_k + \sfE \bigg[ \sfE\bigg[\big|U + \sum_{i \in [k-1]} t_i V_i \big| \;\; \bigg| \;\; V_k = 0\bigg] \mathbbm{1}\{V_k = 0\} \bigg]\\
&= \int_{V_k \neq 0} |V_k| \sfE\bigg[\big|\big(U + \sum_{i \in [k-1]} t_i V_i\big)/V_k + t_k\big| \;\; \bigg| \;\; V_k\bigg] d\sfP_k + \sfP_k(V_k = 0) \; \sfE \big[\big|U + \sum_{i \in [k-1]} t_i V_i\big|\big].
\end{align*}
Now, for any arbitrarily fixed $t_1, t_2, \dots, t_{k-1} \in \bR$ and $v \neq 0$, we have
\begin{align*}
\sfE\bigg[\big|\big(U + \sum_{i \in [k-1]} t_i V_i\big)/V_k + t_k\big| \;\; \bigg| \;\; V_k = v\bigg] = \sfE\bigg[\big|\big(U + \sum_{i \in [k-1]} t_i V_i\big)/v + t_k\big| \bigg] < \infty,
\end{align*}
where the equality follows since $V_k$ is independent with $U, V_1, \dots, V_{k-1}$. Note that the second quantity in the above is minimized at $t_k = 0$ because the median of the random variable $(U + \sum_{i \in [k-1]} t_i V_i\big)/v$ is $0$, which is true as its distribution is symmetric with respect to $0$ due to Lemma \ref{lem:sum_sym_rvs}. Therefore, the result follows.
\end{proof}

\begin{lemma}\label{lem:apply_rep_anc}
For every $j \in [p]$ and any set of real numbers $\{c_{jk} : k \in \pa^*(j)\}$, we have
\begin{align*}
\sum_{k \in \pa^*(j)} c_{jk}X_k = \sum_{\ell \in \an^*(j)} \sum_{k \in \pa^*(j) \cap \bar{\de^*}(\ell)} c_{jk}\beta^*_{k \gets \ell} \epsilon_\ell.
\end{align*}
\end{lemma}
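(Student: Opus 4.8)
## Proof Proposal for Lemma \ref{lem:apply_rep_anc}

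The plan is to reduce this to Lemma \ref{lem:rep_anc} (the analogous representation for $X_j$ itself in terms of ancestral errors) by an essentially identical induction/rearrangement argument, now applied to the linear combination $\sum_{k \in \pa^*(j)} c_{jk} X_k$ rather than to a single variable. First I would recall that, specializing the representation \eqref{eq:X_Beps} (equivalently Lemma \ref{lem:rep_anc} applied with $\gamma = \gamma^*$, and using the notation $\beta^*_{j \gets s}$ introduced in Appendix \ref{app:working}), each parent variable admits the expansion
\begin{align*}
X_k = \sum_{\ell \in \bar{\an}^*(k)} \beta^*_{k \gets \ell}\, \epsilon_\ell, \qquad k \in \pa^*(j),
\end{align*}
where $\beta^*_{k \gets k} = 1$ by convention. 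This is exactly the content of Lemma \ref{lem:rep_anc} (or directly of \eqref{eq:X_Beps}) for the true DAG.

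Next I would substitute this into the left-hand side and interchange the order of summation. Writing
\begin{align*}
\sum_{k \in \pa^*(j)} c_{jk} X_k = \sum_{k \in \pa^*(j)} c_{jk} \sum_{\ell \in \bar{\an}^*(k)} \beta^*_{k \gets \ell}\, \epsilon_\ell = \sum_{k \in \pa^*(j)} \sum_{\ell \in \bar{\an}^*(k)} c_{jk}\,\beta^*_{k \gets \ell}\, \epsilon_\ell,
\end{align*}
the remaining task is to reorganize the double sum by grouping terms according to the source node $\ell$. The key combinatorial fact, already used in the proof of Lemma \ref{lem:rep_anc}, is that $\bigcup_{k \in \pa^*(j)} \bar{\an}^*(k) = \bar{\an}^*(j) = \an^*(j) \cup \{j\}$: every node $\ell$ appearing is either an ancestor of $j$ (reachable through some parent $k$ with $\ell \in \bar{\an}^*(k)$, i.e.\ $k \in \bar{\de}^*(\ell)$) or $\ell = j$ itself, which cannot occur since $j \notin \bar{\an}^*(k)$ for any parent $k$ of $j$ in an acyclic graph. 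Hence every $\ell$ in the double sum lies in $\an^*(j)$, and for fixed $\ell$ the inner index $k$ ranges precisely over $\pa^*(j) \cap \bar{\de}^*(\ell)$. Swapping the summation order therefore yields
\begin{align*}
\sum_{k \in \pa^*(j)} \sum_{\ell \in \bar{\an}^*(k)} c_{jk}\,\beta^*_{k \gets \ell}\, \epsilon_\ell = \sum_{\ell \in \an^*(j)} \Big( \sum_{k \in \pa^*(j) \cap \bar{\de}^*(\ell)} c_{jk}\,\beta^*_{k \gets \ell} \Big) \epsilon_\ell,
\end{align*}
which is exactly the claimed identity.

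I do not anticipate a genuine obstacle here — the statement is a routine bookkeeping consequence of the ancestral representation \eqref{eq:X_Beps}/Lemma \ref{lem:rep_anc}. The only point requiring a line of care is the index-set rearrangement: one must verify that the map $(k,\ell) \mapsto (\ell,k)$ carries $\{(k,\ell): k \in \pa^*(j),\ \ell \in \bar{\an}^*(k)\}$ bijectively onto $\{(\ell,k): \ell \in \an^*(j),\ k \in \pa^*(j) \cap \bar{\de}^*(\ell)\}$, using the equivalence $\ell \in \bar{\an}^*(k) \iff k \in \bar{\de}^*(\ell)$ and the acyclicity exclusion of $\ell = j$. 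This mirrors verbatim the fourth displayed equality in the proof of Lemma \ref{lem:rep_anc}, so the argument can be kept very short by simply citing that lemma's proof technique.
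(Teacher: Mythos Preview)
Your proposal is correct and follows essentially the same approach as the paper's proof: expand each $X_k$ via Lemma~\ref{lem:rep_anc}, then swap the order of summation using the identity $\an^*(j) = \bigcup_{k \in \pa^*(j)} \bar{\an}^*(k)$ and the equivalence $\ell \in \bar{\an}^*(k) \iff k \in \bar{\de}^*(\ell)$. One small slip: you wrote $\bigcup_{k \in \pa^*(j)} \bar{\an}^*(k) = \bar{\an}^*(j)$, but the union equals $\an^*(j)$ (without $j$), which you in fact immediately observe; otherwise the argument is identical to the paper's.
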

\begin{proof}
We have
\begin{align*}
\sum_{k \in {\rm pa}^*(j)} c_{jk} X_{k}
&= \sum_{k \in \pa^*(j)} c_{jk} \sum_{\ell \in \bar{\an^*}(k)} \beta^*_{k \gets \ell} \epsilon_\ell\\
&= \sum_{k \in \pa^*(j)} \sum_{\ell \in \bar{\an^*}(k)} c_{jk} \beta^*_{k \gets \ell} \epsilon_\ell\\
&= \sum_{\ell \in \an^*(j)} \sum_{k \in \pa^*(j) \cap \bar{\de^*}(\ell)} c_{jk}\beta^*_{k \gets \ell} \epsilon_\ell,
\end{align*}
where the first equality follows from Lemma \ref{lem:rep_anc}, and third one follows from the similar step in Lemma \ref{lem:rep_anc}.
\end{proof}

\begin{lemma}\label{lem:minH*}
%If we define \; $\hat{b}^* = (\hat{b}^*_{jk} : j \in [p], k \in \pa^*(j))$ \; such that
We have
\begin{align*}
\tilde{b}^*_j = \argmin_{b^*} \;\; \prod_{j \in [p]} \sfE_*\bigg[\bigg|X_j - \sum_{k \in {\rm pa^*}(j)} b^*_{jk} X_{k}\bigg|\bigg],
\end{align*}
and furthermore \, $\tilde{b}^*_{jk} = \beta_{jk}^*$ \, for every $j \in [p]$ and $k \in \pa^*(j)$.
\end{lemma}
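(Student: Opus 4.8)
The plan is to reduce the joint minimization to a per-node one, rewrite the $j$-th objective purely in terms of the independent exogenous errors via Lemma \ref{lem:apply_rep_anc}, and then read off the minimizer from Lemma \ref{lem:min_abs_exp}.

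First I would observe that the product $\prod_{j \in [p]} \sfE_*[|X_j - \sum_{k \in \pa^*(j)} b^*_{jk} X_k|]$ has nonnegative factors, and the block $b^*_j := (b^*_{jk} : k \in \pa^*(j))$ enters only its $j$-th factor; hence minimizing the product amounts to minimizing each factor separately, and it suffices to show that for every fixed $j$ with $\pa^*(j) \ne \emptyset$ (the case $\pa^*(j) = \emptyset$ being vacuous), the map $g_j(b^*_j) := \sfE_*[|X_j - \sum_{k \in \pa^*(j)} b^*_{jk} X_k|]$ is minimized only at $b^*_j = \beta^*_j := (\beta^*_{jk})_{k \in \pa^*(j)}$.

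Next, using the structural equation \eqref{eq:model} I would write $X_j - \sum_{k \in \pa^*(j)} b^*_{jk} X_k = \epsilon_j + \sum_{k \in \pa^*(j)} (\beta^*_{jk} - b^*_{jk}) X_k$, and then apply Lemma \ref{lem:apply_rep_anc} with $c_{jk} = \beta^*_{jk} - b^*_{jk}$ to get
\begin{align*}
X_j - \sum_{k \in \pa^*(j)} b^*_{jk} X_k &= \epsilon_j + \sum_{\ell \in \an^*(j)} d_\ell(b^*_j)\,\epsilon_\ell, \\
\text{where} \quad d_\ell(b^*_j) &:= \sum_{k \in \pa^*(j) \cap \bar{\de}^*(\ell)} (\beta^*_{jk} - b^*_{jk})\,\beta^*_{k \gets \ell},
\end{align*}
so that each $d_\ell(\cdot)$ is affine in $b^*_j$ and, crucially, $d_\ell(\beta^*_j) = 0$ for all $\ell \in \an^*(j)$. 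The key structural point is that $\{\epsilon_j\} \cup \{\epsilon_\ell : \ell \in \an^*(j)\}$ are mutually independent (they are the exogenous errors of \eqref{eq:model}, and $j \notin \an^*(j)$), each symmetric about $0$ by the Gaussian scale-mixture representation \eqref{eq:error_dist}, and each with finite first absolute moment, since $\sfE_*[|\epsilon_\ell|] = \sfE_*[\sfE_*[|\epsilon_\ell| \mid \lambda_\ell]] = \sqrt{2/\pi}\,\sfE_*[\lambda_\ell] < \infty$ under the standing assumption $\sfE_*[|\lambda_\ell|] < \infty$. Lemma \ref{lem:min_abs_exp}, applied with $U = \epsilon_j$ and $(V_\ell)_{\ell \in \an^*(j)} = (\epsilon_\ell)_{\ell \in \an^*(j)}$, then shows that $(d_\ell)_\ell \mapsto \sfE_*[|\epsilon_j + \sum_\ell d_\ell \epsilon_\ell|]$ is minimized, uniquely, at $(d_\ell)_\ell = (0,\dots,0)$.

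Finally I would conclude as follows. Since $d_\ell(\beta^*_j) = 0$ for all $\ell$, we get $g_j(\beta^*_j) = \sfE_*[|\epsilon_j|] \le g_j(b^*_j)$ for every $b^*_j$, so $\beta^*_j$ minimizes $g_j$. Conversely, if $b^*_j$ is any minimizer of $g_j$, then $\sfE_*[|\epsilon_j + \sum_\ell d_\ell(b^*_j)\epsilon_\ell|] = \sfE_*[|\epsilon_j|]$, and the uniqueness clause of Lemma \ref{lem:min_abs_exp} forces $d_\ell(b^*_j) = 0$ for all $\ell \in \an^*(j)$; hence $\sum_{k \in \pa^*(j)} (\beta^*_{jk} - b^*_{jk}) X_k = 0$ almost surely, and Lemma \ref{lem:uniq_rep} (applied with the empty set on one side) then forces $b^*_{jk} = \beta^*_{jk}$ for all $k \in \pa^*(j)$. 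This gives $\tilde b^*_{jk} = \beta^*_{jk}$ for all $j$ and $k \in \pa^*(j)$, and, combined with the per-node reduction, also yields the stated formula for $\tilde b^*_j$. The only points needing care are verifying the hypotheses of Lemma \ref{lem:min_abs_exp} — in particular the independence of $\epsilon_j$ from $\{\epsilon_\ell\}_{\ell \in \an^*(j)}$ — and noting that the affine reparametrization $b^*_j \mapsto (d_\ell(b^*_j))_\ell$ sends $\beta^*_j$ to the origin; no real difficulty is expected beyond this bookkeeping.
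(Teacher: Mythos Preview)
Your proof is correct and follows essentially the same route as the paper: both reduce to per-node minimization, rewrite the $j$-th factor via the structural equation and Lemma~\ref{lem:apply_rep_anc} as $\sfE_*[|\epsilon_j + \sum_{\ell \in \an^*(j)} d_\ell \epsilon_\ell|]$, and then invoke Lemma~\ref{lem:min_abs_exp} to force all $d_\ell = 0$. The only cosmetic difference is in the final bookkeeping step: the paper solves the linear system $d_\ell(\tilde b^*_j)=0$, $\ell \in \pa^*(j)$, by an explicit downward induction on causal order (using $\beta^*_{\ell\gets\ell}=1$), whereas you pass back to the almost-sure identity $\sum_{k\in\pa^*(j)}(\beta^*_{jk}-b^*_{jk})X_k=0$ and invoke Lemma~\ref{lem:uniq_rep} with one side empty --- a clean shortcut that packages the same induction.
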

\begin{proof}
Fix $j \in [p]$. The first part follows from the proof of Lemma \ref{lem:minH}. Now, consider the objective function %in \eqref{eq:b*} is
\begin{align} \notag
&\sfE_*\bigg[\bigg|X_j - \sum_{k \in {\rm pa}^*(j)} b^*_{jk} X_{k}\bigg|\bigg]\\ \notag
&= \sfE_*\bigg[\bigg|\sum_{k \in {\rm pa}^*(j)} \beta^*_{jk} X_{k} + \epsilon_j - \sum_{k \in {\rm pa}^*(j)} b^*_{jk} X_{k}\bigg|\bigg]\\ \notag
&= \sfE_*\bigg[\bigg| \sum_{\ell \in \an^*(j)} \sum_{k \in \pa^*(j) \cap \bar{\de^*}(\ell)} \beta^*_{jk}\beta^*_{k \gets \ell} \epsilon_\ell + \epsilon_j - \sum_{\ell \in \an^*(j)} \sum_{k \in \pa^*(j) \cap \bar{\de^*}(\ell)} b^*_{jk}\beta^*_{k \gets \ell} \epsilon_\ell\bigg|\bigg]\\ \label{eq:lem:minH*1}
&= \sfE_*\bigg[\bigg| \sum_{\ell \in \an^*(j)} \sum_{k \in \pa^*(j) \cap \bar{\de^*}(\ell)} (\beta^*_{jk} - b^*_{jk})\beta^*_{k \gets \ell} \epsilon_\ell + \epsilon_j \bigg|\bigg],
\end{align}
where the first equality is due to \eqref{eq:model}, the second one follows by an application of Lemma \ref{lem:apply_rep_anc}. Furthermore, in the representation in \eqref{eq:lem:minH*1}, the variables $\{\epsilon_\ell : \ell \in \an^*(j)\}$ and $\epsilon_j$ are independent with distributions symmetric with respect to $0$ and finite first moment. Thus, by applying Lemma \ref{lem:min_abs_exp} and following the first par, we have
\begin{align}\label{eq:minH*1}
\sum_{k \in \pa^*(j) \cap \bar{\de^*}(\ell)} (\beta^*_{jk} -  \tilde{b}^*_{jk})\beta^*_{k \gets \ell} = 0 \qquad \text{for every} \;\; \ell \in \an^*(j).
\end{align}
This implies that \eqref{eq:minH*1} is true for every $\ell \in \pa^*(j) \subseteq \an^*(j)$. Fix any arbitrary $\ell \in \pa^*(j)$ and since by definitions $\ell \in \bar{\de^*}(\ell)$ and $\beta^*_{\ell \gets \ell} = 1$, we can rewrite \eqref{eq:minH*1} as %we have 
\begin{align}\label{eq:minH*2}
(\beta^*_{j\ell} -  \tilde{b}^*_{j\ell}) + \sum_{k \in \pa^*(j) \cap {\de^*}(\ell)} (\beta^*_{jk} -  \tilde{b}^*_{jk})\beta^*_{k \gets \ell} = 0.
\end{align}

Next, we prove that $\tilde{b}^*_{j\ell} = \beta_{j\ell}^*$ for every $\ell \in \pa^*(j)$ by induction over their causal orders, from the highest to the lowest. Note that, the hypotheses is true for $\ell = \argmax_{k \in \pa^*(j)} \sigma^*(k)$ since $\pa^*(j) \cap {\de^*}(\ell) = \emptyset$. Now, fix $\ell \in \pa^*(j)$ for which $\sigma^*(\ell) = m$, where clearly $m  < \sigma^*(j) \leq p$, and suppose that the hypotheses is true for every $\ell \in \pa^*(j)$ such that $\sigma^*(\ell) \geq m+1$. Again, for every $k \in \de^*(\ell)$, $\sigma^*(k) \geq \sigma^*(\ell)+1 = m+1$, therefore, for every $k \in \pa^*(j) \cap {\de^*}(\ell)$, we have $\tilde{b}^*_{jk} = \beta^*_{jk}$. This readily implies from \eqref{eq:minH*2} that $\tilde{b}^*_{j\ell} = \beta_{j\ell}^*$.
The proof is complete.
\end{proof}

\begin{lemma}\label{lem:supset_h} 
Suppose there exists \ $v^* > 0$ \ such that for every $\gamma \in \Gamma^p$,
\begin{align*}
    \min_{b^\gamma}\; h^\gamma(b^\gamma) \, \geq \, v^*, \qquad \text{where} \;\; 
    h^\gamma(b^\gamma) \, :=
    \; \prod_{j \in [p]} \sfE_*\bigg[\bigg|X_j - \sum_{k \in {\rm pa}^\gamma(j)} b_{jk}^\gamma X_{k}\bigg|\bigg]. %\; \geq \; v^*.
\end{align*}
Furtheremore, if for some $\gamma \in \Gamma^p$, there exists $\tilde{b}^\gamma$ such that $h^\gamma(\tilde{b}^\gamma) = v^*$, then equality holds in the above for every $\gamma' \supseteq \gamma$.
\end{lemma}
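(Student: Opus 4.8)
The plan is to use the single structural fact that a supergraph $\gamma'\supseteq\gamma$ has pointwise larger parent sets, $\pa^{\gamma'}(j)\supseteq\pa^\gamma(j)$ for every $j\in[p]$ (immediate from $E'\supseteq E$). Consequently every mean-function realizable under $\gamma$ is also realizable under $\gamma'$ by padding with zero coefficients, so the infimum of the product $h^{\gamma'}(\cdot)$ cannot exceed that of $h^\gamma(\cdot)$; matching this against the uniform lower bound $v^*$ forces equality.

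Concretely, I would fix $\gamma'\supseteq\gamma$ and take the minimizer $\tilde b^\gamma$ supplied by the hypothesis, which satisfies $h^\gamma(\tilde b^\gamma)=v^*$. Define $b^{\gamma'}$ by $b^{\gamma'}_{jk}:=\tilde b^\gamma_{jk}$ for $k\in\pa^\gamma(j)$ and $b^{\gamma'}_{jk}:=0$ for $k\in\pa^{\gamma'}(j)\setminus\pa^\gamma(j)$. Then, for each $j\in[p]$,
\[
\sum_{k\in\pa^{\gamma'}(j)} b^{\gamma'}_{jk}\,X_k \;=\; \sum_{k\in\pa^\gamma(j)} \tilde b^\gamma_{jk}\,X_k\qquad \text{$\sfP^*$-almost surely},
\]
so that $h^{\gamma'}(b^{\gamma'}) = h^\gamma(\tilde b^\gamma) = v^*$. (All the $\ell_1$-moments entering $h^\gamma$ and $h^{\gamma'}$ are finite under the standing assumption $\sfE_*[|\lambda_j|]<\infty$, since this makes $\sfE_*[|X_j|]<\infty$ and hence $\sfE_*|X_j-\sum_k c_{jk}X_k|<\infty$ for any coefficients; cf.\ Lemma \ref{lem:H_gamma_basic}.)

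Finally I would combine the two bounds. Since $b^{\gamma'}$ is an admissible coefficient vector for $\gamma'$, we get $\min_{b^{\gamma'}} h^{\gamma'}(b^{\gamma'}) \le h^{\gamma'}(b^{\gamma'}) = v^*$; and the standing hypothesis of the lemma, applied to the DAG $\gamma'$ (which is a genuine DAG by definition of supergraph), gives $\min_{b^{\gamma'}} h^{\gamma'}(b^{\gamma'}) \ge v^*$. Hence $\min_{b^{\gamma'}} h^{\gamma'}(b^{\gamma'}) = v^*$, attained at the zero-padded vector $b^{\gamma'}$, which is exactly the claimed equality for every $\gamma'\supseteq\gamma$. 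I do not anticipate any real obstacle here: the only points worth spelling out are the almost-sure identity of the two mean-functions above, the finiteness of the expectations, and the remark that the argument simultaneously exhibits an attaining minimizer under $\gamma'$, so "equality holds in the above" can be read as equality of the minima (equivalently of the infima).
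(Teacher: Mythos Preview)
Your argument is correct and follows essentially the same route as the paper: both exploit the nesting $\pa^{\gamma}(j)\subseteq\pa^{\gamma'}(j)$ to show that the value $v^*$ is achievable under $\gamma'$, then combine with the uniform lower bound. The only cosmetic difference is that the paper phrases the upper bound via a limit $\tilde b^{\gamma'}_{jk}\to 0$ for $k\in\pa^{\gamma'}(j)\setminus\pa^\gamma(j)$, whereas you evaluate directly at the zero-padded vector; since $h^{\gamma'}$ is well-defined and continuous at that point (and zeros are admissible in the parameter space $\mathbb{R}^{|\gamma'|}$), your direct evaluation is equivalent and in fact slightly cleaner, as it also exhibits an explicit minimizer.
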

\begin{proof} 
Fix any $\gamma' \supseteq \gamma$. Then for every $j \in [p]$, $\pa^\gamma(j) \subseteq \pa^{\gamma'}(j)$. Now, for every $b^{\gamma'}$, we have $h^{\gamma'}(b^{\gamma'}) \geq v^*$. Furthermore, let $\tilde{b}^{\gamma'}$ be such that $\tilde{b}^{\gamma'}_{jk} = \tilde{b}^\gamma_{jk}$ for every $j \in [p], k \in \pa^{\gamma}(j)$. Then considering the limit as $\tilde{b}^{\gamma'}_{jk} \to 0$ for every $k \in \pa^{\gamma'}(j) \setminus \pa^\gamma(j), j \in [p]$, we have
\begin{align*}
    \lim \; h^{\gamma'}(\tilde{b}^{\gamma'})
    = \lim \,
    \prod_{j \in [p]} \sfE_*\bigg[\bigg|X_j - \sum_{k \in {\rm pa}^\gamma(j)} \tilde{b}_{jk}^\gamma X_{k} - \sum_{k \in {\rm pa}^{\gamma'}(j) \setminus \pa^\gamma(j)} \tilde{b}_{jk}^{\gamma'} X_{k}\bigg|\bigg] = h^\gamma(\tilde{b}^\gamma) = v^*.
\end{align*}
This implies that $\min_{b^{\gamma'}} h^{\gamma'}(b^{\gamma'}) = v^*$, and the proof is complete.
\end{proof}

\begin{comment}
\begin{lemma}\label{lem:H*<H}
Suppose that either of the following cases holds, and Markov assumption [Assumption ... ]
 \begin{enumerate}
 \item[(a)] $\bar{\cE}^* = \{\gamma^*\}$, which happens if there is at most one Gaussian error, i.e., $|n\cG^*| \geq (p-1)$,
 \item[(b)] $\bar{\cE}^* \supset \{\gamma^*\}$, and faithfulness assumption [Assumption..]. 
 \end{enumerate}
Then we have 
\begin{align*}
\min_{b^*, \theta^*} \; H^*(b^*, \theta^*) \; \leq \; \min_{b, \theta} \; H(b, \theta),
\end{align*} 
where the equality holds if and only if $\gamma \in \bar{\cE}^*$.
\end{lemma}
\begin{proof}
\end{proof}
\end{comment}

\subsection{Proof of Theorem \ref{thm:H<H}}\label{pf:thm:H<H}

\begin{proof}

We have
\begin{align*}
h_* &= H^*(\tilde{b}^*, \tilde{\theta}^*) = \min_{(b^*, \theta^*)} \; H^*(b^*, \theta^*)\\ 
&= \; p (1 + \log 2) \; + \;  \log  \Bigg(\min_{b^*} \; \prod_{j \in [p]} \sfE_*\bigg[\bigg|X_j - \sum_{k \in {\rm pa}^*(j)} b^*_{jk} X_{k}\bigg|\bigg]\Bigg)\\
&= \; p (1 + \log 2) \; + \;  \log  \Bigg(\prod_{j \in [p]} \sfE_*\bigg[\bigg|X_j - \sum_{k \in {\rm pa}^*(j)} \beta^*_{jk} X_{k}\bigg|\bigg]\Bigg)\\
&= \; p (1 + \log 2) \; + \;  \log  \Bigg(\prod_{j \in [p]} \sfE_*[|\epsilon_j|] \Bigg),
\end{align*}
where the third equality follows from Lemma \ref{lem:minH}, the fourth one follows from Lemma \ref{lem:minH*} and the last one is due to \eqref{eq:model}.
Therefore, in view of Lemma \ref{lem:minH}, it suffices to show that
\begin{align}\label{eq:H*<H}
\prod_{j \in [p]} \sfE_*[|\epsilon_j|] \; \leq \; \min_b \; \prod_{j \in [p]} \sfE_*\bigg[\bigg|X_j - \sum_{k \in {\rm pa}(j)} b_{jk} X_{k}\bigg|\bigg] \; = \; \min_b \; \prod_{j \in [p]}\sfE_*[|e_j|].
\end{align}
Indeed, the above holds since by Lemma \ref{lem:ineq_eps_e}, for every $b$,
\begin{align*}
\prod_{j \in [p]} \sfE_*[|\epsilon_j|] \; \leq \; \prod_{j \in [p]}\sfE_*[|e_j|].
\end{align*}
The equality in the above holds if and only if the conditions in Lemma \ref{lem:ineq_eps_e} hold, and in that event, we further consider two cases: either $\cR_A = \cC_A = \emptyset$ or $\cR_A, \cC_A \neq \emptyset$.  Following Lemma \ref{lem:R=C=emp}, the first case holds if and only if $\gamma = \gamma^*$, trivially satisfying conditions (1) and (2). Following Lemma \ref{lem:R,C=non_emp}, the latter case holds if and only if the conditions in Lemma \ref{lem:R,C=non_emp} holds. Now, following condition (b)(ii) in Lemma \ref{lem:R,C=non_emp}, for every $k \in \cC_A$, $\epsilon_k$ is Gaussian, which implies that $n\cG^* \subseteq \cC_A^c$. If $j \in n\cG^*$, then there exists $i \notin \cR_A$ such that $j = \sigma^{-1}(i) \notin \cC_A$, and thus, following condition (a) in Lemma \ref{lem:R,C=non_emp}, $\pa(j) = \pa^*(j)$, satisfying condition (1). However, if $j \notin n\cG^*$, then in case $j \in \cC_A$, condition (b) in Lemma \ref{lem:R,C=non_emp} immediately implies the existence of $\beta_{jk}^\gamma, k \in \pa(j)$ such that $\eta_j^\gamma$ is some linear combination of the Gaussian errors, in particular, $\epsilon_k, k \in \cC_A$, and also, $\eta_j, j \in \cC_A$ are pairwise independent. Furthermore, in case $j \notin \cC_A$, by letting $\pa(j) = \pa^*(j)$ and $\beta_{jk}^\gamma = \beta^*_{jk}$ according to condition (a) in Lemma \ref{lem:R,C=non_emp}, we have $\eta_j^\gamma = \epsilon_j$. This implies that $\eta_j^\gamma, j \notin n\cG^*$ are pairwise independent, and thus, condition (2) is satisfied. The sufficiency part follows similarly. Finally, by using Lemma \ref{lem:supset_h}, the equality in \eqref{eq:H*<H} is extended to any superset of $\gamma$, and this completes the proof. 
\end{proof}

\subsection{Proof of Corollary \ref{cor:E=DE=gam}}\label{pf:cor:E=DE=gam}

\begin{proof}
    If $\bar{\cE}^* = \cS^*$, then following \ref{thm:H<H} there exists no $\gamma \neq \gamma^*$ such that $\sfP_X^* \in \cP(\gamma, n\cG)$ for some $n\cG \subseteq [p]$. This immediately implies from the definitions \eqref{eq:dist_equiv} and \eqref{def:epsR} that $\bar{\cE}^*_R = \cE(\gamma^*, n\cG^*) = \{\gamma^*\}$. Also, since for every $\gamma \in \cS^*, \gamma \neq \gamma^*$, clearly $\gamma \supset \gamma^*$ i.e., $|\gamma| > |\gamma^*|$, we have, following the definition \eqref{def:E*}, $\cE^* = \{\gamma^*\}$. 
\end{proof}

\begin{lemma}\label{lem:nG=nG*}
    If  $\cP(\gamma, n\cG) = \cP(\gamma^*, n\cG^*)$ for some $n\cG \subseteq[p]$, then $n\cG = n\cG^*$. Furthermore,
    \begin{align*}
    \cE(\gamma^*, n\cG^*) = \{\gamma \in \Gamma^p : \cP(\gamma, n\cG^*) = \cP(\gamma^*, n\cG^*)\}.
\end{align*}
\end{lemma}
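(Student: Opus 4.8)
# Proof proposal for Lemma \ref{lem:nG=nG*}

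The plan is to prove the two assertions in sequence, with the first (the equality $n\cG=n\cG^*$) serving as the key structural input for the second. For the first assertion, suppose $\cP(\gamma,n\cG)=\cP(\gamma^*,n\cG^*)$ for some $n\cG\subseteq[p]$. Since $\sfP_X^*\in\cP(\gamma^*,n\cG^*)$, we have $\sfP_X^*\in\cP(\gamma,n\cG)$, so $\gamma\in\bar{\cE}^*$ by Theorem \ref{thm:H<H} (indeed $\gamma'=\gamma\supseteq\gamma$ with $\sfP_X^*\in\cP(\gamma,n\cG)$). The main work is to argue that the label set of non-Gaussian nodes is forced: because $\sfP_X^*\in\cP(\gamma,n\cG)$, there is a linear recursive SEM based on $\gamma$ generating $X$ whose error at node $j$ is non-Gaussian precisely when $j\in n\cG$. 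I would first invoke Theorem \ref{thm:H<H} to recover the structure of such an SEM; this is exactly where Lemmas \ref{lem:R=C=emp} and \ref{lem:R,C=non_emp} come into play. Those lemmas show that for every node $j$ in the "non-Gaussian part," the error $e_j^\gamma$ must equal the original $\epsilon_j$ (up to the reindexing by $\sigma$), while on the "Gaussian part" the errors $\eta_j^\gamma$ are linear combinations of the Gaussian $\epsilon_k$'s. Consequently the set of nodes whose error in \emph{this particular} SEM is non-Gaussian is exactly $n\cG^*$: a node in $\cR_A^c$ with $\sigma^{-1}(i)\in n\cG^*$ keeps its non-Gaussian error $\epsilon_{\sigma^{-1}(i)}$, and a node in $\cR_A$ (or in $\cC_A$) has a Gaussian error. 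Hence $n\cG=n\cG^*$.

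An alternative, possibly cleaner, route to the first assertion avoids naming $A$: I would argue symmetrically. From $\cP(\gamma,n\cG)=\cP(\gamma^*,n\cG^*)$ we get, by swapping roles, that every $\sfP_X\in\cP(\gamma,n\cG)$ lies in $\cP(\gamma^*,n\cG^*)$, so in particular $\gamma^*$ "represents" any SEM based on $\gamma$. Pick a representative SEM on $\gamma$ with \emph{generic} coefficients and with errors whose non-Gaussian index set is exactly $n\cG$ (this is possible by definition of $\cP(\gamma,n\cG)$). Applying Theorem \ref{thm:H<H} with the roles of $\gamma$ and $\gamma^*$ interchanged (which is legitimate since the minimized-risk characterization is symmetric in the sense that $\gamma^*$ minimizes risk relative to any data-generating truth in its distribution equivalence class — here formalized through $h_\gamma = h_*$ on $\bar{\cE}^*$), the induced SEM on $\gamma^*$ must have non-Gaussian nodes exactly $n\cG$; but the canonical SEM on $\gamma^*$ has non-Gaussian nodes exactly $n\cG^*$, and by Lemma \ref{lem:uniq_rep} (uniqueness of the SEM representation along a fixed DAG) these coincide, giving $n\cG=n\cG^*$. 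I would use whichever of these two arguments produces the least friction; I lean toward the first since Lemmas \ref{lem:R=C=emp} and \ref{lem:R,C=non_emp} already do the heavy lifting.

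For the second assertion, the inclusion "$\supseteq$" is immediate from the definition \eqref{eq:dist_equiv} of $\cE(\gamma^*,n\cG^*)$: any $\gamma$ with $\cP(\gamma,n\cG^*)=\cP(\gamma^*,n\cG^*)$ trivially satisfies the defining condition with the witness $n\cG=n\cG^*$. The inclusion "$\subseteq$" is where the first assertion is applied: if $\gamma\in\cE(\gamma^*,n\cG^*)$, then by \eqref{eq:dist_equiv} there is some $n\cG\subseteq[p]$ with $\cP(\gamma,n\cG)=\cP(\gamma^*,n\cG^*)$; by the first part, $n\cG=n\cG^*$, hence $\cP(\gamma,n\cG^*)=\cP(\gamma^*,n\cG^*)$, which is exactly membership in the set on the right-hand side. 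This finishes the proof.

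The main obstacle I anticipate is the first assertion, specifically making rigorous the claim that \emph{every} SEM representation of $\sfP_X^*$ along $\gamma$ (not just one) has the same non-Gaussian index set. The subtlety is that $\cP(\gamma,n\cG)=\cP(\gamma^*,n\cG^*)$ is an equality of \emph{families} of distributions, so I must be careful to test it on the right elements — ideally on $\sfP_X^*$ itself together with a generic perturbation — and then transfer the structural conclusions of Theorem \ref{thm:H<H} and Lemmas \ref{lem:R=C=emp}, \ref{lem:R,C=non_emp} to pin down $n\cG$. Handling the degenerate boundary cases ($n\cG^*=\emptyset$, i.e. the multivariate Gaussian case, and $|n\cG^*|\geq p-1$) may require a separate short remark, but in the Gaussian case $\cP(\gamma,n\cG)=\cP(\gamma^*,\emptyset)$ forces $n\cG=\emptyset$ since any non-degenerate mixing variable would produce a non-Gaussian marginal, violating joint Gaussianity; and when $|n\cG^*|\geq p-1$ Theorem \ref{thm:H<H} already yields $\bar{\cE}^*=\cS^*$ (Proposition \ref{prop:spl_cases}(a)), collapsing everything to $\{\gamma^*\}$ by Corollary \ref{cor:E=DE=gam}.
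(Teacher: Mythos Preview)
Your first approach is essentially correct and aligns with the paper's: both feed the assumed $\gamma$-SEM (the one with non-Gaussian index set $n\cG$) through the structural Lemmas~\ref{lem:ineq_eps_e}, \ref{lem:R=C=emp}, \ref{lem:R,C=non_emp} to conclude that the error at each node $j\in n\cG^*$ must equal $\epsilon_j$ and hence be non-Gaussian. The paper then stops at $n\cG^*\subseteq n\cG$ and obtains the reverse inclusion by \emph{symmetry}, swapping the roles of $(\gamma^*,n\cG^*)$ and $(\gamma,n\cG)$; to make the swap legitimate for an arbitrary $\sfP_X\in\cP(\gamma^*,n\cG^*)$ (not just $\sfP_X^*$), the paper singles out the key observation that the fact $n\cG^*\subseteq\cC_A^c$ depends only on non-Gaussianity of the relevant $\epsilon_j$'s via Darmois--Skitovic, not on the scale-mixture form. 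Your route of characterizing both the Gaussian and non-Gaussian parts in one pass using $\sfP_X^*$ avoids needing that observation, which is a small simplification. Two minor corrections: the detour through Theorem~\ref{thm:H<H} and $\bar\cE^*$ is unnecessary, since the structural conclusions come from pairwise independence of the $\gamma$-errors (last paragraph of the proof of Lemma~\ref{lem:ineq_eps_e}), not from the risk inequality; and your second approach's appeal to ``minimized-risk symmetry'' is not the right justification, as the risk function is tied to the Laplace working model and the scale-mixture truth---the paper's symmetry is purely at the level of SEM representations.
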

\begin{proof}
    Consider $\sfP_X \in \cP(\gamma^*, n\cG^*)$, i.e., there exists independent random variables $\eta_j^*, j \in [p]$ such that under $\sfP_X$, $X_j, j \in [p]$ are generated by some linear acyclic SEM represented by $\gamma$ with the error corresponding to node $j$ being $\eta_j^*$, and furthermore, for every $j \in n\cG$, $\eta_j^*$ is non-Gaussian. Since $\sfP_X \in \cP(\gamma, n\cG)$, i.e., there exist independent random variables $\eta_j^\gamma, j \in [p]$ such that we have an equivalent SEM representation according to $\gamma$ with the errors $\eta_j^\gamma$, and furthermore, for every $j \in n\cG$, $\eta_j^\gamma$ is non-Gaussian. Now, it is important to emphasize here that, in Lemma \ref{lem:ineq_eps_e}, Lemma \ref{lem:R=C=emp} and Lemma \ref{lem:R,C=non_emp}, the fact that $n\cG^* \subseteq \cC_A^c$ only depends on $\epsilon_j, j \in n\cG^*$ being non-Gaussian and it does not depend on any other distributional assumptions. Therefore, the same result holds for any $\sfP_X \in \cP(\gamma^*, n\cG^*)$, and subsequently following these lemmas, we have for every $j \in n\cG^*$, $\eta_j^\gamma = \eta_j^*$, i.e., $\eta_j^\gamma$ is non-Gaussian. This implies that $n\cG^* \subseteq n\cG$, and again by the same steps above we can show that $n\cG \subseteq n\cG^*$, which establishes that $n\cG = n\cG^*$. Thus, following the definition in \eqref{eq:dist_equiv}, the second result holds immediately, and the proof is complete.
\end{proof}

\subsection{Proof of Theorem \ref{thm:char_E}}\label{pf:thm:char_E}

\begin{proof}
Fix $\gamma \in \cE(\gamma^*, n\cG^*)$. Then due to Lemma \ref{lem:nG=nG*}, $\sfP_X^* \in \cP(\gamma, n\cG^*)$, which in turn by Theorem \ref{thm:H<H} implies that $\pa^\gamma(j) = \pa^*(j)$ for every $j \in n\cG^*$. Furthermore, since $\sfP_X^*$ is faithful to $\gamma^*$, we have $\bI(\gamma) \subseteq \bI(\sfP_X^*) = \bI(\gamma^*)$. Now, consider a probability distribution $\sfP_X \in \cP(\gamma, n\cG^*)$ which is faithful to $\gamma$. Then, as $\sfP_X \in \cP(\gamma^*, n\cG^*)$, we must have $\bI(\gamma^*) \subseteq \bI(\sfP_X) = \bI(\gamma)$, and therefore, $\bI(\gamma) = \bI(\gamma^*)$. This shows that
\begin{align*}
    \cE(\gamma^*, n\cG^*) \subseteq \{\gamma \in \Gamma^p :  \pa^\gamma(j) = \pa^*(j) \quad \text{for every} \;\; j \in n\cG^* \quad \text{and} \quad \bI(\gamma) = \bI(\gamma^*)\}.
\end{align*}
Next, fix $\gamma \in \Gamma^p$ such that $\pa^\gamma(j) = \pa^*(j)$ for every $j \in n\cG^*$ and $\bI(\gamma) = \bI(\gamma^*)$. Consider $\sfP_X \in \cP(\gamma^*, n\cG^*)$, i.e., there exists independent random variables $\eta_j^*, j \in [p]$ such that under $\sfP_X$, $X_j, j \in [p]$ are generated by some linear acyclic SEM represented by $\gamma^*$ with the error corresponding to node $j$ being $\eta_j^*$, and furthermore, for every $j \in n\cG^*$, $\eta_j^*$ is non-Gaussian. If for every $j \in n\cG^*$, $\eta_j^*$ were Gaussian, then $\sfP_X$ would be some multivariate Gaussian distribution. Now, it is well known \cite{geiger2002parameter} that for Gaussian DAG models, Markov equivalence is equivalent to distribution equivalence, and thus, as $\bI(\gamma) = \bI(\gamma^*)$, i.e., $\gamma$ is Markov equivalent to $\gamma^*$, $\sfP_X$ can be represented by some linear acyclic SEM according to $\gamma$ with the errors denoted by the independent random variables $\eta_j^\gamma, j \in [p]$. Furthermore, since $\pa^\gamma(j) = \pa^*(j)$ for every $j \in n\cG^*$, we must also have $\eta_j^\gamma = \eta_j^*$ for every $j \in n\cG^*$. Otherwise, this leads to
\begin{align*}
    \eta_j^\gamma - \eta_j^* = \sum_{k \in \pa^*(j)}c_{jk}X_k,
\end{align*}
where $c_{jk}$ must be non-zero for at least one $k \in \pa^*(j)$, in turn contradicting the fact that  $X_k, k \in \pa^*(j)$ are independent of both $\eta_j^\gamma$ and $\eta_j^*$. Moreover, in order for $\eta_j^\gamma, j \in [p]$ being pairwise independent, for every $j \notin n\cG^*$, 
%$\pa^\gamma(j)$ is such that there exist $\beta_{jk}^\gamma, k \in \pa^\gamma(j)$ for which
$\eta_j^\gamma$ must be some linear combination of the errors $\eta^*_j, j \notin n\cG^*$. Therefore, the expressions of $\eta_j^\gamma, j \in [p]$ imply that the SEM representation still holds even when $\eta_j^*, j \in n\cG^*$ were non-Gaussian. Thus, $\sfP_X \in \cP(\gamma, n\cG^*)$, implying that $\cP(\gamma^*, n\cG^*) \subseteq \cP(\gamma, n\cG^*)$. We can similarly show that $\cP(\gamma, n\cG^*) \subseteq \cP(\gamma^*, n\cG^*)$, leading us to $\cP(\gamma^*, n\cG^*) = \cP(\gamma, n\cG^*)$. As a result, we have
\begin{align*}
      \{\gamma \in \Gamma^p :  \pa^\gamma(j) = \pa^*(j) \quad \text{for every} \;\; j \in n\cG^* \quad \text{and} \quad \bI(\gamma) = \bI(\gamma^*)\} \subseteq \cE(\gamma^*, n\cG^*),
\end{align*}
which completes the proof.
\end{proof}

\begin{lemma}\label{lem:IgamsubIgam*}
If $\sfP_X^*$ is faithful to $\gamma^*$, and $H^*(\tilde{b}^*, \tilde{\theta}^*) = H(\tilde{b}, \tilde{\theta})$, then $\bI(\gamma) \subseteq \bI(\gamma^*)$.
\end{lemma}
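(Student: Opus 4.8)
The plan is to show that the equality $H^*(\tilde b^*,\tilde\theta^*)=H(\tilde b,\tilde\theta)$ forces $\sfP_X^*$ to be generated by a recursive linear SEM along $\gamma$ whose error terms are \emph{jointly} independent; then $\sfP_X^*$ obeys the recursive (Bayesian network) factorization, hence the directed global Markov property, with respect to $\gamma$, and faithfulness finishes the argument.

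First I would reduce the hypothesis to the equality case of Lemma \ref{lem:ineq_eps_e}. Writing $\tilde e_j:=X_j-\sum_{k\in\pa^\gamma(j)}\tilde b_{jk}X_k$ for the pseudo-true error of node $j$ in the $\gamma$-SEM, Lemma \ref{lem:minH} gives $H(\tilde b,\tilde\theta)=p(1+\log2)+\log\prod_{j\in[p]}\sfE_*[|\tilde e_j|]$, while Lemma \ref{lem:minH*} gives $H^*(\tilde b^*,\tilde\theta^*)=p(1+\log2)+\log\prod_{j\in[p]}\sfE_*[|\epsilon_j|]$. Hence the hypothesis is exactly $\prod_{j}\sfE_*[|\epsilon_j|]=\prod_{j}\sfE_*[|\tilde e_j|]$, i.e.\ equality holds in Lemma \ref{lem:ineq_eps_e} for the choice $b=\tilde b$ with associated matrix $A$ from \eqref{eq:defA}. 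Consequently $\tilde e_1,\dots,\tilde e_p$ are pairwise independent, and the structural descriptions of Lemma \ref{lem:R=C=emp} (when $\cR_A=\cC_A=\emptyset$) and Lemma \ref{lem:R,C=non_emp} (when $\cR_A,\cC_A\ne\emptyset$, where faithfulness of $\sfP_X^*$ is invoked) apply. In the first case $\gamma=\gamma^*$ and the claim is trivial; in the second case, for every $i\notin\cR_A$ one has $\pa^\gamma(\sigma^{-1}(i))=\pa^*(\sigma^{-1}(i))$ and $\tilde e_{\sigma^{-1}(i)}=\epsilon_{\sigma^{-1}(i)}$ with $\sigma^{-1}(i)\notin\cC_A$, whereas for $i\in\cR_A$ the term $\tilde e_{\sigma^{-1}(i)}$ is a linear image (invertible, by the block form of $A$ in Lemma \ref{lem:Aprop}) of the Gaussian errors $(\epsilon_k)_{k\in\cC_A}$.

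Second I would upgrade pairwise independence to joint independence. The variables $\{\tilde e_{\sigma^{-1}(i)}:i\notin\cR_A\}$ are distinct true errors $\epsilon_j$, hence mutually independent and jointly independent of any function of $(\epsilon_k)_{k\in\cC_A}$ since the corresponding index sets are disjoint. The variables $\{\tilde e_{\sigma^{-1}(i)}:i\in\cR_A\}$ are jointly Gaussian, being linear in $(\epsilon_k)_{k\in\cC_A}$, and are pairwise independent by the previous step; for jointly Gaussian random variables pairwise independence implies mutual independence. Combining the two blocks, $\tilde e_1,\dots,\tilde e_p$ are mutually independent.

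Finally, from $X_j=\sum_{k\in\pa^\gamma(j)}\tilde b_{jk}X_k+\tilde e_j$ with mutually independent $\tilde e_j$ and $\gamma$ acyclic, a standard induction along a topological order of $\gamma$ shows $\sfp_X^*(x)=\prod_{j\in[p]}p_j(x_j\mid x_{\pa^\gamma(j)})$, i.e.\ $\sfP_X^*$ satisfies the recursive factorization with respect to $\gamma$, hence the directed global Markov property, so $\bI(\gamma)\subseteq\bI(\sfP_X^*)$. Since $\sfP_X^*$ is faithful to $\gamma^*$ (Definition \ref{def:faith}), $\bI(\sfP_X^*)\subseteq\bI(\gamma^*)$, and therefore $\bI(\gamma)\subseteq\bI(\gamma^*)$, as claimed. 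The main obstacle is the passage from pairwise to joint independence of the $\tilde e_j$: pairwise independence of the components of a determinant-one linear mixture of independent variables is not enough on its own, and the argument relies essentially on the explicit block structure of $A$ extracted in Lemmas \ref{lem:Aprop}, \ref{lem:R=C=emp} and \ref{lem:R,C=non_emp}, together with the elementary Gaussian fact.
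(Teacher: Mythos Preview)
Your proposal is correct and follows essentially the same route as the paper: show that $\sfP_X^*$ obeys the recursive factorization with respect to $\gamma$ (hence is Markov with respect to $\gamma$), and then apply faithfulness. The paper compresses the first step into a single citation of Theorem~\ref{thm:H<H}, which already packages the conclusion that equality $h_\gamma=h_*$ forces $\sfP_X^*\in\cP(\gamma',n\cG^*)$ for some $\gamma'\subseteq\gamma$, whence $\sfP_X^*$ is Markov with respect to $\gamma$; you instead unpack the relevant machinery (Lemmas~\ref{lem:ineq_eps_e}, \ref{lem:Aprop}, \ref{lem:R=C=emp}, \ref{lem:R,C=non_emp}) directly and add the explicit upgrade from pairwise to joint independence via the block structure of $A$ and the Gaussianity of the $\cC_A$-block. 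That extra step is a nice clarification: the paper's short proof implicitly relies on this joint independence being part of what ``$\sfP_X^*\in\cP(\gamma',n\cG^*)$'' means, whereas Lemma~\ref{lem:ineq_eps_e} as stated only delivers pairwise independence, so your argument makes transparent why the factorization actually holds.
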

\begin{proof}
Since $\sfP_X^*$ is Markov with respect to $\gamma^*$, we already have $\bI(\gamma^*) \subseteq \bI(\sfP_X^*)$, and thus, due to faithfulness, we further have $\bI(\gamma^*) = \bI(\sfP^*_X)$. Now, following Theorem \ref{thm:H<H}, we have $\sfP^*_X \in \cP(\gamma, n\cG^*)$, i.e., $\sfP_X^*$ factorizes with respect to $\gamma$, which implies that $\sfP^*_X$ is Markov with respect to $\gamma$. Thus, $\bI(\gamma) \subseteq \bI(\sfP^*_X) = \bI(\gamma^*)$, and the proof is complete.
\end{proof}

\begin{lemma}\label{lem:|gam|and|gam*|}
Suppose that $\bI(\gamma) \subseteq \bI(\gamma^*)$. Then the following hold.
\begin{enumerate}
\item[(i)] $|\gamma^*| \leq |\gamma|$.
\item[(ii)] $|\gamma^*| = |\gamma|$ \;\; if and only if \;\; $\bI(\gamma) = \bI(\gamma^*)$, i.e., $\gamma$ and $\gamma^*$ are Markov equivalent.
\end{enumerate}
\end{lemma}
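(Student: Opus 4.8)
**Proof proposal for Lemma \ref{lem:|gam|and|gam*|}.**

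The plan is to use the standard correspondence between the number of edges in a DAG and the number of conditional independence statements it fails to encode, together with the characterization of Markov equivalence via skeleton and v-structures. First I would recall a basic counting fact: for any DAG $\gamma$ on $p$ nodes with causal order $\sigma$, the edge count satisfies
\begin{align*}
|\gamma| = \binom{p}{2} - \#\{(j,k) : j,k \text{ non-adjacent in } \gamma\},
\end{align*}
and for each non-adjacent pair $(j,k)$ there is a conditional independence relation $X_j \indep X_k \mid S$ in $\bI(\gamma)$ for a suitable separating set $S$ (e.g. the predecessors of the later node in $\sigma$). Conversely, if $j$ and $k$ are adjacent in $\gamma$, then no conditional independence statement separates them in $\bI(\gamma)$. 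Thus $|\gamma|$ is determined by the \emph{skeleton} of $\gamma$, and the skeleton is in turn recoverable from $\bI(\gamma)$: nodes $j,k$ are adjacent in $\gamma$ if and only if there is no $S \subseteq [p] \setminus \{j,k\}$ with $(X_j \indep X_k \mid S) \in \bI(\gamma)$.

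For part (i), since $\bI(\gamma) \subseteq \bI(\gamma^*)$, every conditional independence statement holding in $\gamma$ also holds in $\gamma^*$. Hence whenever $j,k$ are non-adjacent in $\gamma$ — so that some $(X_j \indep X_k \mid S) \in \bI(\gamma)$ — the same statement lies in $\bI(\gamma^*)$, which forces $j,k$ to be non-adjacent in $\gamma^*$ as well. Therefore the set of non-adjacent pairs of $\gamma$ is contained in the set of non-adjacent pairs of $\gamma^*$, i.e. the skeleton of $\gamma^*$ is a subgraph of the skeleton of $\gamma$. Plugging into the counting identity above gives $|\gamma^*| \le |\gamma|$.

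For part (ii), the ``if'' direction is immediate: if $\bI(\gamma) = \bI(\gamma^*)$ then $\gamma$ and $\gamma^*$ have the same skeleton, hence the same edge count. For ``only if'', suppose $|\gamma^*| = |\gamma|$. By the inclusion of skeletons established in part (i) and equality of edge counts, the two skeletons must coincide. It then remains to upgrade ``same skeleton'' plus $\bI(\gamma) \subseteq \bI(\gamma^*)$ to ``same v-structures'', after which the classical characterization of Markov equivalence (Verma--Pearl) yields $\bI(\gamma) = \bI(\gamma^*)$. The argument here is that a v-structure $j \to m \leftarrow k$ in $\gamma$ (with $j,k$ non-adjacent) corresponds precisely to the presence in $\bI(\gamma)$ of a statement $(X_j \indep X_k \mid S)$ for all $S$ not containing $m$, together with the \emph{absence} of such a statement for every $S$ containing $m$; since $\bI(\gamma) \subseteq \bI(\gamma^*)$ and the skeletons agree, one checks the v-structures of $\gamma$ are forced to appear in $\gamma^*$ and, using that both graphs have the identical list of non-adjacent pairs with the same separating behavior, no extra v-structures can appear in $\gamma^*$.

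The main obstacle I anticipate is the v-structure matching step in part (ii): passing from the one-sided inclusion $\bI(\gamma) \subseteq \bI(\gamma^*)$ (rather than equality) to the conclusion that the v-structures coincide requires care, because a priori $\gamma^*$ could have \emph{more} conditional independencies and hence potentially a different collider pattern on the common skeleton. The resolution is to observe that once the skeletons are equal, the number of v-structures is itself determined (together with the skeleton) by how many of the relevant separating statements are present, and the inclusion forces $\gamma$'s colliders into $\gamma^*$ while a parity/counting argument on the separating sets rules out strictly more — at which point one invokes the Verma--Pearl theorem to conclude Markov equivalence. Alternatively, and perhaps more cleanly, I would invoke the known fact that among DAGs with a fixed skeleton, $\bI(\cdot)$ is monotone in a way that makes $\bI(\gamma) \subseteq \bI(\gamma^*)$ with equal skeletons already imply equality, so the proof reduces to the skeleton-counting argument of part (i) plus this structural observation.
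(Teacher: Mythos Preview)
Your approach is essentially the same as the paper's: both reduce (i) to a skeleton inclusion via the adjacency--separation correspondence, and handle (ii) by first forcing equal skeletons and then invoking the Verma--Pearl/Andersson et al.\ characterization. The paper is just as terse on the v-structure step you flag as an obstacle; for the record, no ``parity/counting'' is needed there---a direct d-separation check suffices: if $\gamma$ and $\gamma^*$ share a skeleton but differ on a v-structure $i\to m\leftarrow j$, then taking a separating set $S$ for $i,j$ in $\gamma$ with $m\in S$ (when $m$ is a non-collider in $\gamma$) or $m\notin S$ (when $m$ is a collider in $\gamma$) produces an explicit $(i\indep j\mid S)\in\bI(\gamma)\setminus\bI(\gamma^*)$.
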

\begin{proof}
Suppose that $|\gamma| < |\gamma^*|$. Then there exist $i, j \in [p]$ such that $(i \to j) \in \gamma^*$ but $i$ and $j$ are not adjacent in $\gamma$, i.e., both $(i \to j), (j \to i) \notin \gamma$. This implies for every $\cV \subseteq [p] \setminus \{i, j\}$ we have \; $i \nindep j | \cV$ \; under $\gamma^*$, however, there exists $\cV \subseteq [p] \setminus \{i, j\}$ such that \; $i \indep j | \cV$ \; under $\gamma$. Thus, $\bI(\gamma) \nsubseteq \bI(\gamma^*)$, leading to contradiction. This proves (i).

Furthermore, if $\bI(\gamma) = \bI(\gamma^*)$ then by \cite{andersson1997characterization} they must have the same skeleton, which implies $|\gamma| = |\gamma^*|$. Now, suppose that $|\gamma| = |\gamma^*|$ but $\bI(\gamma) \neq \bI(\gamma^*)$. Then, again by \cite{andersson1997characterization} they either have different skeleton or have different v-structure. In the first case, since $|\gamma| = |\gamma^*|$ there exist $i, j \in [p]$ such that $(i \to j) \in \gamma^*$ but $i$ and $j$ are not adjacent in $\gamma$, i.e., both $(i \to j), (j \to i) \notin \gamma$. By the same argument provided above in the proof of (i), this again implies $\bI(\gamma) \nsubseteq \bI(\gamma^*)$, leading to contradiction. Therefore, they must have the same skeleton but different v-structure, which again immediately implies that $\bI(\gamma) \nsubseteq \bI(\gamma^*)$. Thus, $\bI(\gamma) = \bI(\gamma^*)$, and this proves (ii).
\end{proof}

\subsection{Proof of Theorem \ref{thm:E=DE}}\label{pf:thm:E=DE}

\begin{proof}
We recall from \eqref{def:E*} that
\begin{align*}
\cE^* := \Big\{\gamma \in \Gamma^p : H^\gamma(\tilde{b}^\gamma, \tilde{\theta}^\gamma) = H^*(\tilde{b}^*, \tilde{\theta}^*) \quad \text{and} \quad |\gamma| = |\gamma^*|\Big\}.
\end{align*}
Therefore, the first result follows immediately from Lemma \ref{lem:IgamsubIgam*} and Lemma \ref{lem:|gam|and|gam*|}(ii).

Now, fix $\gamma \in \cE^*$. Then following the first condition for equality in Theorem \ref{thm:H<H}, we must have, for every $j \in n\cG^*$, $\pa^\gamma(j) = \pa^*(j)$. Thus, using the first result, we establish that $\cE^* \subseteq \cE^*(\gamma^*, n\cG^*)$.    

Next, fix $\gamma \in \cE^*(\gamma^*, n\cG^*)$. Then due to Lemma \ref{lem:nG=nG*}, $\sfP_X^* \in \cP(\gamma, n\cG^*)$, which in turn by Theorem \ref{thm:H<H} implies that $H^\gamma(\tilde{b}^\gamma, \tilde{\theta}^\gamma) = H^*(\tilde{b}^*, \tilde{\theta}^*)$. Furthermore, from the first part of the proof in Theorem \ref{thm:char_E}, we have $\bI(\gamma) = \bI(\gamma^*)$. Therefore, using the first result we obtain $\cE^*(\gamma^*, n\cG^*) \subseteq \cE^* $, and this completes the proof.
\end{proof}

\subsection{Proof of Corollary \ref{cor:ineq_risk_and_param}}\label{pf:cor:ineq_risk_and_param}
\begin{proof}
Fix $\gamma \notin \cE^*$. Then by Theorem \ref{thm:E=DE}, we either have $H^\gamma(\tilde{b}^\gamma, \tilde{\theta}^\gamma) \neq H^*(\tilde{b}^*, \tilde{\theta}^*)$, or $\bI(\gamma) \neq \bI(\gamma^*)$. If the former happens, then due to Theorem \ref{thm:H<H}, we must have $H^\gamma(\tilde{b}^\gamma, \tilde{\theta}^\gamma) < H^*(\tilde{b}^*, \tilde{\theta}^*)$. When that is not the case, i.e., $H^\gamma(\tilde{b}^\gamma, \tilde{\theta}^\gamma) = H^*(\tilde{b}^*, \tilde{\theta}^*)$, then following Lemma \ref{lem:IgamsubIgam*} we have $\bI(\gamma) \subseteq \bI(\gamma^*)$. However, in that case, it is necessary that $\bI(\gamma) \neq \bI(\gamma^*)$, which further yields $\bI(\gamma) \subset \bI(\gamma^*)$. Therefore, following Lemma \ref{lem:|gam|and|gam*|} we must have $|\gamma| > |\gamma^*|$, which completes the proof.
\end{proof}

In what follows, we denote by $\Var_*[\cdot]$ the associated variance.

\subsection{Proof of Proposition \ref{prop:spl_cases}}\label{pf:prop:spl_cases}

\begin{proof}
Fix $\gamma \in \bar{\cE}^*$, i.e., $H^\gamma(\tilde{b}^\gamma, \tilde{\theta}^\gamma) = H^*(\tilde{b}^*, \tilde{\theta}^*)$. Then, following the same steps in the proof of Theorem \ref{thm:H<H}, this is equivalent to having
\begin{align*}
\prod_{j \in [p]} \sfE_*[|\epsilon_j|] \; = \; \prod_{j \in [p]}\sfE_*[|e_j|].
\end{align*}
Again, due to Lemma \ref{lem:ineq_eps_e}, the above holds if and only if the conditions in Lemma \ref{lem:ineq_eps_e} hold. 

First, suppose that (a) holds. In that case, if $\cC_A \neq \emptyset$, then due to condition (i) in Lemma \ref{lem:ineq_eps_e}, it is necessary that $|n\cG^*| \leq (p-2)$, which leads us to contradiction. Thus, $\cR_A = \cC_A = \emptyset$, and then by following Lemma \ref{lem:R=C=emp}, we must have $\gamma = \gamma^*$.

Next, suppose that (b) holds, and let $\Var_*[\epsilon_j] = V^*$ for every $j \in [p]$. Then, following \eqref{eq:defA}, for every $i \in [p]$, we have $\Var_*[e_{\sigma^{-1}(i)}] = \Var_*[a_i^T\epsilon] = ||a_i||^2V^*$. Thus, due to the equality of error variances, we must have $||a_i||, i \in [p]$ all equal, say to $a$. Furthermore, since by Lemma \ref{lem:ineq_eps_e}, $e_i, i \in [p]$ are pairwise independent, we have $a_i, a_j$ being orthogonal for every $i, j \in [p], i \neq j$. Thus, applying the equality condition in Hadamard's inequality, see Lemma \ref{lem:Hadamard}, we have
\begin{align*}
    |\det(A^T)| = \prod_{i \in [p]} ||a_i|| = a^p.
\end{align*}
Since by Lemma \ref{lem:detA}, $\det(A) = 1$, we have $a = 1$, i.e., for every $i \in [p]$, $||a_i|| = 1$. Now, suppose that $\cR_A \neq \emptyset$, and let $\ell = \min \cR_A$. Then following the same steps from the proof of Lemma \ref{lem:RCempty_ki}, it is not difficult to show that $a_{j\kappa(j)} = 1$ and $\kappa(j) = \sigma^{-1}(j)$ for every $j < \ell$. Thus, from the representation in \eqref{eq:X_Be} and condition (ii) in Lemma \ref{lem:ineq_eps_e} we have
\begin{align*}
X_{\sigma^{-1}(\ell)} &= \sum_{k \in \bar{\an}(\sigma^{-1}(\ell))}b_{\sigma^{-1}(\ell) \gets k}e_k\\
&= \sum_{\sigma^{-1}(j) \in \an(\sigma^{-1}(\ell))}b_{\sigma^{-1}(\ell) \gets \sigma^{-1}(j)}e_{\sigma^{-1}(j)} + e_{\sigma^{-1}(\ell)}\\
&= \sum_{\sigma^{-1}(j) \in \an(\sigma^{-1}(\ell))}b_{\sigma^{-1}(\ell) \gets \sigma^{-1}(j)}\epsilon_{\sigma^{-1}(j)} + a_\ell^T\epsilon,
\end{align*} where $|\supp(a_\ell)| \geq 2$. 
 Comparing the above with the representation obtained from \eqref{eq:X_Beps}, which is
\begin{align}
X_{\sigma^{-1}(\ell)} = \sum_{k \in \an^*(\sigma^{-1}(\ell))} \beta^*_{\sigma^{-1}(\ell) \gets k} \epsilon_k + \epsilon_{\sigma^{-1}(\ell)},
\end{align}
we must have $a_{\ell \sigma^{-1}(\ell)} = 1$. However, since $|\supp(a_\ell)| \geq 2$, it further implies that $||a_\ell|| > 1$, leading to a contradiction. Thus, it is necessary that $\cR_A = \cC_A = \emptyset$, which by Lemma \ref{lem:R=C=emp} leads us to $\gamma = \gamma^*$.

Finally, suppose that (c) holds. The first part immediately follows from Corollary \ref{cor:barE*}, and the second part is shown as follows. Since as per condition (i) in Lemma \ref{lem:ineq_eps_e}, $\epsilon_j, j \in \cC_A$ are Gaussian, let $\Var_*[\epsilon_j] = V^*$ for every $j \in \cC_A$. Then as per condition (ii) in Lemma \ref{lem:ineq_eps_e}, $e_{\sigma^{-1}(i)}, i \in \cR_A$,
are Gaussian, and for every $i \in \cR_A$, we have $\Var_*[e_{\sigma^{-1}(i)}] = \Var_*[a_i^T\epsilon] = ||a_i||^2V^*$. Thus, due to the equality of variances for the Gaussian errors, we must have $||a_i||, i \in \cR_A$ all equal, say to $a$. Moreover, due to condition (iii) in Lemma \ref{lem:ineq_eps_e} and Lemma \ref{lem:non-G_err_nodes}, we have $||a_i|| = 1$, for every $i \notin \cR_A$. Again, since by Lemma \ref{lem:ineq_eps_e}, $e_i, i \in [p]$ are pairwise independent, we have $a_i, a_j$ being orthogonal for every $i, j \in [p], i \neq j$. Thus, applying the equality condition in Hadamard's inequality, see Lemma \ref{lem:Hadamard}, we have
\begin{align*}
    |\det(A^T)| = \prod_{i \in [p]} ||a_i|| = \prod_{i \in \cR_A} ||a_i|| = a^{|\cR_A|}.
\end{align*}
Since by Lemma \ref{lem:detA}, $\det(A) = 1$, we have $a = 1$, i.e., for every $i \in \cR_A$, $||a_i|| = 1$. Now, following the same steps from the previous part, we can show that it eventually leads us to $\gamma = \gamma^*$. The proof is complete.
\end{proof}

\subsection{Proof of Corollary \ref{cor:anc_res}}\label{pf:cor:anc_res}

\begin{proof}
Since $k \in \an^*(j)$, suppose that $k' \in \pa^*(j)$ is such that there is a directed path from node $k$ to node $k'$ in $\gamma^*$, and similarly, since $\ell \in \de^*(j)$, suppose that $\ell'$ is such that $j \in \pa^*(\ell)$ and there is a directed path from node $\ell'$ to node $\ell$ in $\gamma^*$. We assume that $\ell \in \an^\gamma(k)$, i.e., there is a directed path from node $\ell$ to node $k$ in $\gamma$. More specifically, since $\gamma$ has the same skeleton as $\gamma^*$ due to Markov equivalence, the above implies that there exist node $k''$ on the path between node $k$ and node $k'$ in $\gamma^*$ (including both nodes) and node $\ell''$ on the path between node $\ell'$ and node $\ell$ in $\gamma^*$ (including both nodes) such that the directed path from node $\ell$ to node $k$ in $\gamma$ passes through node $\ell''$ and node $k''$. Therefore, in $\gamma$, there exists no directed path from node $k''$ to node $j$, or no directed path from node $j$ to node $\ell''$ because otherwise, it would create a cycle in $\gamma$. 

Now, note that, since $\gamma$ is Markov equivalent to $\gamma^*$, the path between node $k''$ and node $\ell''$ in $\gamma^*$ also exists in $\gamma$, and let it be denoted by $\ell'' \sim_\gamma j$. However, it must no longer be directed from $k''$ to $\ell''$ in $\gamma$ to maintain the acyclicity of $\gamma$, as stated above.
Moreover, since $j \in n\cG^*$ due to parental preservation both $(j \to \ell'),(k' \to j) \in \gamma$. Furthermore, since there exists no directed path from node $j$ to node $\ell''$ there exist nodes $j = \ell_0, \ell_1, \ell_2, \dots, \ell_k$ on $\ell'' \sim_\gamma j$ such that $(\ell_{i-1} \to \ell_i) \in \gamma^*$ for every $i \in [k]$, but $(\ell_k \to \ell_{k-1}) \in \gamma$. Therefore, it creates a new v-structure unless $(\ell_k \to \ell_{k-2}) \in \gamma$ or $(\ell_{k-1} \to \ell_{k-2}) \in \gamma$. Furthermore, if the first case happens, since $(\ell_{k-3} \to \ell_{k-2}) \in \gamma^*$, in order to avoid the creation of a new v-structure, we must have $(\ell_k \to \ell_{k-3}) \in \gamma$ or $(\ell_{k-2} \to \ell_{k-3}) \in \gamma$, and similarly under the second case, $(\ell_{k-1} \to \ell_{k-3}) \in \gamma$ or $(\ell_{k-2} \to \ell_{k-3}) \in \gamma$. That is, to sum up, we must have $(\ell_{k} \to \ell_{k-3}) \in \gamma$, $(\ell_{k-1} \to \ell_{k-3}) \in \gamma$, or $(\ell_{k-2} \to \ell_{k-3}) \in \gamma$.
Therefore, a successive application of the above argument implies that there exists $i \in [k]$ such that $(\ell_i \to \ell_0) \in \gamma$, i.e., $i \in \pa^\gamma(\ell_0) = \pa^\gamma(j)$, and as $\pa^*(j) = \pa^\gamma(j)$, we also have $(\ell_i \to j) \in \gamma^*$. However, since there is a directed path from node $j$ to node $\ell_i$, presence of $(\ell_i \to j)$ creates a cycle in $\gamma^*$. Therefore, our assumption was wrong and we must have $\ell \notin \an^\gamma(k)$. The proof is complete. 
\end{proof}

\subsection{Proof of Corollary \ref{cor:enc_DAG}}\label{pf:cor:enc_DAG}

\begin{proof}
Clearly, condition (1) ensures that the skeleton of $\gamma$ and $\gamma^*$ are the same and they also have the same v-structures, i.e., $\bI(\gamma) = \bI(\gamma^*)$, and condition (2) satisfies the parental preservation, that is, $\pa^*(j) = \pa^\gamma(j)$ for every $j \in n\cG^*$. Furthermore, condition (3) incorporates the edges which are necessary to remain undirected due to the combined effect of Markov equivalence and parental preservation, for example, to satisfy the ancestral restrictions.  
\end{proof}

\section{Establishing the Laplace approximation}\label{app:lap}

In this section, we establish the Laplace approximation under model misspecification in Theorem \ref{thm:lap_app}. 
%for misspecified models. 
First, since $\gamma$ is fixed in the beginning, as done previously,  we omit the superscript from the notation $f^\gamma(x | b^\gamma, \theta^\gamma, \gamma)$ and rewrite it as $f(x | b, \theta)$, where $x \in \bR^p$, for notational simplicity. Next, we let $b_j := (b_{jk} : k \in \pa(j))$ and $x_{\pa(j)} := (x_k : k \in \pa(j))$ having the same order for their corresponding elements, and consider the reparameterization of having, for every $j \in [p]$, $\eta_j = 1/\theta_j \in \bR^+$ and $w_j = b_j/\theta_j \in \bR^{\pa(j)}$ that transforms $f(x | b, \theta)$ into the following equivalent density:
\begin{align}\label{eq:eq_den}
g(x, (\eta, w)) = \prod_{j \in [p]} \frac{\eta_j}{2} \exp \lt(-|\eta_j x_j - x^T_{\pa(j)}w_j|\rt),\qquad x = (x_1, x_2, \dots, x_p) \in \bR^p,
\end{align}
where we let $(\eta, w) := (\eta_j, w_j : j \in [p]) \in \times_{j \in [p]} (\bR^+ \times \bR^{\pa(j)})$.

\subsection{Some important lemmas}

\begin{lemma}\label{lem:log-concave}
$\log g(x , (\eta, w))$ is concave in $(\eta, w)$ for every $x \in \bR^p$.
\end{lemma}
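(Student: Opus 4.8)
The plan is to observe that $\log g$ decomposes as a sum over nodes of functions, each depending on a disjoint block of the parameter vector $(\eta, w)$, and that each summand is a concave function of its block; since concavity is preserved under sums and under this kind of separable structure, the claim follows. Concretely, from \eqref{eq:eq_den} I would first write
\begin{align*}
\log g(x, (\eta, w)) \;=\; \sum_{j \in [p]} \Bigl( \log \eta_j \;-\; \log 2 \;-\; \bigl|\eta_j x_j - x_{\pa(j)}^T w_j\bigr| \Bigr),
\end{align*}
which is valid for every $(\eta, w) \in \times_{j \in [p]}(\bR^+ \times \bR^{\pa(j)})$ with $x$ fixed. The $j$-th summand depends only on $(\eta_j, w_j)$, and distinct summands involve disjoint coordinates of the argument, so it suffices to show that $(\eta_j, w_j) \mapsto \log \eta_j - \log 2 - |\eta_j x_j - x_{\pa(j)}^T w_j|$ is concave on $\bR^+ \times \bR^{\pa(j)}$ for each $j$.

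For that, I would argue term by term: $\log \eta_j$ is concave in $\eta_j$ (hence in $(\eta_j, w_j)$), the constant $-\log 2$ is trivially concave, and the map $(\eta_j, w_j) \mapsto \eta_j x_j - x_{\pa(j)}^T w_j$ is affine in $(\eta_j, w_j)$, so its absolute value is convex and therefore $-|\eta_j x_j - x_{\pa(j)}^T w_j|$ is concave. A sum of concave functions being concave then gives concavity of the $j$-th summand, and summing over $j$ gives concavity of $\log g(x, \cdot)$. There is essentially no obstacle here: the only point requiring a word of care is that concavity of a function that is separately concave on disjoint coordinate blocks does follow from concavity in each block (this is immediate because any line segment in the product space projects to segments in each block), and that the base functions $\log(\cdot)$ and $|\cdot|$ have the stated convexity/concavity on the relevant domains. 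I would state these facts and conclude.
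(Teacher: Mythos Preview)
Your proposal is correct and matches the paper's proof almost exactly: the paper writes the negative log-density, shows $-\log \eta_j$ is convex and that $|\eta_j x_j - x_{\pa(j)}^T w_j|$ is convex (there via the max-of-two-affines representation), and sums. The only cosmetic difference is that you argue concavity of $\log g$ directly rather than convexity of $-\log g$.
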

\begin{proof}
Following \eqref{eq:eq_den}, we have 
\begin{align*}
- \log g(x, (\eta, w)) = p\log 2 - \sum_{j \in [p]} \log \eta_j + \sum_{j \in [p]} |\eta_j x_j - x^T_{\pa(j)}w_j|.
\end{align*}
Note that, for every $j \in [p]$, $- \log \eta_j$ is convex in $\eta_j$, and since the function
\begin{align*}
|\eta_j x_j - x^T_{\pa(j)}w_j| &= \max\{\eta_j x_j - x^T_{\pa(j)}w_j, -\eta_j x_j + x^T_{\pa(j)}w_j\}\\
&= \max\lt\{
\begin{bmatrix}
x_j\\
-x_{\pa(j)}
\end{bmatrix}^T
\begin{bmatrix}
\eta_j\\
w_j
\end{bmatrix}, 
\begin{bmatrix}
- x_j\\
 x_{\pa(j)}
\end{bmatrix}^T
\begin{bmatrix}
\eta_j\\
w_j
\end{bmatrix}
\rt\}
\end{align*}
is maximum of two affine (hence, convex) functions, it is also convex in $(\eta_j, w_j)$. Thus, the result follows.
\end{proof}
In what follows, ${\rm sgn}(\cdot)$ denotes the signum or sign function. Moreover, we denote by $\Cov_*(\cdot)$ the associated covariance.
\begin{lemma}\label{lem:sol_exi}
For every $j \in [p]$, the following system of equations of $(\eta,w)$ has a solution:
\begin{align}\label{eq:score_eq}
\begin{split}
\frac{1}{{\eta}_j} - \sfE_*[X_j{\rm sgn}({\eta}_j X_j - X^T_{\pa(j)}{w}_j)] &= 0,\\
- \sfE_*[X_{\pa(j)} {\rm sgn}({\eta}_j X_j - X^T_{\pa(j)}{w}_j)] &= 0.
\end{split}
\end{align}
\end{lemma}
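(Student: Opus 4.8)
\textbf{Proof plan for Lemma \ref{lem:sol_exi}.}

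The plan is to recognize that the system \eqref{eq:score_eq} is exactly the stationarity (first-order) condition for the convex minimization problem defining the pseudo-true parameters, and then to argue existence of a minimizer together with the fact that the minimizer must satisfy the stated equations. First I would fix $j \in [p]$ and introduce the function
\begin{align*}
\Phi_j(\eta_j, w_j) := -\log\eta_j + \sfE_*\bigl[\,|\eta_j X_j - X_{\pa(j)}^T w_j|\,\bigr], \qquad (\eta_j, w_j) \in (0,\infty) \times \bR^{\pa(j)},
\end{align*}
which is finite for all such $(\eta_j,w_j)$ by the moment assumption $\sfE_*[|\lambda_j|]<\infty$ (hence $\sfE_*[|X_j|]<\infty$ and $\sfE_*[\|X_{\pa(j)}\|]<\infty$), and which is convex in $(\eta_j,w_j)$ by Lemma \ref{lem:log-concave} (the summand $-\log\eta_j$ is convex, and the expectation of the convex map $(\eta_j,w_j)\mapsto|\eta_j X_j - X_{\pa(j)}^T w_j|$ is convex). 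Observe that, up to constants, $\Phi_j$ is the per-node contribution appearing in $H^\gamma$ after the reparameterization $\eta_j=1/\theta_j$, $w_j=b_j/\theta_j$; and the gradient (in the sense of subdifferential, since the $L^1$ term is not differentiable) contains precisely the left-hand sides of \eqref{eq:score_eq}, because under $\sfP_X^*$ the random variable $\eta_j X_j - X_{\pa(j)}^T w_j$ has a density (it is a nontrivial linear combination of the continuous errors $\epsilon_k$), so $\sfP_*(\eta_j X_j = X_{\pa(j)}^T w_j)=0$ and $\mathrm{sgn}(\cdot)$ applied inside the expectation is unambiguous; differentiation under the integral sign is justified by dominated convergence using the integrable envelopes $|X_j|$ and $\|X_{\pa(j)}\|$. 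Thus it suffices to show $\Phi_j$ attains its infimum on $(0,\infty)\times\bR^{\pa(j)}$.

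Next I would establish coercivity of $\Phi_j$. By Lemma \ref{lem:H_gamma_basic} (equivalently Lemma \ref{lem:minH}, Lemma \ref{lem:minH*}) the infimum over $(b^\gamma,\theta^\gamma)$ of $H^\gamma$ is finite and attained, with the minimizing scale $\tilde\theta_j^\gamma = \bigl(\sfE_*[|X_j - \sum_{k}\tilde b_{jk}^\gamma X_k|]\bigr)^{-1}$ finite and positive; translating back through the reparameterization, the point $(\tilde\eta_j,\tilde w_j) := (1/\tilde\theta_j^\gamma,\ \tilde b_j^\gamma/\tilde\theta_j^\gamma)$ is a candidate minimizer of $\Phi_j$. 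To show existence directly without invoking Lemma \ref{lem:H_gamma_basic}, I would instead argue: as $\eta_j \to 0^+$ with $w_j$ in any bounded set, $-\log\eta_j\to+\infty$ and the expectation term stays bounded below by $0$, so $\Phi_j\to+\infty$; as $\eta_j\to+\infty$, $\sfE_*[|\eta_j X_j - X_{\pa(j)}^T w_j|] \geq \eta_j\,\sfE_*[|X_j|] - \|w_j\|\sfE_*[\|X_{\pa(j)}\|]$ grows linearly, dominating $-\log\eta_j$; and as $\|w_j\|\to\infty$ with $\eta_j$ bounded, writing $w_j = r u$ with $\|u\|=1$ and $r\to\infty$, one has $\sfE_*[|\eta_j X_j - r X_{\pa(j)}^T u|] \geq r\,\sfE_*[|X_{\pa(j)}^T u|] - \eta_j\sfE_*[|X_j|]$, and $\sfE_*[|X_{\pa(j)}^T u|]$ is bounded away from zero uniformly over the unit sphere because $X_{\pa(j)}^T u$ is a nondegenerate linear combination of continuous random variables (its covariance matrix is positive definite, since the parents are distinct non-redundant variables in the SEM), hence its first absolute moment is continuous and strictly positive on the compact sphere, so it has a positive minimum. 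Combining these three directions, $\Phi_j$ is coercive: its sublevel sets are bounded, and since $\Phi_j$ is continuous on the open convex set $(0,\infty)\times\bR^{\pa(j)}$, a standard compactness argument (taking a minimizing sequence, which by coercivity stays in a compact subset of the interior, and passing to a convergent subsequence) yields a minimizer $(\tilde\eta_j,\tilde w_j)$ in the interior.

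Finally, since $(\tilde\eta_j,\tilde w_j)$ lies in the interior of the domain and is a minimizer of the convex function $\Phi_j$, Fermat's rule for convex functions gives $0 \in \partial\Phi_j(\tilde\eta_j,\tilde w_j)$; because the only nonsmooth term has subdifferential equal to the ordinary gradient there (as $\sfP_*(\tilde\eta_j X_j = X_{\pa(j)}^T\tilde w_j)=0$, the expectation is actually differentiable at this point), this reduces to the ordinary stationarity equations $\nabla\Phi_j(\tilde\eta_j,\tilde w_j)=0$, which upon writing out the partials are exactly
\begin{align*}
\frac{1}{\tilde\eta_j} - \sfE_*\bigl[X_j\,\mathrm{sgn}(\tilde\eta_j X_j - X_{\pa(j)}^T\tilde w_j)\bigr] &= 0,\\
-\,\sfE_*\bigl[X_{\pa(j)}\,\mathrm{sgn}(\tilde\eta_j X_j - X_{\pa(j)}^T\tilde w_j)\bigr] &= 0,
\end{align*}
i.e.\ the system \eqref{eq:score_eq} has a solution. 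I expect the main obstacle to be the rigorous justification that the $L^1$-type objective is differentiable (not merely subdifferentiable) at the minimizer and that one may interchange differentiation and expectation — this hinges on verifying that the linear combination $\eta_j X_j - X_{\pa(j)}^T w_j$ genuinely has no atom at $0$ under $\sfP_X^*$, which follows from the absolute continuity of the errors $\epsilon_k$ in the data-generating process together with the fact that the coefficient of $\epsilon_j$ (or of the leading ancestral error) is $\eta_j\neq 0$; the coercivity estimate in the $w_j$ direction, requiring a uniform lower bound on $\sfE_*[|X_{\pa(j)}^T u|]$ over the unit sphere, is the other place needing care, and it relies on non-degeneracy of the joint law of the parents.
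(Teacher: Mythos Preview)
Your proposal is correct and follows essentially the same approach as the paper: recognize the system \eqref{eq:score_eq} as the first-order stationarity condition for the concave function $(\eta,w)\mapsto\sfE_*[\log g(X,(\eta,w))]$ (equivalently, for the convex per-node objective $\Phi_j$), then compute the gradient. The paper's own proof is terser---it simply invokes concavity from Lemma \ref{lem:log-concave} and writes down the partial derivatives---so your explicit coercivity argument and your care with the differentiability of the $L^1$ term (via $\sfP_*(\eta_j X_j = X_{\pa(j)}^T w_j)=0$) supply rigor that the paper leaves implicit.
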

\begin{proof}
Following Lemma \ref{lem:log-concave}, it is clear that $\sfE_*[\log g(X, (\eta, w))]$ is concave in $(\eta, w)$. Therefore, there exists a solution for 
\begin{align*}
\nabla_{(\eta, w)} \sfE_*[\log g(X, (\eta, w))] = 0,
\end{align*}
where $\nabla_{(\eta, w)}$ represents the differential with respect to $(\eta, w)$. Now, following \eqref{eq:eq_den}, we have
\begin{align*}
\sfE_*[\log g(X, (\eta, w))] = - p\log 2 + \sum_{j \in [p]} \log \eta_j - \sum_{j \in [p]} \sfE_*[|\eta_j X_j - X^T_{\pa(j)}w_j|],
\end{align*}
which yields, for every $j \in [p]$,
\begin{align*}
\frac{\partial}{\partial \eta_j} \sfE_*[\log g(X, (\eta, w))] &= 
\frac{1}{{\eta}_j} - \sfE_*[X_j{\rm sgn}({\eta}_j X_j - X^T_{\pa(j)}{w}_j)],\\
\frac{\partial}{\partial w_j} \sfE_*[\log g(X, (\eta, w))] &= - \sfE_*[X_{\pa(j)} {\rm sgn}({\eta}_j X_j - X^T_{\pa(j)}{w}_j)],
\end{align*}
and the result follows.
\end{proof}

Now, fix any arbitrary $j \in [p]$. First, following Lemma \ref{lem:sol_exi}, we define the quantities $\tilde{\eta}_j \in \bR^+$ and $\tilde{w}_j \in \bR^{\pa(j)}$ as the solution of the system of equations in \eqref{eq:score_eq}.
Moreover, for any $x = (x_1, x_2, \dots, x_p) \in \bR^p$, we define the function $\cD_j : \bR^p \to \bR^{1 + \pa(j)}$ as
\begin{align}
\label{eq:cdj}
\cD_j(x) := 
\begin{bmatrix}
\frac{1}{\tilde{\eta}_j} - x_j{\rm sgn}(\tilde{\eta}_j x_j - x^T_{\pa(j)}\tilde{w}_j)\\
{\rm sgn}(\tilde{\eta}_j x_j - x^T_{\pa(j)}\tilde{w}_j) x_{\pa(j)}
\end{bmatrix},
\end{align}
and for any $t_{\eta, j} \in \bR$
and $t_{w, j} \in \bR^{\pa(j)}$, if we let $t_j = (t_{\eta, j}, t_{w, j}) \in \bR^{1 + \pa(j)}$, and $t = (t_j : j \in [p]) \in \bR^{p + \sum_{j \in [p]} \pa(j)}$, then the function $u_j : \bR^p \times \bR^{p + \sum_{j \in [p]} \pa(j)} \to \bR$ is defined as
\begin{align}
\label{eq:ujxt}
u_j(x, t) &:= 
\begin{cases}
&2(x^T_{\pa(j)}(\tilde{w_j} + t_{w, j}) - (\tilde{\eta}_j + t_{\eta, j})x_j) \\
&\qquad \times \mathbbm{1}\{x^T_{\pa(j)}\tilde{w_j} \leq \tilde{\eta}_j x_j \leq x^T_{\pa(j)}(\tilde{w_j} + t_{w, j}) - t_{\eta, j}x_j\}\\
&\qquad \qquad \qquad \qquad \qquad \qquad \qquad \qquad \qquad \text{if} \;\; x^T_{\pa(j)}t_{w, j} - t_{\eta, j}x_j \geq 0,\\
&2( - x^T_{\pa(j)}(\tilde{w_j} + t_{w, j}) + (\tilde{\eta}_j + t_{\eta, j})x_j) \\
&\qquad \times \mathbbm{1}\{x^T_{\pa(j)}\tilde{w_j} \geq \tilde{\eta}_j x_j \geq x^T_{\pa(j)}(\tilde{w_j} + t_{w, j}) - t_{\eta, j}x_j\}\\
&\qquad \qquad \qquad \qquad \qquad \qquad \qquad \qquad \qquad  \text{if} \;\; x^T_{\pa(j)}t_{w, j} - t_{\eta, j}x_j < 0.
\end{cases}
\end{align}
\begin{lemma}\label{lem:HP_decomp}
For every $y, \mu, t \in \bR$, the following decomposition holds:
\begin{align*}
|y - (\mu + t)| - |y - \mu| = D(y)t + U(y, t), 
\end{align*}
where the functions $D : \bR \to \bR$ and $U : \bR \times \bR \to \bR$ are defined as
\begin{align*}
D(y) &:= {\rm sgn}(\mu - y), \quad \text{and}\\
U(y, t) &:= 
\begin{cases}
2(t - (y - \mu)) \mathbbm{1}\{\mu \leq y \leq \mu + t\} \quad \text{if} \;\; t \geq 0,\\
2((y - \mu) - t) \mathbbm{1}\{\mu + t \leq y \leq \mu\} \quad \text{if} \;\; t < 0.
\end{cases}
\end{align*}
\end{lemma}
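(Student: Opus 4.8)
\textbf{Proof plan for Lemma \ref{lem:HP_decomp}.}

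The plan is to prove the identity $|y-(\mu+t)| - |y-\mu| = D(y)t + U(y,t)$ by a direct case analysis, treating the sign of $t$ and the position of $y$ relative to the two points $\mu$ and $\mu+t$. Since the right-hand side is defined piecewise in $t$, I would first fix the sign of $t$; by symmetry (replacing $(\mu, t)$ by $(\mu+t, -t)$ swaps the two terms on the left and swaps the two branches of $U$), it suffices to handle $t \ge 0$ in full and then remark that $t < 0$ follows by the same computation. Throughout, $D(y) = {\rm sgn}(\mu - y)$, so $D(y)t = t\cdot{\rm sgn}(\mu-y)$.

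For $t \ge 0$, I would split into three subcases according to where $y$ lies relative to the interval $[\mu, \mu+t]$. First, if $y \le \mu$: then also $y \le \mu + t$, so $|y-(\mu+t)| = \mu+t-y$ and $|y-\mu| = \mu - y$, giving left-hand side $= t$. Meanwhile $D(y) = {\rm sgn}(\mu-y) = 1$ (or $0$ when $y = \mu$, but then $t$ times it still matches after accounting for $U$; I would handle the boundary $y=\mu$ as part of the middle case), and $U(y,t) = 0$ since $y \notin (\mu, \mu+t]$ strictly below; so the right-hand side is $t$. Second, if $y \ge \mu + t$: then $|y-(\mu+t)| = y-\mu-t$ and $|y-\mu| = y - \mu$, giving left-hand side $= -t$; and $D(y) = -1$, $U(y,t) = 0$, so the right-hand side is $-t$. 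Third, the crucial case $\mu \le y \le \mu+t$: here $|y-(\mu+t)| = \mu+t-y$ and $|y-\mu| = y-\mu$, so the left-hand side equals $\mu + t - y - (y - \mu) = 2\mu + t - 2y$. On the right, $D(y) = 1$ (taking ${\rm sgn}(0)=1$ at $y=\mu$, consistently with the indicator's closed endpoints) so $D(y)t = t$, and $U(y,t) = 2(t - (y-\mu)) = 2t - 2y + 2\mu$; summing gives $t + 2t - 2y + 2\mu$ — so I need the convention that makes $D(y)t + U(y,t) = 2\mu + t - 2y$, i.e. I should double-check the indicator and the coefficient: with $U(y,t) = 2(t-(y-\mu))\mathbbm{1}\{\mu\le y\le\mu+t\}$ and $D(y)t$, one wants $D(y)$ to be $-1$ on this middle interval. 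This sign bookkeeping at the endpoints and on the middle interval is exactly where care is needed, so I would set the ${\rm sgn}$ convention up front (e.g. ${\rm sgn}(0)=-1$, or equivalently read $D(y) = {\rm sgn}(\mu-y)$ with the understanding forced by the half-open indicators) and verify all three subcases are then mutually consistent and cover $\bR$.

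The case $t < 0$ is then dispatched by symmetry: writing $\mu' = \mu + t$, $t' = -t > 0$, we have $\mu' + t' = \mu$, so $|y - (\mu'+t')| - |y-\mu'| = |y-\mu| - |y-(\mu+t)|$, which is the negative of the left-hand side we want; applying the already-proved $t \ge 0$ identity to $(\mu', t')$ and negating, then checking that $-D_{\mu'}(y)t' = D_\mu(y)t$ and $-U(y; \mu', t') = U(y; \mu, t)$ under the stated definitions, completes the argument. The main obstacle — really the only one — is getting the sign/endpoint conventions for ${\rm sgn}$ and for the closed-versus-open indicators to line up so that the identity holds literally for every $y$ including the two boundary points $y = \mu$ and $y = \mu + t$; once that convention is pinned down, each case is a one-line computation. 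Since this is a purely deterministic scalar identity, no probabilistic or analytic machinery is needed, and the lemma will subsequently be applied coordinatewise (with $y = \tilde\eta_j X_j$, $\mu = X_{\pa(j)}^T\tilde w_j$, and $t$ a perturbation) to produce the decompositions \eqref{eq:cdj}–\eqref{eq:ujxt} feeding into the Laplace approximation.
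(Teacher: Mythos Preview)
The paper does not actually prove this lemma: its proof is a one-line citation to Section 3A of Hjort and Pollard \cite{hjort2011asymptotics}. Your direct case analysis is exactly the kind of verification one finds there, so the approach is correct and appropriate.

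One correction to your middle-case bookkeeping: for $\mu < y \le \mu + t$ you have $\mu - y < 0$, so $D(y) = {\rm sgn}(\mu - y) = -1$, not $+1$. With that, $D(y)t + U(y,t) = -t + 2(t - (y-\mu)) = 2\mu + t - 2y$, matching the left-hand side; you effectively caught this yourself when you wrote ``one wants $D(y)$ to be $-1$ on this middle interval,'' so just commit to that from the start. The endpoint $y = \mu$ does force a convention for ${\rm sgn}(0)$ (taking ${\rm sgn}(0) = -1$ makes the identity literal there too), but this is immaterial for the downstream application: the decomposition is only ever evaluated at $X$ which has a Lebesgue density, so an almost-everywhere identity suffices for Lemmas \ref{lem:decomp}--\ref{lem:Var_U}. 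The symmetry reduction for $t < 0$ is fine and saves repeating the three subcases.
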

\begin{proof}
See Section 3A in Hjort and Pollard \cite{hjort2011asymptotics}.
\end{proof}

\begin{lemma}\label{lem:decomp}
For every $x \in \bR^p$, $t \in \bR^{p + \sum_{j \in [p]} \pa(j)}$ such that $(\tilde{\eta}, \tilde{w}) + t \in \times_{j \in [p]} (\bR^+ \times \bR^{\pa(j)})$,
the following decomposition holds:
\begin{align*}
\log g(x , (\tilde{\eta}, \tilde{w}) + t) &- \log g(x, (\tilde{\eta}, \tilde{w})) = \cD(x)^T t + \cU(x, t),
\end{align*}
with the functions $\cD : \bR^p \to \bR^{p + \sum_{j \in [p]} \pa(j)}$ and $\cU : \bR^p \times \bR^{p + \sum_{j \in [p]} \pa(j)} \to \bR$ defined as 
\begin{align*}
\cD(x) &:= (\cD_j(x) : j \in [p]), \quad \text{and}\\
\cU(x, t) &:= \sum_{j \in [p]} \cU_j(x, t) \quad \text{with} \quad \cU_j(x, t) := -u_j(x, t) - \frac{1}{2}\frac{t_{\eta, j}^2}{\tilde{\eta}_j^2} + o(t_{\eta, j}^2),
\end{align*} 
for some quantities $t_{\eta, j}, j \in [p]$ where $\cD_j$ and $u_j$ are defined in \eqref{eq:cdj} and \eqref{eq:ujxt}, respectively.
\end{lemma}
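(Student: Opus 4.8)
The plan is to expand $\log g$ node by node. Writing out the definition \eqref{eq:eq_den},
$$\log g(x,(\eta,w)) = \sum_{j\in[p]}\Big(\log\tfrac{\eta_j}{2} - |\eta_j x_j - x^T_{\pa(j)} w_j|\Big),$$
so that $\log g(x,(\tilde{\eta},\tilde{w})+t) - \log g(x,(\tilde{\eta},\tilde{w}))$ splits into a sum over $j\in[p]$ of a \emph{logarithmic part} $\log\frac{\tilde{\eta}_j+t_{\eta,j}}{\tilde{\eta}_j}$ and an \emph{$\ell_1$ part} $-\big(|(\tilde{\eta}_j+t_{\eta,j})x_j - x^T_{\pa(j)}(\tilde{w}_j+t_{w,j})| - |\tilde{\eta}_j x_j - x^T_{\pa(j)}\tilde{w}_j|\big)$. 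Since $(\tilde{\eta},\tilde{w})+t$ lies in $\times_{j\in[p]}(\bR^+\times\bR^{\pa(j)})$, we have $\tilde{\eta}_j+t_{\eta,j}>0$, so the logarithmic part is well defined, and a one-variable Taylor expansion gives $\log\frac{\tilde{\eta}_j+t_{\eta,j}}{\tilde{\eta}_j} = \log\big(1+\tfrac{t_{\eta,j}}{\tilde{\eta}_j}\big) = \frac{t_{\eta,j}}{\tilde{\eta}_j} - \frac{1}{2}\frac{t_{\eta,j}^2}{\tilde{\eta}_j^2} + o(t_{\eta,j}^2)$, the remainder being the remainder of the scalar map $r\mapsto\log(1+r/\tilde{\eta}_j)$ and hence a function of $t$ only through the coordinate $t_{\eta,j}$.

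Next I would handle the $\ell_1$ part via Lemma \ref{lem:HP_decomp}, applied with $y = \tilde{\eta}_j x_j$, $\mu = x^T_{\pa(j)}\tilde{w}_j$ and the scalar increment $s_j := x^T_{\pa(j)} t_{w,j} - t_{\eta,j} x_j$, chosen so that $y-(\mu+s_j) = (\tilde{\eta}_j+t_{\eta,j})x_j - x^T_{\pa(j)}(\tilde{w}_j+t_{w,j})$. Lemma \ref{lem:HP_decomp} then yields $|(\tilde{\eta}_j+t_{\eta,j})x_j - x^T_{\pa(j)}(\tilde{w}_j+t_{w,j})| - |\tilde{\eta}_j x_j - x^T_{\pa(j)}\tilde{w}_j| = D(y)s_j + U(y,s_j)$ with $D(y) = {\rm sgn}(\mu-y) = -{\rm sgn}(\tilde{\eta}_j x_j - x^T_{\pa(j)}\tilde{w}_j)$. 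Writing $\sigma_j := {\rm sgn}(\tilde{\eta}_j x_j - x^T_{\pa(j)}\tilde{w}_j)$, the linear contribution to the $j$-th summand is $\frac{t_{\eta,j}}{\tilde{\eta}_j} - D(y)s_j = \frac{t_{\eta,j}}{\tilde{\eta}_j} + \sigma_j s_j = \frac{t_{\eta,j}}{\tilde{\eta}_j} - \sigma_j x_j t_{\eta,j} + \sigma_j x^T_{\pa(j)} t_{w,j}$, which is exactly $\cD_j(x)^T t_j$ with $\cD_j$ as in \eqref{eq:cdj} — this is just matching the coefficients of $t_{\eta,j}$ and of $t_{w,j}$.

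It then remains to identify $U(y,s_j)$ with $u_j(x,t)$ from \eqref{eq:ujxt}, which I would verify by substitution in the two cases of Lemma \ref{lem:HP_decomp}. When $s_j\ge 0$ one has $s_j - (y-\mu) = x^T_{\pa(j)}(\tilde{w}_j+t_{w,j}) - (\tilde{\eta}_j+t_{\eta,j})x_j$ and $\{\mu\le y\le\mu+s_j\} = \{x^T_{\pa(j)}\tilde{w}_j\le\tilde{\eta}_j x_j\le x^T_{\pa(j)}(\tilde{w}_j+t_{w,j}) - t_{\eta,j}x_j\}$, which is precisely the first branch of $u_j$; the case $s_j<0$ is symmetric, so $U(y,s_j) = u_j(x,t)$ in both cases. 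Collecting everything, the $j$-th summand equals $\cD_j(x)^T t_j - u_j(x,t) - \frac{1}{2}t_{\eta,j}^2/\tilde{\eta}_j^2 + o(t_{\eta,j}^2) = \cD_j(x)^T t_j + \cU_j(x,t)$, and summing over $j\in[p]$ gives the claimed decomposition with $\cD(x) = (\cD_j(x):j\in[p])$ and $\cU(x,t) = \sum_{j\in[p]}\cU_j(x,t)$. There is no serious obstacle here; the work is entirely careful sign bookkeeping — tracking the minus sign carried by $-|\cdot|$ in $\log g$, and the fact that $D(y) = -\sigma_j$ while $\cD_j$ and $u_j$ are written in terms of $+\sigma_j$ — together with checking that the Taylor remainder in the logarithmic part depends on $t$ only through $t_{\eta,j}$, which is immediate.
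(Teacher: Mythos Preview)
Your proposal is correct and follows essentially the same route as the paper: split $\log g$ into the logarithmic and $\ell_1$ parts node by node, Taylor-expand the logarithmic part, apply Lemma~\ref{lem:HP_decomp} to the $\ell_1$ part with exactly the substitution $y=\tilde{\eta}_j x_j$, $\mu=x^T_{\pa(j)}\tilde{w}_j$, increment $s_j=x^T_{\pa(j)}t_{w,j}-t_{\eta,j}x_j$, and then identify $U(y,s_j)$ with $u_j(x,t)$. Your sign bookkeeping is slightly more explicit than the paper's, but the argument is the same.
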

\begin{proof}
We have
\begin{align*}
&\log g(x , (\tilde{\eta}, \tilde{w}) + t) - \log g(x, (\tilde{\eta}, \tilde{w}))\\
&= \sum_{j \in [p]} \log (\tilde{\eta}_j + t_{\eta, j}) - \sum_{j \in [p]} |(\tilde{\eta}_j + t_{\eta, j}) x_j - x^T_{\pa(j)} (\tilde{w}_j + t_{w, j})|\\
&\qquad \qquad \qquad \qquad \qquad - \sum_{j \in [p]} \log \tilde{\eta}_j + \sum_{j \in [p]} |\tilde{\eta}_j x_j - x^T_{\pa(j)}\tilde{w}_j|\\
&=\sum_{j \in [p]} \lt\{\log (\tilde{\eta}_j + t_{\eta, j}) - \log \tilde{\eta}_j\rt\}\\
&\qquad \qquad \qquad \qquad \qquad - \sum_{j \in [p]} \lt\{|(\tilde{\eta}_j + t_{\eta, j}) x_j - x^T_{\pa(j)} (\tilde{w}_j + t_{w, j})| - |\tilde{\eta}_j x_j - x^T_{\pa(j)}\tilde{w}_j|\rt\}\\
&= \sum_{j \in [p]} \frac{t_{\eta, j}}{\tilde{\eta}_j} - \frac{1}{2}\frac{t_{\eta, j}^2}{\tilde{\eta}_j^2} + o(t_{\eta, j}^2)\\
&\qquad \qquad \qquad \qquad \qquad - \sum_{j \in [p]} \lt\{{\rm sgn}(\tilde{\eta}_j x_j - x^T_{\pa(j)}\tilde{w}_j) (t_{\eta, j} x_j - x_{\pa(j)}^T t_{w, j}) + u_j(x, t)\rt\}\\
&= \sum_{j \in [p]}\lt\{\lt(\frac{1}{\tilde{\eta}_j} - {\rm sgn}(\tilde{\eta}_j x_j - x^T_{\pa(j)}\tilde{w}_j)x_j\rt)t_{\eta, j} + {\rm sgn}(\tilde{\eta}_j x_j - x^T_{\pa(j)}\tilde{w}_j) x_{\pa(j)}^T t_{w, j}\rt\}\\
&\qquad \qquad \qquad \qquad \qquad + \sum_{j \in [p]} -u_j(x, t) - \frac{1}{2}\frac{t_{\eta, j}^2}{\tilde{\eta}_j^2} + o(t_{\eta, j}^2)\\
&= \sum_{j \in [p]} \cD_j(x)^Tt_j + \sum_{j \in [p]} \cU_j(x, t) = \cD(x)^Tt + \cU(x, t),
\end{align*}
where in the third equality, the first part is due to Taylor expansion and the second part follows from Lemma \ref{lem:HP_decomp}.
\end{proof}

In the next two lemmas we establish some properties of the random variables $\cD(X)$ and $\cU(X, t)$, where $t$, $\cD(\cdot)$, and $\cU(\cdot)$ are as appeared in Lemma \ref{lem:decomp}.

\begin{lemma}\label{lem:Ecov_D}
Under the assumption that $\sfE_*[\lambda_j^2] < \infty$ for every $j \in [p]$, $\cD(X)$ has zero mean and finite covariance matrix. 
\end{lemma}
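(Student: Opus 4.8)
The statement splits into two parts: $\sfE_*[\cD(X)] = 0$ and $\cD(X)$ has finite second moments. I would dispatch the zero-mean claim first, since it is immediate from the defining equations of the pseudo-true parameters. Recall that $\cD(X) = (\cD_j(X): j \in [p])$ with $\cD_j$ given in \eqref{eq:cdj}, and that $(\tilde\eta_j, \tilde w_j)$ is, by Lemma \ref{lem:sol_exi}, a solution of the system \eqref{eq:score_eq}. Taking expectations coordinatewise,
\begin{align*}
\sfE_*[\cD_j(X)] =
\begin{bmatrix}
\frac{1}{\tilde\eta_j} - \sfE_*\big[X_j\,{\rm sgn}(\tilde\eta_j X_j - X^T_{\pa(j)}\tilde w_j)\big]\\[2pt]
\sfE_*\big[{\rm sgn}(\tilde\eta_j X_j - X^T_{\pa(j)}\tilde w_j)\,X_{\pa(j)}\big]
\end{bmatrix}
= \begin{bmatrix} 0 \\ 0\end{bmatrix},
\end{align*}
where the last equality is precisely \eqref{eq:score_eq}. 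Stacking over $j \in [p]$ gives $\sfE_*[\cD(X)] = 0$.

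For the finite covariance claim it suffices to show that every coordinate of $\cD(X)$ has finite second moment, since then all diagonal entries of the covariance matrix are finite and the off-diagonal entries are controlled by Cauchy--Schwarz. Each coordinate of $\cD_j(X)$ is either of the form $\tfrac{1}{\tilde\eta_j} - X_j\,{\rm sgn}(\tilde\eta_j X_j - X^T_{\pa(j)}\tilde w_j)$ or of the form ${\rm sgn}(\tilde\eta_j X_j - X^T_{\pa(j)}\tilde w_j)\,X_k$ for some $k \in \pa(j)$. Since the sign function is bounded in absolute value by $1$, and $\tilde\eta_j \in \bR^+$ is a fixed finite positive number (Lemma \ref{lem:sol_exi}, together with the explicit form $\tilde\eta_j = \sfE_*[|X_j - \sum_{k}\tilde b_{jk} X_k|] \in (0,\infty)$ coming from Lemma \ref{lem:H_gamma_basic} under the reparameterization $\eta_j = 1/\theta_j$), we get the pointwise bounds
\begin{align*}
\Big|\tfrac{1}{\tilde\eta_j} - X_j\,{\rm sgn}(\tilde\eta_j X_j - X^T_{\pa(j)}\tilde w_j)\Big| \le \tfrac{1}{\tilde\eta_j} + |X_j|,
\qquad
\big|{\rm sgn}(\cdots)\,X_k\big| \le |X_k|.
\end{align*}
Hence it is enough to verify that $\sfE_*[X_j^2] < \infty$ for every $j \in [p]$.

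The last point is where the hypothesis $\sfE_*[\lambda_j^2] < \infty$ enters. By \eqref{eq:error_dist}, $\epsilon_j \mid \lambda_j \sim \text{N}(0,\lambda_j^2)$, so $\sfE_*[\epsilon_j^2] = \sfE_*[\sfE_*[\epsilon_j^2 \mid \lambda_j]] = \sfE_*[\lambda_j^2] < \infty$; by independence the vector $\epsilon$ has a finite covariance matrix. Using the linear representation $X = \cB^*\epsilon$ from \eqref{eq:X_Beps}, each $X_j = \sum_{k \in \bar\an^*(j)} \beta^*_{j\gets k}\epsilon_k$ is a fixed finite linear combination of the $\epsilon_k$, so $\sfE_*[X_j^2] < \infty$ by the triangle inequality in $L^2$ (or Cauchy--Schwarz). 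Plugging this into the bounds above shows every coordinate of $\cD(X)$ lies in $L^2(\sfP_X^*)$, and therefore $\cD(X)$ has a finite covariance matrix, completing the proof. There is no genuine obstacle here beyond bookkeeping; the only thing to be careful about is recording that $\tilde\eta_j$ is finite and strictly positive, which is guaranteed by Lemma \ref{lem:sol_exi} and Lemma \ref{lem:H_gamma_basic} once $\sfE_*[|\lambda_j|] < \infty$ (implied by $\sfE_*[\lambda_j^2]<\infty$).
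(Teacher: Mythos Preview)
Your proof is correct and follows essentially the same approach as the paper: both use the score equations \eqref{eq:score_eq} for the zero-mean claim, then combine the linear representation \eqref{eq:X_Beps} with $\sfE_*[\epsilon_j^2]=\sfE_*[\lambda_j^2]<\infty$ to get $\sfE_*[X_k^2]<\infty$, and finish with the boundedness of ${\rm sgn}(\cdot)$ and Cauchy--Schwarz. Your version is in fact slightly more careful than the paper's, since you explicitly handle the constant term $1/\tilde\eta_j$ in the first coordinate of $\cD_j(X)$ and record why $\tilde\eta_j\in(0,\infty)$.
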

\begin{proof}
From the definition of $(\tilde{\eta}_j, \tilde{w}_j)$, and due to \eqref{eq:score_eq}, $\cD_j(X)$ has zero mean for every $j \in [p]$, and thus, from the definition of $\cD(X)$ the first part is immediately proved.

Now, note that, following the representation in \eqref{eq:X_Beps}, we have, for every $k \in [p]$,
\begin{align*}
X_{k} &= \sum_{j \in {\an}^*(k)} \beta^*_{k \gets j} \epsilon_j + \epsilon_k, \qquad \text{which implies}\\\
\sfE_*[X_k^2] &= \sum_{j \in {\an}^*(k)} (\beta^*_{k \gets j})^2 \sfE_*[\epsilon_j^2] + \sfE_*[\epsilon_k^2]\\
&= \sum_{j \in {\an}^*(k)} (\beta^*_{k \gets j})^2 \sfE_*[\lambda_j^2] + \sfE_*[\lambda_k^2] < \infty.
\end{align*}
Thus, the second part follows from the fact that, for every $j, k, \ell, m \in [p]$, we have, by the Cauchy-Schwarz inequality,
\begin{align*}
&|{\sf Cov}_*(X_k {\rm sgn}(\tilde{\eta}_j X_j - X^T_{\pa(j)}\tilde{w}_j), X_\ell {\rm sgn}(\tilde{\eta}_m X_m - X^T_{\pa(m)}\tilde{w}_m))|^2\\
&\leq \Var_*[X_k {\rm sgn}(\tilde{\eta}_j X_j - X^T_{\pa(j)}\tilde{w}_j)] \; \Var_*[X_\ell {\rm sgn}(\tilde{\eta}_m X_m - X^T_{\pa(m)}\tilde{w}_m)]\\
&\leq \sfE_*[X_k^2]\sfE_*[X_\ell^2] < \infty.
\end{align*}
The proof is complete.
\end{proof}

\begin{lemma}\label{lem:E_U}
For every $t \in \bR^{p + \sum_{j \in [p]} \pa(j)}$, such that $(\tilde{\eta}, \tilde{w}) + t \in \times_{j \in [p]} (\bR^+ \times \bR^{\pa(j)})$,
\begin{align*}
\sfE_*[\cU(X, t)] = -\frac{1}{2}t^T J t + o(||t||^2),
\end{align*}
for some positive definite matrix $J$.
\end{lemma}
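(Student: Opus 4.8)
The plan is to reduce the statement to a second-order expansion of each term $\sfE_*[u_j(X,t)]$ and then assemble the blocks. By Lemma \ref{lem:decomp} and the fact that $\sfE_*[\cD(X)] = 0$ (Lemma \ref{lem:Ecov_D}), one has $\sfE_*[\cU(X,t)] = \sum_{j \in [p]} \sfE_*[\cU_j(X,t)]$ with $\sfE_*[\cU_j(X,t)] = -\sfE_*[u_j(X,t)] - \tfrac12\, t_{\eta,j}^2/\tilde\eta_j^2 + o(t_{\eta,j}^2)$, the last term being deterministic. Hence it suffices to show $\sfE_*[u_j(X,t)] = t_j^T M_j t_j + o(\|t_j\|^2)$ for a symmetric positive semidefinite matrix $M_j$ free of $t$; then $J$ is the block-diagonal matrix whose $j$-th block is $2M_j + \mathrm{diag}(1/\tilde\eta_j^2, 0, \dots, 0)$, and summing over $j$ gives the claim since $\|t_j\| \le \|t\|$. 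Positive-definiteness of $J$ is checked at the end.

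The first step is to rewrite $u_j$ in a form suited to integration. With $r_j := \tilde\eta_j X_j - X_{\pa(j)}^T \tilde w_j$ (a function of $X$ alone) and $\Delta_j := X_{\pa(j)}^T t_{w,j} - t_{\eta,j} X_j = \langle v_j(X), t_j\rangle$, where $v_j(X) := (-X_j, X_{\pa(j)}^T)^T$, inspection of \eqref{eq:ujxt} shows that in both branches $u_j(X,t) = 2(|\Delta_j| - |r_j|)\,\mathbbm{1}\{r_j \text{ lies between } 0 \text{ and } \Delta_j\}$, so in particular $0 \le u_j(X,t) \le 2|\Delta_j|\,\mathbbm{1}\{|r_j| \le |\Delta_j|\}$. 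I would then condition on $X_{\pa(j)}$: under $\sfP_X^*$ the vector $X$ is an invertible linear image of independent Gaussian scale mixtures (see \eqref{eq:X_Beps}), so the conditional law of $X_j$ given $X_{\pa(j)} = z$ is a Gaussian scale mixture with affine-in-$z$ mean; consequently $r_j$ has a conditional density $\rho_j(\cdot \mid z)$ that is continuous in its argument and strictly positive. Re-expressing $X_j$ through $r_j$ gives $\Delta_j = a_j(z) + c_j r_j$ with $a_j(z) := z^T t_{w,j} - (t_{\eta,j}/\tilde\eta_j)\, z^T \tilde w_j$ and $c_j := -t_{\eta,j}/\tilde\eta_j = O(\|t_j\|)$; since $|r_j| = O(\|t_j\|)$ on the event $\{u_j \ne 0\}$, replacing $\Delta_j$ by $a_j(z)$ perturbs $u_j$ by $O(\|t_j\|^3)$, and a change of variables together with continuity of $\rho_j(\cdot \mid z)$ at $0$ yields $\sfE_*[u_j(X,t) \mid X_{\pa(j)} = z] = \rho_j(0 \mid z)\, a_j(z)^2 + o(\|t_j\|^2)$. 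Integrating and noting $a_j(z) = \langle \tilde v_j(z), t_j\rangle$ for $\tilde v_j(z) := (-z^T \tilde w_j/\tilde\eta_j,\, z^T)^T$ identifies $M_j := \sfE_*[\rho_j(0 \mid X_{\pa(j)})\, \tilde v_j(X_{\pa(j)})\tilde v_j(X_{\pa(j)})^T]$, which is finite because $\sfE_*[\|X_{\pa(j)}\|^2] < \infty$ under \eqref{assum:fin_sec} and $\rho_j(0 \mid \cdot)$ is bounded.

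The main obstacle will be making the remainder in the last display genuinely $o(\|t_j\|^2)$ \emph{after} integration, since the pointwise error is not uniform in $z$ (it deteriorates for large $\|z\|$). The plan is to split the expectation over $\{\|v_j(X)\| \le K\}$ and its complement. On the bounded part, dominated convergence and continuity of $\rho_j(\cdot \mid z)$ at $0$ give the quadratic term up to an error that is $o(\|t_j\|^2)$ for each fixed $K$. On the tail, the bound $u_j \le 2|\Delta_j|\,\mathbbm{1}\{|r_j| \le |\Delta_j|\}$ together with $\sfP_*(|r_j| \le |\Delta_j| \mid X_{\pa(j)}) \le 2|\Delta_j| \sup_r \rho_j(r \mid X_{\pa(j)})$ gives $\sfE_*[u_j(X,t)\,\mathbbm{1}\{\|v_j(X)\| > K\}] \le 4\|t_j\|^2\, \sfE_*[\|v_j(X)\|^2 \sup_r \rho_j(r \mid X_{\pa(j)})\, \mathbbm{1}\{\|v_j(X)\| > K\}]$, which tends to $0$ as $K \to \infty$ uniformly in $t$, using \eqref{assum:fin_sec} and a uniform bound on $\sup_r \rho_j(r \mid z)$ (the conditional variance of $X_j$ given its parents being bounded below, as a Gaussian-scale-mixture residual); the same estimate controls the tail of $\rho_j(0 \mid X_{\pa(j)})\, a_j(X_{\pa(j)})^2$. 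Sending $\|t\| \to 0$ first and then $K \to \infty$ yields the claim. This is the routine-but-delicate heart of the argument; it runs parallel to the quadratic-expansion arguments of Hjort and Pollard \cite{hjort2011asymptotics}, from which Lemma \ref{lem:HP_decomp} is drawn, adapted to the present random, non-compactly-supported perturbation $\Delta_j$.

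It then remains to verify $J \succ 0$, i.e.\ that each block $2M_j + \mathrm{diag}(1/\tilde\eta_j^2, 0, \dots, 0)$ is positive definite. When $\pa(j) = \emptyset$ one checks that $u_j \equiv 0$ almost surely and the block equals $1/\tilde\eta_j^2 > 0$. When $\pa(j) \ne \emptyset$, setting $\beta := t_{w,j} - (t_{\eta,j}/\tilde\eta_j)\tilde w_j$ one computes $t_j^T(2M_j + \mathrm{diag}(1/\tilde\eta_j^2, 0,\dots,0))\,t_j = 2\,\sfE_*[\rho_j(0 \mid X_{\pa(j)})(X_{\pa(j)}^T\beta)^2] + t_{\eta,j}^2/\tilde\eta_j^2$. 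If $t_j \ne 0$ and $\beta \ne 0$, the first term is strictly positive because $\rho_j(0 \mid \cdot) > 0$ everywhere and $X_{\pa(j)}$, having a density on $\bR^{|\pa(j)|}$, puts no mass on the hyperplane $\{z : z^T\beta = 0\}$; if $t_j \ne 0$ and $\beta = 0$, then necessarily $t_{\eta,j} \ne 0$ and the second term is strictly positive. Hence every block is positive definite, so $J \succ 0$, which would complete the proof.
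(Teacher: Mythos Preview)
Your strategy coincides with the paper's: both reduce to $\sfE_*[u_j(X,t)]$, condition on $X_{\pa(j)}$, write the conditional expectation as an integral against the conditional density of $X_j$, and extract the quadratic term in $t_j$. The execution differs---the paper differentiates the integral via the Leibniz rule and then invokes a Taylor expansion, whereas you substitute $r_j,\Delta_j$ directly and control the remainder through an explicit truncation---and your version is in fact more careful on two points: you retain the conditional-density factor $\rho_j(0\mid X_{\pa(j)})$ in $M_j$ (the paper's displayed $W_j$ omits this factor), and you verify positive definiteness of $J$, which the paper does not. One claim should be weakened, however: the assertion that the conditional law of $X_j$ given $X_{\pa^\gamma(j)}$ is a Gaussian scale mixture is generally false when $\pa^\gamma(j)\neq\pa^{*}(j)$ (for instance, if $X_1=\epsilon_1$ and $X_2=\beta X_1+\epsilon_2$ with $\epsilon_1$ Laplace and $\epsilon_2$ Gaussian, the law of $X_1\mid X_2$ is not a scale mixture of Gaussians). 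What your argument actually uses---existence, strict positivity, continuity, and local boundedness of the conditional density---does follow from the smoothness and positivity of the joint density $\sfp_X^*$ in \eqref{eq:true_dens}, so simply replace the scale-mixture claim with that weaker regularity statement; the tail justification ``conditional variance bounded below as a Gaussian-scale-mixture residual'' should likewise be rephrased, since that reasoning rests on the same false premise.
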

\begin{proof}
Following the definition, we have 
\begin{align}\label{eq:E_U}
\sfE_*[\cU(X, t)] = \sum_{j \in [p]} \sfE_*[\cU_j(X, t)] = \sum_{j \in [p]} - \sfE_*[u_j(X, t)] - \frac{1}{2}\frac{t_{\eta, j}^2}{\tilde{\eta}_j^2} + o(t_{\eta, j}^2).
\end{align}
Fix $j \in [p]$, and let $\mu_j := \sfE_*[u_j(X, t)]$. Then, using the fact that $\tilde{\eta}_j + t_{\eta, j} > 0$, it is not difficult to derive from the definition of $u_j(X, t)$ that
\begin{align*}
\mu_j &= \sfE_*[\sfE_*[u_j(X, t) | X_{\pa(j)}]]\\
&= \sfE_*\lt[\int_{X_{\pa(j)}^T\tilde{w}_j/\tilde{\eta}_j}^{X_{\pa(j)}^T(\tilde{w}_j + t_{w, j})/(\tilde{\eta}_j + t_{\eta, j})} 2(X^T_{\pa(j)}(\tilde{w_j} + t_{w, j}) - (\tilde{\eta}_j + t_{\eta, j})x) \sfp^*_{j|\pa(j)}(x|X_{\pa(j)})dx\rt],
\end{align*}
where $\sfp^*_{j|\pa(j)}$ denotes the conditional density of $X_j$ given $X_{\pa(j)}$. 

Now, by applying Leibniz integral rule we obtain that
\begin{align*}
\frac{\partial \mu_j}{\partial t_j} &= 2 \; \sfE_*\lt[\int_{X_{\pa(j)}^T\tilde{w}_j/\tilde{\eta}_j}^{X_{\pa(j)}^T(\tilde{w}_j + t_{w, j})/(\tilde{\eta}_j + t_{\eta, j})} 
\begin{bmatrix}
-x\\
X_{\pa(j)}
\end{bmatrix} 
\sfp^*_{j|\pa(j)}(x|X_{\pa(j)})dx\rt],\\
\frac{\partial^2 \mu_j}{\partial t_j^2} &= 2 \; \sfE_*\lt[
\begin{bmatrix}
- X^T_{\pa(j)} \frac{\tilde{w}_j + t_{w, j}}{\tilde{\eta}_j + t_{\eta, j}}\\
X_{\pa(j)}
\end{bmatrix} 
\begin{bmatrix}
- X^T_{\pa(j)} \frac{\tilde{w}_j + t_{w, j}}{(\tilde{\eta}_j + t_{\eta, j})^2}\\
X_{\pa(j)}\frac{1}{\tilde{\eta}_j + t_{\eta, j}}
\end{bmatrix}^T 
\rt].
\end{align*}
Thus, we have
\begin{align*}
\frac{\partial \mu_j}{\partial t_j}\bigg|_{t_j = 0} &= 0 \qquad \text{and}\\
\frac{\partial^2 \mu_j}{\partial t_j^2} \bigg|_{t_j = 0} &= W_j :=
2 \; \begin{bmatrix}
\frac{1}{\tilde{\eta}_j^3}(X^T_{\pa(j)}\tilde{w}_j)^2 & - \frac{1}{\tilde{\eta}_j^2}(X^T_{\pa(j)}\tilde{w}_j) X^T_{\pa(j)}\\
- \frac{1}{\tilde{\eta}_j^2}(X^T_{\pa(j)}\tilde{w}_j) X_{\pa(j)} & \frac{1}{\tilde{\eta}_j}X_{\pa(j)} X_{\pa(j)}^T
\end{bmatrix}.
\end{align*}
Therefore, by Taylor expansion we further have
\begin{align*}
\mu_j = \frac{1}{2}t_j^T W_j t_j + o(||t_j||^2),
\end{align*}
which, following \eqref{eq:E_U}, yields
\begin{align*}
\sfE_*[\cU(X, t)] &= \sum_{j \in [p]} - \frac{1}{2}t_j^T W_j t_j - \frac{1}{2}\frac{t_{\eta, j}^2}{\tilde{\eta}_j^2} + o(t_{\eta, j}^2) + o(||t_j||^2)\\
&= \sum_{j \in [p]} - \frac{1}{2}t_j^T J_j t_j + o(||t_j||^2)\\
&= - \frac{1}{2}t^T J t + o(||t||^2),
\end{align*}
where, for every $j \in [p]$, 
\begin{align*}%\label{eq:J_matrix}
J_j :=
2 \; \begin{bmatrix}
\frac{1}{\tilde{\eta}_j^3}(X^T_{\pa(j)}\tilde{w}_j)^2 + \frac{1}{2 \tilde{\eta}_j^2} & - \frac{1}{\tilde{\eta}_j^2}(X^T_{\pa(j)}\tilde{w}_j) X^T_{\pa(j)}\\
- \frac{1}{\tilde{\eta}_j^2}(X^T_{\pa(j)}\tilde{w}_j) X_{\pa(j)} & \frac{1}{\tilde{\eta}_j}X_{\pa(j)} X_{\pa(j)}^T
\end{bmatrix},
\;\; \text{and} \;\; 
J =  
\begin{bmatrix}
J_1 &\bold{0} & \cdots & \bold{0}\\
\bold{0} &J_2 & \cdots & \bold{0}\\
\vdots & \vdots & \ddots & \vdots\\
\bold{0} & \bold{0} & \cdots & J_p
\end{bmatrix}.
\end{align*}
The proof is complete.
\end{proof}

\begin{lemma}\label{lem:Var_U}
For every $t \in \bR^{p + \sum_{j \in [p]} \pa(j)}$, such that $(\tilde{\eta}, \tilde{w}) + t \in \times_{j \in [p]} (\bR^+ \times \bR^{\pa(j)})$, 
\begin{align*}
{\sf Var}_*[\cU(X, t)] = o(||t||^2).
\end{align*}
\end{lemma}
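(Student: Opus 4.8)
The plan is to reduce the claim to a pointwise estimate on $u_j(X,t)$ followed by a dominated-convergence argument. First I would recall from Lemma~\ref{lem:decomp} that $\cU(X,t)=\sum_{j\in[p]}\cU_j(X,t)$ with $\cU_j(X,t)=-u_j(X,t)-\tfrac12 t_{\eta,j}^2/\tilde\eta_j^2+o(t_{\eta,j}^2)$, where the last two summands are deterministic (they arise from the Taylor expansion of $\log(\tilde\eta_j+t_{\eta,j})$ in the proof of Lemma~\ref{lem:decomp} and do not depend on $X$). Consequently $\Var_*[\cU(X,t)]=\Var_*\big[\sum_{j}u_j(X,t)\big]\le p\sum_{j\in[p]}\Var_*[u_j(X,t)]\le p\sum_{j\in[p]}\sfE_*[u_j(X,t)^2]$, so it suffices to show $\sfE_*[u_j(X,t)^2]=o(||t||^2)$ for each fixed $j$.

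Next I would extract a pointwise bound directly from the definition \eqref{eq:ujxt}. Set $A_j:=X_{\pa(j)}^T t_{w,j}-t_{\eta,j}X_j$ and $R_j:=\tilde\eta_j X_j-X_{\pa(j)}^T\tilde w_j$. A short check of the two branches of \eqref{eq:ujxt} shows that the indicator there confines $R_j$ to the closed interval between $0$ and $A_j$ (in the first branch $0\le R_j\le A_j$, in the second $A_j\le R_j\le 0$); in particular the indicator forces $|R_j|\le|A_j|$, and on its support $|X_{\pa(j)}^T(\tilde w_j+t_{w,j})-(\tilde\eta_j+t_{\eta,j})X_j|=|A_j-R_j|\le|A_j|$. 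Hence $|u_j(X,t)|\le 2|A_j|\,\mathbbm{1}\{|R_j|\le|A_j|\}$. Since $|A_j|\le ||t_{w,j}||\,||X_{\pa(j)}||+|t_{\eta,j}|\,|X_j|\le c\,||t_j||\,||X||$ for a constant $c$ depending only on $\gamma$, this gives $u_j(X,t)^2\le 4c^2||t_j||^2\,||X||^2\,\mathbbm{1}\{|R_j|\le c\,||t_j||\,||X||\}$, and therefore $\sfE_*[u_j(X,t)^2]\le 4c^2||t_j||^2\,\sfE_*\!\big[||X||^2\,\mathbbm{1}\{|R_j|\le c\,||t_j||\,||X||\}\big]$.

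It then remains to prove that $\sfE_*\!\big[||X||^2\,\mathbbm{1}\{|R_j|\le c\,||t_j||\,||X||\}\big]\to 0$ as $t\to 0$. The dominating function $||X||^2$ is integrable under \eqref{assum:fin_sec}, since $\sfE_*[X_k^2]<\infty$ for all $k$ (as established in the proof of Lemma~\ref{lem:Ecov_D}). For the a.s.\ convergence of the integrand to $0$, the essential point is the non-degeneracy of $R_j$: because $\tilde w_j=\tilde\eta_j\tilde b_j$ with $\tilde b_j=\tilde b_j^\gamma$ the pseudo-true coefficients, we have $R_j=\tilde\eta_j\big(X_j-\sum_{k\in\pa(j)}\tilde b_{jk}X_k\big)=\tilde\eta_j e_j$ where $e_j$ is the corresponding residual. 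By the working-model analysis in Appendix~\ref{app:working}, $e_{\sigma^{-1}(i)}=a_i^T\epsilon$ and $\det(A)=1$ (Lemma~\ref{lem:detA}), so every row $a_i$ is nonzero; since each $\epsilon_k$ admits a Lebesgue density (scale mixture of Gaussians), the nontrivial linear combination $e_j$ admits a Lebesgue density, whence $R_j\neq 0$ $\sfP^*$-a.s., while $\tilde\eta_j>0$ by Lemma~\ref{lem:H_gamma_basic}. Together with $||X||<\infty$ a.s., this implies that for $\sfP^*$-a.e.\ $\omega$ one has $c\,||t_j||\,||X(\omega)||<|R_j(\omega)|$ once $||t_j||$ is small enough, so $||X||^2\,\mathbbm{1}\{|R_j|\le c\,||t_j||\,||X||\}\to 0$ a.s.; dominated convergence then yields the claimed limit $0$. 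Hence $\sfE_*[u_j(X,t)^2]=||t_j||^2\,o(1)=o(||t||^2)$, and summing over $j$ gives $\Var_*[\cU(X,t)]=o(||t||^2)$. The main obstacle I anticipate is exactly this last non-degeneracy step---ensuring the pseudo-true residual has no atom at $0$ so that the shrinking indicator events become negligible in the limit; the pointwise bound on $u_j$ and the dominated-convergence bookkeeping are then routine.
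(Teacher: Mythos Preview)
Your argument is correct and reaches the conclusion by a genuinely different, more elementary route than the paper. After the common reduction $\Var_*[\cU(X,t)]\le p\sum_j\sfE_*[u_j(X,t)^2]$, the paper expresses $\sigma_j(t):=\sfE_*[u_j(X,t)^2]$ as an iterated integral against the conditional density $\sfp^*_{j\mid\pa(j)}$, differentiates twice in $t_j$ via the Leibniz rule, observes that both $\partial\sigma_j/\partial t_j$ and $\partial^2\sigma_j/\partial t_j^2$ vanish at $t_j=0$ (the two integration limits coincide there and the boundary contributions are zero), and then invokes a second-order Taylor expansion to obtain $\sigma_j=o(\|t_j\|^2)$. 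You instead exploit the explicit form of $u_j$ to derive the pointwise bound $|u_j|\le 2|A_j|\,\mathbbm{1}\{|R_j|\le|A_j|\}$ and close by dominated convergence, using that the pseudo-true residual $R_j=\tilde\eta_j\,e_j$ has no atom at $0$. Your approach sidesteps differentiation under the integral sign and the smoothness of the conditional density that the paper's computation implicitly requires; the price is the extra (but short) step of verifying $R_j\neq 0$ a.s., which you handle cleanly via $\det(A)=1$ (Lemma~\ref{lem:detA}) and the absolute continuity of the scale-mixture errors. Both arguments ultimately rest on $\sfE_*\|X\|^2<\infty$: the paper needs it for the integrals it differentiates, you need it for the dominating function.
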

\begin{proof}
Applying Cauchy-Schwarz inequality, we have
\begin{align}\label{eq:var_U}
\Var_*[\cU(X, t)] \leq p \sum_{j \in [p]} \Var_*[\cU_j(X, t)] = p \sum_{j \in [p]} \Var_*[u_j(X, t)] \leq p \sum_{j \in [p]} \sfE_*[u_j^2(X, t)].
\end{align}
Fix $j \in [p]$, and let $\sigma_j := \sfE_*[u_j^2(X, t)]$. Then, using the fact that $\tilde{\eta}_j + t_{\eta, j} > 0$, it is not difficult to derive from the definition of $u_j(X, t)$ that
\begin{align*}
\sigma_j &= \sfE_*[\sfE_*[u_j^2(X, t) | X_{\pa(j)}]]\\
&= \sfE_*\lt[\int_{X_{\pa(j)}^T\tilde{w}_j/\tilde{\eta}_j}^{X_{\pa(j)}^T(\tilde{w}_j + t_{w, j})/(\tilde{\eta}_j + t_{\eta, j})} 4(X^T_{\pa(j)}(\tilde{w_j} + t_{w, j}) - (\tilde{\eta}_j + t_{\eta, j})x)^2 \sfp^*_{j|\pa(j)}(x|X_{\pa(j)})dx\rt].
\end{align*}
Now, by applying Leibniz integral rule we obtain that
\begin{align*}
\frac{\partial \sigma_j}{\partial t_j} &= 8 \; \sfE_*\Bigg[\int_{X_{\pa(j)}^T\tilde{w}_j/\tilde{\eta}_j}^{X_{\pa(j)}^T(\tilde{w}_j + t_{w, j})/(\tilde{\eta}_j + t_{\eta, j})} 
(X^T_{\pa(j)}(\tilde{w_j} + t_{w, j}) - (\tilde{\eta}_j + t_{\eta, j})x)\\
&\qquad \qquad \qquad \qquad \qquad \qquad \qquad \qquad 
\times \begin{bmatrix}
-x\\
X_{\pa(j)}
\end{bmatrix} 
\sfp^*_{j|\pa(j)}(x|X_{\pa(j)})dx\Bigg],\\
\frac{\partial^2 \sigma_j}{\partial t_j^2} &= 8 \; \sfE_*\Bigg[\int_{X_{\pa(j)}^T\tilde{w}_j/\tilde{\eta}_j}^{X_{\pa(j)}^T(\tilde{w}_j + t_{w, j})/(\tilde{\eta}_j + t_{\eta, j})} 
\begin{bmatrix}
-x\\
X_{\pa(j)}
\end{bmatrix} 
\begin{bmatrix}
-x\\
X_{\pa(j)}
\end{bmatrix}^T 
\sfp^*_{j|\pa(j)}(x|X_{\pa(j)})dx\Bigg].
\end{align*}
Thus, we have both
\begin{align*}
\frac{\partial \sigma_j}{\partial t_j}\bigg|_{t_j = 0} = 0 \qquad \text{and} \qquad
\frac{\partial^2 \sigma_j}{\partial t_j^2} \bigg|_{t_j = 0} = 0,
\end{align*}
which by applying Taylor expansion yields that
\begin{align*}
\sigma_j = o(||t_j||^2).
\end{align*}
Therefore, following \eqref{eq:var_U}, we have
\begin{align*}
\Var_*[\cU(X, t)] \leq p \sum_{j \in [p]} \sigma_j = p \sum_{j \in [p]} o(||t_j||^2) = o(||t||^2),
\end{align*}
which completes the proof.
\end{proof}

Now, since $\gamma$ is fixed in the beginning, as done previously,  we omit the superscript from the notation $\cL(D_n | b^\gamma, \theta^\gamma, \gamma)$ and rewrite it as $\cL(D_n | b, \theta)$, for notational simplicity. Furthermore, we define $(\hat{b}_n, \hat{\theta}_n)$ to be the maximum likelihood estimator (MLE) of $(b, \theta)$, i.e.,
\begin{align*}
(\hat{b}_n, \hat{\theta}_n) := \argmax_{(b, \theta)} \; \cL(D_n | b, \theta).
\end{align*}
Next, after the reparameterization from $(b, \theta)$ to $(\eta, w)$, the likelihood function can be equivalently expressed as $\cL(D_n | b, \theta) = \prod_{i \in [n]}  g(X^{(i)}, (\eta, w))$, and the log-likelihood function and the MLE are denoted as $\ell_n(\eta, w)$, and $(\hat{\eta}_n, \hat{w}_n)$, respectively, i.e.,
\begin{align}\label{eq:likli_etaw}
\ell_n(\eta, w) := \sum_{i \in [n]}  \log g(X^{(i)}, (\eta, w)), \quad \text{and} \quad (\hat{\eta}_n, \hat{w}_n) = \argmax_{(\eta, w)} \; \ell_n(\eta, w).
\end{align}
Furthermore, for every $t \in \bR^{p + \sum_{j \in [p]} \pa(j)}$, we define the following function:
\begin{align}\label{eq:def_An}
A_n(t) := \ell_n(\hat{\eta}_n, \hat{w}_n) - \ell_n((\hat{\eta}_n, \hat{w}_n) + t/\sqrt{n}).
\end{align}
\begin{lemma}\label{lem:prop_A}
$A_n(\cdot)$ satisfies the following properties:
\begin{itemize}
\item[(i)] $A_n(0) = 0$,
\item[(ii)] $A_n(\cdot)$ is convex, and
\item[(iii)] for every compact set $K \subset \bR^{p + \sum_{j \in [p]} \pa(j)}$, we have, in $\sfP^*$-probability,
\begin{align*}
\sup_{t \in K} \lt|A_n(t) -  \frac{1}{2}t^T J t\rt| \to 0,
\end{align*}
where $J$ is defined in the proof of Lemma \ref{lem:E_U}.
\end{itemize}
\end{lemma}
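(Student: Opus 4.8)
The plan is as follows. Properties (i) and (ii) require essentially no work: $A_n(0)=\ell_n(\hat\eta_n,\hat w_n)-\ell_n(\hat\eta_n,\hat w_n)=0$; and since $\log g(\cdot,(\eta,w))$ is concave in $(\eta,w)$ by Lemma \ref{lem:log-concave}, the map $(\eta,w)\mapsto\ell_n(\eta,w)$ is concave, hence $t\mapsto-\ell_n\big((\hat\eta_n,\hat w_n)+t/\sqrt n\big)$ is convex and $A_n$ differs from it by the additive constant $\ell_n(\hat\eta_n,\hat w_n)$, so $A_n$ is convex.

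For (iii), I would run the convexity argument of \cite{hjort2011asymptotics}. Write $(\tilde\eta,\tilde w):=(\tilde\eta_j,\tilde w_j:j\in[p])$ for the root of \eqref{eq:score_eq}, set $W_n:=n^{-1/2}\sum_{i\in[n]}\cD(X^{(i)})$, and introduce the recentred criterion $h_n(s):=\ell_n\big((\tilde\eta,\tilde w)+s/\sqrt n\big)-\ell_n(\tilde\eta,\tilde w)$, which is concave by Lemma \ref{lem:log-concave} and, for $n$ large depending on the bound on $s$, is well defined. By Lemma \ref{lem:decomp},
\begin{align*}
h_n(s)=W_n^{\,T}s+\sum_{i\in[n]}\cU\!\big(X^{(i)},\,s/\sqrt n\big).
\end{align*}
By Lemma \ref{lem:Ecov_D}, $\cD(X)$ has mean zero and finite covariance, so $W_n=O_p(1)$ by Chebyshev's inequality; by Lemmas \ref{lem:E_U} and \ref{lem:Var_U}, for each fixed $s$ one has $\sfE_*\big[\sum_{i}\cU(X^{(i)},s/\sqrt n)\big]=-\tfrac{1}{2}s^{T}Js+o(1)$ and $\Var_*\big[\sum_{i}\cU(X^{(i)},s/\sqrt n)\big]=o(1)$, so that $h_n(s)-W_n^{\,T}s\to-\tfrac{1}{2}s^{T}Js$ in $\sfP^*$-probability pointwise in $s$. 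Since $s\mapsto h_n(s)-W_n^{\,T}s$ is concave and the limit is finite and, by positive definiteness of $J$ (Lemma \ref{lem:E_U}), strictly concave, the convexity lemma of \cite{hjort2011asymptotics} upgrades this to uniform convergence on compacts: writing $h_n(s)=W_n^{\,T}s-\tfrac{1}{2}s^{T}Js+\rho_n(s)$, we have $\sup_{|s|\le M}|\rho_n(s)|\to0$ in $\sfP^*$-probability for every $M>0$.

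Next I would identify the rescaled maximiser. Let $\hat t_n:=\sqrt n\big((\hat\eta_n,\hat w_n)-(\tilde\eta,\tilde w)\big)$, which maximises $h_n$. Tightness $\hat t_n=O_p(1)$ follows from concavity: on $\{|W_n|\le M\}$ one can pick $R=R(M)$ so large that $\sup_{|s|=R}\{W_n^{\,T}s-\tfrac12 s^{T}Js\}<-1$, and then $\sup_{|s|=R}h_n(s)<0=h_n(0)$ with probability tending to $1$, which by concavity of $h_n$ confines its maximiser to $\{|s|\le R\}$; consistency $\hat t_n=J^{-1}W_n+o_p(1)$ then follows from the uniform quadratic approximation on $\{|s|\le R\}$ together with strict concavity of the limit. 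Finally, since $A_n(t)=h_n(\hat t_n)-h_n(\hat t_n+t)$, substituting $h_n(s)=W_n^{\,T}s-\tfrac{1}{2}s^{T}Js+\rho_n(s)$ at $s=\hat t_n$ and $s=\hat t_n+t$ (both in a fixed compact set with probability tending to $1$, uniformly over $t$ in a compact $K$) gives
\begin{align*}
A_n(t)=-W_n^{\,T}t+\tfrac{1}{2}t^{T}Jt+\hat t_n^{\,T}Jt+\rho_n(\hat t_n)-\rho_n(\hat t_n+t).
\end{align*}
Using $\hat t_n=J^{-1}W_n+o_p(1)$ and symmetry of $J$ one has $\hat t_n^{\,T}Jt=W_n^{\,T}t+o_p(1)$ uniformly over $t\in K$, while $\rho_n(\hat t_n)-\rho_n(\hat t_n+t)=o_p(1)$ uniformly over $t\in K$; therefore $\sup_{t\in K}\big|A_n(t)-\tfrac{1}{2}t^{T}Jt\big|\to0$ in $\sfP^*$-probability, which is (iii).

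The main obstacle is the passage from pointwise to uniform-on-compacts convergence of $\rho_n$ and the $O_p(1)$-tightness and consistency of the rescaled MLE $\hat t_n$: because the log-likelihood \eqref{eq:likeli_L} is non-differentiable, the usual Taylor-expansion route to a quadratic approximation is unavailable, and the argument must rely entirely on the convexity machinery — which is precisely what Lemmas \ref{lem:decomp}, \ref{lem:Ecov_D}, \ref{lem:E_U} and \ref{lem:Var_U} were prepared for. The remaining bookkeeping in the last display is routine.
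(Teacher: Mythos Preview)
Your proposal is correct and follows essentially the same approach as the paper: the paper disposes of (i)--(ii) identically and for (iii) simply invokes \cite[Theorems 4.1--4.2]{hjort2011asymptotics}, noting that their hypotheses are supplied by Lemmas \ref{lem:log-concave}, \ref{lem:decomp}, \ref{lem:Ecov_D}, \ref{lem:E_U} and \ref{lem:Var_U}. What you have done is unpack those Hjort--Pollard theorems in full --- the recentred concave criterion $h_n$, the convexity lemma upgrading pointwise to locally uniform convergence, the tightness and $\hat t_n=J^{-1}W_n+o_p(1)$ consistency of the rescaled MLE, and the final identity $A_n(t)=h_n(\hat t_n)-h_n(\hat t_n+t)$ --- so your write-up is the paper's proof made explicit rather than a different route.
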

\begin{proof}
From the definition of $A_n(t)$, (i) is immediate, and (ii) follows due to log-concavity of $g(x, (\eta, w))$ as proved in Lemma \ref{lem:log-concave}. In order to prove (iii), we use \cite[Theorem 4.1, Theorem 4.2]{hjort2011asymptotics}.
To be specific, all conditions of \cite[Theorem 4.1]{hjort2011asymptotics} are satisfied due to Lemma \ref{lem:log-concave}, \ref{lem:decomp}, \ref{lem:Ecov_D}, \ref{lem:E_U} and \ref{lem:Var_U}. Thus, following the proof techniques of \cite[Theorem 4.2]{hjort2011asymptotics}, property (iii) holds.
\end{proof}

\begin{lemma}\label{lem:An_bound}
Let $\xi_0 := \inf_{||t|| = 1} \frac{1}{2}t^T J t$. Then
\begin{align*}
\sfP^*\lt(A_n(t) \ \mathbbm{1}\{||t|| > 1\} \geq \frac{1}{2} \xi_0||t|| \ \mathbbm{1}\{||t|| > 1\}\rt) \to 1.
\end{align*}
\end{lemma}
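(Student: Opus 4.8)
## Proof proposal for Lemma \ref{lem:An_bound}

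The plan is to leverage the convexity of $A_n(\cdot)$ (Lemma \ref{lem:prop_A}(ii)) together with the local uniform convergence $A_n(t) \to \frac{1}{2}t^TJt$ on compact sets (Lemma \ref{lem:prop_A}(iii)) to control $A_n$ outside the unit ball. The key observation is that for a convex function vanishing at the origin, its values on the ray through any point $t$ with $\|t\| > 1$ are bounded below by a linear extrapolation of its value at the unit vector $t/\|t\|$: concretely, writing $t = \|t\| \cdot (t/\|t\|)$ and using $\|t\| > 1$, convexity of $A_n$ and $A_n(0) = 0$ give
\begin{align*}
A_n\!\left(\frac{t}{\|t\|}\right) = A_n\!\left(\frac{1}{\|t\|} \, t + \Big(1 - \frac{1}{\|t\|}\Big) \, 0\right) \le \frac{1}{\|t\|} A_n(t) + \Big(1 - \frac{1}{\|t\|}\Big) A_n(0) = \frac{1}{\|t\|} A_n(t),
\end{align*}
hence $A_n(t) \ge \|t\| \, A_n(t/\|t\|)$ for all $t$ with $\|t\| > 1$.

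Next I would apply Lemma \ref{lem:prop_A}(iii) with the compact set $K = \{s : \|s\| = 1\}$ (the unit sphere, which is compact in $\bR^{p + \sum_{j \in [p]} \pa(j)}$). This yields, in $\sfP^*$-probability,
\begin{align*}
\sup_{\|s\| = 1} \left| A_n(s) - \tfrac{1}{2} s^T J s \right| \;\to\; 0.
\end{align*}
Since $J$ is positive definite (established in the proof of Lemma \ref{lem:E_U}), $\xi_0 = \inf_{\|s\|=1} \frac{1}{2} s^T J s > 0$. Therefore, on the event $\mathcal{A}_n := \{\sup_{\|s\|=1} | A_n(s) - \frac{1}{2} s^T J s | \le \frac{1}{2}\xi_0\}$, which has probability tending to $1$, we have for every $s$ with $\|s\| = 1$ that $A_n(s) \ge \frac{1}{2} s^T J s - \frac{1}{2}\xi_0 \ge \xi_0 - \frac{1}{2}\xi_0 = \frac{1}{2}\xi_0$. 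Combining with the convexity bound from the first paragraph, on $\mathcal{A}_n$ we get $A_n(t) \ge \|t\| \cdot \frac{1}{2}\xi_0$ for every $t$ with $\|t\| > 1$, which is exactly the asserted inequality (multiplying both sides by the indicator $\mathbbm{1}\{\|t\| > 1\}$ and noting the inequality holds trivially as $0 \ge 0$ when $\|t\| \le 1$). Since $\sfP^*(\mathcal{A}_n) \to 1$, the conclusion follows.

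I do not anticipate a serious obstacle here; the only points requiring mild care are (a) confirming that the convexity/monotonicity-along-rays argument is applied correctly — in particular that the inequality $A_n(t) \ge \|t\| A_n(t/\|t\|)$ goes in the right direction, which relies on $\|t\| > 1$ so that $1/\|t\| \in (0,1)$ is a genuine convex combination weight — and (b) making the "uniform over $\|t\| > 1$" statement precise: the bound $A_n(t) \ge \frac{1}{2}\xi_0 \|t\|$ holds \emph{simultaneously} for all such $t$ on the single event $\mathcal{A}_n$, because the lower bound $A_n(s) \ge \frac{1}{2}\xi_0$ on the unit sphere is itself uniform in $s$. This is why invoking Lemma \ref{lem:prop_A}(iii) with $K$ equal to the whole unit sphere (rather than a single point) is essential.
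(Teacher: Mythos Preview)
The proposal is correct and follows essentially the same approach as the paper: both use convexity of $A_n$ together with $A_n(0)=0$ to bound $A_n(t)$ from below by $\|t\|\,A_n(t/\|t\|)$ when $\|t\|>1$, and then apply Lemma~\ref{lem:prop_A}(iii) on the unit sphere to obtain $A_n(u)\ge \tfrac{1}{2}\xi_0$ uniformly in $u$ with high probability. Your write-up is in fact slightly more explicit than the paper's about defining the high-probability event $\mathcal{A}_n$ and noting that the bound holds simultaneously for all $t$, but the argument is the same.
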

\begin{proof}
Fix $t \in \bR^{p + \sum_{j \in [p]} \pa(j)}$ such that $||t|| > 1$. Then we can write $t = a \times u$, where $a = ||t|| > 1$ and $u = t/||t||$, and by using convexity of $A_n(\cdot)$, as proved in Lemma \ref{lem:prop_A}(ii), we have
\begin{align*}
\lt(1 - \frac{1}{a}\rt)A_n(0) + \frac{1}{a}A_n(t) \geq A_n\lt(\frac{t}{a}\rt) = A_n(u),
\end{align*}
which, by the fact that $A_n(0) = 0$, as proved in Lemma \ref{lem:prop_A}(i), further yields that
\begin{align*}
\frac{1}{||t||}A_n(t) \geq A_n(u) &= \frac{1}{2}u^t J u + \lt(A_n(u) - \frac{1}{2}u^t J u\rt)\\
&\geq \inf_{||v|| = 1} \frac{1}{2}v^t J v - \lt|A_n(u) - \frac{1}{2}u^t J u\rt|\\
&\geq \xi_0 - \sup_{||v|| = 1} \lt|A_n(v) - \frac{1}{2}v^t J v\rt|,
\end{align*}
where the second and third inequalities follow since $||u|| = 1$.
Now, by Lemma \ref{lem:prop_A}(iii), we have, in $\sfP^*$-probability,
\begin{align*}
\sup_{||v|| = 1} \lt|A_n(v) - \frac{1}{2}v^t J v\rt| \to 0,
\end{align*}
and thus, the result follows immediately.
\end{proof}

\begin{lemma}\label{lem:conv_Ln}
We have, in $\sfP^*$-probability,
\begin{align*}
%\frac{1}{n} \log \cL(D_n | \hat{b}_n, \hat{\theta}_n) \to - \min_{(b, \theta)} \; H(b, \theta).
\frac{1}{n} \log \cL(D_n | \hat{b}_n, \hat{\theta}_n) = - \min_{(b, \theta)} \; H(b, \theta) + O_p(1/\sqrt{n}).
\end{align*}
\end{lemma}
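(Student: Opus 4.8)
The plan is to pass from $\cL(D_n\mid b,\theta)$ to the reparametrized likelihood $\ell_n(\eta,w)$ of \eqref{eq:likli_etaw}, split the maximized log-likelihood into its value at the population (pseudo-true) parameter plus a remainder, and bound the two pieces separately: the first by a central limit theorem at rate $n^{-1/2}$, and the second --- a Wilks-type ``overshoot'' of the maximum over the pseudo-true value --- via the convex-process machinery, showing it is only $O_p(1)$ and hence $O_p(n^{-1})$ after division by $n$.

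First I would record that the change of variables $\eta_j=1/\theta_j$, $w_j=b_j/\theta_j$ is a density-preserving bijection (see \eqref{eq:eq_den}), so $\log\cL(D_n\mid\hat b_n,\hat\theta_n)=\ell_n(\hat\eta_n,\hat w_n)=\max_{(\eta,w)}\ell_n(\eta,w)$. Let $(\tilde\eta,\tilde w)$ be the solution of the population score equations \eqref{eq:score_eq} supplied by Lemma \ref{lem:sol_exi}; by concavity of $\log g(x,\cdot)$ (Lemma \ref{lem:log-concave}) it is the global maximizer of $(\eta,w)\mapsto\sfE_*[\log g(X,(\eta,w))]$, and under the reparametrization it corresponds to the pseudo-true $(\tilde b,\tilde\theta)$ of Lemma \ref{lem:H_gamma_basic}, so $\sfE_*[\log g(X,(\tilde\eta,\tilde w))]=-\min_{(b,\theta)}H(b,\theta)=:-h_\gamma$ by \eqref{eq:h_gamma_id}. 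Writing
\begin{align*}
\frac1n\log\cL(D_n\mid\hat b_n,\hat\theta_n)=\frac1n\ell_n(\tilde\eta,\tilde w)+\frac1n\Big(\ell_n(\hat\eta_n,\hat w_n)-\ell_n(\tilde\eta,\tilde w)\Big),
\end{align*}
the first term is $\frac1n\sum_{i\in[n]}\log g(X^{(i)},(\tilde\eta,\tilde w))$, an i.i.d.\ average with mean $-h_\gamma$; since $\log g(x,(\tilde\eta,\tilde w))$ is affine in the constants $\log\tilde\eta_j$ and in the quantities $|\tilde\eta_jx_j-x_{\pa(j)}^T\tilde w_j|$, and $\sfE_*[X_k^2]<\infty$ for every $k$ under \eqref{assum:fin_sec} (as established in the proof of Lemma \ref{lem:Ecov_D}), this summand has finite variance, whence the first term equals $-h_\gamma+O_p(n^{-1/2})$ by Chebyshev's inequality.

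It then remains to show that the remainder $\ell_n(\hat\eta_n,\hat w_n)-\ell_n(\tilde\eta,\tilde w)$, which is nonnegative since the MLE maximizes $\ell_n$, is $O_p(1)$. Setting $T_n:=\sqrt n\big((\hat\eta_n,\hat w_n)-(\tilde\eta,\tilde w)\big)$, this remainder equals $A_n(-T_n)$ in the notation of \eqref{eq:def_An}. Using the decomposition of Lemma \ref{lem:decomp} about $(\tilde\eta,\tilde w)$, the concave process $t\mapsto\ell_n((\tilde\eta,\tilde w)+t/\sqrt n)-\ell_n(\tilde\eta,\tilde w)$ equals $W_n^Tt+\sum_{i\in[n]}\cU(X^{(i)},t/\sqrt n)$, where $W_n:=\frac1{\sqrt n}\sum_{i\in[n]}\cD(X^{(i)})=O_p(1)$ by the CLT (Lemma \ref{lem:Ecov_D}); by Lemmas \ref{lem:E_U}, \ref{lem:Var_U} and the convexity lemma (\cite[Theorem 4.1]{hjort2011asymptotics}, exactly as in the proof of Lemma \ref{lem:prop_A}) the last sum converges, uniformly on compacts, to $-\frac12t^TJt$. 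This process vanishes at $t=0$ while its strictly concave approximant $W_n^Tt-\frac12t^TJt$ becomes uniformly negative for $\|t\|$ large (using positive-definiteness of $J$ and $W_n=O_p(1)$, which is the quantitative content of Lemma \ref{lem:An_bound}), so its maximizer $T_n$ lies in a fixed ball with probability tending to one, i.e.\ $T_n=O_p(1)$; equivalently one may quote \cite[Theorem 4.2]{hjort2011asymptotics}. Then, by Lemma \ref{lem:prop_A}(iii), on the asymptotically certain event that $\|T_n\|\le M$ and $\sup_{\|t\|\le M}|A_n(t)-\frac12t^TJt|<1$ one obtains $A_n(-T_n)\le\frac12M^2\lambda_{\max}(J)+1$, so $\ell_n(\hat\eta_n,\hat w_n)-\ell_n(\tilde\eta,\tilde w)=O_p(1)$. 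Combining the two pieces,
\begin{align*}
\frac1n\log\cL(D_n\mid\hat b_n,\hat\theta_n)=-h_\gamma+O_p(n^{-1/2})+O_p(n^{-1})=-\min_{(b,\theta)}H(b,\theta)+O_p(n^{-1/2}).
\end{align*}

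The step I expect to be the main obstacle is the $\sqrt n$-consistency $T_n=O_p(1)$ and the attendant $O_p(1)$ bound on the overshoot: because $\ell_n$ is not differentiable, neither can be read off from a Taylor expansion of the score, and both instead rest on the convex-process arguments of \cite{hjort2011asymptotics} assembled in Lemmas \ref{lem:decomp}--\ref{lem:An_bound} --- which is precisely where only the finite second moment \eqref{assum:fin_sec}, and not any stronger tail condition, is needed.
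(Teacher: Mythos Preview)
Your argument is correct, but the paper takes a more direct route that exploits a special feature of the Laplace likelihood. Rather than decomposing $\frac1n\ell_n(\hat\eta_n,\hat w_n)$ into ``value at the pseudo-true parameter plus overshoot'' and controlling the two pieces with a CLT and a Wilks-type argument, the paper observes that profiling out $\theta$ from the Laplace log-likelihood yields the closed form
\[
\frac1n\log\cL(D_n\mid\hat b_n,\hat\theta_n)=-p(1+\log 2)-\sum_{j\in[p]}\log\hat\theta_{j,n},
\]
which mirrors the identity $\min_{(b,\theta)}H(b,\theta)=p(1+\log 2)+\sum_{j\in[p]}\log\tilde\theta_j$ of Lemma \ref{lem:minH}. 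The desired conclusion then reduces to $\log\hat\theta_{j,n}-\log\tilde\theta_j=O_p(n^{-1/2})$, which follows from $\sqrt n$-consistency of the MLE (again via \cite[Theorem 2.1]{hjort2011asymptotics}) and a delta-method step. This is what the paper flags in Section~\ref{sec:asym_prop} as ``exploiting a representation of the maximized log-likelihood function in terms of the MLEs of the scale parameters, thereby avoiding more involved empirical process based arguments.'' Your approach is precisely that more involved argument: it reuses the full convex-process machinery of Lemmas \ref{lem:decomp}--\ref{lem:An_bound} to bound the overshoot by $O_p(1)$. The upside of your route is that it is model-agnostic and would transfer to error distributions where no such profiled closed form is available; the paper's route is shorter here because the Laplace structure collapses the problem to consistency of a single scalar per node.
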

\begin{proof}
Following \eqref{eq:likeli_L}, we have 
\begin{align*}
\log \cL\lt(D_n\rt | b, \theta) &= -np \log 2 - n \sum_{j \in [p]} \log \theta_j - \sum_{j \in [p]} \frac{1}{\theta_j} \sum_{i \in [n]} \lt|X_j^{(i)} - b_j^TX_{\pa(j)}^{(i)}\rt|
\end{align*}
which yields, by letting $\hat{b}_{j, n}$ and $\hat{\theta}_{j, n}$ be the MLE for $b_j$ and $\theta_j$, respectively for every $j \in [p]$, and applying Lemma \ref{lem:min_f}, that
\begin{align*}
\hat{b}_{j, n} &= \min_{b_j} \sum_{i \in [n]} \lt|X_j^{(i)} - b_j^TX_{\pa(j)}^{(i)}\rt|,\\
\hat{\theta}_{j, n} &= \frac{1}{n}\sum_{i \in [n]} \lt|X_j^{(i)} - \hat{b}_{j, n}^TX_{\pa(j)}^{(i)}\rt|.
\end{align*}
Thus, plugging the above values we have
\begin{align}\label{eq:avg_L}
\frac{1}{n} \log \cL\lt(D_n\rt | \hat{b}_n, \hat{\theta}_n) = -p(1 + \log 2) - \sum_{j \in [p]} \log \hat{\theta}_{j, n},
\end{align}
and also following from Lemma \ref{lem:minH}, we have
\begin{align}\label{eq:targetH}
\min_{(b, \theta)} \; H(b, \theta) \; = \; p (1 + \log 2) \; + \;  \sum_{j \in [p]} \log  \lt(\min_{b_j} \; \sfE_*\lt[\lt|X_j - b_j^TX_{\pa(j)}\rt|\rt]\rt).
\end{align}
Now, from the consistency of MLE due to \cite[Theorem 2.1]{hjort2011asymptotics}, it follows that, for every $j \in [p]$, in $\sfP^*$-probability, we have
\begin{align*}
\hat{\theta}_{j, n} \; \to \; \min_{b_j} \; \sfE_*\lt[\lt|X_j - b_j^TX_{\pa(j)}\rt|\rt],
\end{align*}
and in fact, the above holds with $\sqrt{n}$-consistency further leading to
\begin{align*}
\log \hat{\theta}_{j, n} \; = \; \log \lt(\min_{b_j} \; \sfE_*\lt[\lt|X_j - b_j^TX_{\pa(j)}\rt|\rt]\rt) + O_p(1/\sqrt{n}).
\end{align*}
Therefore, by comparing \eqref{eq:avg_L} and \eqref{eq:targetH} the proof is complete.
\end{proof}

\begin{comment}
\begin{theorem}[Laplace approximation]\label{thm:lap_approx}
We have
\begin{align*}
\log m\lt(D_n | \gamma\rt) = \lt(-n \, \min_{(b, \theta)} \; H(b, \theta) - \frac{p + |\gamma|}{2} \log n\rt)(1 + o_p(1)).
\end{align*}
\end{theorem}
\end{comment}

\begin{lemma}\label{lem:wilks_missp}
If $\gamma \in \cS^*$ then we have
\begin{align*}
    %\max_{(b^*, \theta^*)} \log \cL(D_n | b^*, \theta^*, \gamma^*) - \max_{(b^\gamma, \theta^\gamma)} \log \cL(D_n | b^\gamma, \theta^\gamma, \gamma) = O_p(1).
    \log \cL(D_n | \hat{b}^*_n, \hat{\theta}^*_n, \gamma^*) - \log \cL(D_n | \hat{b}_n, \hat{\theta}_n) = O_p(1).
\end{align*}
\end{lemma}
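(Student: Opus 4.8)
The plan is to exploit the fact that, after the reparametrisation $(\eta,w)$ of \eqref{eq:eq_den}, the model $\bM_{\gamma^*}$ sits inside $\bM_\gamma$ as a coordinate subspace, and then run a misspecified Wilks-type argument based on the quadratic approximation already developed in Lemma \ref{lem:prop_A}. Write $\Theta_\gamma := \times_{j\in[p]}(\bR^+\times\bR^{\pa^\gamma(j)})$ for the parameter space of $\bM_\gamma$ in the $(\eta,w)$-coordinates, and $\ell_n$ as in \eqref{eq:likli_etaw}. Since $\gamma\supseteq\gamma^*$ we have $\pa^*(j)\subseteq\pa^\gamma(j)$ for all $j$, so $\bM_{\gamma^*}$ corresponds exactly to the coordinate subspace $\Theta_{\gamma^*}:=\{(\eta,w)\in\Theta_\gamma:\ w_{jk}=0\ \text{for all}\ j\in[p],\ k\in\pa^\gamma(j)\setminus\pa^*(j)\}$, and maximising $\ell_n$ over $\Theta_{\gamma^*}$ produces $\log\cL(D_n|\hat b^*_n,\hat\theta^*_n,\gamma^*)$. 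Denote by $(\hat\eta_n,\hat w_n)$ and $(\check\eta_n,\check w_n)$ the maximisers of $\ell_n$ over $\Theta_\gamma$ and over $\Theta_{\gamma^*}$, respectively (the latter regarded as a point of $\Theta_\gamma$).

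The central step is to show that the pseudo-true parameters of $\bM_\gamma$ and $\bM_{\gamma^*}$ coincide. Because $\gamma\in\cS^*\subseteq\bar{\cE}^*$ (Lemma \ref{lem:supset_h}), we have $h_\gamma=h_*$, i.e., by Lemma \ref{lem:H_gamma_basic}, $\prod_{j\in[p]}\tilde\theta^\gamma_j=\prod_{j\in[p]}\tilde\theta^*_j$ where $(\tilde\theta^\gamma_j)^{-1}=\min_{b_j}\sfE_*[|X_j-\sum_{k\in\pa^\gamma(j)}b_{jk}X_k|]$ and $(\tilde\theta^*_j)^{-1}=\sfE_*[|\epsilon_j|]$. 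Taking $b_{jk}=\beta^*_{jk}$ for $k\in\pa^*(j)$ and $b_{jk}=0$ otherwise makes $X_j-\sum_k b_{jk}X_k=\epsilon_j$ by \eqref{eq:model}, so $(\tilde\theta^\gamma_j)^{-1}\le\sfE_*[|\epsilon_j|]=(\tilde\theta^*_j)^{-1}$ for every $j$; since the products agree, each inequality is an equality, and by uniqueness of the minimiser (Lemma \ref{lem:H_gamma_basic}) the pseudo-true coefficient $\tilde b^\gamma_j$ is precisely this embedding of $\beta^*$, while $\tilde\theta^\gamma_j=\tilde\theta^*_j$. Converting to $(\eta,w)$-coordinates, this says the pseudo-true point $(\tilde\eta^\gamma,\tilde w^\gamma)$ of $\bM_\gamma$ lies in $\Theta_{\gamma^*}$ and, viewed as a point of $\Theta_{\gamma^*}$, equals the pseudo-true point $(\tilde\eta^*,\tilde w^*)$ of $\bM_{\gamma^*}$.

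Next I would invoke $\sqrt n$-consistency of both maximum likelihood estimators. The appendix lemmas \ref{lem:log-concave}, \ref{lem:decomp}, \ref{lem:Ecov_D}, \ref{lem:E_U} and \ref{lem:Var_U} are stated for an arbitrary fixed DAG, hence apply verbatim to $\bM_\gamma$ and to $\bM_{\gamma^*}$; so, exactly as in the proof of Lemma \ref{lem:prop_A} (via Theorems 4.1--4.2 of \cite{hjort2011asymptotics}), $\sqrt n\,((\hat\eta_n,\hat w_n)-(\tilde\eta^\gamma,\tilde w^\gamma))=O_p(1)$ and $\sqrt n\,((\check\eta_n,\check w_n)-(\tilde\eta^*,\tilde w^*))=O_p(1)$. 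Since the two pseudo-true points coincide, $t_n:=\sqrt n\,((\check\eta_n,\check w_n)-(\hat\eta_n,\hat w_n))=O_p(1)$. Finally, by Lemma \ref{lem:prop_A}, given $\varepsilon>0$ pick a compact $K$ with $\sfP^*(t_n\in K)\ge1-\varepsilon$ for large $n$; on this event $|A_n(t_n)-\tfrac12 t_n^TJt_n|\le\sup_{t\in K}|A_n(t)-\tfrac12 t^TJt|\to0$, so $A_n(t_n)=\tfrac12 t_n^TJt_n+o_p(1)=O_p(1)$. By \eqref{eq:def_An}, $A_n(t_n)=\ell_n(\hat\eta_n,\hat w_n)-\ell_n(\check\eta_n,\check w_n)=\log\cL(D_n|\hat b_n,\hat\theta_n)-\log\cL(D_n|\hat b^*_n,\hat\theta^*_n,\gamma^*)$, which gives the assertion (up to sign).

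The hard part will be the second paragraph: identifying the pseudo-true parameters and, in particular, showing that those of the two nested models coincide. This is precisely where the hypothesis $\gamma\in\cS^*$ enters --- for a generic $\gamma$ one would only have $h_\gamma\ge h_*$, the pseudo-true points would differ, $\|t_n\|$ would diverge at rate $\sqrt n$, and the log-likelihood gap would be of order $n$ rather than $O_p(1)$. The remaining ingredients (the coordinate-subspace identification of $\bM_{\gamma^*}$ inside $\bM_\gamma$, and the quadratic-expansion bookkeeping) are routine given Lemmas \ref{lem:H_gamma_basic} and \ref{lem:prop_A}.
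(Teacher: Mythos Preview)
Your proposal is correct and follows essentially the same route as the paper: embed $\bM_{\gamma^*}$ as a coordinate subspace of $\bM_\gamma$, show the pseudo-true parameters coincide so that $t_n=\sqrt n\,((\check\eta_n,\check w_n)-(\hat\eta_n,\hat w_n))=O_p(1)$ via $\sqrt n$-consistency of both MLEs, and then use the uniform quadratic expansion of Lemma~\ref{lem:prop_A} to conclude $A_n(t_n)=O_p(1)$. The paper simply asserts $\tilde\eta=\tilde\eta^*$, $\tilde w=(\tilde w^*,0)$ without argument, whereas you spell out the factor-by-factor equality from $h_\gamma=h_*$ and uniqueness of the minimiser --- a welcome clarification but not a different method.
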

\begin{proof}
Since $\gamma \supseteq \gamma^*$, by leting $b^{-*} := (b_{jk} : j \in [p], k \in \pa(j) \setminus \pa^*(j))$, and $b^{+*} := (b_{jk} : j \in [p], k \in \pa^*(j))$  we write $b = (b^{+*}, b^{-*})$. Then clearly,
\begin{align*}
    \cL(D_n | \hat{b}^*_n, \hat{\theta}^*_n, \gamma^*) = \max_{(b^*, \theta^*)} \cL(D_n | b^*, \theta^*, \gamma^*) = \max_{(b, \theta) :\, b^{-*} = 0} \cL(D_n | b, \theta) = \cL(D_n | (\hat{b}^*_n, 0), \hat{\theta}^*_n).
\end{align*}
Thus, using the reparameterization, if we let
    \begin{align*}
        (\hat{\eta}_n^*,\hat{w}^*_n) := \argmax_{(\eta^*, w^*)} \; \ell_n^*(\eta^*, w^*), %\qquad \text{and} \qquad (\hat{\eta}_n,\hat{w}_n) := \argmax_{(\eta, w)} \; \ell_n(\eta, w)
    \end{align*}
    then we analogously have
    \begin{align*}
        (\hat{\eta}_n^*, (\hat{w}^*_n, 0)) = \argmax_{(\eta, w): \; w^{-*} = 0} \; \ell_n(\eta, w), \quad \text{that is}, \quad  \ell_n(\hat{\eta}_n^*, (\hat{w}^*_n, 0)) = \log \cL(D_n | (\hat{b}^*_n, 0), \hat{\theta}^*_n).
    \end{align*}
    Now, letting $t_n = -\sqrt{n}(\hat{\eta}_n - \hat{\eta}_n^*, \hat{w}_n - (\hat{w}_n^*, 0))$, we have
\begin{comment}
    Now, since we have $\tilde{\eta} = \tilde{\eta}^*$, $\tilde{w} = (\tilde{w}^*, 0)$, following Lemma \ref{lem:decomp}, 
    \begin{align*}
        \ell_n(\hat{\eta}_n^*, (\hat{w}^*_n, 0)) - \ell_n(\tilde{\eta}^*, (\tilde{w}^*, 0)) &= \sum_{i \in [n]} \cD(X^{(i)})^Tt_n + \sum_{i \in [n]} \cU(X^{(i)}, t_n), \;\; \text{and}\\
    %\end{align*}
    %\begin{align*}
        \ell_n(\hat{\eta}_n, \hat{w}_n) - \ell_n(\tilde{\eta}^*, (\tilde{w}^*, 0)) &= \sum_{i \in [n]} \cD(X^{(i)})^Tt'_n + \sum_{i \in [n]} \cU(X^{(i)}, t'_n),
    \end{align*}
    where we let $t_n = (\hat{\eta}_n^* - \tilde{\eta}^*, (\hat{w}^*_n - \tilde{w}^*, 0))$ and $t'_n = (\hat{\eta}_n - \tilde{\eta}^*, \hat{w}_n - (\tilde{w}^*, 0))$.
\end{comment}
    %Therefore, from the above results, we have
    \begin{align} \notag
        &\log \cL(D_n | \hat{b}_n, \hat{\theta}_n) - \log \cL(D_n | \hat{b}^*_n, \hat{\theta}^*_n, \gamma^*)\\ \notag
        &= \ell_n(\hat{\eta}_n, \hat{w}_n) - \ell_n(\hat{\eta}_n^*, (\hat{w}^*_n, 0)) = A_n(t_n)\\ \notag
        &\leq \frac{1}{2}t_n^TJt_n + \lt|A_n(t_n) - \frac{1}{2}t_n^TJt_n\rt|\\ \label{ineq:wilks_decomp}
        &\leq \frac{1}{2}t_n^TJt_n + \sup_{t \in K}\lt|A_n(t) - \frac{1}{2}t^TJt\rt| + \lt|A_n(t_n) - \frac{1}{2}t_n^TJt_n\rt|\mathbbm{1}\{t_n \notin K\},
        %&= \sum_{i \in [n]} \cD(X^{(i)})^T(t_n - t'_n) + \sum_{i \in [n]} \lt(\cU(X^{(i)}, t_n) - \cU(X^{(i)}, t'_n)\rt)\\
        %&= \sqrt{n}(t_n - t'_n)^T\frac{1}{\sqrt{n}}\sum_{i \in [n]} \cD(X^{(i)}) + \sum_{i \in [n]} \lt(\cU(X^{(i)}, t_n) - \cU(X^{(i)}, t'_n)\rt)
    \end{align}
    where the first equality follows from the definition in \eqref{eq:def_An}, and $K$ is some compact set. 
    Furthermore, since $\gamma \in \cS^*$, we have $\tilde{\eta} = \tilde{\eta}^*$, $\tilde{w} = (\tilde{w}^*, 0)$, and thus,
    \begin{align*}
        t_n &= -\sqrt{n}(\hat{\eta}_n - \hat{\eta}_n^*, \hat{w}_n - (\hat{w}_n^*, 0))\\
        &= -\sqrt{n}(\hat{\eta}_n - \tilde{\eta}^*, \hat{w}_n - (\tilde{w}^*, 0)) + \sqrt{n}(\hat{\eta}^*_n - \tilde{\eta}^*, (\hat{w}^*_n - \tilde{w}^*, 0))\\
        &= -\sqrt{n}((\hat{\eta}_n , \hat{w}_n) - (\tilde{\eta}, \tilde{w})) + \sqrt{n}(\hat{\eta}^*_n - \tilde{\eta}^*, (\hat{w}^*_n - \tilde{w}^*, 0)).
    \end{align*}
    Therefore, $t_n = O_p(1)$ due to $\sqrt{n}$-consistency of the quantities $(\hat{\eta}_n^* - \tilde{\eta}^*)$ and $(\hat{w}_n^* - \tilde{w}^*)$ and  $((\hat{\eta}_n , \hat{w}_n) - (\tilde{\eta}, \tilde{w}))$ which again follows from \cite[Theorem 2.1]{hjort2011asymptotics}. As a consequence, the first and third terms in \eqref{ineq:wilks_decomp} are $O_p(1)$, and the second term is $o_p(1)$ due to Lemma \ref{lem:prop_A}(iii). The proof is complete.
\end{proof}

\subsection{Proof of Theorem \ref{thm:lap_app}}\label{pf:thm:lap_app}
\begin{proof}
Following \eqref{eq:marg_m}, we have 
\begin{align*}
m\lt(D_n | \gamma\rt)
&= \int \cL\lt(D_n\rt | b, \theta) \prod_{j \in [p]} \Big(\pi_\theta(\theta_j)d\theta_j \prod_{k \in \pa(j)}\pi_b(b_{jk}) db_{jk}\Big)\\
&= \int \exp\lt(\log \cL\lt(D_n\rt | b, \theta)\rt) \prod_{j \in [p]} \Big(\pi_\theta(\theta_j)d\theta_j \prod_{k \in \pa(j)}\pi_b(b_{jk}) db_{jk}\Big)\\
&= \int \exp (\ell_n(\eta, w)) \pi(\eta, w) d\eta dw\\
&= \exp (\ell_n(\hat{\eta}_n, \hat{w}_n)) \int \exp (\ell_n(\eta, w) - \ell_n(\hat{\eta}_n, \hat{w}_n)) \pi(\eta, w) d\eta dw\\
&= \exp (\ell_n(\hat{\eta}_n, \hat{w}_n)) \; n^{-\frac{p + |\gamma|}{2}}\int \exp (\ell_n((\hat{\eta}_n, \hat{w}_n) + t/\sqrt{n}) - \ell_n(\hat{\eta}_n, \hat{w}_n))\\ 
&\qquad \qquad \qquad \qquad \qquad \qquad \qquad \qquad \qquad \times \pi((\hat{\eta}_n, \hat{w}_n) + t/\sqrt{n}) dt\\
&= \exp (\ell_n(\hat{\eta}_n, \hat{w}_n)) \; n^{-\frac{p + |\gamma|}{2}} \int \exp(-A_n(t)) \pi((\hat{\eta}_n, \hat{w}_n) + t/\sqrt{n}) dt,
\end{align*}
where the third equality follows from the definition in \eqref{eq:likli_etaw} with $\pi(\cdot)$ being the equivalent prior distribution of $(\eta, w)$, the fifth one follows from the change of variable $(\eta, w) \to (\hat{\eta}_n, \hat{w}_n) + t/\sqrt{n}$, and the last one follows from the definition in \eqref{eq:def_An}. 

Thus, following the above, we have
\begin{align*}
\log m\lt(D_n | \gamma\rt) = \ell_n(\hat{\eta}_n, \hat{w}_n) - \frac{p + |\gamma|}{2} \log n + \log  \int \exp(-A_n(t)) \pi((\hat{\eta}_n, \hat{w}_n) + t/\sqrt{n}) dt.
\end{align*}
Now, since the reparameterization $(b, \theta) \to (\eta, w)$ is one-one, we have
\begin{align*}
\ell_n(\hat{\eta}_n, \hat{w}_n) = \log \cL(D_n | \hat{b}_n, \hat{\theta}_n) = - n \, \min_{(b, \theta)} \; H(b, \theta)(1 + O_p(1/\sqrt{n})),
\end{align*}
where the second equality follows from Lemma \ref{lem:conv_Ln}.

Next, let $C_\pi := \sup_{(\eta, w)}\pi (\eta, w)$. Then, by using the fact that $A_n(t) \geq 0$, and following Lemma \ref{lem:An_bound}, we obtain the following result regarding the integrand above,
\begin{align*}
\sfP^*\lt(\exp(-A_n(t)) \pi((\hat{\eta}_n, \hat{w}_n) + t/\sqrt{n}) \leq \bar{A}(t)\rt) \to 1,
\end{align*}
where 
\begin{align*}
\bar{A}(t) := 
\begin{cases}
C_\pi \quad &\text{if} \;\; ||t|| \leq 1,\\
\frac{1}{2}C_\pi\xi_0 ||t|| \quad &\text{if} \;\; ||t|| > 1.
\end{cases}
\end{align*}
Clearly, $\bar{A}(\cdot)$ is an integrable function, i.e., $\int \bar{A}(t) dt < \infty$, and also, by Lemma \ref{lem:prop_A}(iii) and consistency of MLE,
we have, in $\sfP^*$-probability,
\begin{align*}
\exp(-A_n(t)) \pi((\hat{\eta}_n, \hat{w}_n) + t/\sqrt{n}) \to \pi(\tilde{\eta}, \tilde{w}) \exp(-(1/2)t^T J t),
\end{align*}
where we recall that $(\tilde{\eta}, \tilde{w})$ is the solution of \eqref{eq:score_eq}.
Thus, by applying the dominated convergence theorem, we have, in $\sfP^*$-probability,
\begin{align*}
&\int \exp(-A_n(t)) \pi((\hat{\eta}_n, \hat{w}_n) + t/\sqrt{n}) dt\\
&\to \; \pi(\tilde{\eta}, \tilde{w}) \int \exp\lt(-\frac{1}{2}t^T J t\rt) dt = %c_\gamma := 
\pi(\tilde{\eta}, \tilde{w}) (2\pi)^{\frac{p + |\gamma|}{2}}  \sqrt{\det(J)}. %= e^{c_\gamma}.   
\end{align*}
%Therefore, defining the last quantity as $C_\gamma$, 
Therefore, defining $c_\gamma$ as the logarithm of the above limiting value, the proof is complete.
\end{proof}

\section{Proofs Regarding Posterior Consistency}\label{app:post_const}
\begin{comment}
To begin with, for every $\gamma \in \Gamma^p$, we denote by $\delta_\gamma$ and $\psi_\gamma$ the difference in the value of the risk function and the difference in the number of edges, respectively, between $\gamma$ and $\gamma^*$, i.e.,
\begin{align*}
    \delta_\gamma &:= H^\gamma(\tilde{b}^\gamma, \tilde{\theta}^\gamma) - H^*(\tilde{b}^*, \tilde{\theta}^*),\\
    \psi_\gamma &:= |\gamma| - |\gamma^*|.
\end{align*}
In light of Corollary \ref{cor:ineq_risk_and_param}, we have, under the assumption \eqref{assum:faith}, 
\begin{align*}
    &\text{for every}\;\; \gamma \in \Gamma^p, \qquad \delta_\gamma \geq 0, \quad \, \text{with equality iff} \;\; \gamma \in \bar{\cE}^*,\\
    &\text{for every}\;\; \gamma \in \bar{\cE}^*, \qquad \psi_\gamma \geq 0, \quad \text{with equality iff} \;\; \gamma \in \cE^*.
\end{align*}
\end{comment}

\subsection{Some important lemmas}

In this subsection, we establish some lemmas that are critical in establishing the posterior consistency results in Section \ref{sec:asym_prop}.

\begin{lemma}\label{lem:BF_conv}
    Suppose that \eqref{assum:fin_sec} holds. Then for every $\gamma \in \Gamma^p$, we have
    \begin{align*}
        %\frac{1}{n}\; \log \sfBF_n(\gamma^*, \gamma) = \delta_\gamma + O_p(1/\sqrt{n}).
        \log \sfBF_n(\gamma^*, \gamma) = 
        \begin{cases}
            \frac{\psi_\gamma}{2} \, \log n + O_p(1) \qquad &\text{if} \;\; \gamma \in \cS^*\\
            n \, \delta_\gamma + \frac{\psi_\gamma}{2} \, \log n + O_p(\sqrt{n}) \qquad &\text{otherwise}
        \end{cases}.
        %n \, \delta_\gamma + O_p(\sqrt{n}).
    \end{align*}
    %and in particular, when $\gamma \supseteq \gamma^*$, i.e., $\gamma \in \cS^*$,
    %\begin{align*}
       % $\log \sfBF_n(\gamma^*, \gamma) = \frac{\psi_\gamma}{2} \, \log n + O_p(1)$.
    %\end{align*}
\end{lemma}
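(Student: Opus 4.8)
The plan is to obtain both branches directly from the Laplace approximation of Theorem~\ref{thm:lap_app}, with the sharper remainder in the case $\gamma \in \cS^*$ supplied by the Wilks-type bound of Lemma~\ref{lem:wilks_missp}.

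For a general $\gamma \in \Gamma^p$, I would apply the second equality in Theorem~\ref{thm:lap_app} to both $\gamma^*$ and $\gamma$. Since $h_\gamma$ and $h_*$ are deterministic constants, $-n h_\gamma(1+O_p(n^{-1/2})) = -n h_\gamma + O_p(\sqrt{n})$, and likewise for $h_*$; moreover $c_\gamma, c_{\gamma^*}$ are $n$-free constants and the $o_p(1)$ terms are negligible. Subtracting $\log m(D_n\mid\gamma)$ from $\log m(D_n\mid\gamma^*)$ therefore yields
\[
\log\sfBF_n(\gamma^*,\gamma) = n(h_\gamma - h_*) + \tfrac{|\gamma|-|\gamma^*|}{2}\log n + O_p(\sqrt{n}) = n\,\delta_\gamma + \tfrac{\psi_\gamma}{2}\log n + O_p(\sqrt{n}),
\]
using the definitions in \eqref{def:del_psi}. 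This is exactly the ``otherwise'' branch; note that the two separate $O_p(\sqrt{n})$ contributions (one from each of $\gamma$, $\gamma^*$) simply combine into a single $O_p(\sqrt{n})$ rather than cancelling, which is all that is needed.

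For $\gamma \in \cS^*$, i.e.\ $\gamma \supseteq \gamma^*$, two improvements occur. First, by Theorem~\ref{thm:H<H} applied with $\gamma=\gamma^*$ (equivalently, since $\cS^* \subseteq \bar{\cE}^*$ by Lemma~\ref{lem:supset_h}), every supergraph of $\gamma^*$ attains the minimal risk, so $h_\gamma = h_*$ and $\delta_\gamma = 0$, and the leading $n\,\delta_\gamma$ term vanishes. Second, to upgrade the remainder from $O_p(\sqrt{n})$ to $O_p(1)$ I would instead use the \emph{first} equality in Theorem~\ref{thm:lap_app}, which expresses $\log m(D_n\mid\gamma)$ through the maximized log-likelihood $\log\cL(D_n\mid\hat b_n,\hat\theta_n)$ (and analogously for $\gamma^*$). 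Subtracting gives
\[
\log\sfBF_n(\gamma^*,\gamma) = \bigl[\log\cL(D_n\mid\hat b_n^*,\hat\theta_n^*,\gamma^*) - \log\cL(D_n\mid\hat b_n,\hat\theta_n)\bigr] + \tfrac{\psi_\gamma}{2}\log n + O_p(1),
\]
where the $O_p(1)$ absorbs $c_{\gamma^*}-c_\gamma$ and the two $o_p(1)$ terms. Lemma~\ref{lem:wilks_missp}, which applies precisely because $\gamma\in\cS^*$, bounds the bracketed difference of maximized log-likelihoods by $O_p(1)$, so $\log\sfBF_n(\gamma^*,\gamma) = \tfrac{\psi_\gamma}{2}\log n + O_p(1)$, as claimed.

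In short, this lemma is essentially bookkeeping on top of two substantial inputs, and the only points calling for care are the handling of the $n h_\gamma\cdot O_p(n^{-1/2})$ term and the observation that $\delta_\gamma=0$ on $\cS^*$. The genuinely hard work lies not in this statement but in the results it rests upon — the misspecified, non-smooth Laplace approximation (Theorem~\ref{thm:lap_app}) and the misspecified Wilks phenomenon (Lemma~\ref{lem:wilks_missp}) — so here there is effectively no further obstacle to overcome.
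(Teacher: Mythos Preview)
Your proposal is correct and follows essentially the same approach as the paper: apply the second equality of Theorem~\ref{thm:lap_app} for the general case to obtain the $O_p(\sqrt n)$ remainder, and for $\gamma\in\cS^*$ switch to the first equality together with Lemma~\ref{lem:wilks_missp} to sharpen the remainder to $O_p(1)$. The only cosmetic difference is that the paper goes directly to the first equality for $\gamma\in\cS^*$ without first noting that $\delta_\gamma=0$ there; your extra observation is correct but not needed once you use the maximized-log-likelihood form.
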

\begin{proof}
First, fix $\gamma \in \cS^*$. Following the definition in \eqref{eq:def_BF} and by applying the first Laplace approximation from Theorem \ref{thm:lap_app}, we obtain that, by letting $c^*_\gamma := c_{\gamma^*} - c_\gamma$,
    \begin{align*}
        &\log \sfBF_n(\gamma^*, \gamma)\\
        &= \log m\lt(D_n | \gamma^*\rt) - \log m\lt(D_n | \gamma\rt)\\
        &= \max_{(b^*, \theta^*)} \log \cL(D_n | b^*, \theta^*, \gamma^*) - \max_{(b^\gamma, \theta^\gamma)} \log \cL(D_n | b^\gamma, \theta^\gamma, \gamma) + \frac{|\gamma| - |\gamma^*|}{2} \log n + c^*_\gamma + o_p(1)\\
        &=O_p(1)\; + \; \frac{\psi_\gamma}{2}\, \log n \; + \; c^*_\gamma \; + \; o_p(1) = \frac{\psi_\gamma}{2}\, \log n \; + \; O_p(1),
    \end{align*}
    where the last equality follows from Lemma \ref{lem:wilks_missp} %Wilk's Theorem \cite{wilks1938large} 
    since $\gamma \supseteq \gamma^*$.
    
    Next, fix $\gamma \notin \cS^*$. Again, following the definition in \eqref{eq:def_BF} and by applying the second Laplace approximation from Theorem \ref{thm:lap_app}, we obtain that, for some $R_n^*, R_n^\gamma = O_p\lt({1}/{\sqrt{n}}\rt)$,
    \begin{align*}
        &\log \sfBF_n(\gamma^*, \gamma)\\
        &= \log m\lt(D_n | \gamma^*\rt) - \log m\lt(D_n | \gamma\rt)\\
        &= - \, n \, H^*(\tilde{b}^*, \tilde{\theta}^*)\lt(1 + R^*_n\rt) \; + \, n \, H^\gamma(\tilde{b}^\gamma, \tilde{\theta}^\gamma)\lt(1 + R^\gamma_n\rt) \; + \; \frac{|\gamma| - |\gamma^*|}{2} \log n \; + \; c^*_\gamma \; + \; o_p(1)\\
        &=n \, \delta_\gamma + n \, (H^\gamma(\tilde{b}^\gamma, \tilde{\theta}^\gamma)R_n^\gamma - H^*(\tilde{b}^*, \tilde{\theta}^*)R_n^*)\; + \; \frac{\psi_\gamma}{2} \log n \; + \; c^*_\gamma \; + \; o_p(1).
    \end{align*}
    Clearly, the second term in the above is $O_p(\sqrt{n})$, and %furthermore, since $\log n = O_p(\sqrt{n})$, 
    the result follows. 
\end{proof}

\begin{lemma}\label{lem:PrE}
If for every $\gamma \notin \cE^*$, we have \, $\Pi_n(\gamma^*, \gamma) \to \infty$ \, in $\sfP^*$-probability, then
\begin{align*}
    {\rm Pr}(\gamma \in \cE^* | D_n) \; {\to} \; 1,\quad \text{in} \;\; \sfP^*\text{-probability}.
    \end{align*}
\end{lemma}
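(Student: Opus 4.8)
The plan is to exploit the finiteness of $\Gamma^p$ (since $p$ is held fixed) together with the hypothesis that each posterior odds $\Pi_n(\gamma^*,\gamma)$ with $\gamma\notin\cE^*$ diverges in $\sfP^*$-probability. First I would rewrite the complementary probability as a finite sum over DAGs outside $\cE^*$,
\[
1-{\rm Pr}(\gamma\in\cE^*\mid D_n)\;=\;\sum_{\gamma\notin\cE^*}\pi(\gamma\mid D_n),
\]
using that $\{\pi(\gamma\mid D_n)\}_{\gamma\in\Gamma^p}$ is a probability mass function on $\Gamma^p$.

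Next, recall that $\gamma^*\in\cE^*$ (indeed $\gamma^*\in\bar\cE^*_R\cap\cE^*$, as noted after Theorem \ref{thm:H<H}), so the term $\pi(\gamma^*\mid D_n)$ appears in the normalizing sum and therefore, for every $\gamma\in\Gamma^p$,
\[
\pi(\gamma\mid D_n)\;=\;\frac{\pi(\gamma\mid D_n)}{\sum_{\gamma'\in\Gamma^p}\pi(\gamma'\mid D_n)}\;\le\;\frac{\pi(\gamma\mid D_n)}{\pi(\gamma^*\mid D_n)}\;=\;\frac{1}{\Pi_n(\gamma^*,\gamma)},
\]
where the last equality is the definition \eqref{eq:def_BF} of the posterior odds. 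Combining the two displays yields
\[
0\;\le\;1-{\rm Pr}(\gamma\in\cE^*\mid D_n)\;\le\;\sum_{\gamma\notin\cE^*}\frac{1}{\Pi_n(\gamma^*,\gamma)}.
\]

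Finally I would invoke the hypothesis: for each fixed $\gamma\notin\cE^*$ one has $\Pi_n(\gamma^*,\gamma)\to\infty$ in $\sfP^*$-probability, hence $1/\Pi_n(\gamma^*,\gamma)\to 0$ in $\sfP^*$-probability. Since the index set $\{\gamma\in\Gamma^p:\gamma\notin\cE^*\}$ is finite, a finite sum of sequences each converging to $0$ in probability also converges to $0$ in probability; thus the right-hand side above tends to $0$ in $\sfP^*$-probability, which forces ${\rm Pr}(\gamma\in\cE^*\mid D_n)\to 1$ in $\sfP^*$-probability. There is essentially no obstacle here: the argument is a routine posterior-mass bound, and the only points needing (minor) care are that $\gamma^*$ genuinely lies in $\cE^*$ so the denominator bound is legitimate, and that finiteness of $\Gamma^p$ permits passage from termwise convergence in probability to convergence of the sum.
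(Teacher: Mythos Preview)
Your proof is correct and essentially identical to the paper's argument: both express $1-{\rm Pr}(\gamma\in\cE^*\mid D_n)$ as a finite sum $\sum_{\gamma\notin\cE^*}\pi(\gamma\mid D_n)$ and bound each summand by $1/\Pi_n(\gamma^*,\gamma)$ (the paper does this by writing $\pi(\gamma\mid D_n)=1/\sum_{\gamma'}\Pi_n(\gamma',\gamma)$ and dropping all nonnegative terms but $\Pi_n(\gamma^*,\gamma)$ from the denominator). Your explicit mention that $\gamma^*\in\cE^*$ and that $|\Gamma^p|<\infty$ is a nice touch the paper leaves implicit.
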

\begin{proof}
For every $\gamma \notin \cE^*$, we have
\begin{align*}
\pi(\gamma | D_n) &= \frac{m\lt(D_n \rt | \gamma) \times \pi_g(\gamma)}{\sum_{\gamma' \in \Gamma^p}m\lt(D_n \rt | \gamma') \times \pi_g(\gamma')}\\
&= \frac{1}{\sum_{\gamma' \in \Gamma^p} \Pi_n(\gamma', \gamma)}\\
&= \frac{1}{\Pi_n(\gamma^*, \gamma) + \sum_{\gamma' \neq \gamma^*} \Pi_n(\gamma', \gamma)} \quad \to \; 0, \quad \text{in} \;\; \sfP^*\text{-probability},
\end{align*}
where the convergence holds since $\Pi_n(\gamma^*, \gamma) \to \infty$ in $\sfP^*$-probability, and $\Pi_n(\gamma', \gamma) \geq 0$ for every $\gamma' \in \Gamma^p$. Therefore, using the above result we have
\begin{align*}
    1 - {\rm Pr}(\gamma \in \cE^* | D_n) =  \sum_{\gamma \notin \cE^*} \pi(\gamma | D_n)\quad \to \; 0, \quad \text{in} \;\; \sfP^*\text{-probability}.
\end{align*}
The proof is complete.
\end{proof}

\subsection{Proof of Theorem \ref{thm:post_cons_exac}}\label{pf:thm:post_cons_exac}

\begin{proof}
%Following Theorem \ref{thm:E=DE}, we have $\cE^* = \cE(\gamma^*, n\cG^*)$. Thus, 
In view of Lemma \ref{lem:PrE}, it suffices to have $\Pi_n(\gamma^*, \gamma) \to \infty$ in $\sfP^*$-probability for every $\gamma \neq \gamma^*$, as shown below. 

Following the definition in \eqref{eq:def_BF} we have, for every $\gamma \in \Gamma^p$,
\begin{align*}
\log \Pi_n(\gamma^*, \gamma) &= \log \sfBF_n(\gamma^*, \gamma) + \log (\pi_g(\gamma^*)/\pi_g(\gamma))\\
&= 
\begin{cases}
        \frac{\psi_\gamma}{2} \, \log n + O_p(1) \qquad &\text{if} \;\; \gamma \in \cS^*\\
            n \, \delta_\gamma + \frac{\psi_\gamma}{2} \, \log n + O_p(\sqrt{n}) \qquad &\text{otherwise}
        \end{cases},
        %n \, \delta_\gamma + O_p(\sqrt{n}).,
\end{align*}
where the second equality follows from Lemma \ref{lem:BF_conv}, and the fact that $|\log(\pi_g(\gamma^*)/\pi_g(\gamma))| \leq \log C$.
Now, $\psi_\gamma > 0$ for every $\gamma \in \cS^*$, $\gamma \neq \gamma^*$, and since $\bar{\cE}^* = \cS^*$, we have $\delta_\gamma > 0$ for every $\gamma \notin \cS^*$. Thus, from the above we have $\log \Pi_n(\gamma^*, \gamma) \to \infty$, in $\sfP^*$-probability for every $\gamma \neq \gamma^*$. The proof is complete.
\end{proof}

\subsection{Proof of Theorem \ref{thm:post_cons_gen}}\label{pf:thm:post_cons_gen}

\begin{proof}
Following Theorem \ref{thm:E=DE}, we have $\cE^* = \cE(\gamma^*, n\cG^*)$. Thus, in view of Lemma \ref{lem:PrE}, it suffices to have $\Pi_n(\gamma^*, \gamma) \to \infty$ in $\sfP^*$-probability for every $\gamma \notin \cE^*$, as shown below. 

Following the definition in \eqref{eq:def_BF} we have, for every $\gamma \in \Gamma^p$,
\begin{align*}
\log \Pi_n(\gamma^*, \gamma) &= \log \sfBF_n(\gamma^*, \gamma) + \log (\pi_g(\gamma^*)/\pi_g(\gamma))\\
&= n \, \delta_\gamma +  \frac{\psi_\gamma}{2} \, \log n + O_p(\sqrt{n}) + n^\alpha d_n (|\gamma| - |\gamma^*|)\\
&= n\, \delta_\gamma  +  n^\alpha\,d_n\psi_\gamma +  \frac{\psi_\gamma}{2}\, \log n + O_p(\sqrt{n})\\
&= n\, \delta_\gamma  + n^\alpha(d_n\psi_\gamma + O_p(n^{1/2-\alpha})) + \frac{\psi_\gamma}{2}\, \log n,
\end{align*}
where the second equality follows from Lemma \ref{lem:BF_conv}. 
Now, when $\gamma \notin \bar{\cE}^*$, we have $\delta_\gamma >0$, and since $\alpha < 1$, the above suggests $\log \Pi_n(\gamma^*, \gamma) \to \infty$ in $\sfP^*$-probability regardless of the sign of $\psi_\gamma$. Furthermore, when $\gamma \in \bar{\cE}^* \setminus \cE^*$, we have $\delta_\gamma = 0$ but $\psi_\gamma > 0$, and since $\alpha > 1/2$ and $0 < d_n = O_p(1)$, again from the above $\log \Pi_n(\gamma^*, \gamma) \to \infty$ in $\sfP^*$-probability. The proof is complete.
%Since $\cE^* \subseteq \bar{\cE}^*$, if $\gamma \notin \cE^*$, we either have $\gamma \notin \bar{\cE}^*$ or $\gamma \in \bar{\cE}^* \setminus \cE^*$.
%In the first case, we have  $\delta_\gamma > 0$ by definition of $\bar{\cE}^*$, and therefore, using the first convergence result of Lemma \ref{lem:conv_comp_prior}, we must have $\Pi_n(\gamma^*, \gamma) \to \infty$, in $\sfP^*$-probability. Furthermore, in the other case, i.e., $\gamma \in \bar{\cE}^* \setminus \cE^*$, we have $\psi_\gamma > 0$ by using Corollary \ref{cor:ineq_risk_and_param}, and therefore, using the second convergence result of Lemma \ref{lem:conv_comp_prior}, we must again have $\Pi_n(\gamma^*, \gamma) \to \infty$, in $\sfP^*$-probability. 
\end{proof}

\end{appendix}

\bibliographystyle{imsart-number} % Style BST file (imsart-number.bst or imsart-nameyear.bst)
\bibliography{BCDnG.bib}       % Bibliography file (usually '*.bib')

%% or include bibliography directly:
% \begin{thebibliography}{}
% \bibitem{b1}
% \end{thebibliography}

\end{document}